\newtheorem{theorem}[equation]{Theorem}
\newtheorem{lemma}[equation]{Lemma}
\newtheorem{proposition}[equation]{Proposition}
\newtheorem{corollary}[equation]{Corollary}
\newtheorem*{claim}{Claim}
\theoremstyle{definition}
\newtheorem{sub}[equation]{}
\newtheorem{prop-con}[equation]{Proposition--Construction}
\newtheorem{notation}[equation]{Notation}
\newtheorem{construction}[equation]{Construction}
\newtheorem*{observation}{Observation}
\newtheorem{definition}[equation]{Definition}
\newtheorem{definition/lemma}[equation]{Definition--Lemma}
\newtheorem{example}[equation]{Example}
\theoremstyle{remark}
\newtheorem{remark}[equation]{Remark}
\title[Cycles relations in the affine grassmannian and applications]{Cycles relations in the affine grassmannian and applications to Breuil--M\'ezard for $G$-crystalline representations}
\author[R. Bartlett]{Robin Bartlett}
\address{University of Glasgow, United Kingdom}
\email{robin.bartlett.math@gmail.com}
\subjclass{Primary 11F80, Secondary 14M15}
\begin{document}
	
	\begin{abstract}
		For a split reductive group $G$ we realise identities in the Grothendieck group of $\widehat{G}$-representations in terms of cycle relations between certain closed subschemes inside the affine grassmannian. These closed subschemes are obtained as a degeneration of $e$-fold products of flag varieties and, under a bound on the Hodge type, we relate the geometry of these degenerations to that of moduli spaces of $G$-valued crystalline representations of $\operatorname{Gal}(\overline{K}/K)$ for $K/\mathbb{Q}_p$ a finite extension with ramification degree $e$. By transferring the aforementioned cycle relations to these moduli spaces we deduce one direction of the Breuil--M\'ezard conjecture for $G$-valued crystalline representations with small Hodge type.
	\end{abstract}
	\maketitle
	\tableofcontents
	
	\section{Introduction}
	
	The goal of this paper is to prove new results towards the Breuil--M\'ezard conjecture for crystalline representations valued in a connected split reductive group $G$. This open conjecture is a combinatorial shadow of the expected $p$-adic Langlands correspondence and describes multiplicities of irreducible components inside moduli spaces of $p$-adic Galois representations. We refer to the introduction of \cite{B21} or \cite[\S1.7]{EG19} for more details. To achieve our goal we describe new structures in the affine grassmannian which exhibit Breuil--M\'ezard phenomena, and relate these to moduli of Galois representations. When $G = \operatorname{GL}_2$ these results were proven in \cite{B21}, and what we do here extends these techniques to general $G$.
	
	There are two main theorems we prove. The first is purely algebro-geometric and describes an analogue of the Breuil--M\'ezard conjecture for certain closed subschemes inside the affine grassmannian. For this we fix a split connected reductive group $G$, together with a  choice of maximal torus and Borel $T \subset B$ and write $\widehat{G}$ for the dual group. We let $\operatorname{Gr}_{G,\mathbb{F}}$ denote the associated affine grassmannian over a field $\mathbb{F}$ and, for an integer $e\geq 1$ and any $e$-tuple of dominant cocharacters $\mu = (\mu_1,\ldots,\mu_e)$ of $G$, we define closed subschemes $M_{\mu,\mathbb{F}} \subset \operatorname{Gr}_{G,\mathbb{F}}$ as degenerations of an $e$-fold product of flag varieties $G/P_{\mu_1} \times \ldots \times G/P_{\mu_e}$ (see the first bullet point after Theorem~\ref{thmA-p1} for more details). We then show that the geometry of these $M_{\mu,\mathbb{F}}$ as $\mu$ varies can be described in terms of the representation theory of $e$-fold tensor products of the highest weight $\widehat{G}$-representations $W(\mu_i)$. More precisely, we prove:
	
	\begin{theorem}\label{thmA-p1}
		Assume $G$ admits a twisting element $\rho$, i.e. a cocharacter pairing to $1$ with all simple roots of $G$. Then, for any tuple $\mu = (\mu_1,\ldots,\mu_e)$ of strictly dominant cocharacters of $G$ (i.e. $\mu_i-\rho$ is dominant) satisfying
		$$
		\sum_{i=1}^e \langle \alpha^\vee, \mu_i \rangle \leq \operatorname{char}\mathbb{F} + e-1
		$$
		for all roots $\alpha^\vee$ of $G$ (when $\operatorname{char}\mathbb{F} =0$ this condition is not needed) one has identities of $e\operatorname{dim}G/B$-dimensional cycles (i.e. in the free abelian group on $e\operatorname{dim}G/B$-dimensional integral closed subschemes)
		$$
		[M_{\mu,\mathbb{F}}] = \sum_{\lambda} m_\lambda [M_{(\lambda+\rho,\rho,\ldots,\rho),\mathbb{F}}]
		$$
		where $m_\lambda$ denotes the multiplicity of $W(\lambda)$ inside $\bigotimes_{i=1}^e W(\mu_i-\rho)$. Furthermore, each $M_{(\lambda+\rho,\rho,\ldots,\rho),\mathbb{F}}$ appearing in this sum is irreducible and generically reduced. 
	\end{theorem}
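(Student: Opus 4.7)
The plan is to transfer a representation-theoretic identity in the Grothendieck group of $\widehat{G}$-representations to a cycle identity in $\operatorname{Gr}_{G,\mathbb{F}}$ via a Satake-style dictionary, working first in characteristic zero and then specializing.

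First I would exploit the defining degeneration. Since each $\mu_i$ is strictly dominant, every $P_{\mu_i}$ equals the Borel $B$, so $\prod_i G/P_{\mu_i}=(G/B)^e$. Realizing $M_{\mu,\mathbb{F}}$ as the special fibre of a flat family (most naturally modelled on a Beilinson--Drinfeld degeneration inside a global affine grassmannian whose generic fibre is a product of Schubert varieties and whose ``all points collide'' fibre lives inside $\operatorname{Gr}_G$) gives that $[M_{\mu,\mathbb{F}}]$ is the flat specialisation of $[(G/B)^e]$ and has the correct dimension $e\dim G/B$. A stratification of this global model by intermediate weights should produce a decomposition of $[M_{\mu,\mathbb{F}}]$ as a non-negative combination of the ``basic'' cycles $[M_{(\lambda+\rho,\rho,\ldots,\rho),\mathbb{F}}]$.

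The crucial representation-theoretic input is a dictionary under which $[M_{(\nu,\rho,\ldots,\rho),\mathbb{F}}]$ corresponds to the Weyl module $W(\nu-\rho)$ (the $\rho$-shift arising because the trailing $e-1$ slots are ``trivial'' and only the first slot carries a nontrivial highest weight), while $[M_{\mu,\mathbb{F}}]$ corresponds to $\bigotimes_{i=1}^e W(\mu_i-\rho)$. Granting this dictionary, the cycle identity reduces to the Weyl-module decomposition
\[
\bigotimes_{i=1}^e W(\mu_i-\rho)\;\cong\;\bigoplus_\lambda W(\lambda)^{\oplus m_\lambda},
\]
which in characteristic zero is just complete reducibility with Littlewood--Richardson multiplicities.

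To descend to characteristic $p$ I would work over a mixed-characteristic base: construct $M_{\mu}$ as a flat family over (a suitable localisation of) $\operatorname{Spec}\mathbb{Z}$ and specialise the characteristic-zero identity. The hypothesis $\sum_i \langle\alpha^\vee,\mu_i\rangle\le \operatorname{char}\mathbb{F}+e-1$ is precisely the numerical condition that forces $\bigotimes_i W(\mu_i-\rho)$ to retain a Weyl filtration with the characteristic-zero multiplicities $m_\lambda$ modulo $p$, via a Jantzen sum-formula / ``lies in the lowest alcove'' argument. Under this bound the specialisation of the Beilinson--Drinfeld decomposition respects the integer multiplicities.

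Finally, for irreducibility and generic reducedness of each $M_{(\lambda+\rho,\rho,\ldots,\rho),\mathbb{F}}$, the trailing slots $\mu_2=\cdots=\mu_e=\rho$ should realise the corresponding global model as an iterated $G/B$-fibration, whose degeneration limit is a single irreducible component generically reduced by dimension count. The main obstacle, I expect, is pinning down the Satake dictionary with the correct $\rho$-shift so that one reads off honest integer multiplicities (not merely graded-piece data), and verifying that the stated bound genuinely guarantees that the characteristic-zero Weyl filtration of $\bigotimes_i W(\mu_i-\rho)$ survives intact modulo $\operatorname{char}\mathbb{F}$; the rest of the argument is essentially geometry of flat degenerations once these inputs are in place.
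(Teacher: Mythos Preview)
Your proposal has a fundamental gap: the ``Satake-style dictionary'' you invoke does not exist for the $M_\mu$. Geometric Satake relates Schubert varieties $\operatorname{Gr}_{G,\leq\lambda}$ (or rather their IC sheaves) to Weyl modules, and convolution to tensor product. But $M_\mu$ is the degeneration of a product of closed $G$-orbits $G/P_{\mu_i}\hookrightarrow\operatorname{Gr}_{G,i}$, not of Schubert varieties, and there is no off-the-shelf functor sending $[M_{\mu,\mathbb{F}}]$ to $\bigotimes_i W(\mu_i-\rho)$. The paper builds this link by hand: it restricts the adjoint determinant line bundle $\mathcal{L}_{\operatorname{ad}}$ to $M_\mu$, uses flatness over $\mathcal{O}$ and K\"unneth on the generic fibre to identify $H^0(M_{\mu,\mathbb{F}},\mathcal{L}_{\operatorname{ad}}^{\otimes n})\cong\bigotimes_i W(np(\mu_i))$ for $n\gg0$, and then shows (Section~\ref{sec-equivariantsheaves}) that any cycle identity among $T$-stable subschemes forces an asymptotic relation among these global sections in $R(T)$. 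The representation-theoretic multiplicities are then extracted via Weyl-character manipulations (Propositions~\ref{prop-weylcharform} and~\ref{prop-kostant}), not via Satake.

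Two further points. First, the bound $\sum_i\langle\alpha^\vee,\mu_i\rangle\le\operatorname{char}\mathbb{F}+e-1$ is not used as a Weyl-filtration survival condition; it enters geometrically in Proposition~\ref{prop-dim}, where it guarantees that certain derivatives are invertible so that $\operatorname{Gr}_{G,\lambda,\mathbb{F}}\cap\operatorname{Gr}_G^\nabla$ is smooth of the predicted dimension. This is what produces the preliminary identity $[M_{\mu,\mathbb{F}}]=\sum_\lambda n_\lambda[\mathcal{C}_{\lambda+e\rho}]$ with a~priori unknown $n_\lambda$. Second, your irreducibility sketch (``iterated $G/B$-fibration, dimension count'') is too optimistic: the paper's proof (Theorem~\ref{thm-irred}) is delicate, constructing via an adjoint-action computation an explicit point of $\mathcal{C}_{\lambda'+e\rho}$ for $\lambda'<\lambda$ that cannot lie in $M_{(\lambda+\rho,\rho,\ldots,\rho),\mathbb{F}}$, using convolution grassmannians and a Bruhat-cell argument. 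A dimension count alone cannot rule out extra components. Finally, your plan to specialise from characteristic zero via a universal family over $\mathbb{A}^e_{\mathbb{Z}}$ runs into the flatness problem the paper explicitly flags as open in its additional remarks.
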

	
	A twisting element $\rho$ need not always exist (e.g. if $G= \operatorname{SL}_2$) but will whenever $\widehat{G}$ is semi-simple and simply connected, or has simply connected derived subgroup (e.g. if $G = \operatorname{GL}_n$). Twisting elements can also always be found after replacing $\widehat{G}$ by a $\mathbb{G}_m$-extension \cite[\S5]{BG14}. 
	
	The following three points identify the crucial inputs into the proof of Theorem~\ref{thmA-p1}:
	\begin{itemize}
		\item In order to construct the degenerations $M_{\mu,\mathbb{F}}$ we choose a discrete valuation ring $\mathcal{O}$ with residue field $\mathbb{F}$ and an $e$-tuple of pairwise distinct $\pi_1,\ldots,\pi_e$ in the maximal ideal of $\mathcal{O}$. Viewing $G$ as a group over $\operatorname{Spec}\mathcal{O}$ we use $\pi_1,\ldots,\pi_e$ to extend $\operatorname{Gr}_{G,\mathbb{F}}$ to and ind-$\mathcal{O}$-scheme $\operatorname{Gr}_G$ (as a specialisation of the Beilinson--Drinfeld grassmannian over $\mathbb{A}^e_{\mathbb{Z}}$) with generic fibre an $e$-fold product of $\operatorname{Gr}_{G,\operatorname{Frac}\mathcal{O}}$'s. Any tuple $\mu =(\mu_1,\ldots,\mu_e)$ then determines a closed immersion
		$$
		G/P_{\mu_1} \times \ldots \times G/P_{\mu_e} \hookrightarrow \operatorname{Gr}_G \otimes_{\mathcal{O}} \operatorname{Frac}\mathcal{O}
		$$
		for $P_{\mu_i}$ the associated parabolic subgroup. We set $M_{\mu}$ equal the closure of this embedding inside $\operatorname{Gr}_G$ and $M_{\mu,\mathbb{F}}$ the fibre over $\operatorname{Spec}\mathbb{F}$.
		\item Next we establish cycle identities 
		\begin{equation}\label{eq-nonrepcycleidentity}
			[M_{(\mu_1+\rho,\ldots,\mu_e+\rho)} \otimes_{\mathcal{O}} \mathbb{F}] = \sum_{\lambda} n_\lambda [M_{(\lambda_+\rho,\rho,\ldots,\rho)} \otimes_{\mathcal{O}}\mathbb{F}]
		\end{equation}
		for $n_{\lambda} \in \mathbb{Z}_{\geq 0}$, a priori, with no representation theoretic interpretation. This is essentially a topological calculation and is achieved by giving an explicit moduli description of closed subschemes $\operatorname{Gr}_{G,\leq \mu}^\nabla \subset \operatorname{Gr}_G$ approximating $M_{\mu,\mathbb{F}}$ in the sense that 
		$$
		M_{\mu,\mathbb{F}} \subset \operatorname{Gr}^\nabla_{\leq \mu}
		$$
		and that the top dimensional irreducible components of $\operatorname{Gr}^\nabla_{\leq \mu} \otimes_{\mathcal{O}} \mathbb{F}$ generically identify with the $M_{(\lambda+\rho,\rho,\ldots,\rho),\mathbb{F}}$ for dominant $\lambda$ with $\lambda+e\rho \leq \mu_1+\ldots+\mu_e$. The moduli description of $\operatorname{Gr}^\nabla_{\leq \mu}$ is primarily Lie theoretic, and can be interpreted as an infinitesimal version of being fixed by the loop rotation, or as an incarnation of Griffiths transversality for Breuil--Kisin modules. It is in these calculations that the restriction on $\operatorname{char}\mathbb{F}$ and the strict dominance of the $\mu_i$ play a crucial role.
		\item To finish the proof it remains to identify the $n_\lambda$'s with the representation theoretic multiplicities $m_\lambda$. For this we consider the $G$-equivariant ample line bundle $\mathcal{L}$ on $\operatorname{Gr}_G$ obtained by pulling back the determinant bundle along the adjoint representation. The restriction of $\mathcal{L}$ to $M_{(\mu_1+\rho,\ldots,\mu_e+\rho)} \otimes_{\mathcal{O}} \operatorname{Frac}\mathcal{O}$ can be expressed explicitly as a product of equivariant line bundles on flag varieties. Using ampleness of $\mathcal{L}$ and flatness of $M_{(\mu_1+\rho,\ldots,\mu_e+\rho)}$ over $\mathcal{O}$ we are therefore able to identify, for sufficiently large $n$,
		$$
		H^0(M_{(\mu_1+\rho,\ldots,\mu_e+\rho)} \otimes_{\mathcal{O}} \mathbb{F}, \mathcal{L}^{\otimes n}) = \bigotimes_{i=1}^e W(np(\mu_i +\rho))
		$$
		as $G$-representations over $\mathbb{F}$. Here $p(\eta) = \sum_{\alpha^\vee} \langle \alpha^\vee,\eta \rangle \alpha^\vee$ is the homomorphism from cocharacters to characters induced by the Killing form. In Section~\ref{sec-equivariantsheaves} we show that if $X$ is a $T$-equivariant scheme admitting an equivariant ample line bundle $\mathcal{L}_X$ then any identity of cycles between $T$-equivariant closed subschemes induces an asymptotic formula between the global sections of high powers of $\mathcal{L}_X$ inside the Grothendieck group of $T$-representations. Applying this to \eqref{eq-nonrepcycleidentity} and $\mathcal{L}$ produces a formula involving the $n_\lambda$ which asymptotically relates $[\bigotimes_{i=1}^e W(np(\mu_i +\rho))]$ and $[W(np(\lambda+\rho)) \otimes W(np(\rho))^{\otimes e-1}]$, in the sense that an appropriate difference is polynomial in $n$ of degree $< e\operatorname{dim}G/B$. In Sections~\ref{sec-proofofcyclethm} and~\ref{sec-reptheory} we show, using manipulations with the Weyl character formula, that such asymptotic relations can only occur if $m_\lambda = n_\lambda$, proving Theorem~\ref{thmA-p1}.
	\end{itemize}
	Our second main theorem specialises to the case of residue characteristic $p$, and proves new instances of one direction of the Breuil--M\'ezard conjecture (namely that Galois multiplicities are $\leq$ automorphic multiplicities). It is deduced from Theorem~\ref{thmA-p1} by relating the geometry of the $M_\mu$'s to that of moduli spaces of crystalline representations.
	
	\begin{theorem}\label{thmA-p2}
		Continue to assume $G$ admits a twisting element $\rho$ and let $K/\mathbb{Q}_p$ be a finite extension with ramification degree $e$. Let $R_{\overline{\rho}}^{\square,\operatorname{cr},\mu}$ denote the framed deformation ring of a fixed $\overline{\rho}:G_K \rightarrow G(\overline{\mathbb{F}}_p)$ classifying $G$-valued crystalline representations with Hodge type $\mu =(\mu_{\kappa})_{\kappa:K \rightarrow \overline{\mathbb{Q}}_p}$. Suppose further that each $\mu_\kappa$ is strictly dominant and, for each $\kappa_0:k\rightarrow \overline{\mathbb{F}}_p$,
		$$
		\sum_{\kappa|_k = \kappa_0} \langle \alpha^\vee, \mu_\kappa \rangle \leq p
		$$
		for all roots $\alpha^\vee$ of $G$. Then, as $\operatorname{dim}G + [K:\mathbb{Q}_p]\operatorname{dim}G/B$-dimensional cycles,
		$$
		[R_{\overline{\rho}}^{\square,\operatorname{cr},\mu} \otimes_{\overline{\mathbb{Z}}_p} \overline{\mathbb{F}}_p] \leq \sum_{\lambda} m_\lambda [R_{\overline{\rho}}^{\square,\operatorname{cr},(\lambda+\rho,\rho,\ldots,\rho)}  \otimes_{\overline{\mathbb{Z}}_p} \overline{\mathbb{F}}_p]	$$
		where again $m_\lambda$ denotes the multiplicity of the Weyl module $W(\lambda)$ of highest weight $\lambda$ inside $\bigotimes_{i=1}^e W(\mu_i-\rho)$.
	\end{theorem}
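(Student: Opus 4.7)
The plan is to use a geometric bridge via a moduli stack of $G$-valued Breuil--Kisin modules to transport the cycle identity of Theorem~\ref{thmA-p1} to a cycle identity on an upstream model of $R_{\overline{\rho}}^{\square,\operatorname{cr},\mu}$, and then push forward along a surjection that may collapse components, which produces the inequality (rather than equality). Concretely, let $\mathcal{X}^{\operatorname{cr},\mu}_{\overline{\rho}}$ denote the formal completion at $\overline{\rho}$ of the moduli of $G$-valued Breuil--Kisin modules of Hodge type $\mu$ with residual representation $\overline{\rho}$. The hypothesis $\sum_{\kappa|_k = \kappa_0}\langle \alpha^\vee,\mu_\kappa\rangle \leq p$ is the Fontaine--Laffaille type range which should ensure, via $G$-equivariant Kisin theory in small height, that the Breuil--Kisin functor is fully faithful on crystalline lattices of Hodge type $\mu$; this gives a smooth surjection $\mathcal{X}^{\operatorname{cr},\mu}_{\overline{\rho}} \to \operatorname{Spf} R_{\overline{\rho}}^{\square,\operatorname{cr},\mu}$ of pure relative dimension $\operatorname{dim} G$, reducing the problem to bounding top-dimensional cycles on the special fibre of $\mathcal{X}^{\operatorname{cr},\mu}_{\overline{\rho}}$.

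The technical heart is then the construction of a local model diagram
\[
\mathcal{X}^{\operatorname{cr},\mu}_{\overline{\rho}} \longleftarrow \widetilde{\mathcal{X}} \longrightarrow \prod_{\kappa_0 : k \hookrightarrow \overline{\mathbb{F}}_p} M_{(\mu_\kappa)_{\kappa|_k = \kappa_0}}
\]
whose two arrows are smooth of the same relative dimension. For each residual embedding $\kappa_0$ the embeddings $\kappa$ lifting $\kappa_0$ assemble into an $e$-tuple of strictly dominant cocharacters to which Theorem~\ref{thmA-p1} applies, since the required bound $\operatorname{char}\mathbb{F} + e - 1 = p + e - 1$ is implied by the given hypothesis on $\mu$ together with strict dominance. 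The right-hand map will record the relative position of Frobenius in the Beilinson--Drinfeld grassmannian, one factor per residual embedding, with the product of uniformisers $\prod_{\kappa|_k = \kappa_0}(u - [\pi_\kappa])$ as the $e$ marked points; crystallinity of the Breuil--Kisin lattice should precisely cut out $M_{(\mu_\kappa)}$ inside the ambient grassmannian.

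With the diagram in place, the identity $[M_{\mu,\mathbb{F}}] = \sum_\lambda m_\lambda [M_{(\lambda+\rho,\rho,\ldots,\rho),\mathbb{F}}]$ of Theorem~\ref{thmA-p1}, applied factor by factor over $\kappa_0$, pulls back along the smooth right-hand map to an identity of cycles on the special fibre of $\widetilde{\mathcal{X}}$ and descends along the smooth surjection on the left to the same identity on the special fibre of $\mathcal{X}^{\operatorname{cr},\mu}_{\overline{\rho}}$. Pushing forward along $\mathcal{X}^{\operatorname{cr},\mu}_{\overline{\rho}} \to \operatorname{Spf} R_{\overline{\rho}}^{\square,\operatorname{cr},\mu}$ converts this into the desired inequality of cycles on $R_{\overline{\rho}}^{\square,\operatorname{cr},\mu} \otimes \overline{\mathbb{F}}_p$: distinct top-dimensional components upstairs can collapse onto the same image or onto a lower-dimensional one downstairs, and since all $m_\lambda$ are non-negative the component-wise contribution surviving on the left is bounded by the right-hand sum, with strict inequality possible whenever collapsing occurs.

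The main obstacle I anticipate is the local model construction and its compatibility with crystallinity: one must give an equivariant description of $\mathcal{X}^{\operatorname{cr},\mu}_{\overline{\rho}}$ in terms of Breuil--Kisin lattices equipped with a Hodge filtration, verify smoothness of both legs of the diagram with matching relative dimensions, and, most delicately, confirm that under the small-Hodge-type bound each $M_{(\lambda+\rho,\rho,\ldots,\rho),\mathbb{F}}$ appearing on the right-hand side of Theorem~\ref{thmA-p1} lifts to a genuine component of $\mathcal{X}^{\operatorname{cr},\mu}_{\overline{\rho}}$ supported on the crystalline locus rather than spilling into non-crystalline strata, which would otherwise corrupt the transfer of the cycle identity. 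The hypothesis $\sum\langle \alpha^\vee,\mu_\kappa\rangle \leq p$ should be exactly the threshold below which this is automatic.
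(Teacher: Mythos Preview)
Your overall architecture---local model diagram, transfer of the cycle identity from Theorem~\ref{thmA-p1}, then descent to the deformation ring---matches the paper's strategy. But you have misidentified where the inequality arises, and this matters because the mechanism you propose would not actually work.

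You claim the inequality comes from collapsing components under a proper pushforward $\mathcal{X}^{\operatorname{cr},\mu}_{\overline{\rho}} \to \operatorname{Spf} R_{\overline{\rho}}^{\square,\operatorname{cr},\mu}$, while earlier describing this same map as a smooth surjection of relative dimension $\dim G$. These are inconsistent: smooth pullback and pushforward along a smooth surjection both preserve cycle identities, so no inequality appears there. In the paper the morphism from the Kisin resolution $\mathcal{L}_{\overline{\rho}}^{\operatorname{cr},\mu}$ to $\operatorname{Spec} R_{\overline{\rho}}^{\square}$ is proper (an isomorphism after inverting $p$), and proper pushforward of cycles is again a homomorphism---it preserves the inequality but does not create it.

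The actual source of the inequality is your sentence ``crystallinity of the Breuil--Kisin lattice should precisely cut out $M_{(\mu_\kappa)}$,'' which is exactly what is \emph{not} known. The paper proves only one containment (Theorem~\ref{thm-factorisation2}): the image $\Psi(\mathfrak{M},\iota)\otimes\mathbb{F}$ of a crystalline Breuil--Kisin module of Hodge type $\mu$ lies in $M_{\mu,\mathbb{F}}$, but not every point of $M_{\mu,\mathbb{F}}$ need arise this way. Writing $Z_{G,\mu,\mathbb{F}}$ for the locus of Breuil--Kisin modules with $\Psi$ landing in $M_{\mu,\mathbb{F}}$ and $Y_G^\mu$ for the crystalline locus of Hodge type $\mu$, one has a closed immersion $Y_G^\mu\otimes\mathbb{F}\hookrightarrow Z_{G,\mu,\mathbb{F}}$ (Corollary~\ref{cor-closed immerson}), hence $[Y_G^\mu\otimes\mathbb{F}]\le[Z_{G,\mu,\mathbb{F}}]$. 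This is the inequality.

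What makes the argument go through is that for $\widetilde{\lambda}=(\lambda+\rho,\rho,\ldots,\rho)$ this closed immersion is forced to be an equality of cycles: $M_{\widetilde{\lambda},\mathbb{F}}$ is irreducible and generically reduced by the last clause of Theorem~\ref{thmA-p1}, hence so is $Z_{G,\widetilde{\lambda},\mathbb{F}}$, and since $\dim Y_G^{\widetilde{\lambda}}\otimes\mathbb{F}=\dim Z_{G,\widetilde{\lambda},\mathbb{F}}$ the closed subscheme must exhaust it. So the chain is
\[
[Y_G^\mu\otimes\mathbb{F}]\le[Z_{G,\mu,\mathbb{F}}]=\sum_\lambda m_\lambda[Z_{G,\widetilde{\lambda},\mathbb{F}}]=\sum_\lambda m_\lambda[Y_G^{\widetilde{\lambda}}\otimes\mathbb{F}],
\]
and only then do you pull back along the formally smooth $\mathcal{L}_{\overline{\rho}}^{\operatorname{cr},\mu}\otimes\mathbb{F}\to Y_G^\mu\otimes\mathbb{F}$ and push forward along the proper $\mathcal{L}_{\overline{\rho}}^{\operatorname{cr},\mu}\to\operatorname{Spec} R_{\overline{\rho}}^\square$. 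Your last paragraph gestures at this concern (``lifts to a genuine component supported on the crystalline locus'') but does not isolate the irreducibility input from Theorem~\ref{thmA-p1} as the reason it succeeds for $\widetilde{\lambda}$.
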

	
	When $G = \operatorname{GL}_n$ we can identify the $\mu_{\kappa}$ with $n$-tuples of integers $(\mu_{\kappa,1},\ldots,\mu_{\kappa,n})$ so that $\rho = (n-1,n-2,\ldots,1,0)$. Then the bound on the $\mu_\kappa$'s is equivalent to asking that
	$$
	\sum_{\kappa|_k = \kappa_0} (\mu_{\kappa,1}-\mu_{\kappa,n}) \leq p
	$$
	In particular, we see that the theorem, roughly speaking, accesses Hodge types contained in the interval $[0,p/e]$. Note that since we also ask each $\mu_\kappa$ to be strictly dominant we have $\mu_{\kappa,1} - \mu_{\kappa,n} \geq n-1$ and so $\mu$ as in Theorem~\ref{thmA-p2} will only exist when $e(n-1) \leq p$.
	
	The crux of Theorem~\ref{thmA-p2}'s proof lies in connecting crystalline representations to the $M_{\mu,\mathbb{F}}$. This passage is achieved via the intermediary of Breuil--Kisin modules associated to crystalline representations. To explain this we assume, for notational simplicity, that $K$ is totally ramified over $\mathbb{Q}_p$ and let $\mathcal{O}$ be the ring of integers in a finite extension containing the Galois closure of $K$ and with residue field $\mathbb{F}$. Let $\mathfrak{M}$ denote the Breuil--Kisin module associated to a crystalline representation valued in $G(\mathcal{O})$. This is a $G$-torsor on $\operatorname{Spec}\mathcal{O}[[u]]$ equipped with an isomorphism $\varphi^*\mathfrak{M}[\frac{1}{E(u)}] \cong \mathfrak{M}[\frac{1}{E(u)}]$, where $\varphi$ is given by $u \mapsto u^p$ and $E(u)$ is the minimal polynomial of a fixed choice of uniformiser $\pi \in K$. Any trivialisation $\iota$ of $\mathfrak{M}$ over $\operatorname{Spec}\mathcal{O}[[u]]$ produces an $\mathcal{O}$-valued point $\Psi(\mathfrak{M},\iota) \in\operatorname{Gr}_G$ describing the relative position of $\mathfrak{M}$ and $\varphi^*\mathfrak{M}$. We prove that if the Hodge type $\mu$ of the crystalline representation satisfies the bound from Theorem~\ref{thmA-p2} then $\Psi(\mathfrak{M},\iota) \otimes_{\mathcal{O}} \mathbb{F} \in M_{\mu,\mathbb{F}}$. The following describes the central idea:
	\begin{itemize}
		\item Consideration of Kisin's original construction of $\mathfrak{M}$ from \cite{Kis06} shows that, without any bound on $\mu$, one has 
		$$
		X \cdot \Psi(\mathfrak{M},\iota)[\tfrac{1}{p}] \in M_{\mu}[\tfrac{1}{p}]
		$$
		for $X\in  G(\mathcal{O}^{\operatorname{rig}}[\frac{1}{\lambda}])$ the automorphism inducing a Frobenius equivariant identification $\mathfrak{M} \otimes \mathcal{O}^{\operatorname{rig}}[\frac{1}{\lambda}] \cong D \otimes \mathcal{O}^{\operatorname{rig}}[\frac{1}{\lambda}]$. Here $D$ is the filtered $\varphi$-module associated to the crystalline representation, $\mathcal{O}^{\operatorname{rig}}$ is the ring of power series convergent on the open unit disk over $\operatorname{Frac}\mathcal{O}$, and $\lambda = \prod_{n \geq 0} \varphi^n(\frac{E(u)}{E(0)})$.
		\item While $X$ will almost never be defined integrally, calculations of \cite{Liu15} and \cite{GLS} bound the order of $\frac{1}{p}$ in the coefficients of $X$ (more precisely, they bound the coefficients of the monodromy operator from which $X$ can be recovered). Using this we are able to show that a truncation $X^{\operatorname{trun}}$ of $X$ modulo sufficient high powers of $E(u)$ (depending upon $\mu$)  is such that $X^{\operatorname{trun}}$ is integral, $\equiv 1$ modulo the maximal ideal of $\mathcal{O}$,  and still satisfies 
		$$
		X^{\operatorname{trun}} \cdot \Psi(\mathfrak{M},\iota)[\tfrac{1}{p}] \in M_{\mu}[\tfrac{1}{p}]
		$$
		Thus $\Psi(\mathfrak{M},\iota) \otimes_{\mathcal{O}} \mathbb{F} \in M_{\mu,\mathbb{F}}$ as desired.
	\end{itemize}
	
	More generally this construction works whenever the crystalline representation is valued in $G(A)$ for $A$ any finite flat $\mathcal{O}$-algebra for which the associated Breuil--Kisin module is a $G$-torsor on $\operatorname{Spec}A[[u]]$ (which is not automatic). As a consequence, if one considers the standard diagram (whose construction goes back to \cite{KisFF} and \cite{PR09})
	$$
	\begin{tikzcd}
		& & \widetilde{Y} \ar[dr,"\Psi"] \ar[dl,"\Gamma"]&   \\
		X & \ar[l,"\Theta"] Y  & & \operatorname{Gr}_G  
	\end{tikzcd}
	$$
	in which $X$ denotes an appropriate moduli space of crystalline Galois representations, $Y$ a moduli space of Breuil--Kisin modules associated to crystalline Galois representations, and $\widetilde{Y}$ a rigidification of $Y$ classifying an additional choice of trivialisation, then the restriction  of $\Psi$ to the fibre over $\operatorname{Spec}\mathbb{F}$ of the closed locus $\widetilde{Y}^\mu \subset \widetilde{Y}$ of Breuil--Kisin modules of Hodge type $\mu$ factors through $M_{\mu,\mathbb{F}}$. An additional (but much simpler) argument shows that, under the bound on $\mu$, the morphism $\Psi$ is smooth over $M_{\mu,\mathbb{F}}$.  As a result the cycle identities in Theorem~\ref{thmA-p1} can be pulled back along $\Psi$, descended along $\Gamma$, and then pushed forward along the proper morphism $\Theta$. Since we only know that the preimage of $M_{\mu,\mathbb{F}}$ contains $\widetilde{Y}^\mu \otimes \mathbb{F}$ this process produces an identity of cycles 
	$$
	[X_0^\mu] = \sum_{\lambda} n_\lambda [X_0^{(\lambda+\rho,\rho,\ldots,\rho)}]
	$$
	inside $X \otimes \mathbb{F}$ with $[X^\mu \otimes \mathbb{F}] \leq [X_0^\mu]$ for $X^\mu \subset X$ the locus of crystalline representations of Hodge type $\mu$. However, by the last part of Theorem~\ref{thmA-p1}, and the fact that over each $M_{\mu,\mathbb{F}}$ the morphism $\Psi$ is smooth with irreducible fibres, we can additionally show that $X_0^{(\lambda+\rho,\rho,\ldots,\rho)}$ is irreducible and generically reduced. Thus $[X_0^{(\lambda+\rho,\rho,\ldots,\rho)}] = [X^{(\lambda+\rho,\rho,\ldots,\rho)} \otimes \mathbb{F}]$. This gives the inequality in Theorem~\ref{thmA-p2}. The most natural choice for $X$ would be the moduli stack of Galois representations, constructed in \cite{EG19} when $G = \operatorname{GL}_n$. Since the case of more general groups has yet to be written up (though this is likely to be addressed by work of Lin, see for example \cite{Lin23}) we take, in the body of the text, $X$ equal to the formal spectrum of a Galois deformation ring.
	
	The methods of this paper do not appear to give any way to prove an equality in Theorem~\ref{thmA-p2}. This would come down to showing that $[X^\mu \otimes \mathbb{F}] \leq [X_0^\mu]$ is an equality which ultimately, is a question about producing crystalline lifts with Hodge type $\mu$ of torsion Breuil--Kisin modules whose relative position in $\operatorname{Gr}_G$ is contained in $M_{\mu,\mathbb{F}}$. On the other hand, at least when $G = \operatorname{GL}_n$, equality could be obtained by proving the full support of patched modules for e.g. at Hodge types of the form $(\lambda+\rho,\ldots,\rho)$. We hope to do this in future work.
	\subsection*{Additional remarks}
	\begin{itemize}
		\item We don't know if Theorem~\ref{thmA-p1} remains true without the bound on $\operatorname{char}\mathbb{F}$, or whether this bound, if necessary, is at all sharp. It is, however, worth observing that the bound in Theorem~\ref{thmA-p1} is somewhat natural because of its relation to the irreducibility of the $W(\lambda)$ appearing in $\bigotimes_{i=1}^e W(\mu_i -\rho)$. More precisely, recall from \cite[5.6]{Janbook} that, when $\mathbb{F}$ has positive characteristic, $W(\lambda)$ is simple when viewed as an algebraic representation over $\mathbb{F}$ whenever $0 \leq \langle \alpha^\vee,\lambda+\rho \rangle \leq p$.  Since $W(\lambda)$ appears in $\bigotimes_{i=1}^e W(\mu_i -\rho)$ only if $\lambda \leq \sum_{i=1}^e (\mu_i - \rho)$ each such $W(\lambda)$ will be simple if
		$$
		\sum_{i=1}^e \langle \alpha^\vee, \mu_i \rangle \leq \operatorname{char}\mathbb{F} + (e-1)\operatorname{max}_{\alpha^\vee} \langle \alpha^\vee,\rho \rangle
		$$
		If $\operatorname{max}_{\alpha^\vee} \langle \alpha^\vee,\rho \rangle = 1$ (e.g. if $G = \operatorname{GL}_2)$ then this is exactly the bound in Theorem~\ref{thmA-p1} and so one might hypothesise that Theorem~\ref{thmA-p1} remains true at least under this stronger bound. On the other hand, the irreducibility of the $W(\lambda)$ does not appear to play any direct role in our methods, making the significance of these observations questionable.
		\item In contrast, the stronger bound on the $\mu$ in Theorem~\ref{thmA-p2} is far more unnatural. It arises from certain estimates in $p$-adic Hodge theory which could quite possibly be improved, at least so that they agree with the bound in Theorem~\ref{thmA-p1}.
		\item The requirement in Theorem~\ref{thmA-p2} that the $\mu_\kappa$ be strictly dominant arises only from its appearance in Theorem~\ref{thmA-p1}. In particular, if a version of Theorem~\ref{thmA-p1} could be proven for not necessarily strictly dominant cocharacters then the same arguments would allow any such cycle identities to be transferred to an inequality of cycles of crystalline representations with irregular Hodge types.
		\item While we suppress it from the notation we are not able to show that the $M_{\mu,\mathbb{F}}$ do not depend upon the choice of $\mathcal{O}$ and the $\pi_1,\ldots,\pi_e$. Indeed, a more natural construction of the $M_{\mu,\mathbb{F}}$ would involve taking $M_\mu$ as the closure inside the Beilinson--Drinfeld grassmannian over $\mathbb{A}_{\mathbb{Z}}^e$ of an embedding an $e$-fold product of flag varieties over the locus of pairwise distinct tuples in $\mathbb{A}_{\mathbb{Z}}^e$. Then one could define $M_{\mu,\mathbb{F}}$ as the fibre over $0 \in \mathbb{A}^e_{\mathbb{F}}$, which would be independent of any choices. The problem is that, with this definition, we would need to know that $M_{\mu}$ is flat around $0 \in \mathbb{A}_{\mathbb{Z}}^e$ and this is probably a difficult question.
	\end{itemize}
	\subsection*{Connections to previous work}
	We conclude by saying a little about how this paper relates to previous work. Concrete results so far towards Breuil--M\'ezard fall into two broad categories and (with a few exceptions that we mention shortly) all consider the case of $\operatorname{GL}_n$. The first category considers the situation where $G = \operatorname{GL}_2$ and $K =\mathbb{Q}_p$. Here the conjecture is now essentially known, see \cite{KisFM,Pas15,HT15,Sand,Tun21}. These results rely on the existence of a form of the $p$-adic Langlands correspondence, and therefore have little direct relation to our work.
	
	The second category treats the conjecture in either higher dimensions and/or with $K$ a finite extension of $\mathbb{Q}_p$, but at the cost of making (as we do) very strong assumptions on the size of the Hodge types appearing. For example, \cite{GK14} proves the conjecture for any $K /\mathbb{Q}_p$ and two dimensional potentially crystalline representation of $G_K$ with Hodge type $(0,1)$, while \cite{LLBBM} proves the conjecture for $K/\mathbb{Q}_p$ unramified and $n$-dimensional (tamely) potentially crystalline representation of $G_K$ whose Hodge type is bounded by an inexplicit formula in terms of $p$ and the tame type (which, at the very least, requires the Hodge types to be $\leq p$)
	
	While the assumptions of both \cite{GK14} and \cite{LLBBM} are entirely perpendicular to ours (in situations where the assumptions overlap the statement of Breuil--M\'ezard is vacuous) their methods are much closer in sprit to those of our paper. Indeed, both use moduli spaces of Breuil--Kisin modules to control moduli spaces of Galois representations, and describe the former moduli spaces in terms of closed subschemes inside an affine grassmannian. This is particularly true of the closed subschemes appearing in \cite{LLBBM} which are defined as a degeneration of a single flag variety (recall they consider $e=1$) in an affine flag variety (i.e. a twisting of $\operatorname{Gr}_G$ which accounts for the tame type). Clearly, combining this definition with our construction of $M_{\mu,\mathbb{F}}$ describes candidate closed subschemes modelling the geometry of Breuil--Kisin modules associated to potentially crystalline representations of any finite extension of $\mathbb{Q}_p$ (at least for small Hodge types). On the other hand, there are significant points of departure from our methods and those of \cite{GK14} and \cite{LLBBM}. While we use the control of moduli of Breuil--Kisin modules to directly analyse the special fibres of moduli of Galois representations, in loc. cit. they are used as a means to prove modularity lifting theorems, which are in turn used to control the moduli spaces of local Galois representations using patched modules. 
	
	Finally, while the majority of work towards Breuil--M\'ezard has focused on the case of $\operatorname{GL}_n$, there has also been considerations of other groups. In \cite{GG15} and \cite{Dotto18} the conjecture is considered for the group of units in a central division algebra, and in the latter it is shown that these conjectures follow from the conjecture for $\operatorname{GL}_n$. In \cite{DR22} the conjecture for $\operatorname{PGL}_n$ is also shown to follow from the case of $\operatorname{GL}_n$. We also mention \cite{Lev15} and \cite{BB20} which use methods similar to ours to describe some deformation rings on crystalline representations valued in split reductive $G$.
	
	\subsection*{Acknowledgements}
		I would like to thank Yifei Zhao, for many helpful conversations, and the anonymous referee, whose careful reading of the paper caught a number of errors.

	\part{Cycles identities in the affine grassmannian}
	\section{Notation}\label{sec-notation1}
	
	\begin{sub}\label{sub-roots}
		Let $\mathcal{O}$ be a discrete valuation ring with residue field $\mathbb{F}$ and fraction field $E$. Let $G$ be a split connected reductive group over $\operatorname{Spec} \mathcal{O}$ with connected fibres together with a choice of maximal torus $T$. Let $X_*(T) = \operatorname{Hom}(\mathbb{G}_m,T)$ and $X^*(T) = \operatorname{Hom}(T,\mathbb{G}_m)$ and write $\langle ~,~\rangle$ for the natural pairing $X^*(T) \times X_*(T) \rightarrow \mathbb{Z}$. Let $R^\vee \subset X^*(T)$ be the roots $(G,T)$ and for $\alpha^\vee \in R^\vee$ write $\alpha \in X_*(T)$ for the corresponding coroot. Let $W$ be the Weyl group of $(G,T)$. Choose a set of positive roots $R_+^\vee \subset R^\vee$, with associated Borel $B$. Let $R_+$ denote the corresponding set of positive coroots. Recall
		$$
		\lambda^\vee  \leq \mu^\vee \Leftrightarrow \mu^\vee - \lambda^\vee \in \sum_{\alpha \in R_+^\vee} \mathbb{Z}_{\geq 0} \alpha^\vee
		$$
		and that $\lambda^\vee \in X^*(T)$ is dominant if $\langle \lambda^\vee, \alpha\rangle \geq 0$ for all positive coroots $\alpha \in R_+$. We say $\lambda^\vee$ is strictly dominant if $\langle \lambda^\vee, \alpha \rangle \geq 1$. We likewise make sense of $\leq$ on $X_*(T)$ as well as dominant and strictly dominant $\lambda \in X_*(T)$.
	\end{sub}
	
	\begin{definition}\label{def-twistingelement}
		An element $\rho \in X_*(T)$ is called a twisting element if $\langle \alpha^\vee,\rho \rangle =1$ for all simple roots $\alpha^\vee$. Similarly $\rho^\vee \in X^*(T)$ is a twisting element if $\langle \rho^\vee,\alpha \rangle =1$ for all simple coroots $\alpha$.
	\end{definition}
	Notice that if $\rho \in X_*(T)$ is a twisting element then $\rho - \frac{1}{2}\sum_{\alpha \in R_+} \alpha$ is $W$-invariant. Also $\lambda \in X_*(T)$ is strictly dominant if and only if $\lambda -\rho$ is dominant.
	
	\section{Torsors}
	
	\begin{sub}
		Throughout this paper we  view $G$-torsors from the following two equivalent viewpoints:
		\begin{itemize}
			\item A $G$-torsor $\mathcal{E}$ on $\operatorname{Spec}A$ is an $A$-scheme equipped with an action of $G$ so that fppf (equivalently etale) locally on $A$ one has $\mathcal{E} \cong G \times_{\mathcal{O}} \operatorname{Spec}A$.
			\item A fibre functor (a faithful exact tensor functor) from the category of representations of $G$ on finite free $\mathcal{O}$-modules into the category of projective $A$-modules.
		\end{itemize}
		Any $G$-torsor in the first sense induces a fibre functor which sends a representation $\chi: G \rightarrow \operatorname{GL}(V)$ onto the contracted product
		$$
		\mathcal{E}^\chi := \mathcal{E} \times^\chi V = \mathcal{E} \times V / \sim
		$$
		That this construction produces an equivalence of categories is proved in e.g. \cite[4.8]{Broshi}. We always write $\mathcal{E}^0$ for the trivial $G$-torsor and a trivialisation of a $G$-torsor on $\operatorname{Spec}A$ is an isomorphism $\mathcal{E} \cong \mathcal{E}^0$ over $\operatorname{Spec}A$.
	\end{sub}

	\section{Affine grassmannians}\label{sec-affinegr}
	Fix an integer $e \geq 1$ and pairwise distinct $\pi_1,\ldots,\pi_e$ in the maximal ideal of $\mathcal{O}$. For any $\mathcal{O}$-algebra $A$ we write $E(u) = \prod_{i=1}^e (u-\pi_i) \in A[u]$.
	\begin{definition}\label{sub-Grdef}
		Let $\operatorname{Gr}_G$ denote the projective ind-scheme over $\mathcal{O}$ whose $A$-points, for any $\mathcal{O}$-algebra $A$, classify isomorphism classes of pairs $(\mathcal{E},\iota)$ where
		\begin{itemize}
			\item $\mathcal{E}$ is a $G$-torsor over $\operatorname{Spec}A[u]$,
			\item $\iota$ is a trivialisation of $\mathcal{E}$ over the open subscheme $\operatorname{Spec}A[u,E(u)^{-1}]$, i.e. an isomorphism
			$$
			\mathcal{E}|_{\operatorname{Spec}A[u,E(u)^{-1}]} \cong \mathcal{E}^0|_{\operatorname{Spec}A[u,E(u)^{-1}]}
			$$
			where $\mathcal{E}^0$ denotes the trivial $G$-torsor.
		\end{itemize}
		We also consider variants $\operatorname{Gr}_{G,i}$ of $\operatorname{Gr}_G$ for $i =1,\ldots,e$ which are again projective ind-schemes over $\mathcal{O}$, and whose $A$-points classify isomorphism classes of pairs $(\mathcal{E},\iota)$ with $\mathcal{E}$ a $G$-torsor on $\operatorname{Spec}A[u]$ and $\iota$ is a trivialisation of $\mathcal{E}$ over the open subscheme $\operatorname{Spec}A[u,(u-\pi_i)^{-1}]$. Notice that for each $i$ there are natural closed immersions $\operatorname{Gr}_{G,i} \rightarrow \operatorname{Gr}_G$. Each of $\operatorname{Gr}_G$ and $\operatorname{Gr}_{G,i}$ are also functorial in $G$.
	\end{definition}
	\begin{remark}\label{rem-representability}
		When $G = \operatorname{GL}_n$ the above functor is a colimit over $a \geq 0$ of the functors sending an $\mathcal{O}$-algebra $A$ onto the set of rank $n$ projective $A[u]$ submodules 
		$$
		E(u)^{a} A[u]^n \subset \mathcal{E} \subset E(u)^{-a} A[u]^n
		$$
		Since a submodule $\mathcal{E} \subset E(u)^a A[u]^n$ is $A[u]$-projective of rank $n$ if and only if $E(u)^aA[u]^n /\mathcal{E}$ is $A$-projective (see \cite[Lemma 1.1.5]{Zhu17}) each subfunctor is represented by a subfunctor of the grassmannian classifying projective $A$-submodules of $E(u)^{-a}A[u]^n/E(u)^aA[u]^n$, which shows the ind-representability of $\operatorname{Gr}_{\operatorname{GL}_n}$. 
		
		For general $G$ one chooses a faithful representation into $\operatorname{GL}_n$ and, using \cite[1.2.6]{Zhu17}, identifies $\operatorname{Gr}_G$ as a closed sub-indscheme of $\operatorname{Gr}_{\operatorname{GL}_n}$.
	\end{remark}
	\begin{lemma}\label{lem-BL}
		For any $\mathcal{O}$-algebra $A$ set $\widehat{A[u]}_{E(u)}$ equal the $E(u)$-adic completion of $A[u]$. Then the $A$-valued points of $\operatorname{Gr}_G$ functorially identify with isomorphism classes of $G$-torsors on $\operatorname{Spec}\widehat{A[u]}_{E(u)}$ together with a trivialisation after inverting $E(u)$. Similarly for $A$-valued points of $\operatorname{Gr}_{G,i}$ with $E(u)$ replaced by $(u-\pi_i)$.
	\end{lemma}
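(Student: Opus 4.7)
The statement is a Beauville--Laszlo type glueing result, saying that a $G$-torsor on $\operatorname{Spec}A[u]$ together with a trivialisation away from the Cartier divisor cut out by $E(u)$ is the same as a $G$-torsor on the formal neighbourhood of this divisor together with a trivialisation along its generic fibre. The plan is therefore to construct maps in both directions and verify they are mutually inverse, working first in the case $G=\operatorname{GL}_n$ and then reducing the general case to this via the faithful embedding of Remark~\ref{rem-representability}.

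In one direction, given a pair $(\mathcal{E},\iota)$ as in Definition~\ref{sub-Grdef}, restriction along $\operatorname{Spec}\widehat{A[u]}_{E(u)} \to \operatorname{Spec}A[u]$ produces a $G$-torsor on the formal neighbourhood, and $\iota$ restricts to a trivialisation after inverting $E(u)$ (since $\operatorname{Spec}\widehat{A[u]}_{E(u)}[E(u)^{-1}]$ maps into $\operatorname{Spec}A[u,E(u)^{-1}]$). In the other direction, given a pair $(\mathcal{E}',\iota')$ on the formal neighbourhood, I would glue $\mathcal{E}'$ on $\operatorname{Spec}\widehat{A[u]}_{E(u)}$ to the trivial torsor $\mathcal{E}^0$ on $\operatorname{Spec}A[u,E(u)^{-1}]$ along the isomorphism $\iota'$ over the overlap $\operatorname{Spec}\widehat{A[u]}_{E(u)}[E(u)^{-1}]$. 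The classical Beauville--Laszlo theorem, applied with $R = A[u]$ and the regular element $f = E(u)$ (regular since $E(u)$ is monic in $u$), produces a finitely generated projective $A[u]$-module from such glueing data and the construction is functorial, so the case $G = \operatorname{GL}_n$ follows directly by glueing rank $n$ projective modules on each side (equivalently $\operatorname{GL}_n$-torsors). The trivialisation $\iota$ on $\operatorname{Spec}A[u,E(u)^{-1}]$ is then built into the glueing.

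For general $G$, fix a closed immersion $G \hookrightarrow \operatorname{GL}_n$ as in Remark~\ref{rem-representability} so that $\operatorname{GL}_n/G$ is quasi-affine and $G$-torsors on an affine scheme $\operatorname{Spec}R$ correspond to $\operatorname{GL}_n$-torsors on $\operatorname{Spec}R$ equipped with a reduction of structure group to $G$, i.e.\ a section of the associated $\operatorname{GL}_n/G$-bundle. The glueing from the previous step produces a $\operatorname{GL}_n$-torsor on $\operatorname{Spec}A[u]$ from the $\operatorname{GL}_n$-torsors induced on the two pieces, and the two reductions of structure group (one from $\mathcal{E}'$, the other from the trivial $G$-torsor) agree over the overlap precisely because $\iota'$ is $G$-equivariant. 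Thus the reduction of structure group itself descends by Beauville--Laszlo applied to the quasi-coherent sheaf of sections of the $\operatorname{GL}_n/G$-bundle (one checks on finite type affine subschemes, using that $\operatorname{GL}_n/G$ is quasi-affine, or invokes \cite[1.2.6]{Zhu17} directly). This produces a $G$-torsor on $\operatorname{Spec}A[u]$ from $(\mathcal{E}',\iota')$, and fully faithfulness of the two glueing functors shows the constructions are inverse to each other. The case of $\operatorname{Gr}_{G,i}$ is identical with $E(u)$ replaced by $u - \pi_i$, which is again a regular element of $A[u]$.

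The main subtlety I expect is the passage from $\operatorname{GL}_n$ to general $G$: one must check that a pair of reductions of structure group on the two pieces, matching on the overlap, genuinely descends to a global section of the associated $\operatorname{GL}_n/G$-bundle on $\operatorname{Spec}A[u]$. This would be obvious for coherent sheaves from the original Beauville--Laszlo theorem, but here one needs an analogous statement for sections of a non-affine (though quasi-affine) scheme over $\operatorname{Spec}A[u]$. The cleanest route is to cite the general formal glueing result in \cite[\S1.2]{Zhu17} which handles exactly this situation, or alternatively to check directly that the category of $G$-torsors satisfies the relevant sheaf condition with respect to the cover $\{\operatorname{Spec}\widehat{A[u]}_{E(u)}, \operatorname{Spec}A[u,E(u)^{-1}]\} \to \operatorname{Spec}A[u]$.
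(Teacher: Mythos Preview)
Your approach is correct and matches the paper's: both invoke the Beauville--Laszlo gluing lemma, with the paper simply citing \cite{BL} in a single line while you spell out the reduction from general $G$ to $\operatorname{GL}_n$ via a faithful embedding. Your discussion of the subtlety in descending the reduction of structure group is accurate and more explicit than what the paper records, but the underlying argument is the same.
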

	\begin{proof}
		This follows from the Beauville--Laszlo gluing lemma \cite{BL}.
	\end{proof}
	\begin{sub}\label{sub-productdecomp}
		If $A$ is an $E$-algebra and $n_i \in \mathbb{Z}_{\geq 0}$ then, since $E[u]$ is principal ideal domain, the product of the quotient maps describes an isomorphism
		$$
		\frac{A[u]}{\prod_{i=1}^e (u-\pi_i)^{n_i}}  \cong \prod_{i=1}^e \frac{A[u]}{(u-\pi_i)^{n_i}}
		$$
		In particular, this gives an isomorphism  $\widehat{A[u]}_{E(u)} \cong \prod_{i=1}^e \widehat{A[u]}_{(u-\pi_i)}$ where the completions are respectively taken against the ideals generated by $E(u)$ and $(u-\pi_i)$. As a consequence, we obtain:
	\end{sub}\begin{corollary}\label{lem-generic}
		There is an isomorphism 
		$$
		\operatorname{Gr}_G \otimes_{\mathcal{O}} E \cong \left(\operatorname{Gr}_{G,1} \times_{\mathcal{O}} \ldots \times_{\mathcal{O}} \operatorname{Gr}_{G,e}\right) \otimes_{\mathcal{O}} E
		$$
		written $(\mathcal{E},\iota) \mapsto (\mathcal{E}_i,\iota_i)_{i=1,\ldots,e}$ on $A$-valued points, so that
		$$
		\mathcal{E} \otimes_{A[u]} \widehat{A[u]}_{E(u)} = \prod_{i=1}^e \mathcal{E}_i \otimes_{A[u]} \widehat{A[u]}_{u-\pi_i}
		$$
		with $\iota = \prod_i \iota_i$.
	\end{corollary}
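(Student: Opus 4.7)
The plan is to reduce everything to the Beauville--Laszlo description of $A$-points provided by Lemma~\ref{lem-BL} and then apply the ring-theoretic decomposition recorded in \ref{sub-productdecomp}. Fix an $E$-algebra $A$. By Lemma~\ref{lem-BL}, an element of $\operatorname{Gr}_G(A)$ is a $G$-torsor on $\operatorname{Spec}\widehat{A[u]}_{E(u)}$ together with a trivialisation after inverting $E(u)$, and similarly an element of $\operatorname{Gr}_{G,i}(A)$ is a $G$-torsor on $\operatorname{Spec}\widehat{A[u]}_{(u-\pi_i)}$ together with a trivialisation after inverting $(u-\pi_i)$. The task therefore becomes to construct a natural equivalence between these two kinds of data.

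The first step is to observe that the isomorphism
$$
\widehat{A[u]}_{E(u)} \;\cong\; \prod_{i=1}^e \widehat{A[u]}_{(u-\pi_i)}
$$
recorded in \ref{sub-productdecomp} expresses $\operatorname{Spec}\widehat{A[u]}_{E(u)}$ as the disjoint union of its open-and-closed components $\operatorname{Spec}\widehat{A[u]}_{(u-\pi_i)}$. Giving a $G$-torsor on such a disjoint union is the same as giving a $G$-torsor on each component, so the category of $G$-torsors on $\operatorname{Spec}\widehat{A[u]}_{E(u)}$ is equivalent (by restriction on each factor) to the product of the categories of $G$-torsors on $\operatorname{Spec}\widehat{A[u]}_{(u-\pi_i)}$. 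This matches up the unframed torsor data on the two sides.

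Next I would check that the framings match. Since $A$ is an $E$-algebra and the $\pi_i$ are pairwise distinct in $\mathcal{O}\subset E$, for $j\neq i$ the element $\pi_i-\pi_j \in E^\times$ is a unit, so
$$
u-\pi_j \;=\; (\pi_i-\pi_j)\Bigl(1-\tfrac{u-\pi_i}{\pi_j-\pi_i}\Bigr)
$$
is a unit in $\widehat{A[u]}_{(u-\pi_i)}$. Consequently inverting $E(u)=\prod_j(u-\pi_j)$ in $\widehat{A[u]}_{(u-\pi_i)}$ is the same as inverting $(u-\pi_i)$, and the product decomposition above therefore restricts to
$$
\widehat{A[u]}_{E(u)}\bigl[E(u)^{-1}\bigr] \;\cong\; \prod_{i=1}^e \widehat{A[u]}_{(u-\pi_i)}\bigl[(u-\pi_i)^{-1}\bigr].
$$
Hence a trivialisation of the total torsor after inverting $E(u)$ corresponds uniquely to a tuple of trivialisations of the components after inverting each $(u-\pi_i)$. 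Combining this with the previous paragraph yields a functorial bijection $\operatorname{Gr}_G(A)\cong \prod_i \operatorname{Gr}_{G,i}(A)$ satisfying the asserted compatibility $\mathcal{E}\otimes\widehat{A[u]}_{E(u)}=\prod_i \mathcal{E}_i\otimes\widehat{A[u]}_{u-\pi_i}$ and $\iota=\prod_i\iota_i$.

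The only real issue in executing this plan is bookkeeping: ensuring that the equivalence of Beauville--Laszlo descent data is indeed preserved under the decomposition of $\operatorname{Spec}$ of a finite product, and that naturality in $A$ upgrades the pointwise bijection to an isomorphism of ind-$E$-schemes. Both points are formal, so I do not expect a genuine obstacle; the substance of the corollary lies entirely in the observation that after base change to $E$ the polynomial $E(u)$ splits into coprime factors.
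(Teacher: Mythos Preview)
Your argument is correct and is exactly the approach the paper takes: the corollary is stated immediately after \ref{sub-productdecomp} with the phrase ``As a consequence, we obtain'', meaning it is intended to follow directly from Lemma~\ref{lem-BL} together with the product decomposition $\widehat{A[u]}_{E(u)} \cong \prod_i \widehat{A[u]}_{(u-\pi_i)}$. You have simply spelled out the details that the paper leaves implicit.
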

	
	\begin{sub}\label{sub-opencovers}
		The isomorphism in Corollary~\ref{lem-generic} has an alternative description. For any $\mathcal{O}$-algebra $A$ consider the open subsets
		$$
		U_i = \operatorname{Spec}A[u,\prod_{j \neq i} (u-\pi_j)^{-1}], \qquad  V_i = \operatorname{Spec}A[u,(u-\pi_i)^{-1}] 
		$$
		of $\operatorname{Spec}A[u]$. Then $V_i = \bigcup_{j \neq i} U_j$, $V_i \cap U_i = \operatorname{Spec}A[u,E(u)^{-1}]$ and, if $A$ is an $E$-algebra, then 
		$$
		\operatorname{Spec}A[u] = \bigcup_i U_i = V_i \cup U_i
		$$
		Then $\mathcal{E}_i$ is the $G$-torsor obtained by glueing $\mathcal{E}|_{U_i}$ and $\mathcal{E}^0|_{V_i}$ along the restriction of $\iota$ to $U_i \cap V_i= \operatorname{Spec}A[u,E(u)^{-1}]$
	\end{sub}

	\begin{notation}\label{notation-Elambda}
		For $\lambda \in X_*(T)$ write $\mathcal{E}_{\lambda,i}$ for the $\mathcal{O}$-valued point 
		$$
		(\mathcal{E}^0,(u-\pi_i)^\lambda) \in \operatorname{Gr}_{G,i}
		$$
		where $\mathcal{E}^0$ denotes the trivial $G$-torsor on $\operatorname{Spec}\mathcal{O}[u]$ and $(u-\pi_i)^{\lambda}$ denotes the automorphism of $\mathcal{E}^0|_{\operatorname{Spec}\mathcal{O}[u,E(u)^{-1}]}$ induced by multiplication by $\lambda(u-\pi_i) \in G(\mathcal{O}[u,E(u)^{-1}])$.  We then define the locally closed subscheme 
		$$
		\operatorname{Gr}_{G,i,\lambda} \subset \operatorname{Gr}_{G,i}
		$$
		as the orbit of $\mathcal{E}_{\lambda,i}$ under the action of the group scheme $L^+G$ with $A$-valued points $G(A[u])$.
	\end{notation}
	\begin{lemma}\label{lem-closedflag}
		For each $\lambda \in X_*(T)$ and $i=1,\ldots,e$ the morphism $G \rightarrow \operatorname{Gr}_{G,i}$ given by $g \mapsto g \mathcal{E}_{\lambda,i}$ induces a closed immersion
		$$
		G/P_\lambda \rightarrow \operatorname{Gr}_{G,i}
		$$
		where 
		$$
		P_\lambda = \lbrace g \in G \mid \operatorname{lim}_{t \rightarrow 0} \lambda(t)^{-1} g \lambda(t) \text{ exists} \rbrace
		$$
		Equivalently, $P_\lambda$ is the parabolic subgroup of $G$ generated by $T$ and the roots subgroups $U_{\alpha^\vee}$ for $\alpha^\vee \in R^\vee$ with $\langle \alpha^\vee ,\lambda \rangle \leq 0$.
	\end{lemma}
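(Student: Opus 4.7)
The plan is to exhibit $P_\lambda$ as the scheme-theoretic stabiliser of $\mathcal{E}_{\lambda,i}$ under the action of $G \subset L^+G$ (embedded as constants), descend the orbit map to $G/P_\lambda$, and then conclude via a proper monomorphism argument.

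\emph{Step 1 (stabiliser computation).} For $g \in G(A)$ the point $g\cdot \mathcal{E}_{\lambda,i}$ equals $\mathcal{E}_{\lambda,i}$ in $\operatorname{Gr}_{G,i}(A)$ iff there exists $h \in G(A[u])$ with $g \cdot (u-\pi_i)^\lambda = (u-\pi_i)^\lambda h$, i.e.\ iff the automorphism $(u-\pi_i)^{-\lambda} g (u-\pi_i)^\lambda$ extends from $\operatorname{Spec}A[u,(u-\pi_i)^{-1}]$ across the vanishing locus of $u-\pi_i$. By Lemma~\ref{lem-BL} this can be checked after $(u-\pi_i)$-adic completion, and under the substitution $t = u-\pi_i$ (using that $g$ is independent of $u$) it becomes the condition $\lambda(t)^{-1} g \lambda(t) \in G(A[[t]])$, i.e.\ that $\lim_{t\to 0} \lambda(t)^{-1} g \lambda(t)$ exists. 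Thus the scheme-theoretic stabiliser of $\mathcal{E}_{\lambda,i}$ in $G$ is precisely $P_\lambda$. The alternative description as the subgroup generated by $T$ and the root subgroups $U_{\alpha^\vee}$ with $\langle \alpha^\vee, \lambda\rangle \leq 0$ then follows from the standard observation that $\lambda(t)^{-1}$ conjugates $U_{\alpha^\vee}$ by scaling by $t^{-\langle \alpha^\vee,\lambda\rangle}$, so $U_{\alpha^\vee} \subset P_\lambda$ iff $\langle \alpha^\vee,\lambda\rangle \leq 0$, together with the Bruhat--Chevalley structure theory of parabolic subgroups.

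\emph{Step 2 (descent to a monomorphism).} Applying Step~1 to $g_2^{-1} g_1$ shows that $g_1 \mathcal{E}_{\lambda,i} = g_2 \mathcal{E}_{\lambda,i}$ in $\operatorname{Gr}_{G,i}(A)$ iff $g_1$ and $g_2$ define the same $A$-point of $G/P_\lambda$. Hence the orbit morphism descends to a morphism $\varphi \colon G/P_\lambda \to \operatorname{Gr}_{G,i}$ which is a monomorphism of fppf sheaves, and therefore a monomorphism of schemes.

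\emph{Step 3 (properness and conclusion).} Since $P_\lambda$ is a parabolic subgroup of the reductive group $G$, the quotient $G/P_\lambda$ is projective over $\operatorname{Spec}\mathcal{O}$, so $\varphi$ is proper (the target is separated because $\operatorname{Gr}_{G,i}$ is a projective ind-scheme). A proper monomorphism of schemes is a closed immersion (EGA IV, 18.12.6), so $\varphi$ is a closed immersion.

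The only delicate point is Step~1: one must be careful with the signs in the conjugation $(u-\pi_i)^{-\lambda}g(u-\pi_i)^{\lambda}$ and invoke Lemma~\ref{lem-BL} to legitimately pass to $A[[u-\pi_i]]$, after which the comparison with the limit condition defining $P_\lambda$ is immediate. Steps~2 and~3 are formal once the stabiliser has been pinned down.
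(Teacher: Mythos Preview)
Your proof is correct and follows essentially the same route as the paper: compute the stabiliser of $\mathcal{E}_{\lambda,i}$ in $G$ to be $P_\lambda$, deduce a monomorphism from $G/P_\lambda$, and conclude via proper monomorphisms being closed immersions. The paper's version is slightly terser---it checks $\lambda(u-\pi_i)^{-1} g \lambda(u-\pi_i) \in G(A[u])$ directly without passing to the completion (so your appeal to Lemma~\ref{lem-BL} is harmless but unnecessary, since $u \mapsto u-\pi_i$ already identifies $A[u]$ with $A[t]$)---and cites the Stacks Project tag 04XV rather than EGA.
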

	\begin{proof}
		The $A$-points of the stabiliser in $G$ of $\mathcal{E}_\lambda$ consists of those $g \in G(A)$ for which  $\lambda(u-\pi_i)^{-1} g \lambda(u-\pi_i) \in G(A[u])$. Therefore, this stabiliser is precisely $P_\lambda$ and we obtain a monomorphism $G/P_\lambda \rightarrow \operatorname{Gr}_{G,i}$. Since $P_\lambda$ is a parabolic subgroup of $G$ the quotient $G/P_\lambda$ is proper over $\mathcal{O}$. Thus $G/P_\lambda \rightarrow \operatorname{Gr}_{G,i}$ is also proper. Proper monomorphisms are closed immersions \cite[04XV]{stacks-project} so the lemma follows.
	\end{proof}
	\begin{definition}\label{def-mmu}
		For $\mu = (\mu_1,\ldots,\mu_e)$ with $\mu_i \in X_*(T)$ set $M_\mu \subset \operatorname{Gr}_G$ equal to the scheme theoretic image of the composite
		$$
		\left( G/P_{\mu_1} \times_{\mathcal{O}} \ldots \times_{\mathcal{O}} G/P_{\mu_e} \right) \otimes_{\mathcal{O}} E \rightarrow \left( \operatorname{Gr}_{G,1} \times_{\mathcal{O}} \ldots \times_{\mathcal{O}} \operatorname{Gr}_{G,e} \right) \otimes_{\mathcal{O}} E \cong \operatorname{Gr}_G \otimes_{\mathcal{O}} E \rightarrow \operatorname{Gr}_{G}
		$$
		(the isomorphism coming from Lemma~\ref{lem-generic}).
	\end{definition}
	\begin{sub}\label{sub-moduliinterpret}
		The $A$-valued points of $G/P_\lambda$ classify filtrations of type $\lambda$ on the trivial $G$-torsor over $\operatorname{Spec}A$, i.e exact tensor functors from the category of representations of $G$ on finite free $\mathcal{O}$-modules $V$ into the category of filtrations on $V$ by $A$-modules with projective graded pieces. From this point of view:
		\begin{itemize}
			\item The closed immersion $G/P_\lambda \rightarrow \operatorname{Gr}_{G,i}$ from Lemma~\ref{lem-closedflag} sends a filtration $\operatorname{Fil}^\bullet$ onto the $G$-torsor $\operatorname{Fil}^0( G \otimes_A \widehat{A[u]}_{(u-\pi)})$ over $\widehat{A[u]}_{(u-\pi)}$ which, when interpreted as an exact tensor functor on the category of representations of $G$, is given by
			$$
			V \mapsto \sum_{n \in \mathbb{Z}} \operatorname{Fil}^n(V) \otimes_A (u-\pi_i)^{-n} \widehat{A[u]}_{(u-\pi_i)}
			$$
			This $G$-torsor is interpreted as a point of $\operatorname{Gr}_{G,i}$ via the natural trivialisation coming from the identification of $\operatorname{Fil}^0( G \otimes_A \widehat{A[u]}_{(u-\pi_i)})$ after inverting $(u-\pi_i)$ with the base change of the trivial $G$-torsor underlying $\operatorname{Fil}^\bullet$ to $\widehat{A[u]}_{(u-\pi_i)}[\frac{1}{(u-\pi_i)}]$. To verify this assertion it suffices, by functoriality of $G/P_\lambda \rightarrow \operatorname{Gr}_{G,i}$ in $G$, to consider the case $G = \operatorname{GL}_n$.
			\item Similarly, the closed immersion	$\left( G/P_{\mu_1} \times_{\mathcal{O}} \ldots \times_{\mathcal{O}} G/P_{\mu_e} \right) \rightarrow \operatorname{Gr}_G$ from Definition~\ref{def-mmu} sends an $A$-valued point corresponding to an $e$-tuple of filtrations $(\operatorname{Fil}^\bullet_i)_i$ onto the $G$-torsor $\operatorname{Fil}^0( G \otimes_A \widehat{A[u]}_{E(u)})$ which, from the Tannakian viewpoint, we view as the functor on representations of $G$ given by
			$$
			V \mapsto \left( \sum_{n \in \mathbb{Z}} \operatorname{Fil}^n(V) \otimes_A (u-\pi_i)^{-n} \widehat{A[u]}_{(u-\pi_i)} \right)_{i=1,\ldots,e }
			$$
			The target is viewed as a projective $\widehat{A[u]}_{E(u)}$-module via the isomorphism from \ref{sub-productdecomp}.
		\end{itemize}
	\end{sub}

	\section{Various Schubert varieties}\label{sec-schubert}
	
	Here we introduce some variants on the usual notation of Schubert varieties inside the affine grassmannian.
	\begin{notation}\label{sub-orbit}
		For $\lambda \in X_*(T)$ write $\mathcal{E}_{\lambda,\mathbb{F}} \in \operatorname{Gr}_G(\mathbb{F})$ for the point corresponding to $(\mathcal{E}^0,u^\lambda)$. Thus 
		$$
		\mathcal{E}_{\lambda,\mathbb{F}} = \mathcal{E}_{\lambda,i} \otimes_{\mathcal{O}} \mathbb{F}
		$$
		with $\mathcal{E}_{\lambda,i}$ as defined in Notation~\ref{notation-Elambda} and for any $i=1,\ldots,e$. We also write $\operatorname{Gr}_{G,\lambda,\mathbb{F}} = \operatorname{Gr}_{G,\lambda,i} \otimes_{\mathcal{O}} \mathbb{F}$ for any $i=1,\ldots,e$. Equivalently, $\operatorname{Gr}_{G,\lambda,\mathbb{F}}$ is the $L^+G$-orbit of $\mathcal{E}_{\lambda,\mathbb{F}}$.
	\end{notation}
	
	For $\lambda \in X_*(T)$ the Schubert variety $\operatorname{Gr}_{G,\leq \lambda,\mathbb{F}} \subset \operatorname{Gr}_G \otimes_{\mathcal{O}} \mathbb{F}$ is usually defined as the closure of $\operatorname{Gr}_{G,\lambda,i} \otimes_{\mathcal{O}} \mathbb{F}$ (for any $i=1,\ldots,e)$. Then $\operatorname{Gr}_{G,\leq \lambda,\mathbb{F}}$ is reduced, irreducible, and can be expressed as
	$$
	\operatorname{Gr}_{G,\leq \lambda,\mathbb{F}} = \bigcup_{\lambda' \leq \lambda} \operatorname{Gr}_{G,\lambda',\mathbb{F}}
	$$
	We would like to use $\operatorname{Gr}_{G,\leq \mu_1+\ldots+\mu_e,\mathbb{F}}$ as an ambient space in which to study $M_\mu \otimes_{\mathcal{O}} \mathbb{F}$. However, the containment of $M_\mu \otimes_{\mathcal{O}} \mathbb{F}$ in $\operatorname{Gr}_{G,\leq \mu_1+\ldots+\mu_e,\mathbb{F}}$ is not immediate due to both varieties construction as a closure. This issue can be addressed using the identifications from \cite{PZ13,Levin} of $\operatorname{Gr}_{G,\leq \lambda,\mathbb{F}}$ with the special fibre of mixed characteristic Schubert varieties. However, we instead use give a more simple minded approach and replace $\operatorname{Gr}_{G,\leq \lambda,\mathbb{F}}$ with a  moduli construction which is close to (and conjectured to equal) $\operatorname{Gr}_{G,\leq \lambda,\mathbb{F}}$.
	
	\begin{definition}\label{def-schubert}
		Let $V$ be a free $\mathcal{O}$-module. Any $A$-valued point of $\operatorname{Gr}_{\operatorname{GL}(V)}$ corresponds to a projective $A[u]$-submodule of $V \otimes_{\mathcal{O}} A[u,E(u)^{-1}]$ and so, for any $e$-tuple of integers $(n_i)$, we may consider the closed subfunctor $Y_{\operatorname{GL}(V)}^{\geq (n_i)} \subset \operatorname{Gr}_{\operatorname{GL}(V)}$  consisting of those $\mathcal{E}$ for which 
		\begin{equation}\label{eq-schubert}
			\mathcal{E} \subset \prod_{i=1}^e (u-\pi_i)^{n_i} V \otimes_{\mathcal{O}} A[u]	
		\end{equation}
		If $\mu = (\mu_1,\ldots,\mu_e)$ with $\mu_i \in X_*(T)$ dominant then define
		$$
		Y_{G,\leq \mu} = \bigcap_\chi \left( \operatorname{Gr}_G \times_{\chi,\operatorname{Gr}_{\operatorname{GL}(V)}} Y_{\operatorname{GL}(V)}^{\geq (\langle w_0\chi^\vee,\mu_i\rangle)} \right)
		$$
		where $w_0 \in W$ is the longest element and the intersection runs over irreducible algebraic representations $\chi: G \rightarrow \operatorname{GL}(V)$ of highest weight $\chi^\vee$. Notice that each $Y_{\operatorname{GL}(V)}^{\geq (n_i)}$ is stable under the action of $L^+\operatorname{GL}(V)$ on $\operatorname{Gr}_{\operatorname{GL}(V)}$ and so $Y_{G,\leq \mu}$ is stable under the $L^+G$-action on $\operatorname{Gr}_G$.
	\end{definition}
	
	\begin{example}
		Suppose that $G = \operatorname{GL}_n$ so that $A$-valued points of $\operatorname{Gr}_G $ identify with projective $A[u]$-submodules $\mathcal{E} \subset A[u,E(u)^{-1}]^n$. If 
		$$
		\mu = (\mu_1,\ldots,\mu_e), \qquad  \mu_i = (\mu_{i,1} \geq \ldots \geq \mu_{i,n})
		$$
		and $\mathcal{E} \in Y_{G,\leq \mu} \otimes_{\mathcal{O}} \mathbb{F}$ then
		\begin{equation}\label{eq-wedge}
			\bigwedge^j(\mathcal{E}) \subset \prod_{i=1}^e (u-\pi_i)^{ \mu_{i,n} + \ldots + \mu_{i,n-j+1}} A[u]^{\binom{n}{j}}
		\end{equation}
		for all $j = 1,\ldots,n$. Indeed, if $\chi:G \rightarrow \operatorname{GL}(V)$ equals the $j$-th exterior power of the standard representation, then the induced morphism $\operatorname{Gr}_G \rightarrow \operatorname{Gr}_{\operatorname{GL}(V)}$ sends $\mathcal{E}$ onto $\bigwedge^j(\mathcal{E})$ and so
		$$
		\operatorname{Gr}_G \times_{\chi,\operatorname{Gr}_{\operatorname{GL}(V)}} Y_{\operatorname{GL}(V)}^{\geq (\langle w_0\chi^\vee,\mu_i\rangle)}
		$$
		is the closed subscheme consisting of $\mathcal{E}$ as in \eqref{eq-wedge}. This is because
		$$
		\chi^\vee = (\underbrace{1,\ldots,1}_{j\text{ ones}},0,\ldots,0)
		$$
		and so $\langle w_0 \chi^\vee, \mu_i \rangle = \mu_{i,n}+\ldots+\mu_{i,n-j+1}$. In fact, since every highest weight representation of $\operatorname{GL}_n$ is a quotient of tensor products of these exterior powers representations, conditions \eqref{eq-wedge} suffice to determine $Y_{G,\leq \mu}$.
		
	\end{example}
	The following lemma describes the basic properties of $Y_{G,\leq \mu}$ that we need.
	
	\begin{lemma}\label{lem-Yschuprop}
		Let $\mu_1,\ldots,\mu_e \in X_*(T)$ be dominant. Then $Y_{G,\leq \mu} \otimes_{\mathcal{O}} \mathbb{F}$ only depends upon $\mu_1+\ldots+\mu_e$, contains $\operatorname{Gr}_{G,\mu_1+\ldots+\mu_e,\mathbb{F}}$ as an open subset, and 
		$$
		(Y_{G,\leq \mu} \otimes_{\mathcal{O}} \mathbb{F})_{\operatorname{red}} = \bigcup_{\lambda \leq \mu_1+\ldots+\mu_e} \operatorname{Gr}_{G,\lambda,\mathbb{F}}
		$$
	\end{lemma}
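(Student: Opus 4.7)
The plan is to prove the three assertions in sequence, with the crux being a Tannakian calculation at $T$-fixed points. Throughout I write $\nu:=\mu_1+\cdots+\mu_e$.

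For the first assertion observe that each $\pi_i$ maps to $0$ in $\mathbb{F}$, so over $\mathbb{F}$ the product $\prod_i(u-\pi_i)^{n_i}$ in the defining condition of $Y^{\geq(n_i)}_{\operatorname{GL}(V)}$ collapses to $u^{\sum_i n_i}$. With $n_i=\langle w_0\chi^\vee,\mu_i\rangle$ the exponent becomes $\langle w_0\chi^\vee,\nu\rangle$, so $Y_{G,\leq\mu}\otimes_{\mathcal{O}}\mathbb{F}$ only depends on $\nu$.

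Next I would identify the $T$-fixed points of $Y_{G,\leq\mu}\otimes_{\mathcal{O}}\mathbb{F}$. Unwinding $\mathcal{E}_{\lambda,\mathbb{F}}=(\mathcal{E}^0,u^\lambda)$ shows that its image under $\chi:G\to\operatorname{GL}(V)$ of highest weight $\chi^\vee$ is the lattice $\chi(u^\lambda)\cdot V\otimes\mathbb{F}[u]=\bigoplus_w u^{\langle\chi^\vee_w,\lambda\rangle}V_w\otimes\mathbb{F}[u]$ with $w$ running over the weights of $V$, so the inclusion $\mathcal{E}\subset u^{\langle w_0\chi^\vee,\nu\rangle}V\otimes\mathbb{F}[u]$ becomes $\langle\chi^\vee_w,\lambda\rangle\geq\langle w_0\chi^\vee,\nu\rangle$ for every weight $w$. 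Using $W$-equivariance (since $W\subset L^+G$) I reduce to $\lambda$ dominant, in which case the minimum over $w$ is attained at the lowest weight $w_0\chi^\vee$ and the condition simplifies to $\langle-w_0\chi^\vee,\nu-\lambda\rangle\geq 0$. As $\chi^\vee$ varies over dominant weights, $\psi^\vee:=-w_0\chi^\vee$ also varies over all dominant weights, and the aggregate condition $\langle\psi^\vee,\nu-\lambda\rangle\geq 0$ for all dominant $\psi^\vee$ is exactly the standard characterisation of $\lambda\leq\nu$.

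Finally I would invoke the Cartan stratification of $\operatorname{Gr}_G\otimes_{\mathcal{O}}\mathbb{F}$ by $L^+G$-orbits $\operatorname{Gr}_{G,\lambda,\mathbb{F}}$ (dominant $\lambda$) with closures $\bigcup_{\lambda'\leq\lambda}\operatorname{Gr}_{G,\lambda',\mathbb{F}}$: since $Y_{G,\leq\mu}\otimes_{\mathcal{O}}\mathbb{F}$ is closed and $L^+G$-stable, its reduced subscheme is a union of such orbit closures, and by the $T$-fixed point analysis the participating orbits are exactly those with $\lambda\leq\nu$, yielding the stated formula. The orbit $\operatorname{Gr}_{G,\nu,\mathbb{F}}$ is then open in the reduction as its top-dimensional stratum (its complement is a finite union of closed lower-dimensional Schubert varieties), and hence open in $Y_{G,\leq\mu}\otimes_{\mathcal{O}}\mathbb{F}$ itself since open immersions commute with reduction. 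The main obstacle throughout is the Tannakian fixed-point calculation: correctly matching the submodule description of $\mathcal{E}_{\lambda,\mathbb{F}}$ against the inclusion defining $Y_{G,\leq\mu}$ and then invoking the duality between the dominance order on $X_*(T)$ and non-negative pairing with dominant weights; once this is pinned down, everything else follows formally from the $L^+G$-stratification of the special fibre.
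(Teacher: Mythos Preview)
Your arguments for the first and third assertions are correct and agree with the paper's (the paper is terser, simply declaring that $\mathcal{E}_{\lambda,\mathbb{F}}\in Y_{G,\leq\mu}\otimes_{\mathcal{O}}\mathbb{F}$ iff $\lambda\leq\nu$ is ``clear from the definition'', whereas you spell out the Tannakian fixed-point computation and the dominance-pairing characterisation explicitly).

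There is, however, a genuine gap in your openness argument. You correctly show that the underlying \emph{set} of $\operatorname{Gr}_{G,\nu,\mathbb{F}}$ is open in $Y_{G,\leq\mu}\otimes_{\mathcal{O}}\mathbb{F}$, since it is the complement of the lower-dimensional Schubert closures in the stratification. But the lemma asserts that $\operatorname{Gr}_{G,\nu,\mathbb{F}}$ is an open \emph{subscheme}: that is, the open subscheme of $Y_{G,\leq\mu}\otimes_{\mathcal{O}}\mathbb{F}$ cut out by this open set must itself be reduced, so that it coincides scheme-theoretically with the (reduced) orbit $\operatorname{Gr}_{G,\nu,\mathbb{F}}$. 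Your sentence ``open immersions commute with reduction'' does not supply this: knowing that $\operatorname{Gr}_{G,\nu,\mathbb{F}}\hookrightarrow (Y_{G,\leq\mu}\otimes_{\mathcal{O}}\mathbb{F})_{\mathrm{red}}$ is an open immersion says nothing about nilpotents on $Y_{G,\leq\mu}\otimes_{\mathcal{O}}\mathbb{F}$ along that open locus. Since $Y_{G,\leq\mu}$ is defined by infinitely many determinantal-type conditions and is \emph{not} known to be reduced in general (the paper remarks that this is an open question), this is exactly the nontrivial point.

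The paper closes this gap by a tangent-space computation at $\mathcal{E}_{\nu,\mathbb{F}}$, citing \cite[\S3]{KMW18}: one checks that the Zariski tangent space of $Y_{G,\leq\mu}\otimes_{\mathcal{O}}\mathbb{F}$ at $\mathcal{E}_{\nu,\mathbb{F}}$ agrees with that of the smooth orbit $\operatorname{Gr}_{G,\nu,\mathbb{F}}$, forcing reducedness at that point; $L^+G$-equivariance then propagates reducedness along the whole orbit. This is what you are missing, and it is precisely what the lemma is later used for (in the proof that $m_{\mu_1+\cdots+\mu_e-e\rho}=1$, one needs the open $U=M_\mu\times_{\operatorname{Gr}_G}\operatorname{Gr}_{G,\nu,\mathbb{F}}$ to embed scheme-theoretically into the smooth irreducible $\operatorname{Gr}_{G,\nu,\mathbb{F}}\times_{\operatorname{Gr}_G}\operatorname{Gr}_G^\nabla$).
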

	\begin{proof}
		That $Y_{G,\leq \mu} \otimes_{\mathcal{O}} \mathbb{F}$ only depends upon $\mu_1+\ldots+\mu_e$ is clear from the definition. It is also clear from the definition that $Y_{G,\leq \mu}$ contains $\mathcal{E}_{\lambda,\mathbb{F}} \in \operatorname{Gr}_{G}(\mathbb{F})$ if and only if $\lambda \leq \mu_1+\ldots+\mu_e$. Since $Y_{G,\leq \mu}$ is $L^+G$-stable it follows that 
		$$
		(Y_{G,\leq \mu} \otimes_{\mathcal{O}} \mathbb{F})_{\operatorname{red}} = \bigcup_{\lambda \leq \mu_1+\ldots+\mu_e} \operatorname{Gr}_{G,\lambda,\mathbb{F}}
		$$ 
		It remains to show $\operatorname{Gr}_{G, \mu_1+\ldots,\mu_e,\mathbb{F}}$ is open in $Y_{G,\leq \mu} \otimes_{\mathcal{O}} \mathbb{F}$. This will follow if $Y_{G,\leq \mu} \otimes_{\mathcal{O}}\mathbb{F}$ is reduced at $\mathcal{E}_{\mu_1+\ldots+\mu_e,\mathbb{F}}$, which can be achieved by a simple tangent space computation identifying the tangent space of $Y_{G,\leq \mu}\otimes_{\mathcal{O}} \mathbb{F}$ at $\mathcal{E}_{\mu_1+\ldots+\mu_e,\mathbb{F}}$ with the tangent space of $\operatorname{Gr}_{G,\mu_1+\ldots+\mu_e,\mathbb{F}}$. See \cite[\S3]{KMW18}.
	\end{proof}
	\begin{remark}
		The construction of $Y_{G,\leq \mu} \otimes_{\mathcal{O}} \mathbb{F}$ was proposed in \cite{FM99} (see also \cite{H12}) as a moduli interpretation of $\operatorname{Gr}_{G,\leq \mu_1+\ldots+\mu_e,\mathbb{F}}$. However, it is an open question whether $Y_{G,\leq \mu} \otimes_{\mathcal{O}} \mathbb{F} = \operatorname{Gr}_{G,\leq \mu_1+\ldots+\mu_e,\mathbb{F}}$ (equivalently, whether $Y_{G,\leq \mu} \otimes_{\mathcal{O}} \mathbb{F}$ is reduced). The equality is known when $G =\operatorname{SL}_n$ and $\mathbb{F}$ has characteristic zero, see \cite{KMW18}.
	\end{remark}
	
	\begin{lemma}\label{lem-schubertgeneric}
		Under the isomorphism $\operatorname{Gr}_G \otimes_{\mathcal{O}} E \cong \left( \operatorname{Gr}_{G,1} \times_{\mathcal{O}} \ldots \times_{\mathcal{O}} \operatorname{Gr}_{G,e} \right) \otimes_{\mathcal{O}} E$ from Lemma~\ref{lem-generic} we have
		$$
		Y_{G,\leq \mu} \otimes_{\mathcal{O}} E \cong \left( Y_{G,1,\leq \mu_1} \times_{\mathcal{O}} \ldots \times_{\mathcal{O}} Y_{G,e,\leq \mu_e} \right) \otimes_{\mathcal{O}} E
		$$
		where $Y_{G,i,\leq \mu_i} \subset \operatorname{Gr}_{G,i}$ is defined as a special case of Definition~\ref{def-schubert}.
	\end{lemma}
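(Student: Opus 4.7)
The plan is to first prove the statement for $G = \operatorname{GL}(V)$ and then deduce the general case by exploiting the fact that $Y_{G,\leq \mu}$ was defined as an intersection of pullbacks of the $\operatorname{GL}(V)$ subfunctors along representations $\chi$, together with the functoriality in $G$ of the isomorphism $\operatorname{Gr}_G \otimes_{\mathcal{O}} E \cong \prod_i \operatorname{Gr}_{G,i} \otimes_{\mathcal{O}} E$ of Corollary~\ref{lem-generic}. Concretely, for each $i$ I let $Y^{\geq n}_{\operatorname{GL}(V),i} \subset \operatorname{Gr}_{\operatorname{GL}(V),i}$ denote the closed subfunctor carved out by $\mathcal{E}_i \subset (u-\pi_i)^n V \otimes_{\mathcal{O}} A[u]$.

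For the $\operatorname{GL}(V)$ case I would pass to the completed picture using Lemma~\ref{lem-BL}, so that an $A$-point of $\operatorname{Gr}_{\operatorname{GL}(V)}$ is a projective $\widehat{A[u]}_{E(u)}$-submodule $\mathcal{E}$ of $V \otimes_{\mathcal{O}} \widehat{A[u]}_{E(u)}[E(u)^{-1}]$, and the integrality condition defining $Y^{\geq(n_i)}_{\operatorname{GL}(V)}$ is equivalent after completion to $\mathcal{E} \subset \prod_i (u-\pi_i)^{n_i} V \otimes_{\mathcal{O}} \widehat{A[u]}_{E(u)}$. When $A$ is an $E$-algebra the product decomposition $\widehat{A[u]}_{E(u)} \cong \prod_i \widehat{A[u]}_{(u-\pi_i)}$ from \ref{sub-productdecomp} makes $\mathcal{E}$ split as $\prod_i \mathcal{E}_i$, where each $\mathcal{E}_i$ is a projective $\widehat{A[u]}_{(u-\pi_i)}$-submodule of $V \otimes_{\mathcal{O}} \widehat{A[u]}_{(u-\pi_i)}[(u-\pi_i)^{-1}]$; and the lattice $\prod_i (u-\pi_i)^{n_i} V \otimes_{\mathcal{O}} \widehat{A[u]}_{E(u)}$ factors under the same decomposition as the product of $(u-\pi_i)^{n_i} V \otimes_{\mathcal{O}} \widehat{A[u]}_{(u-\pi_i)}$. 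Hence the overall containment breaks up into the component-wise conditions $\mathcal{E}_i \subset (u-\pi_i)^{n_i} V \otimes_{\mathcal{O}} \widehat{A[u]}_{(u-\pi_i)}$, which by Lemma~\ref{lem-BL} applied to each factor is exactly the condition defining $Y^{\geq n_i}_{\operatorname{GL}(V),i}$.

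For general $G$ I would fix a faithful representation $G \hookrightarrow \operatorname{GL}(V_0)$ (so that $\operatorname{Gr}_G$ sits inside $\operatorname{Gr}_{\operatorname{GL}(V_0)}$ as in Remark~\ref{rem-representability}) and note that the isomorphism of Corollary~\ref{lem-generic} is functorial in $G$. Thus for every irreducible representation $\chi: G \to \operatorname{GL}(V_\chi)$ of highest weight $\chi^\vee$, the pullback $\operatorname{Gr}_G \times_{\chi,\operatorname{Gr}_{\operatorname{GL}(V_\chi)}} Y^{\geq(\langle w_0 \chi^\vee,\mu_i\rangle)}_{\operatorname{GL}(V_\chi)}$ corresponds, after base change to $E$ and application of the previous paragraph, to the product over $i$ of the pullbacks $\operatorname{Gr}_{G,i} \times_{\chi,\operatorname{Gr}_{\operatorname{GL}(V_\chi),i}} Y^{\geq \langle w_0 \chi^\vee,\mu_i\rangle}_{\operatorname{GL}(V_\chi),i}$. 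Intersecting over all $\chi$ commutes with the product decomposition (each $\chi$ contributes an independent condition on each of the $e$ factors), and the intersection in the $i$-th factor is, by the definition of $Y_{G,i,\leq \mu_i}$ as the corresponding case of Definition~\ref{def-schubert}, precisely $Y_{G,i,\leq \mu_i}$. This yields the claimed isomorphism.

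I do not anticipate a genuine obstacle: the content is essentially a bookkeeping exercise combining Beauville--Laszlo, the product decomposition of \ref{sub-productdecomp}, and the functoriality of the grassmannian construction. The one point requiring a moment of care is the compatibility between the integrality condition $\mathcal{E} \subset \prod_i (u-\pi_i)^{n_i} V \otimes_{\mathcal{O}} A[u]$ formulated over $A[u]$ and the same condition formulated over $\widehat{A[u]}_{E(u)}$, but this is exactly the content of the Beauville--Laszlo descent already built into Lemma~\ref{lem-BL}.
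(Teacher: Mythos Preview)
Your proposal is correct and follows essentially the same approach as the paper: reduce to showing that $Y_{\operatorname{GL}(V)}^{\geq (n_i)} \otimes_{\mathcal{O}} E$ factors as the product of the $Y_{\operatorname{GL}(V),i}^{\geq n_i}$, and verify this using the product decomposition $\widehat{A[u]}_{E(u)} \cong \prod_i \widehat{A[u]}_{(u-\pi_i)}$ which carries the ideal generated by $\prod_i (u-\pi_i)^{n_i}$ to the product of the ideals $(u-\pi_i)^{n_i}$. The paper's proof is terser (it simply asserts the reduction and then the ideal factorisation), but your added remarks on Beauville--Laszlo compatibility and functoriality in $G$ are exactly the points one would unpack if pressed.
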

	\begin{proof}
		The lemma reduces to showing that, for any tuple $(n_i)$ of integers and any finite free $\mathcal{O}$-module $V$, 
		$$
		Y_{\operatorname{GL}(V)}^{\geq (n_i)} \otimes_{\mathcal{O}} E = \left( Y_{\operatorname{GL}(V),1}^{\geq n_1} \times_{\mathcal{O}} \ldots \times_{\mathcal{O}} Y_{\operatorname{GL}(V),e}^{\geq n_e} \right) \otimes_{\mathcal{O}} E
		$$ 
		under the identification from Lemma~\ref{lem-generic}. This is clear since if $A$ is an $E$-algebra then the isomorphism $\widehat{A[u]}_{E(u)} \cong \prod_{i=1}^e \widehat{A[u]}_{u-\pi_i}$ identifies the ideal generated by $\prod (u-\pi_i)^{n_i}$ with the product of the ideals generated by $(u-\pi_i)^{n_i}$.
	\end{proof}
	The reason we introduce Definition~\ref{def-schubert} is because it easily allows us to prove:
	
	\begin{proposition}\label{prop-schubertcontain}
		For $\mu = (\mu_1,\ldots,\mu_e)$ with $\mu_i \in X_*(T)$ dominant we have $M_\mu \subset Y_{G,\leq \mu}$.
	\end{proposition}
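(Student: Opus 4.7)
The plan is to use the closedness of $Y_{G,\leq \mu}$ in $\operatorname{Gr}_G$ together with $L^+G$-equivariance to reduce the containment $M_\mu \subset Y_{G,\leq \mu}$ to a single explicit point-check. Since $M_\mu$ is defined as a scheme-theoretic image and $Y_{G,\leq \mu}$ is a closed subscheme of $\operatorname{Gr}_G$, the universal property of scheme-theoretic images tells us it suffices to show that the defining morphism
$$
j: \bigl( G/P_{\mu_1} \times_{\mathcal{O}} \cdots \times_{\mathcal{O}} G/P_{\mu_e} \bigr) \otimes_{\mathcal{O}} E \longrightarrow \operatorname{Gr}_G \otimes_{\mathcal{O}} E
$$
factors through $Y_{G,\leq \mu} \otimes_{\mathcal{O}} E$.

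Under the product decomposition $\operatorname{Gr}_G \otimes_{\mathcal{O}} E \cong (\operatorname{Gr}_{G,1} \times \cdots \times \operatorname{Gr}_{G,e}) \otimes_{\mathcal{O}} E$ of Corollary~\ref{lem-generic}, the map $j$ is the $e$-fold product of the closed immersions $G/P_{\mu_i} \hookrightarrow \operatorname{Gr}_{G,i}$ from Lemma~\ref{lem-closedflag}. By Lemma~\ref{lem-schubertgeneric}, the target $Y_{G,\leq \mu} \otimes_{\mathcal{O}} E$ is the corresponding product of the $Y_{G,i,\leq \mu_i} \otimes_{\mathcal{O}} E$. The question therefore reduces, index by index, to showing that each $G/P_{\mu_i} \hookrightarrow \operatorname{Gr}_{G,i}$ factors through $Y_{G,i,\leq \mu_i}$ (and it is enough to do this already over $\mathcal{O}$).

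I would then exploit equivariance. The morphism $G \to G/P_{\mu_i}$ is fppf, and after pulling back it becomes the orbit map $g \mapsto g \cdot \mathcal{E}_{\mu_i,i}$. Since $Y_{G,i,\leq \mu_i}$ is $L^+G$-stable (as recorded in Definition~\ref{def-schubert}) and $G$ acts on $\operatorname{Gr}_{G,i}$ through the constant loops $G \hookrightarrow L^+G$, this further reduces to showing the single $\mathcal{O}$-point $\mathcal{E}_{\mu_i,i}$ lies in $Y_{G,i,\leq \mu_i}$.

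For the final check, fix an irreducible representation $\chi: G \to \operatorname{GL}(V)$ of highest weight $\chi^\vee$ and decompose $V = \bigoplus_w V_w$ into $T$-weight spaces. From $\mathcal{E}_{\mu_i,i} = (\mathcal{E}^0, (u-\pi_i)^{\mu_i})$, the induced $\widehat{\mathcal{O}[u]}_{(u-\pi_i)}$-lattice inside $V \otimes_{\mathcal{O}} \widehat{\mathcal{O}[u]}_{(u-\pi_i)}[(u-\pi_i)^{-1}]$ is the explicit submodule
$$
\bigoplus_w (u-\pi_i)^{\langle w,\mu_i\rangle} V_w \otimes_{\mathcal{O}} \widehat{\mathcal{O}[u]}_{(u-\pi_i)}.
$$
Since $\mu_i$ is dominant and $w_0 \chi^\vee$ is the lowest weight of $V$, one has $w \geq w_0\chi^\vee$ in the dominance order, and hence $\langle w,\mu_i\rangle \geq \langle w_0\chi^\vee,\mu_i\rangle$, for every weight $w$ of $V$. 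Each summand is therefore contained in $(u-\pi_i)^{\langle w_0\chi^\vee,\mu_i\rangle} V \otimes \widehat{\mathcal{O}[u]}_{(u-\pi_i)}$, which is exactly the inclusion required by Definition~\ref{def-schubert}. The only real friction in the argument is the clean reduction from the scheme-theoretic image to one $\mathcal{O}$-point, which is entirely handled by the $L^+G$-stability; the weight inequality is a standard property of dominant cocharacters.
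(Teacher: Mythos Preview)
Your proof is correct and follows essentially the same approach as the paper: reduce to the generic fibre using closedness of $Y_{G,\leq\mu}$, split into factors via Lemma~\ref{lem-schubertgeneric}, and then use $G$-stability to reduce to the single point $\mathcal{E}_{\mu_i,i}$. The only difference is that you spell out the weight-space check explicitly, whereas the paper simply records that $\mathcal{E}_{\mu_i,i}\in Y_{G,i,\leq\mu_i}$ is ``clear.''
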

	\begin{proof}
		It suffices to show $M_\mu \otimes_{\mathcal{O}} E \subset Y_{G,\leq \mu}$ because $Y_{G,\leq \mu}$ is closed in $\operatorname{Gr}_G$. By Lemma~\ref{lem-schubertgeneric} we are reduced to showing that $G/P_{\mu_i} \rightarrow \operatorname{Gr}_{G,i}$ factors through $Y_{G,i,\leq \mu_i}$. Since $Y_{G,i,\leq \mu_i}$ is $G$-stable it is enough to show $\mathcal{E}_{\mu_i,i} \in Y_{G,i,\leq \mu_i}$, and this is clear.
	\end{proof}

	\section{Approximations via $\operatorname{Gr}_G^\nabla$}\label{sec-dlog}
	When $G = \operatorname{GL}_n$ the spaces $M_\mu \otimes_{\mathcal{O}} \mathbb{F}$ were accessed in \cite[\S7]{B21} by constructing a subfunctor $\operatorname{Gr}^\nabla_G \subset \operatorname{Gr}_G$ with $M_\mu  \subset \operatorname{Gr}_G^\nabla$. The subfunctor $\operatorname{Gr}_G^\nabla$ consists of $\mathcal{E} \in \operatorname{Gr}_G(A)$ which, when viewed as projective $A[u]$-submodules of $A[u,E(u)^{-1}]^n$, satisfy
	$$
	E(u)\nabla(\mathcal{E}) \subset \mathcal{E}
	$$
	for $\nabla$ the operator on $A[u,E(u)^{-1}]^n$ given coordinate-wise via $\frac{d}{du}$. If $\mathcal{E}$ is generated by $(e_1,\ldots,e_n)X$ for a matrix $X \in \operatorname{GL}_n(A[u,E(u)^{-1}])$ then this is equivalent to asking that
	$$
	E(u)X^{-1}\frac{d}{du}(X) \in \operatorname{Mat}(A[u])
	$$
	In this section we show how to extend this construction to general $G$.
	\begin{sub}
		The following construction works when $G = \operatorname{Spec}\mathcal{O}_G$ is any affine algebraic group over $\mathcal{O}$. Set $\mathfrak{g} = \operatorname{Lie}(G)$. In what follows we will interpret elements of $\mathfrak{g}$ as derivations $\mathcal{O}_G \rightarrow \mathcal{O}$ over $\mathcal{O}$ where $\mathcal{O}_G$ acts on $\mathcal{O}$ via the counit map $e:\mathcal{O}_G \rightarrow \mathcal{O}$. The logarithmic derivative can then be described as a map
		$$
		\operatorname{dlog}: G(B) \rightarrow \mathfrak{g} \otimes_{\mathbb{Z}} \Omega_{B/\mathcal{O}}
		$$
		for any ring $B$. To define this map identify $\Omega_{G/\mathcal{O}} =  \mathcal{L}(G)^\vee \otimes_{\mathcal{O}} \mathcal{O}_G$ where $\mathcal{L}(G)$ denotes the translation invariant derivations $\mathcal{O}_G \rightarrow \mathcal{O}_G$. Then
		$$
		\mathfrak{g} \otimes_{\mathcal{O}} \Omega_{G/\mathcal{O}} = \operatorname{Hom}_{\mathcal{O}}(\mathcal{L}(G),\mathfrak{g}) \otimes_{\mathcal{O}} \mathcal{O}_G
		$$
		and the map  $\mathcal{L}(G) \rightarrow \mathfrak{g}$ given by composition with the counit $e$ defines a canonical global section. For any $g \in G(B)$ define $\operatorname{dlog}(g)$ as the image of this section under $\mathfrak{g} \otimes_{\mathcal{O}} g^*\Omega_{G/\mathcal{O}} \rightarrow \mathfrak{g} \otimes_{\mathcal{O}} \Omega_{B/\mathcal{O}}$. If $A$ is any $\mathcal{O}$-algebra and $B = A[u,E(u)^{-1}]$ then we obtain an element
		$$
		\operatorname{dlog}_u(g) \in \mathfrak{g} \otimes_{\mathcal{O}} A[u,E(u)^{-1}]
		$$
		by evaluating $\operatorname{dlog}(g)$ at the derivation $\frac{d}{du}: A[u,E(u)^{-1}] \rightarrow A[u,E(u)^{-1}]$. This construction is functorial in $G$. 
	\end{sub}
	The following example motivates us calling this construction the logarithmic derivative.
	\begin{example}\label{ex-gln}
		Let $G = \operatorname{GL}_n$ with coordinates $T_{ij}$ and write $\iota: \mathcal{O}_G \rightarrow \mathcal{O}_G$ for the coinverse map. Write $\frac{d}{d T_{ij}}$ for the element of $\mathfrak{g}$ sending $T_{ij} \mapsto 1$ and zero on all other coordinates. We claim that the section
		\begin{equation}\label{eq-section}
			\sum_{ij} \frac{d}{dT_{ij}} \otimes \left( \sum_m \iota(T_{im}) d(T_{mj}) \right) \in \mathfrak{g} \otimes_{\mathcal{O}} \Omega_{G/\mathcal{O}}	
		\end{equation}
		coincides with the map $\mathcal{L}(G) \rightarrow \mathfrak{g}$ given by composition with the counit. If $\Delta: T_{ij} \mapsto \sum_l T_{il} \otimes T_{lj}$ is the comultiplication map then $\mathcal{L}(G)\rightarrow \mathfrak{g}$ has an inverse given by $d \mapsto (\operatorname{id} \otimes d) \circ \Delta$ (see for example \cite[12.24]{MilneAlggroups}). Therefore we can check the claim by evaluating $\sum_m \iota(T_{im}) d(T_{mj})$ at $(\operatorname{id} \otimes \frac{d}{d T_{lk}}) \circ \Delta$. Since 
		$$
		(\operatorname{id} \otimes \frac{d}{d T_{lk}}) \circ \Delta: T_{mj} \mapsto \begin{cases}
			T_{ml} & \text{ if $j=k$} \\
			0 & \text{ otherwise}
		\end{cases}
		$$
		this evaluation is equal to
		$$
		\begin{cases}
			\sum_m \iota(T_{im}) T_{ml} & \text{ if $j=k$} \\
			0 & \text{ if $j \neq k$}
		\end{cases}$$
		Since composing $(\iota \otimes \operatorname{id}) \circ \Delta$ with multiplication $\mathcal{O}_G \otimes \mathcal{O}_G \rightarrow \mathcal{O}_G$ equals the counit $e$ it follows that the above evaluation is $1$ if $ij = lk$ and zero otherwise. This verifies our claim. If $g = (g_{ij}) \in G(B)$ has inverse $g^{-1} = (h_{ij})$ then the image of \eqref{eq-section} in $\mathfrak{g} \otimes \Omega_{B/\mathcal{O}}$ is
		$$
		\operatorname{dlog}(g) = \sum_{ij} \frac{d}{d T_{ij}} \otimes \left( \sum_m h_{im} d(g_{mj}) \right)
		$$
		If $B = A[u,E(u)^{-1}]$ and we identify $\mathfrak{g} = \operatorname{Mat}_{n\times n}(\mathcal{O})$ via $\frac{d}{d T_{ij}}$ then evaluating $\operatorname{dlog}(g)$ at $\frac{d}{du}$ yields $\operatorname{dlog}_u(g) = g^{-1}\frac{d}{du}(g)$.
	\end{example}
	\begin{remark}\label{rem-Tannakiandlogu}
		If $G$ is a flat and finite type over $\mathcal{O}$ then $\operatorname{dlog}_u$ can alternatively be constructed using the Tannakian viewpoint. As explained in e.g. \cite{Lev15}, an element of $\mathfrak{g} \otimes_{\mathcal{O}} A[u]$ is equivalent to a collection of endomorphisms $X_V$ for all representations $G \rightarrow \operatorname{GL}(V)$ of $G$ on finite free $\mathcal{O}$-modules which are compatible with exact sequences and satisfy $X_{V_1\otimes V_2} = X_{V_1} \otimes 1 + 1 \otimes X_{V_2}$. Example~\ref{ex-gln} shows that $\operatorname{dlog}_u(g)$ corresponds to the rule sending a representation $\rho$ onto
		$$
		\rho(g)^{-1} \frac{d}{du}(\rho(g)) \in \operatorname{End}(V)
		$$
		(the compatibility of this rule with exact sequences and tensor products being an easy computation).
	\end{remark}
	\begin{example}\label{ex-Ga}
		If $G  = \mathbb{G}_a$ with coordinate $T$ then write $\frac{d}{dT}$ for the element of $\mathfrak{g}$ sending $T \mapsto 1$. In this case the section
		$$
		\frac{d}{dT} \otimes d(T) \in \mathfrak{g} \otimes_{\mathcal{O}} \Omega_{G/\mathcal{O}}
		$$
		coincides with the map $\mathcal{L}(G) \rightarrow \mathfrak{g}$ given by composition with the counit and so if $g \in \mathbb{G}_a(B)$ then
		$$
		\operatorname{dlog}(g) = \frac{d}{dT} \otimes d(b)
		$$
		If $B = A[u,E(u)^{-1}]$ and we identify $\mathfrak{g} = \mathcal{O}$ via $\frac{d}{dT}$ it follows that  $\operatorname{dlog}_u(g) = \frac{d}{du}(g)$.
	\end{example}

	The next lemma contains what we will need to compute with $\operatorname{dlog}_u(-)$.
	\begin{lemma}\label{lem-dlog}
		\begin{enumerate}
			\item For $g,h \in G(A[u,E(u)^{-1}])$ we have 
			$$
			\operatorname{dlog}_u(gh) = \operatorname{Ad}(h^{-1}) \operatorname{dlog}_u(g) + \operatorname{dlog}_u(h)
			$$
			where $\operatorname{Ad}$ denotes the adjoint action of $G$ on $\mathfrak{g}$.
			\item $\operatorname{dlog}_u(g) = 0$ for $g \in G(A)$
			\item $u \operatorname{dlog}_u(u^\lambda) \in \operatorname{Lie}(T)$ for $\lambda \in X_*(T)$ and $\alpha^\vee(u \operatorname{dlog}_u(u^\lambda)) = \langle \alpha^\vee,\lambda \rangle$ for any root $\alpha^\vee$.
		\end{enumerate}
	\end{lemma}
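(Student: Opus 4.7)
The plan is to leverage the Tannakian description of $\operatorname{dlog}_u$ recorded in Remark~\ref{rem-Tannakiandlogu}: an element of $\mathfrak{g}\otimes_{\mathcal{O}} A[u,E(u)^{-1}]$ is the same data as a tensor-compatible collection of endomorphisms $(X_V)$ indexed by representations $\rho_V: G \to \operatorname{GL}(V)$, and $\operatorname{dlog}_u(g)$ corresponds to the rule $V \mapsto \rho_V(g)^{-1}\tfrac{d}{du}\rho_V(g)$. All three parts will then reduce to routine manipulations in this dictionary, together with the fact that $\operatorname{dlog}_u$ is functorial in $G$.

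For (1), I would apply the Leibniz rule to $\rho_V(gh) = \rho_V(g)\rho_V(h)$ and left-multiply by $\rho_V(gh)^{-1} = \rho_V(h)^{-1}\rho_V(g)^{-1}$ to obtain
\[
\rho_V(gh)^{-1}\tfrac{d}{du}\rho_V(gh)
= \rho_V(h)^{-1}\bigl(\rho_V(g)^{-1}\tfrac{d}{du}\rho_V(g)\bigr)\rho_V(h) + \rho_V(h)^{-1}\tfrac{d}{du}\rho_V(h).
\]
Since conjugation by $\rho_V(h)^{-1}$ on $\operatorname{End}(V)$ is precisely the image under $d\rho_V$ of $\operatorname{Ad}(h^{-1})$, comparing the two rules representation by representation gives the claimed cocycle identity. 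Part (2) is then immediate from the same dictionary: if $g \in G(A)$ then each $\rho_V(g)$ lies in $\operatorname{End}(V)\otimes_{\mathcal{O}} A$ and is constant in $u$, so every $\tfrac{d}{du}\rho_V(g)$ vanishes.

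For (3), I would invoke functoriality in $G$. The element $u^\lambda$ is the image of $u \in \mathbb{G}_m(A[u,u^{-1}])$ under $\lambda:\mathbb{G}_m \to T \hookrightarrow G$, so $\operatorname{dlog}_u(u^\lambda)$ lies in the image of $\operatorname{Lie}(T)\otimes_{\mathcal{O}} A[u,u^{-1}]$ inside $\mathfrak{g}\otimes_{\mathcal{O}} A[u,u^{-1}]$. To pin down the element, I would reduce further along the splitting $T \cong \mathbb{G}_m^r$: the direct $\operatorname{GL}_1$-case of Example~\ref{ex-gln} yields $\operatorname{dlog}_u(u^n) = n/u$, so multiplying by $u$ gives a constant element of $\operatorname{Lie}(\mathbb{G}_m)$. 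Applying this componentwise shows $u\operatorname{dlog}_u(u^\lambda) \in \operatorname{Lie}(T)$. For the pairing with a root $\alpha^\vee \in X^*(T)$, functoriality of $\operatorname{dlog}_u$ along $\alpha^\vee: T \to \mathbb{G}_m$ gives
\[
\alpha^\vee\bigl(\operatorname{dlog}_u(u^\lambda)\bigr) \;=\; \operatorname{dlog}_u\bigl(\alpha^\vee(u^\lambda)\bigr) \;=\; \operatorname{dlog}_u\bigl(u^{\langle\alpha^\vee,\lambda\rangle}\bigr) \;=\; \langle\alpha^\vee,\lambda\rangle/u,
\]
and multiplying by $u$ finishes (3).

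I do not anticipate any serious obstacle here: the only bookkeeping subtlety is tracking conventions in (1) so that the adjoint action appears with $h^{-1}$ rather than $h$, which is forced by the order $(gh)^{-1} = h^{-1}g^{-1}$ and not by any property of $\operatorname{dlog}_u$ itself.
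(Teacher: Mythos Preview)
Your proposal is correct and essentially follows the paper's own argument. The paper handles (1) and (2) by choosing a single faithful representation to reduce to $\operatorname{GL}_n$ and then invoking Example~\ref{ex-gln}, whereas you use the full Tannakian description from Remark~\ref{rem-Tannakiandlogu} and check on every representation; these are the same Leibniz-rule computation wearing two hats. For (3) both you and the paper invoke functoriality along $T \to G$, with your version spelling out the further reduction to $\mathbb{G}_m$ and the pairing with $\alpha^\vee$ in more detail than the paper does.
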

	\begin{proof}
		Both (1) and (2) can be proven by choosing a faithful representation of $G$ to reduce to the case of $\operatorname{GL}_n$, where they are clear given the interpretation of $\operatorname{dlog}_u$ from Example~\ref{ex-gln}. (3) follows from the functorialty of $\operatorname{dlog}_u$ under the morphism $T \rightarrow G$.
	\end{proof}

	\begin{definition}
		Define $\operatorname{Gr}^{\nabla}_G \subset \operatorname{Gr}_G$ to be the subfunctor consisting of those $(\mathcal{E},\iota) \in \operatorname{Gr}_G(A)$ for which there exist an fppf cover $A\rightarrow A'$ trivialising $\mathcal{E}$ so that, if $\iota \times_{A[u]} A'[u]$ is given by left multiplication by $g \in G(A'[u,E(u)^{-1}])$, then
		$$
		E(u) \operatorname{dlog}_u(g) \in \mathfrak{g} \otimes_{\mathcal{O}} A[u]
		$$
		This is well defined because of other choice of trivialisation replaces $g$ by $gh$ for $h \in L^+G$ and part (1) of Lemma~\ref{lem-dlog} shows $E(u) \operatorname{dlog}_u(g) \in \mathfrak{g} \otimes_{\mathcal{O}} A[u]$ if and only if $E(u) \operatorname{dlog}_u(gh) \in \mathfrak{g} \otimes_{\mathcal{O}} A[u]$.
		This is a closed subfunctor since $A[u,E(u)^{-1}]/A[u]$ is a projective $A$-module. When $G = \operatorname{GL}_n$ this coincides with the subfunctor defined in \cite[7.4]{B21}. Lemma~\ref{lem-dlog} shows that $\operatorname{Gr}^{\nabla}_G$ is also $G$-stable.
	\end{definition}
	
	\begin{proposition}\label{prop-include}
		$M_\mu \subset \operatorname{Gr}^{\nabla}_G$ for any $\mu = (\mu_1,\ldots,\mu_e)$.
	\end{proposition}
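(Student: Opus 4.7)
The plan is to exploit that $\operatorname{Gr}_G^\nabla \subset \operatorname{Gr}_G$ is a closed subfunctor and that $M_\mu$ is the scheme-theoretic image of the morphism out of $(G/P_{\mu_1} \times_\mathcal{O} \ldots \times_\mathcal{O} G/P_{\mu_e}) \otimes_\mathcal{O} E$. Thus it suffices to verify that every $A$-valued point of the generic fibre of the product of flag varieties has image in $\operatorname{Gr}_G^\nabla(A)$. Fppf-locally on the $E$-algebra $A$ any such point lifts to a tuple $(g_1, \ldots, g_e) \in G(A)^e$ via the quotient morphisms $G \to G/P_{\mu_i}$, so we may work with such data.

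Next I would use the product decomposition $\widehat{A[u]}_{E(u)} \cong \prod_i \widehat{A[u]}_{(u-\pi_i)}$ of \ref{sub-productdecomp} together with Corollary~\ref{lem-generic} and Lemma~\ref{lem-closedflag} to identify, fppf-locally, the trivialisation $\iota$ of the image of $(g_1, \ldots, g_e)$ in $\operatorname{Gr}_G$ with the tuple $(g_i (u-\pi_i)^{\mu_i})_{i}$ inside the product $\prod_i G(\widehat{A[u]}_{(u-\pi_i)}[(u-\pi_i)^{-1}]) = G(\widehat{A[u]}_{E(u)}[E(u)^{-1}])$. By Lemma~\ref{lem-BL} the defining condition of $\operatorname{Gr}_G^\nabla$ can be checked over $\widehat{A[u]}_{E(u)}$ in place of $A[u]$. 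Since $\frac{d}{du}$ respects the product decomposition, so does $\operatorname{dlog}_u$, and the containment reduces to showing for each $i$ that $E(u) \operatorname{dlog}_u(g_i (u-\pi_i)^{\mu_i}) \in \mathfrak{g} \otimes_\mathcal{O} \widehat{A[u]}_{(u-\pi_i)}$. Since $u-\pi_j$ is a unit in $\widehat{A[u]}_{(u-\pi_i)}$ for $j \neq i$, this is in turn equivalent to $(u-\pi_i)\operatorname{dlog}_u(g_i (u-\pi_i)^{\mu_i})$ lying in $\mathfrak{g} \otimes_\mathcal{O} \widehat{A[u]}_{(u-\pi_i)}$.

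The Lie-theoretic content of Lemma~\ref{lem-dlog} then closes the argument. Parts (1) and (2) combined with the fact that $g_i \in G(A)$ is constant in $u$ give $\operatorname{dlog}_u(g_i(u-\pi_i)^{\mu_i}) = \operatorname{dlog}_u((u-\pi_i)^{\mu_i})$, and the translation $u \mapsto u - \pi_i$ applied to part (3) (or alternatively a direct computation via Example~\ref{ex-gln} and a faithful representation) yields $(u-\pi_i)\operatorname{dlog}_u((u-\pi_i)^{\mu_i}) = \mu_i \in \operatorname{Lie}(T) \subset \mathfrak{g}$, which clearly lies in $\mathfrak{g} \otimes \widehat{A[u]}_{(u-\pi_i)}$.

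The main obstacle is the identification in the second paragraph: one must match the trivialisation $\iota$ of the image of $(g_1,\ldots,g_e)$, obtained through the moduli description of \ref{sub-moduliinterpret} and the gluing procedure of \ref{sub-opencovers}, with the tuple $(g_i(u-\pi_i)^{\mu_i})_i$ under the product decomposition. This amounts to tracking local trivialisations through the Beauville--Laszlo isomorphism, and is essentially bookkeeping; once it is in place, the $\operatorname{dlog}_u$ calculation is immediate from Lemma~\ref{lem-dlog}.
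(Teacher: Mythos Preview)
Your proposal is correct and follows essentially the same approach as the paper: both reduce to the generic fibre using closedness of $\operatorname{Gr}_G^\nabla$, use the product decomposition over $E$ to reduce to a factor-by-factor check, and then invoke Lemma~\ref{lem-dlog} to handle each factor $g_i(u-\pi_i)^{\mu_i}$. The paper packages the middle step slightly more cleanly by stating the identification $\operatorname{Gr}_G^\nabla \otimes_{\mathcal{O}} E = \bigl(\operatorname{Gr}_{G,1}^\nabla \times_{\mathcal{O}} \cdots \times_{\mathcal{O}} \operatorname{Gr}_{G,e}^\nabla\bigr) \otimes_{\mathcal{O}} E$ and then checking that each closed immersion $G/P_{\mu_i} \hookrightarrow \operatorname{Gr}_{G,i}$ factors through $\operatorname{Gr}_{G,i}^\nabla$, but this is exactly the bookkeeping you outline in your second paragraph.
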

	\begin{proof}
		Since $\operatorname{Gr}^{\nabla}_G$ is a closed subfunctor it suffices to show that $M_\mu \otimes_{\mathcal{O}} E \subset \operatorname{Gr}_G^\nabla$. For this observe that, under the identification $\operatorname{Gr}_G \otimes_{\mathcal{O}} E \cong \left( \operatorname{Gr}_{G,1} \times_{\mathcal{O}} \ldots \times_{\mathcal{O}} \operatorname{Gr}_{G,e} \right)  \otimes_{\mathcal{O}} E$, one has
		$$
		\operatorname{Gr}_G^\nabla \otimes_{\mathcal{O}} E = \left( \operatorname{Gr}_{G,1}^\nabla \times_{\mathcal{O}} \ldots \times_{\mathcal{O}} \operatorname{Gr}_{G,e}^\nabla \right) \otimes_{\mathcal{O}} E
		$$
		where $\operatorname{Gr}_{G,i}^\nabla $ is defined analogously to $\operatorname{Gr}_{G}^\nabla$ with the condition  $E(u) \operatorname{dlog}_u(g) \in \mathfrak{g} \otimes_{\mathcal{O}} A[u]$ replaced by $(u-\pi_i) \operatorname{dlog}_u(g) \in \mathfrak{g} \otimes_{\mathcal{O}} A[u]$. We are therefore reduced to showing that the closed immersions $G/P_\lambda \rightarrow \operatorname{Gr}_{G,i}$ induced by any $\lambda \in X_*(T)$ factor through $\operatorname{Gr}_{G,i}^\nabla$, and this follows from Lemma~\ref{lem-dlog}.
	\end{proof}
	
	\section{Computations in $\operatorname{Gr}_G^\nabla$}\label{sec-comutationsinnabla}

	In this section we show that the inclusion $M_\mu \subset \operatorname{Gr}_G^\nabla$ induces a reasonable topological description of $M_\mu \otimes \mathbb{F}$ provided $\mu$ is sufficiently small relative to the characteristic of $\mathbb{F}$. As with the previous section, this extend results from \cite[\S7]{B21} beyond $G = \operatorname{GL}_n$.
	
	\begin{proposition}\label{prop-dim}
		Suppose that $\lambda \in X_*(T)$ is dominant. If $\operatorname{char}\mathbb{F} >0$ then assume that
		$$
		\langle \alpha^\vee, \lambda \rangle \leq \operatorname{char}\mathbb{F} + e -1
		$$
		for every positive root $\alpha^\vee$. Then
		$$
		\operatorname{Gr}_{G,\lambda,\mathbb{F}} \times_{\operatorname{Gr}_G} \operatorname{Gr}_{G}^\nabla
		$$
		is smooth and irreducible of dimension
		$$
		\sum_{\alpha \in R^+}   \operatorname{min}\lbrace e, \langle \alpha,\lambda \rangle \rbrace
		$$
	\end{proposition}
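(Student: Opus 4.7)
The plan is to parametrise an open dense subset of $\operatorname{Gr}_{G,\lambda,\mathbb{F}}$ by a product of positive root groups acting on $\mathcal{E}_{\lambda,\mathbb{F}}$, and to translate the $\operatorname{Gr}_G^\nabla$ condition into an explicit system of divisibility conditions on these coordinates. The key preliminary reduction exploits that in the special fibre $E(u)=u^e$, so that by Lemma~\ref{lem-dlog}(1) and (3),
$$
E(u)\operatorname{dlog}_u(h u^\lambda) \;=\; u^e\operatorname{Ad}(u^{-\lambda})\operatorname{dlog}_u(h) \;+\; u^{e-1}\bigl(u\operatorname{dlog}_u(u^\lambda)\bigr),
$$
and the second summand already lies in $\mathfrak{g}\otimes A[u]$ as soon as $e\geq 1$. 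Hence $h\cdot\mathcal{E}_{\lambda,\mathbb{F}}\in\operatorname{Gr}_G^\nabla$ if and only if $u^e\operatorname{Ad}(u^{-\lambda})\operatorname{dlog}_u(h)\in\mathfrak{g}\otimes A[u]$. Decomposing $\operatorname{dlog}_u(h) = X_0 + \sum_{\alpha^\vee} X_{\alpha^\vee}$ along the root space decomposition and using that $\operatorname{Ad}(u^{-\lambda})$ scales $\mathfrak{g}_{\alpha^\vee}$ by $u^{-\langle\alpha^\vee,\lambda\rangle}$, this becomes: for every root $\alpha^\vee$, the component $X_{\alpha^\vee}$ must be divisible by $u^{\max(0,\langle\alpha^\vee,\lambda\rangle-e)}$. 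Since $\lambda$ is dominant, only positive roots can produce a nontrivial constraint.

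I then take the standard open cell $N_\lambda:=\prod_{\alpha^\vee\in R_+^\vee}U_{\alpha^\vee}(\mathbb{F}[u]/u^{\langle\alpha^\vee,\lambda\rangle})\to L^+G$, whose image in $\operatorname{Gr}_{G,\lambda,\mathbb{F}}$ is open and dense. For $h=\prod_{\alpha^\vee}U_{\alpha^\vee}(f_{\alpha^\vee})$ with the product ordered by increasing root height, iterating Lemma~\ref{lem-dlog}(1) shows that $X_{\alpha^\vee} = f_{\alpha^\vee}' + P_{\alpha^\vee}$, where $P_{\alpha^\vee}$ is a universal polynomial in $f_{\beta^\vee}, f_{\beta^\vee}'$ for roots $\beta^\vee$ of strictly smaller height. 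The substitution $(f_{\alpha^\vee})\mapsto(X_{\alpha^\vee})$ is thus triangular with diagonal $d/du$, and the divisibility conditions can be solved root by root. Each step reduces to the elementary $\operatorname{SL}_2$-type question: for fixed $g\in\mathbb{F}[u]$, describe the $f=\sum c_k u^k\in\mathbb{F}[u]/u^d$ satisfying $f'\equiv g\pmod{u^{d-e}}$. The constraint reads $kc_k=g_{k-1}$ for $1\leq k\leq d-e$, and under the hypothesis $d\leq\operatorname{char}\mathbb{F}+e-1$ one has $p\nmid k$ throughout this range, so each such $c_k$ is uniquely determined while $c_0$ and $c_{d-e+1},\dots,c_{d-1}$ remain free---$\min(e,d)$ parameters in all. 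Iterating over roots, the big-cell piece $Z_0$ of the intersection is an affine space of dimension $\sum_{\alpha^\vee\in R_+^\vee}\min(e,\langle\alpha^\vee,\lambda\rangle)$.

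To globalise, I use that $\operatorname{Gr}_G^\nabla$ is $G$-stable. Weyl representatives $n_w\in N_G(T)$ carry $\mathcal{E}_{\lambda,\mathbb{F}}$ to the other $T$-fixed points $\mathcal{E}_{w\lambda,\mathbb{F}}$, and by $G$-equivariance the analysis of $Z_0$ at $\mathcal{E}_{\lambda,\mathbb{F}}$ transfers to analogous local charts around each $\mathcal{E}_{w\lambda,\mathbb{F}}$, giving smoothness and the claimed dimension at every $T$-fixed point of $Z:=\operatorname{Gr}_{G,\lambda,\mathbb{F}}\times_{\operatorname{Gr}_G}\operatorname{Gr}_G^\nabla$. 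A Bia\l{}ynicki-Birula argument using a generic cocharacter of $T$ then promotes this to smoothness and the dimension formula globally: every point of $Z$ contracts to a $T$-fixed point, and the local smoothness at each such fixed point propagates outward along the attracting cells. Irreducibility follows because $Z_0$ is an irreducible affine space of the right dimension $D:=\sum\min(e,\langle\alpha^\vee,\lambda\rangle)$, open inside $Z$; the same contraction shows that every $T$-fixed point of $Z$ lies in $\overline{Z_0}$, so $Z=\overline{Z_0}$.

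The main obstacle is the triangularity of $(f_{\alpha^\vee})\mapsto(X_{\alpha^\vee})$: one must iterate Lemma~\ref{lem-dlog}(1) carefully across the product parametrisation of $N_\lambda$ and check that the correction polynomial $P_{\alpha^\vee}$ at a root of height $n$ involves only data of strictly smaller height, in a way that keeps the residual condition on the leading coefficient of $f_{\alpha^\vee}$ affine-linear. Once that triangular structure is in place, the characteristic hypothesis enters only through the elementary one-variable integration step above---which is precisely where a weaker bound would introduce unwanted solutions at degrees divisible by $p$ and spoil the dimension count---and the remainder of the argument is the $G$-equivariant propagation described above.
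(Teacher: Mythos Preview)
Your open–cell computation is essentially identical to the paper's: parametrise $U_\lambda\subset\operatorname{Gr}_{G,\lambda,\mathbb{F}}$ by $\prod_{\langle\alpha^\vee,\lambda\rangle>0}x_{\alpha^\vee}(f_{\alpha^\vee})$, reduce the $\nabla$-condition to $u^e\operatorname{Ad}(u^{-\lambda})\operatorname{dlog}_u(g)\in\mathfrak{g}[u]$, use Lemma~\ref{lem-dlog} and the commutator formula to write the $\gamma^\vee$-component as $f_{\gamma^\vee}'+P_{\gamma^\vee}$ with $P_{\gamma^\vee}$ depending only on strictly lower roots, and then integrate degree by degree using the bound $\langle\gamma^\vee,\lambda\rangle-e+1\leq\operatorname{char}\mathbb{F}$. (The paper filters by $\langle\cdot,\lambda\rangle$ rather than by height, but either works.) The globalisation to smoothness is also the same: since the $W$-translates $n_wU_\lambda$ already \emph{cover} $\operatorname{Gr}_{G,\lambda,\mathbb{F}}$ and $\operatorname{Gr}_G^\nabla$ is $G$-stable, the translates $n_wZ_0$ cover $Z$ and each is an affine space of the asserted dimension. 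Your appeal to Bia\l{}ynicki--Birula here is superfluous, and in fact mis-ordered: the BB affine-bundle theorem requires smoothness as input, so it cannot be used to ``propagate smoothness outward'' from the $T$-fixed points.

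Where your argument genuinely diverges from the paper, and acquires a gap, is in the proof of irreducibility. You invoke a generic cocharacter of $T$, but the $T$-fixed locus in $Z$ is the discrete set $\{\mathcal{E}_{w\lambda,\mathbb{F}}\}$, and neither of your implications is justified: contraction by a $T$-cocharacter does not show that the $T$-fixed points lie in $\overline{Z_0}$ (the flow goes the other way), and even granting that, it does not follow that $Z=\overline{Z_0}$ without knowing that limits under your cocharacter stay inside the non-proper orbit $\operatorname{Gr}_{G,\lambda,\mathbb{F}}$ and that $\overline{Z_0}$ is stable under the repelling flow. The paper instead uses the \emph{loop rotation} $\mathbb{G}_m$-action $u\mapsto tu$ on $\operatorname{Gr}_G\otimes\mathbb{F}$: this preserves both $\operatorname{Gr}_{G,\lambda,\mathbb{F}}$ and $\operatorname{Gr}_G^\nabla$, and its fixed locus in $\operatorname{Gr}_{G,\lambda,\mathbb{F}}$ is the \emph{connected} variety $G\cdot\mathcal{E}_{\lambda,\mathbb{F}}\cong G/P_\lambda$, which visibly lies in $\operatorname{Gr}_G^\nabla$. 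Then, because $Z$ is already known to be smooth, the BB theorem exhibits $Z$ as an affine bundle over the irreducible $G/P_\lambda$, and irreducibility follows. If you prefer to avoid loop rotation, a clean substitute is: $G/P_\lambda\subset Z$ is connected and meets every chart $n_wZ_0$ (at $\mathcal{E}_{w\lambda,\mathbb{F}}$), so $Z$ is connected; being smooth and equidimensional, it is irreducible.
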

	
	\begin{proof}
		As in the previous section we write $\mathfrak{g} = \operatorname{Lie}(G)$. Let $\mathfrak{t} = \operatorname{Lie}(T)$ and write 
		$$
		\mathfrak{g} = \mathfrak{t} \oplus \bigoplus_{\alpha^\vee\in R^\vee} \mathfrak{g}_{\alpha^\vee}
		$$
		for the root decomposition of $\mathfrak{g}$. For each $\alpha^\vee \in R^\vee$ we have the associated root homomorphism $x_{\alpha^\vee}: \mathbb{G}_a \rightarrow G$ which induces an identification $dx_{\alpha^\vee}: \mathbb{G}_a \xrightarrow{\sim} \mathfrak{g}_{\alpha^\vee}$. See for example \cite[1.2]{Janbook}.
		
		\subsubsection*{Step 1}
		We begin by recalling a standard open cover of $G/P_{\lambda}$. Let $U$ denote the image of the morphism 
		$$
		\prod_{\langle \alpha^\vee, \lambda \rangle >0} \mathbb{A}^1 \rightarrow G
		$$
		given by $(a_{\alpha^\vee})_{\alpha^\vee} \mapsto \prod_{\alpha^\vee} x_{\alpha^\vee}(a_{\alpha^\vee})$ (the product taken in an arbitrary, but fixed, order). Then $U$ is a closed subgroup of $G$ and the induced morphism $U \rightarrow G/P_\lambda$ is an open immersion. Furthermore, the $W$-translates of the image of $U$ form an open cover of $G/P_\lambda$. See \cite[1.10]{Janbook} for more details.
		
		\subsubsection*{Step 2}
		Let $U_{\lambda}$ denote the image of the morphism
		$$
		\prod_{\langle \alpha^\vee,\lambda \rangle >0} \mathbb{A}^{\langle \alpha^\vee,\lambda \rangle} \rightarrow L^+G
		$$
		given by $(a_{\alpha^\vee,0},a_{\alpha,1},\ldots,a_{\alpha^\vee, \langle \alpha^\vee,\lambda \rangle -1})_{\alpha^\vee} \mapsto \prod_{\alpha^\vee} x_{\alpha^\vee}( \sum_i a_{\alpha^\vee,i} u^i)$ (again the product is taken in an arbitrary, but fixed, order). Then the morphism $U_{\lambda} \rightarrow \operatorname{Gr}_{G,\lambda,\mathbb{F}}$ given by $g \mapsto gu^\lambda$ is an open immersion whose $W$-translates cover $\operatorname{Gr}_{G,\lambda,\mathbb{F}}$. To see this note first that $U_\lambda \rightarrow \operatorname{Gr}_{G,\lambda,\mathbb{F}}$ is a monomorphism. Secondly, note that $U_{\lambda} \rightarrow \operatorname{Gr}_{G,\lambda,\mathbb{F}}$ factors through the preimage of $U$ under the morphism $q:\operatorname{Gr}_{G,\lambda,\mathbb{F}} \rightarrow G/P_\lambda$ sending $gu^\lambda$ onto $g$ modulo $u$. Thirdly, note that $q^{-1}(U)$ is smooth and irreducible of dimension $\sum_{\alpha \in R^+ } \langle \alpha^\vee,\lambda \rangle = \langle 2\rho^\vee,\lambda \rangle$ (because the same is known to be true of $\operatorname{Gr}_{G,\lambda,\mathbb{F}}$, see for example \cite[2.1.5]{Zhu17}). Therefore $U_\lambda \rightarrow q^{-1}(U)$, being a monomorphism between integral schemes of the same dimension, is an isomorphism (because monomorphisms are unramified \cite[02GE]{stacks-project} and unramified morphisms are etale locally closed immersions \cite[4.11]{Liu}).
		
		\subsubsection*{Step 3}
		We are going to compute the closed subscheme $U_\lambda \times_{\operatorname{Gr}_G} \operatorname{Gr}_G^\nabla$. By definition $g \in U_\lambda(A)$ is contained in this closed subscheme if and only if
		$$
		u^e \operatorname{dlog}_u(gu^\lambda) \in \mathfrak{g} \otimes_{\mathcal{O}} A[u]
		$$
		Lemma~\ref{lem-dlog} shows this is equivalent to asking that 
		$$
		u^e\operatorname{Ad}(u^{-\lambda}) \operatorname{dlog}_u(g)  \in \mathfrak{g} \otimes_{\mathcal{O}} A[u]
		$$
		It will therefore be necessary to compute $\operatorname{dlog}_u(g)$ and we will do this using the following two observations:
		
		\begin{itemize}
			\item If $g = x_{\alpha^\vee}(a)$ for $a \in A[u]$ then $\operatorname{dlog}_u(g) = dx_{\alpha^\vee}(\frac{d}{du} a)$. This follows from Example~\ref{ex-Ga} and the functoriality of $\operatorname{dlog}_u$.
			\item If $\alpha^\vee +\beta^\vee \neq 0$ then 
			\begin{equation}\label{eq-adjointaction}
				\operatorname{Ad}(x_{\alpha^\vee}(a)) dx_{\beta^\vee}(b) = dx_{\beta^\vee}(b) + \sum_{i >0} c_{i1} d_{i\alpha^\vee + \beta^\vee}(a^i b)
			\end{equation}
			for some $c_{ij} \in \mathbb{Z}$ independent of $a$ and $b$. This can be seen by passing the formula
			\begin{equation*}\label{eq-commutator}
				x_{\alpha^\vee}(a) x_{\beta^\vee}(b) x_{\alpha^\vee}(a)^{-1} x_{\beta^\vee}(b)^{-1}= \prod_{i,j>0} x_{i\alpha^\vee + j\beta^\vee}(c_{ij} a^ib_j)	
			\end{equation*}
			found in e.g. \cite[1.2.(5)]{Janbook} to the Lie algebra.
		\end{itemize}
		\subsubsection*{Step 4}
		Lemma~\ref{lem-dlog} shows that $\operatorname{dlog}_u(g x_{\beta^\vee}(b)) = \operatorname{Ad}(x_{\beta^\vee}(-b)) \operatorname{dlog}_u(g) + \operatorname{dlog}_u(x_{\beta^\vee}(b))$. This, together with the two bullet points from Step 3, allows an inductive computation of $\operatorname{dlog}_u(g)$ for $g = \prod_{\langle \alpha^\vee,\lambda \rangle >0} x_{\alpha^\vee}(a_{\alpha^\vee})$ with $a_{\alpha^\vee} \in A[u]$. We see that $\operatorname{dlog}_u(g)$ can be expressed as a sum, over $\gamma^\vee$ with $\langle \gamma^\vee,\lambda \rangle >0$, of terms
		$$
		dx_{\gamma^\vee}\left( \frac{d}{du}a_{\gamma^\vee} + C_{\gamma^\vee} \right) 
		$$
		where $C_{\gamma^\vee}$ is a $\mathbb{Z}$-linear combination of products of the $a_{\alpha^\vee}$ and $\frac{d}{du}(a_{\alpha^\vee})$ for those roots $\alpha^\vee$ with $\langle \alpha^\vee,\lambda \rangle >0$ and $\langle \gamma^\vee - \alpha^\vee,\lambda \rangle >0$. We can therefore write $C_{\gamma^\vee} = C_{\gamma^\vee,0} + C_{\gamma^\vee,1} u + C_{\gamma^\vee,2} u^2 + \ldots$ with each $C_{\gamma^\vee,i} = C_{\gamma^\vee,i}(a_{\alpha^\vee,j})$ a polynomial in the coefficients of the $a_{\alpha^\vee}$ for $\alpha^\vee$ with $0 < \langle \alpha^\vee,\lambda \rangle  < \langle \gamma^\vee,\lambda \rangle$. These polynomials have $\mathbb{Z}$-coefficients and depend only on the order in which the product defining $g$ is taken. It follows that $u^e \operatorname{Ad}(u^{-\lambda})\operatorname{dlog}_u(g)$ can likewise be expressed as a sum of the terms
		\begin{equation}\label{eq-dlogU}
			u^{e - \langle \gamma^\vee,\lambda \rangle}dx_{\gamma^\vee}\left( \frac{d}{du}a_{\gamma^\vee} + C_{\gamma^\vee} \right) 		
		\end{equation}
		The assumption that $\langle \gamma^\vee,\lambda \rangle - e +1\leq \operatorname{char}\mathbb{F}$ means there exist unique polynomials $D_{\gamma^\vee,i} = D_{\gamma^\vee,i}(a_{\alpha^\vee,i})$ in the coefficients of the $a_{\alpha}$ for $\alpha^\vee $ with $0 < \langle \alpha^\vee,\lambda \rangle < \langle \gamma^\vee,\lambda \rangle$ so that if 
		$$
		D_{\gamma^\vee} = D_{\gamma^\vee,1} u + D_{\gamma^\vee,2} u^2 + \ldots + D_{\gamma^\vee,\langle \gamma^\vee,\lambda \rangle-e+1} u^{\langle \gamma^\vee,\lambda \rangle-e+1}
		$$ then $\frac{d}{du} D_{\gamma^\vee} \equiv C_{\gamma^\vee}$ modulo $u^{\langle \gamma^\vee,\lambda \rangle -e+1}$. Again these polynomials depend only on the order of the product defining $g$. Thus \eqref{eq-dlogU} is contained in $\mathfrak{g}_{\gamma^\vee} \otimes_{\mathcal{O}} A[u]$ if and only if 
		$$
		a_{\gamma^\vee} - D_{\gamma^\vee} \in A + u^{\langle \gamma^\vee,\lambda\rangle -e +1}A[u]
		$$
		It follows that there is an isomorphism
		$$
		\prod_{\langle \gamma^\vee,\lambda \rangle > 0}\mathbb{A}^{\operatorname{min}\lbrace e,\langle \gamma^\vee,\lambda \rangle\rbrace} \rightarrow U_\lambda \times_{\operatorname{Gr}_G} \operatorname{Gr}^\nabla_G
		$$
		sending $(a_{\gamma^\vee,i})_{\gamma^\vee}$ onto
		$$
		\prod_{\gamma^\vee} x_{\gamma^\vee}\left( a_0 + a_{\gamma^\vee,1} u^{\langle \gamma^\vee,\lambda \rangle -1} + a_{\gamma^\vee,2} u^{\langle \gamma^\vee,\lambda \rangle -2} +\ldots + a_{\gamma^\vee,\operatorname{min}\lbrace e,\langle \gamma^\vee,\lambda \rangle-\rbrace-1} u^{\operatorname{max}\lbrace 1,\langle \gamma^\vee,\lambda \rangle\rbrace -e+1} + D_{\gamma^\vee} \right) 
		$$
		This shows that $U_\lambda \times_{\operatorname{Gr}_G} \operatorname{Gr}_{G}^{\nabla}$ is smooth of the claimed dimension.
		
		\subsubsection*{Step 5}
		It remains to show that $\operatorname{Gr}_{G,\lambda,\mathbb{F}} \times_{\operatorname{Gr}_G} \operatorname{Gr}_{G}^\nabla$ is irreducible. For this recall the action of $\mathbb{G}_m$ on $\operatorname{Gr}_G \otimes_{\mathcal{O}} \mathbb{F}$ via loop rotations: if $t \in A^\times$ and $(\mathcal{E},\iota) \in \operatorname{Gr}_{G,A}$ then
		$$
		t \cdot (\mathcal{E},\iota) = (x_t^*\mathcal{E},x_t^*\iota)
		$$
		where $x_t$ is the automorphism of $\operatorname{Spec}A[u]$ given by $u \mapsto tu$. If $\mathcal{E}$ has a trivialisation with respect to which $\iota$ is given by $g(u) \in G(A[u,E(u)^{-1}])$ then $t\cdot(\mathcal{E},\iota)$ corresponds to the trivial $G$-torsor and $g(tu) \in G(A[u,E(u)])$. In particular, this demonstrates why this is only an action on $\operatorname{Gr}_G \otimes_{\mathcal{O}} \mathbb{F}$; if $A$ is not an $\mathbb{F}$-algebra then $g(ut)$ need not be in $G(A[u,E(u)])$.
		
		This action stabilises both $\operatorname{Gr}_{G,\lambda,\mathbb{F}}$ and $\operatorname{Gr}_G^{\nabla}$.  The former stabilisation is clear and the latter follows from the observation, using Remark 6.4 and the chain rule, that $\operatorname{dlog}_u(g(ut)) = t x_t^* \operatorname{dlog}_u(g)$. The smoothness of $\operatorname{Gr}_{G,\lambda,\mathbb{F}} \times_{\operatorname{Gr}_G} \operatorname{Gr}_{G}^\nabla$ then ensures it is an affine bundle over its $\mathbb{G}_m$-fixed points, see \cite[Theorem 13.47]{MilneAlggroups}. Since the fixed point locus in $\operatorname{Gr}_{G,\lambda,\mathbb{F}}$ is the $G$-orbit of $\mathcal{E}_{\lambda,\mathbb{F}}$, and since this is contained in $\operatorname{Gr}_G^\nabla$, we conclude that the fixed point locus of $\operatorname{Gr}_{G,\lambda,\mathbb{F}} \times_{\operatorname{Gr}_G} \operatorname{Gr}_{G}^\nabla$ is also this $G$-orbit. As this orbit is irreducible the same is true for $\operatorname{Gr}_{G,\lambda,\mathbb{F}} \times_{\operatorname{Gr}_G} \operatorname{Gr}_{G}^\nabla$.
	\end{proof}

	\section{Naive cycle identities}\label{sec-naivecycles}
	
	Here we use Proposition~\ref{prop-dim} to produce a basic description of the cycles associated to $M_{\mu} \otimes_{\mathcal{O}} \mathbb{F}$.
	\begin{definition}
		A $d$-dimensional cycle on any Noetherian (ind)-scheme $X$ is a $\mathbb{Z}$-linear combination of integral closed subschemes in $X$ of dimension $d$. The group of all such cycles is denoted $Z_d(X)$. If $\mathcal{F}$ is any coherent sheaf on $X$ we write
		$$
		[\mathcal{F}] = \sum_Z m(Z,\mathcal{F}) [Z]
		$$
		where the sum runs over $d$-dimensional integral closed subschemes $Z$ in $X$ and $m(Z,Y)$ denotes the $\mathcal{O}_{Z,\xi}$-dimension of $\mathcal{F}_\xi$ for $\xi \in Z$ the generic point. If $i:Y \subset X$ is a closed subscheme then we set $[Y] = [i_*\mathcal{O}_Y]$. If $X$ is a scheme then \cite[02S9]{stacks-project} shows that $Z_d(X)$ can alternatively be defined as the cokernel of the map
		$$
		K_0(\operatorname{Coh}_{\leq d-1}(X)) \rightarrow K_0(\operatorname{Coh}_{\leq d}(X))
		$$
		where $\operatorname{Coh}_{\leq d}(X)$ denotes the category of coherent sheaves on $X$ with support of dimension $\leq d$. Then $[\mathcal{F}]$ coincides with the image of the class of $\mathcal{F}$.
	\end{definition}
	
	\begin{definition}\label{def-Clambda}
		For $\lambda \in X_*(T)$ define $\mathcal{C}_\lambda \subset \operatorname{Gr}_G \otimes_{\mathcal{O}} \mathbb{F}$ as the closure of $\operatorname{Gr}_{G,\lambda,\mathbb{F}} \times_{\operatorname{Gr}_G} \operatorname{Gr}^\nabla$. Proposition~\ref{prop-dim} ensures that $\mathcal{C}_\lambda$ is an integral closed subscheme of dimension 
		$$
		\sum_{\text{positive }\alpha^\vee} \operatorname{min}\lbrace e,\langle \alpha^\vee,\lambda \rangle \rbrace
		$$
		provided $\langle \alpha^\vee,\lambda \rangle \leq \operatorname{char}\mathbb{F} +e-1$ for every positive root $\alpha^\vee$. 
	\end{definition}
	
	\begin{proposition}\label{prop-firstcycle}
		Assume that $G$ admits a twisting element $\rho \in X_*(T)$ and suppose that $\mu = (\mu_1,\ldots,\mu_e)$ with $\mu_i \in X_*(T)$ strictly dominant. If $\operatorname{char}\mathbb{F} >0$ assume also that
		$$
		\sum_i \langle \alpha^\vee,\mu_i \rangle \leq \operatorname{char}\mathbb{F} + e-1
		$$
		for every positive root $\alpha^\vee$. Then there exists $m_{\lambda} \in \mathbb{Z}$ so that as $e|R^+|$-dimensional cycles in $\operatorname{Gr}_{G}\otimes_{\mathcal{O}} \mathbb{F}$
		$$
		[M_\mu \otimes_{\mathcal{O}}\mathbb{F}] = \sum_{\lambda} m_{\lambda} [\mathcal{C}_{\lambda+e\rho}]
		$$
		with the sum running over dominant $\lambda \leq \mu_1+\ldots +\mu_e - e\rho$. Furthermore, $m_{\mu_1+\ldots+\mu_e - e\rho}=1$.
	\end{proposition}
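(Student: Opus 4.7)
The strategy is to read off the top-dimensional irreducible components of $M_\mu \otimes_{\mathcal{O}}\mathbb{F}$ by combining the inclusions $M_\mu \subset Y_{G,\leq\mu}$ and $M_\mu \subset \operatorname{Gr}^\nabla_G$ with the dimension formula of Proposition~\ref{prop-dim}, and then to establish the multiplicity $1$ of the distinguished component $\mathcal{C}_{\mu_1+\ldots+\mu_e}$ via a local comparison of Krull dimensions at the closed point $\mathcal{E}_{\mu_1+\ldots+\mu_e,\mathbb{F}}$.

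First, $M_\mu$ is flat over $\mathcal{O}$ (as the scheme-theoretic image of a morphism from the $E$-scheme $\prod_i G/P_{\mu_i}$) with generic fibre $\prod_i G/B$ --- using that strict dominance of $\mu_i$ forces $P_{\mu_i}=B$ --- of dimension $e|R^+|$, so $M_\mu\otimes_{\mathcal{O}}\mathbb{F}$ is equidimensional of dimension $e|R^+|$. Let $Z$ be an irreducible component. Propositions~\ref{prop-schubertcontain} and~\ref{prop-include} combined with Lemma~\ref{lem-Yschuprop} imply that $Z$ meets some $\operatorname{Gr}_{G,\lambda',\mathbb{F}}\cap \operatorname{Gr}^\nabla_G$, for some $\lambda'\leq \mu_1+\ldots+\mu_e$, in a dense open. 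Proposition~\ref{prop-dim} then forces
$$
e|R^+|\leq \sum_{\alpha \in R^+}\operatorname{min}\{e,\langle\alpha,\lambda'\rangle\},
$$
i.e.\ $\langle\alpha,\lambda'\rangle\geq e$ on every positive (co)root $\alpha$. Writing $\lambda'=\lambda+e\rho$ and using that $\rho$ is a twisting element (so $\langle\alpha,\rho\rangle = 1$ on simple (co)roots and $\geq 1$ on all positive ones by positivity), this condition is equivalent to $\lambda$ being dominant. Since $\operatorname{Gr}_{G,\lambda+e\rho,\mathbb{F}}\cap \operatorname{Gr}^\nabla_G$ is smooth and irreducible of dimension $e|R^+|$ by Proposition~\ref{prop-dim}, $Z$ must equal $\mathcal{C}_{\lambda+e\rho}$. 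This produces the claimed cycle decomposition with $m_\lambda$ the multiplicity of $\mathcal{C}_{\lambda+e\rho}$ in $[M_\mu\otimes_{\mathcal{O}}\mathbb{F}]$.

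For the final claim, the $\mathcal{O}$-point $(\mathcal{E}^0,\prod_i(u-\pi_i)^{\mu_i})$ of $\operatorname{Gr}_G$ factors through $M_\mu$ --- it is the image of the identity-coset section of $\prod_i G/P_{\mu_i}\otimes E\to M_\mu\otimes E$ extended to $\operatorname{Spec}\mathcal{O}$ using flatness --- and its special fibre is the closed point $x := \mathcal{E}_{\mu_1+\ldots+\mu_e,\mathbb{F}}$. Since $x\in \operatorname{Gr}_{G,\mu_1+\ldots+\mu_e,\mathbb{F}}$ while every other contributing $\mathcal{C}_{\lambda+e\rho}$ is topologically supported on strictly smaller orbits, $x$ belongs only to the component $\mathcal{C}_{\mu_1+\ldots+\mu_e}$. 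Near $x$ the ambient $Y_{G,\leq\mu}\otimes_{\mathcal{O}}\mathbb{F}$ topologically coincides with the open stratum $\operatorname{Gr}_{G,\mu_1+\ldots+\mu_e,\mathbb{F}}$, so the open subscheme $M_\mu\otimes_{\mathcal{O}}\mathbb{F}\cap \operatorname{Gr}_{G,\mu_1+\ldots+\mu_e,\mathbb{F}}$ of $M_\mu\otimes_{\mathcal{O}}\mathbb{F}$ embeds as a closed subscheme of $V:=\operatorname{Gr}_{G,\mu_1+\ldots+\mu_e,\mathbb{F}}\cap \operatorname{Gr}^\nabla_G$, which is smooth and irreducible of Krull dimension $e|R^+|$ by Proposition~\ref{prop-dim}. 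The local rings at $x$ have matching Krull dimension $e|R^+|$, and as $\mathcal{O}_{V,x}$ is a regular local ring (hence a domain) its only height-zero ideal is zero; thus this closed immersion is an isomorphism locally at $x$. Consequently $M_\mu\otimes_{\mathcal{O}}\mathbb{F}$ is smooth at $x$, in particular reduced at the generic point of the containing component $\mathcal{C}_{\mu_1+\ldots+\mu_e}$, giving $m_{\mu_1+\ldots+\mu_e-e\rho}=1$. The multiplicity step is the main subtlety of the argument; the rest is a bookkeeping exercise assembling the preparatory results of Sections~\ref{sec-schubert} and~\ref{sec-dlog}.
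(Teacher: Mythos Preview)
Your proof is correct and follows essentially the same route as the paper's: both combine the containments $M_\mu \subset Y_{G,\leq\mu} \cap \operatorname{Gr}_G^\nabla$ with Proposition~\ref{prop-dim} to pin down the top-dimensional components as the $\mathcal{C}_{\lambda+e\rho}$ with $\lambda$ dominant, and then verify multiplicity one by showing that $M_\mu \otimes_{\mathcal{O}} \mathbb{F}$ coincides scheme-theoretically with the smooth open $\operatorname{Gr}_{G,\mu_1+\cdots+\mu_e,\mathbb{F}} \cap \operatorname{Gr}_G^\nabla$ near $\mathcal{E}_{\mu_1+\cdots+\mu_e,\mathbb{F}}$ via a Krull-dimension comparison. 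Two inconsequential slips: strict dominance gives $P_{\mu_i}=B^-$ rather than $B$, and the $\mathcal{O}$-point $(\mathcal{E}^0,\prod_i(u-\pi_i)^{\mu_i})$ lies in $M_\mu$ simply because $M_\mu$ is \emph{closed} in $\operatorname{Gr}_G$ (flatness of $M_\mu$ is not what is used there).
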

	Later on we will give the $m_\lambda$ a representation theoretic interpretation (see Theorem~\ref{thm-cycles}).
	\begin{proof}
		Propositions~\ref{prop-include} and~\ref{prop-schubertcontain} ensures $M_\mu \otimes_{\mathcal{O}} \mathbb{F}$ factors through $\operatorname{Gr}_{G}^\nabla$ and $Y_{G,\leq \mu}$. Lemma~\ref{lem-Yschuprop} implies $(Y_{G,\leq \mu}\otimes_{\mathcal{O}}\mathbb{F})_{\operatorname{red}} = \bigcup_{\lambda \leq \mu_1+\ldots+\mu_e-e\rho} \operatorname{Gr}_{G,\lambda+e\rho,\mathbb{F}}$ and so
		\begin{equation}\label{eq-initialcontain}
			(M_\mu \otimes_{\mathcal{O}} \mathbb{F})_{\operatorname{red}} \subset \bigcup_{\lambda \leq \mu_1+\ldots+\mu_e-e\rho} \operatorname{Gr}_{G,\lambda+e\rho,\mathbb{F}} \times_{\operatorname{Gr}_G} \operatorname{Gr}_{G}^\nabla \subset \bigcup_{\lambda \leq \mu_1+\ldots+\mu_e- e\rho}  \mathcal{C}_{\lambda+e\rho}
		\end{equation}
		These unions run over $\lambda$ which are not necessarily dominant. To show that the containment still holds with the union running over dominant $\lambda$ we use the assumption that each $\mu_i$ is strictly dominant. This ensures that $\operatorname{dim}G/P_{\mu_i} = |R|$ and so $\operatorname{dim}M_\mu \otimes_{\mathcal{O}} \mathbb{F} = e|R^+|$. Thus
		$$
		\operatorname{dim}\mathcal{C}_{\lambda+e\rho} = \sum_{\text{positive }\alpha^\vee} \operatorname{min}\lbrace e,\langle \alpha^\vee,\lambda +e\rho \rangle \rbrace \leq \operatorname{dim}M_\mu \otimes_{\mathcal{O}} \mathbb{F}
		$$
		with equality if and only if $\langle \alpha^\vee,\lambda+e\rho \rangle \geq e$ for every positive $\alpha^\vee$. Notice that $\langle \alpha^\vee, \lambda + e\rho \rangle \geq e$ for every positive root $\alpha^\vee$ (equivalently every simple root) if and only if $\lambda$ is dominant, because $\langle \alpha^\vee,\rho \rangle  = 1$ whenever $\alpha^\vee$ is simple. Thus, \eqref{eq-initialcontain} can be refined to:
		$$
		(M_\mu \otimes_{\mathcal{O}} \mathbb{F})_{\operatorname{red}} \subset \bigcup  \mathcal{C}_{\lambda+e\rho}
		$$
		with the union running over dominant $\lambda \leq \mu_1+\ldots+\mu_e- e\rho$. In other words,
		$$
		[M_\mu \otimes_{\mathcal{O}}\mathbb{F}] = \sum_{\lambda'} m_{\lambda} [\mathcal{C}_{\lambda+e\rho}]
		$$
		as desired. To finish the proof we have to $m_{\mu_1+\ldots+\mu_e-e\rho} = 1$, and for this it suffices to show $\mathcal{C}_{\mu_1+\ldots+\mu_e} \subset M_\mu \otimes_{\mathcal{O}} \mathbb{F}$ and this closed immersion becomes an open immersion after restricting to an open subset of $\mathcal{C}_{\mu_1+\ldots+\mu_e}$. Recall from Lemma~\ref{lem-Yschuprop} that $\operatorname{Gr}_{G,\mu_1+\ldots+\mu_e,\mathbb{F}}$ is open in $Y_{G,\leq \mu} \otimes_{\mathcal{O}} \mathbb{F}$. Therefore,
		$$
		U := M_\mu \times_{\operatorname{Gr}_G} \operatorname{Gr}_{G, \mu_1+\ldots+\mu_e,\mathbb{F}}
		$$
		is open in $M_\mu \otimes_{\mathcal{O}} \mathbb{F}$. It is also non-empty because it is easy to see that $\mathcal{E}_{\mu_1+\ldots+\mu_e,\mathbb{F}} \in M_\mu$. Therefore, $U$ has dimension equal $\operatorname{dim}M_\mu \otimes_{\mathcal{O}} \mathbb{F}$. On the other hand $U$ is a closed subscheme of $\operatorname{Gr}_{G,\mu_1+\ldots+\mu_e,\mathbb{F}} \times_{\operatorname{Gr}_G} \operatorname{Gr}^\nabla$. We saw in Proposition~\ref{prop-dim} that $\operatorname{Gr}_{G, \mu_1+\ldots+\mu_e,\mathbb{F}} \times_{\operatorname{Gr}_G} \operatorname{Gr}^\nabla$ is smooth and irreducible of the same dimension. Thus, $U = \operatorname{Gr}_{G,\mu_1+\ldots+\mu_e,\mathbb{F}} \times_{\operatorname{Gr}_G} \operatorname{Gr}^\nabla$. As $\mathcal{C}_{\mu_1+\ldots+\mu_e}$ is the closure of $U$ we conclude that $\mathcal{C}_{\mu_1+\ldots+\mu_e} \subset M_\mu \otimes_{\mathcal{O}} \mathbb{F}$, and that this inclusion is an isomorphism over an open subset.
	\end{proof}
	
	\section{Irreducibility}\label{sec-irred}
	
	\begin{theorem}\label{thm-irred}
		Assume that $G$ contains a twisting element $\rho \in X_*(T)$ and suppose that $\lambda \in X_*(T)$ is dominant. If $\operatorname{char}\mathbb{F} >0$ then assume also that
		$$
		\langle \alpha^\vee,\lambda + e\rho \rangle \leq \operatorname{char}\mathbb{F} + e-1
		$$
		for every positive root $\alpha^\vee$. Then, as cycles
		$$
		[M_{(\lambda+\rho,\rho,\ldots,\rho)} \otimes_{\mathcal{O}} \mathbb{F}] =[\mathcal{C}_{\lambda+e\rho}]
		$$
		In other words, $M_{(\lambda+\rho,\rho,\ldots,\rho)} \otimes_{\mathcal{O}} \mathbb{F}$ is irreducible and generically reduced.
	\end{theorem}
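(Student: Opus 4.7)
The plan is to reduce the theorem to showing $M_\mu \otimes_{\mathcal{O}} \mathbb{F}$ is irreducible, where $\mu = (\lambda+\rho, \rho, \ldots, \rho)$. Because $\mu_1 + \ldots + \mu_e - e\rho = \lambda$, Proposition~\ref{prop-firstcycle} gives
\[
[M_\mu \otimes_{\mathcal{O}} \mathbb{F}] = [\mathcal{C}_{\lambda+e\rho}] + \sum_{\substack{\lambda' < \lambda \\ \lambda' \text{ dominant}}} m_{\lambda'}\, [\mathcal{C}_{\lambda' + e\rho}],
\]
with $[\mathcal{C}_{\lambda+e\rho}]$ appearing with coefficient $1$. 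Once $M_\mu \otimes \mathbb{F}$ is shown to be irreducible of dimension $e|R^+|$, it must set-theoretically coincide with the unique top-dimensional component $\mathcal{C}_{\lambda+e\rho}$ appearing in this identity, and the coefficient $1$ then forces $[M_\mu \otimes \mathbb{F}] = [\mathcal{C}_{\lambda+e\rho}]$ as cycles, yielding both irreducibility and generic reducedness.

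For the irreducibility, the strategy is to construct an $\mathcal{O}$-morphism $\phi \colon (G/B)^e_{\mathcal{O}} \to M_\mu$ extending the generic-fibre isomorphism coming from Lemma~\ref{lem-generic} together with the closed immersions $G/B = G/P_{\mu_i} \hookrightarrow \operatorname{Gr}_{G,i}$ of Lemma~\ref{lem-closedflag}. Assuming such a $\phi$ exists, its image is a closed subscheme of $M_\mu$ containing the dense open $M_\mu \otimes E$; since $M_\mu$ is by definition the scheme-theoretic image of $(G/B)^e \otimes E$, this forces $\phi$ to be surjective. Both source and target being proper over $\mathcal{O}$, $\phi$ is proper, and surjectivity then descends to $\phi \otimes \mathbb{F} \colon (G/B)^e \otimes \mathbb{F} \to M_\mu \otimes \mathbb{F}$ (closed image containing the image of every closed fibre). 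Irreducibility of the source then yields irreducibility of the target.

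The principal obstacle is the construction of $\phi$: the factorisation $\operatorname{Gr}_G \otimes E \cong \prod_i \operatorname{Gr}_{G,i} \otimes E$ of Lemma~\ref{lem-generic} does not extend to $\mathcal{O}$ because $\widehat{A[u]}_{E(u)}$ only splits as $\prod_i \widehat{A[u]}_{u-\pi_i}$ once $A$ is an $E$-algebra. I would handle this using the Tannakian description of~\ref{sub-moduliinterpret}: given $(g_i)_{i=1}^e \in (G/B)^e(A)$ determining a type-$\mu_i$ filtration $\operatorname{Fil}^\bullet_{g_i}$ on each $G$-representation $V$, define the $A[u]$-submodule $\mathcal{E}^V \subset V \otimes_{\mathcal{O}} A[u, E(u)^{-1}]$ by Beauville--Laszlo gluing across $\operatorname{Spec} A[u, E(u)^{-1}] \cup \operatorname{Spec} \widehat{A[u]}_{E(u)}$: on the open piece take the free module, and on $\widehat{A[u]}_{E(u)}$ take the intersection over $i = 1, \ldots, e$ of the preimages, under the canonical projections $\widehat{A[u]}_{E(u)} \to \widehat{A[u]}_{u-\pi_i}$, of the local pieces $\sum_n \operatorname{Fil}^n_{g_i}(V) \otimes (u - \pi_i)^{-n} \widehat{A[u]}_{u-\pi_i}$. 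The technical heart is verifying that this produces an exact tensor functor valued in projective $A[u]$-modules; once this is done, functoriality in $A$ is automatic, agreement with the generic-fibre embedding follows from the product decomposition of $\widehat{A[u]}_{E(u)}$ over $E$, and the morphism $\phi$ is then obtained by restricting the resulting map $(G/B)^e_{\mathcal{O}} \to \operatorname{Gr}_G$ to its image $M_\mu$.
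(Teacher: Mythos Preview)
Your approach has a fundamental gap: the morphism $\phi\colon (G/B)^e_{\mathcal{O}} \to \operatorname{Gr}_G$ you propose does not exist with the required properties. The ``intersection of preimages'' construction is not functorial in $A$. Over $\mathbb{F}$ all the $\pi_i$ vanish, so the projections $\widehat{A[u]}_{E(u)} \to \widehat{A[u]}_{u-\pi_i}$ all become the identity $\mathbb{F}[[u]]\to\mathbb{F}[[u]]$; the intersection of the preimages then returns the \emph{smallest} of the local lattices rather than their ``product''. Already for $G=\operatorname{GL}_1$, $e=2$, $\mu=(1,1)$ your recipe over $\mathbb{F}$ produces $u^{-1}\mathbb{F}[[u]]$, whereas the unique $\mathbb{F}$-point of $M_\mu$ is $u^{-2}\mathbb{F}[[u]]$. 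Hence there is no $\mathcal{O}$-morphism to $\operatorname{Gr}_G$ restricting to the generic-fibre embedding and specialising to your construction, and in particular no map whose image you could identify with $M_\mu$.

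More decisively, the strategy itself cannot work: nothing in your construction uses the special shape $\mu=(\lambda+\rho,\rho,\ldots,\rho)$, so if it succeeded it would prove $M_\mu\otimes\mathbb{F}$ irreducible for \emph{every} strictly dominant $\mu$. This contradicts Theorem~\ref{thm-cycles}: for instance with $G=\operatorname{GL}_2$, $e=2$, $\mu_1=\mu_2=(2,0)$ one has $W(1,0)^{\otimes 2}\cong W(2,0)\oplus W(1,1)$, so $[M_\mu\otimes\mathbb{F}]$ has two distinct top-dimensional components. The paper's proof exploits the asymmetry of $(\lambda+\rho,\rho,\ldots,\rho)$ in an essential way. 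Via the convolution grassmannian (Lemma~\ref{lem-convolutionMmu}) one shows every $\mathbb{F}$-point of $M_\mu$ lies in some $g_1 u^{\lambda+\rho} Y_{G,\leq (e-1)\rho,\mathbb{F}}$; the smallness of $(e-1)\rho$ is then used to exhibit, for each dominant $\lambda'<\lambda$, an explicit point $(\mathcal{E}^0,u^{\lambda'+\rho}g_0 u^{(e-1)\rho})\in\mathcal{C}_{\lambda'+e\rho}$ which cannot be of this form. The construction of $g_0$ and the verification that it obstructs membership require a careful Lie-theoretic argument (Steps~1--5 of the proof) and this is where the actual content lies.
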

	
	For this we need the following lemma. Here we use the notation $Y_{G, \leq \mu_2+\ldots+\mu_e,\mathbb{F}} \subset \operatorname{Gr}_{G} \otimes_{\mathcal{O}} \mathbb{F}$ to denote $Y_{G,\leq \eta} \otimes_{\mathcal{O}} \mathbb{F}$ for any tuple of cocharacters $\eta$ whose sum equals $\mu_2+\ldots+\mu_e$ (this special fibre is independent of $\eta$ by Lemma~\ref{lem-Yschuprop}).
	
	\begin{lemma}\label{lem-convolutionMmu}
		Suppose $\mu = (\mu_1,\ldots,\mu_e)$ with each $\mu_i \in X^*(T)$ dominant. Then any $\mathbb{F}$-valued point of $M_{\mu}(\mathbb{F})$ is contained in
		$$
		g_1 u^{\mu_1} Y_{G, \leq \mu_2+\ldots+\mu_e,\mathbb{F}} 
		$$
		for some $g_1 \in G$.
	\end{lemma}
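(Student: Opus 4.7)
The plan is to lift $x$ to a point over a DVR, use properness of $G/P_{\mu_1}$ to peel off the first factor, and then apply Beauville--Laszlo to identify the remaining data with a point in the analog of $Y$ for the shorter $(e{-}1)$-tuple $(\pi_2,\ldots,\pi_e)$.

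Since $M_\mu$ is defined as a scheme-theoretic closure, one may choose a DVR $R/\mathcal{O}$ with fraction field $F/E$ (and residue field extending $\mathbb{F}$) together with $\tilde x \in M_\mu(R)$ whose generic fiber lies in the image of $(G/P_{\mu_1} \times \cdots \times G/P_{\mu_e})(F)$ and whose special fiber equals $x$. The generic fiber records a tuple $(\operatorname{Fil}^\bullet_i)_i$ of filtrations of type $\mu_i$ on the trivial $G$-torsor. Since $G/P_{\mu_1} \to \operatorname{Spec}\mathcal{O}$ is proper, the valuative criterion extends $\operatorname{Fil}^\bullet_1$ to $R$; smoothness of $G \to G/P_{\mu_1}$ then permits (after an etale cover of $R$) a lift to $\tilde g_1 \in G(R)$, with reduction $g_1 \in G(\mathbb{F}')$. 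Translating $\tilde x$ by $\tilde g_1^{-1}$ via the natural $L^+G$-action on $\operatorname{Gr}_G$ (using $G(R) \subset L^+G(R)$), and appealing to Corollary~\ref{lem-generic} together with Lemma~\ref{lem-closedflag}, we reduce to the case where the first factor $\mathcal{E}_1$ of the generic fiber equals $\mathcal{E}_{\mu_1,1,F}$. It then suffices to prove $g_1^{-1} x \in u^{\mu_1} Y_{G,\leq \mu_2+\ldots+\mu_e,\mathbb{F}}$.

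Since $\operatorname{Gr}_{G,1}$ is ind-projective, hence separated, and $\mathcal{E}_{\mu_1,1}$ is defined integrally over $\mathcal{O}$, the equality at the generic fiber propagates to an equality $\mathcal{E}_1 = \mathcal{E}_{\mu_1,1,R}$ of $R$-points of $\operatorname{Gr}_{G,1}$. Unwinding the gluing description of Section~\ref{sub-opencovers} (after fppf-trivializing $\mathcal{E}$ and writing $\iota$ as $h \in G(R[u,E(u)^{-1}])$), this identity yields a factorization $h = \mu_1(u-\pi_1) \cdot h'$ in which $h'$ extends to a trivialization of $\mathcal{E}$ over the larger open set $\operatorname{Spec} R[u, (\prod_{j\geq 2}(u-\pi_j))^{-1}]$. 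The pair $(\mathcal{E},h')$ then defines an $R$-valued point $y_R$ of the analog $\operatorname{Gr}'_G$ of $\operatorname{Gr}_G$ associated to the $(e{-}1)$-tuple $(\pi_2,\ldots,\pi_e)$. The generic fiber of $y_R$ projects, via the analog of Corollary~\ref{lem-generic}, to $(\operatorname{Fil}^\bullet_i)_{i\geq 2} \in \prod_{i\geq 2} G/P_{\mu_i}(F)$, so $y_R$ lies in the corresponding scheme-theoretic closure. By the version of Proposition~\ref{prop-schubertcontain} for $(e{-}1)$-tuples, this closure sits inside the analog of $Y_{G,\leq (\mu_2,\ldots,\mu_e)}$, whose special fiber equals $Y_{G,\leq \mu_2+\ldots+\mu_e,\mathbb{F}}$ by Lemma~\ref{lem-Yschuprop}. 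Reducing $h = \mu_1(u-\pi_1) h'$ modulo $\mathfrak{m}_R$ gives $g_1^{-1} x = u^{\mu_1} (y_R \otimes \mathbb{F}) \in u^{\mu_1} Y_{G,\leq \mu_2+\ldots+\mu_e,\mathbb{F}}$, as required.

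The main obstacle is the extraction of $h'$ from $h$: converting the separatedness-based identity $\mathcal{E}_1 = \mathcal{E}_{\mu_1,1,R}$ into the integral factorization $h = \mu_1(u-\pi_1) h'$ requires a careful Beauville--Laszlo argument on the completed local rings $\widehat{R[u]}_{u-\pi_j}$ and a compatibility check along the gluing of Lemma~\ref{lem-BL}.
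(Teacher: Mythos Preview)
Your strategy is sound and can be completed, but it follows a different route from the paper's. The paper introduces the convolution grassmannian $\operatorname{Gr}^{\operatorname{con}}_G$---whose points record a pair $((\mathcal{E}_1,\iota_1),(\mathcal{E}_2,\iota_2))$ with $\iota_1$ trivializing $\mathcal{E}_1$ away from $\pi_1$ and $\iota_2:\mathcal{E}_2\cong\mathcal{E}_1$ away from $\pi_2,\ldots,\pi_e$---together with a closed $Z\subset\operatorname{Gr}^{\operatorname{con}}_G$ cut out by $(\mathcal{E}_1,\iota_1)\in G/P_{\mu_1}$ and by the second datum lying in the $(e{-}1)$-tuple Schubert scheme $Y_{G,\leq(\mu_2,\ldots,\mu_e)}$; since the convolution map $\operatorname{Gr}^{\operatorname{con}}_G\to\operatorname{Gr}_G$ is proper and an isomorphism over $E$, every $\mathbb{F}$-point of $M_\mu$ lifts to $Z(\mathbb{F})$ and one reads off the lemma from the image of $Z$. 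Your DVR lift together with the factorization $h=\mu_1(u-\pi_1)h'$ is precisely a pointwise construction of such a preimage in $Z$, so the two arguments are morally the same object viewed globally versus fiberwise. The obstacle you flag is real but does not require Beauville--Laszlo: over $F$ the gluing description in~\ref{sub-opencovers} already yields $h'=\mu_1(u-\pi_1)^{-1}h\in G(F[u,\prod_{j\geq2}(u-\pi_j)^{-1}])$, and since $R[u]$ is a UFD with each $u-\pi_j$ prime one checks (after a faithful representation, using Gauss's lemma) that $R[u,E(u)^{-1}]\cap F[u,\prod_{j\geq2}(u-\pi_j)^{-1}]=R[u,\prod_{j\geq2}(u-\pi_j)^{-1}]$, which is exactly the integral factorization you need. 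Two minor points: your appeal to separatedness of $\operatorname{Gr}_{G,1}$ is misplaced, since there is no natural $R$-point ``$\mathcal{E}_1$'' to compare (the product decomposition of Corollary~\ref{lem-generic} holds only over $E$)---what you actually use is just the over-$F$ identity; and your DVR may have residue field strictly larger than $\mathbb{F}$, so you obtain only $g_1\in G(\mathbb{F}')$, which is harmless for the application in Theorem~\ref{thm-irred} but slightly weaker than what the paper's approach gives directly.
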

	\begin{proof}
		Consider the convolution grassmannian $\operatorname{Gr}^{\operatorname{con}}_{G}$ whose $A$-points classify pairs $\mathcal{E}_{2},\mathcal{E}_1$ of $G$-torsors on $\operatorname{Spec}A[u]$ together with isomorphisms $\iota_1:\mathcal{E}_1|_{\operatorname{Spec}A[u,(u-\pi_1)^{-1}]} \xrightarrow{\sim} \mathcal{E}^0|_{\operatorname{Spec}A[u,(u-\pi_1)^{-1}]}$ and 
		$$
		\iota_2: \mathcal{E}_{2}|_{\operatorname{Spec}A[u,\prod_{i=2}^e (u-\pi_i)^{-1}]}\xrightarrow{\sim} \mathcal{E}_{1}|_{\operatorname{Spec}A[u,\prod_{i=2}^e (u-\pi_i)^{-1}]}
		$$
		Then $(\mathcal{E}_1,\mathcal{E}_2,\iota_1,\iota_2) \mapsto (\mathcal{E}_2, \iota_1 \circ \iota_2)$ induces a convolution morphism
		$$
		\operatorname{Gr}^{\operatorname{con}}_G \rightarrow \operatorname{Gr}_G
		$$
		which is well-known to be both proper, and an isomorphism after applying $\otimes_{\mathcal{O}} E$.  In particular, we can define $M^{\operatorname{con}}_\mu \subset \operatorname{Gr}^{\operatorname{con}}_G$ as the closure of $G/P_{\mu_1} \times_E \ldots \times_E G/P_{\mu_e} \subset \operatorname{Gr}^{\operatorname{con}}_G \otimes_{\mathcal{O}} E$, just as in Definition~\ref{def-mmu}. Then convolution restricts to a morphism $M^{\operatorname{con}}_\mu \rightarrow M_\mu$ which is proper and an isomorphism on generic fibres. In particular, this morphism is surjective on $\mathbb{F}$-valued points.
		
		Now write $\operatorname{Gr}_G^{(e-1)}$ for the affine grassmannian defined as in Definition~\ref{sub-Grdef} but for the $e-1$-tuple $(\pi_2,\ldots,\pi_e)$. As in Definition~\ref{def-schubert} we can consider the closed subscheme $Y_{G,\leq (\mu_2,\ldots,\mu_e)}$ whose special fibre coincides with $Y_{G,\leq \mu_2+ \ldots+\mu_e,\mathbb{F}}$. Then we can define a closed subscheme $Z$ of $\operatorname{Gr}^{\operatorname{con}}_G$ by imposing the conditions:
		\begin{itemize}
			\item The point of $\operatorname{Gr}_{G,1}$ defined by $(\mathcal{E}_1,\iota_1)$ lies in $G/P_{\mu_1}$.
			\item For any trivialisation $\mathcal{E}_1 \cong \mathcal{E}^0$ over $\operatorname{Spec}A[u]$ the point of $\operatorname{Gr}_{G}^{(e-1)}$ defined by $\mathcal{E}_1$ and the trivialisation $$
			\mathcal{E}_{2}|_{\operatorname{Spec}A[u,\prod_{i=2}^e (u-\pi_i)^{-1}]}\xrightarrow{\iota_2} \mathcal{E}_{1}|_{\operatorname{Spec}A[u,\prod_{i=2}^e (u-\pi_i)^{-1}]} \cong \mathcal{E}^0|_{\operatorname{Spec}A[u,\prod_{i=2}^e (u-\pi_i)^{-1}]} 
			$$
			lies in $Y_{G, \leq (\mu_2,\ldots,\mu_e)}$
		\end{itemize}
		Then $Z \otimes_{\mathcal{O}} E \cong G/P_{\mu_1} \times_E Y_{G,\leq \mu_2} \times_E \ldots \times_E Y_{G, \leq \mu_e}$ and so contains $M^{\operatorname{con}}_\mu \otimes_{\mathcal{O}} E$. As $Z \subset \operatorname{Gr}^{\operatorname{con}}$ is closed it follows $Z$ contains $M_\mu$.  It therefore suffices to show that any $\mathbb{F}$-valued point of $\operatorname{Gr}_G$ which is in the image of $Z$ under convolution lies in $g_1 u^{\mu_1} Y_{G, \leq \mu_2+\ldots+\mu_e,\mathbb{F}}$ for some $g_1 \in G$. But this is clear because if $(\mathcal{E}_1,\mathcal{E}_2,\iota_1,\iota_2) \in Z(\mathbb{F})$ and $\beta: \mathcal{E}_1 \cong \mathcal{E}^0$ is a trivialisation over $\operatorname{Spec}\mathbb{F}[u]$ so that $\iota_1  = g(u-\pi_1)^{\mu_1} \circ \beta$ then convolution sends $(\mathcal{E}_1,\mathcal{E}_2,\iota_1,\iota_2)$ onto 
		$$
		(\mathcal{E}_2,\iota_1\circ \iota_2) = (\mathcal{E}_2, g(u-\pi_1)^{\mu_1} \circ \beta \circ \iota_2) = g(u-\pi_1)^{\mu_1} \cdot (\mathcal{E}_2,\beta \circ \iota_2)
		$$
		and this last point is contained in $g(u-\pi_1)^{\mu_1} Y_{G, \leq (\mu_2,\ldots,\mu_e)} \otimes_{\mathcal{O}} \mathbb{F}$ as desired.
	\end{proof}
	\begin{proof}[Proof of Theorem~\ref{thm-irred} A]
		In view of Proposition~\ref{prop-firstcycle}, the theorem will follow if we can show that $\mathcal{C}_{\lambda'+e \rho} \not\subset M_{(\lambda+\rho,\rho,\ldots,\rho)} \otimes_{\mathcal{O}} \mathbb{F}$ for any dominant $\lambda' < \lambda$. We will do this by exhibiting an $\mathbb{F}$-valued point $g_0 \in  G$ so that 
		\begin{equation}\label{eq-calE}
			\mathcal{E} = (\mathcal{E}^0, u^{\lambda'+\rho} g_0 u^{(e-1)\rho}) \in \mathcal{C}_{\lambda'+e\rho}	
		\end{equation}
		but cannot be written as in Lemma~\ref{lem-convolutionMmu}. We refer to Example~\ref{ex-GL3example} for an illustration of some of the key steps in our argument in the case $G = \operatorname{GL}_3$.
		
		\subsubsection*{Step 1} We begin by constructing $g_0$ as above. Recall $U \subset B$ denotes the unipotent subgroup and write $H = u \operatorname{dlog}_u(u^{\lambda'+\rho})$ which, by Lemma~\ref{lem-dlog}, lies in $\operatorname{Lie}(T)$. We claim there exists $g_0 \in U$ so that 
		\begin{equation}\label{eq-adjointcondition}
			\operatorname{Ad}(g_0^{-1}) H = H + \sum_{\text{simple }\alpha^\vee} dx_{\alpha^\vee}(A_{\alpha^\vee})
		\end{equation}
		for some $A_{\alpha^\vee} \in \mathbb{F}^\times$ (recall here that $dx_{\alpha^\vee}: \mathbb{G}_a \rightarrow \mathfrak{g}_{\alpha^\vee}$ is the derivative of the root homomorphism $x_{\alpha^\vee}: \mathbb{G}_a \rightarrow G$). To see this, write $g_0^{-1} = \prod_{\beta^\vee >0} x_{\beta\vee}(a_{\beta^\vee})$ with the product running over all positive roots and taken in some fixed order. Then the description of the adjoint action from \eqref{eq-adjointaction} allows us to write
		$$
		\operatorname{Ad}(g_0^{-1}) H = H + \sum_{\beta^\vee >0} dx_{\beta_\vee}( \beta^\vee(H) a_{\beta^\vee} + P_{\beta^\vee})
		$$
		where $P_{\beta^\vee}$ is some polynomial in the $a_{\gamma^\vee}$ for roots $\gamma^\vee < \beta^\vee$. Lemma~\ref{lem-dlog} implies $\beta^\vee( H ) = \langle \beta^\vee,  \lambda'+ \rho\rangle$ and we claim this is non-zero in $\mathbb{F}$. Indeed, if $\beta^\vee_{\operatorname{max}}$ is the maximal root above $\beta^\vee$ then 
		$$
		0 < \beta^\vee( H ) \leq \langle \beta_{\operatorname{max}}^\vee,  \lambda'+ \rho\rangle \leq \langle \beta_{\operatorname{max}}^\vee,  \lambda+ \rho\rangle 
		$$
		where the first inequality follows from the dominance of $\lambda'$ and the last inequality uses that $\lambda'< \lambda$ and that $\langle \beta^\vee_{\operatorname{max}}, \alpha \rangle \geq 0$ for any simple coroot $\alpha$ (a consequence of the maximality of $\beta^\vee_{\operatorname{max}}$). The bound on $\lambda$ in the theorem therefore gives 
		$$
		0 < \beta^\vee(H) < \operatorname{char}\mathbb{F}
		$$
		as claimed. Consequently, \eqref{eq-adjointcondition} forces $a_{\beta^\vee}$ for non-simple $\beta^\vee$ to be expressed in terms of the $a_{\alpha^\vee}$ for $\alpha^\vee$ simple and implies $a_{\alpha^\vee} \neq 0$. In particular, there exists $g_0$ satisfying \eqref{eq-adjointcondition} as claimed.
		\subsubsection*{Step 2}
		Next we show $\mathcal{E} = (\mathcal{E}^0, u^{\lambda'+\rho} g_0 u^{(e-1)\rho}) \in \mathcal{C}_{\lambda'+e\rho}	$ for $g_0$ as in Step 1. Since $\lambda'+\rho$ is dominant we have $u^{\lambda'+\rho} g_0 u^{-(\lambda'+\rho)} \in L^+G$ for any $g_0  \in U$. Therefore, $\mathcal{E} \in \operatorname{Gr}_{G,\lambda'+e\rho,\mathbb{F}}$ and we just need to check when $\mathcal{E} \in \operatorname{Gr}^{\nabla}$. Since $g_0 \in G$ we have $\operatorname{dlog}_u(g_0) = 0$. Lemma~\ref{lem-dlog} therefore implies $\mathcal{E} \in \operatorname{Gr}^{\nabla}$ if and only if 
		$$
		\operatorname{Ad}(u^{-(e-1)\rho}) \left( \operatorname{Ad}(g_0^{-1}) H \right) \in u^{-(e-1)}\mathfrak{g}[u]
		$$
		As $g_0 \in U$ this is equivalent so asking that 
		$$
		\operatorname{Ad}(g_0^{-1}) H \in \operatorname{Lie}(T) \oplus \bigoplus_{\text{simple }\alpha^\vee } \mathfrak{g}_{\alpha^\vee}
		$$
		which is satisfied for $g_0$ as in Step 1.
		
		\subsubsection*{Step 3} Now suppose $\mathcal{E}$ satisfies the hypothesis of Lemma~\ref{lem-convolutionMmu}. We claim that
		$$
		\mathcal{E} \in u^{w(\lambda + \rho)} Y_{G,\leq (e-1)\rho, \mathbb{F}}
		$$
		for some $w \in W$. To see this consider the action of $\mathbb{G}_m \times T$ on $\operatorname{Gr}_{G,\mathbb{F}}$ where $T$ acts by left multiplication and $\mathbb{G}_m$ acts via the loop rotations described in Step 5 of the proof of Proposition~\ref{prop-dim}. Since $g_0 \in G$ the $1$-parameter subgroup $\mathbb{G}_m \rightarrow \mathbb{G}_m \times T$ given by $t \mapsto (t,t^{-(\lambda'+\rho)})$ induces an action fixing $\mathcal{E}$. Therefore, this $1$-parameter subgroup acts on the non-empty closed subscheme of $G/P_{\lambda+\rho} \subset \operatorname{Gr}_{G,\mathbb{F}}$ consisting of $g_1 \in G/P_{\lambda+\rho}$ with 
		$$
		\mathcal{E} \in g_1 u^{\lambda+\rho} Y_{G, \leq (e-1)\rho,\mathbb{F}}
		$$
		It follows that this closed subscheme contains  $\operatorname{lim}_{t \rightarrow 0} t \cdot g_1$. Since the loop rotation fixes $G/P_{\lambda+\rho}$ we have $t \cdot g_1 = t^{-(\lambda'+\rho)} g_1$.  The strict dominance of  $\lambda'+\rho$ therefore implies $\operatorname{lim}_{t \rightarrow 0} t \cdot g_1$ is a $T$-fixed point in $G/P_{\lambda+\rho}$. In other words, we can choose $g_1$ as above  representing an element in $W$, as claimed.
		
		\subsubsection*{Step 4} Continue to assume $\mathcal{E}$ satisfies the hypothesis of Lemma~\ref{lem-convolutionMmu}. Then Step 3 says
		$$
		u^{(-w(\lambda+\rho) + \lambda'+\rho)} g_0 u^{(e-1)\rho} \in Y_{G, \leq (e-1)\rho, \mathbb{F}}
		$$
		for some $w \in W$. In Step 5 we will show any $g_0$ as in Step 1 lies in $B^- w_0 B^-$ for $w_0 \in W$ the longest element and $B^- = P_{\lambda+\rho}$ the Borel opposite $B$. Here we explain how this will lead to a contradiction and finish the proof. First, consider the $\mathbb{G}_m$-action on $u^{(-w(\lambda+\rho) + \lambda'+\rho)} g_0 u^{(e-1)\rho}$ induced by a strictly dominant cocharacter. Since $g_0 \in U$ we have
		$$
		\operatorname{lim}_{t \rightarrow 0} t \cdot u^{(-w(\lambda+\rho) + \lambda'+\rho)} g_0 u^{(e-1)\rho} = u^{(-w(\lambda+\rho) + \lambda'+\rho)} u^{(e-1)\rho} \in Y_{G,\leq (e-1)\rho,\mathbb{F}} 
		$$
		Therefore $-w(\lambda+\rho) + \lambda'+\rho \leq 0$. On the other hand, if we consider the $\mathbb{G}_m$-action induced by a strictly anti-dominant cocharacter then the containment $g_0 \in B^- w_0 B^-$ implies
		$$
		\operatorname{lim}_{t \rightarrow 0} t \cdot u^{(-w(\lambda+\rho) + \lambda'+\rho)} g_0 u^{(e-1)\rho} = u^{(-w(\lambda+\rho) + \lambda'+\rho)} w_0 u^{(e-1)\rho} \in Y_{G,\leq (e-1)\rho,\mathbb{F}} 
		$$
		Therefore $w_0\left( -w(\lambda+\rho) + \lambda'+\rho \right) \leq 0$ and so $-w(\lambda+\rho) + \lambda'+\rho \geq 0$. Consequently, $w(\lambda+\rho) = \lambda'+\rho$ which is impossible since $\lambda' < \lambda$.
		
		\subsubsection*{Step 5} To conclude choose $s \in W$ so that $g_0^{-1} = b_1 s b_2$ with $b_1\in B^-$ and $b_2 \in U^-$ (the unipotent radical of $B^-$). We have to show $s = w_0$ and we will do this by using that, due to our choice of $g_0$ in Step 1, the application of first $\operatorname{Ad}(b_2)$, then $\operatorname{Ad}(s)$, and then $\operatorname{Ad}(b_1)$ sends $H$ onto an element of the form
		$$
		H + \sum_{\text{simple }\alpha^\vee} dx_{\alpha^\vee}(A_{\alpha^\vee})
		$$
		with $A_{\alpha^\vee} \in \mathbb{F}^\times$.  First, we can write
		$$
		\operatorname{Ad}(sb_2)H = s(H) + \sum_{\beta^\vee<0 } dx_{s(\beta^\vee)}(b_{\beta^\vee})
		$$
		for some $b_{\beta^\vee} \in \mathbb{F}$. Now, $\operatorname{Ad}(b_1)$ maps $dx_{\alpha^\vee}(1)$ onto a linear combination of  $dx_{\beta^\vee}$'s with $\beta^\vee \leq \alpha^\vee$ and the coefficient of $\alpha^\vee$ non-zero. Consequently, if $\gamma^\vee > 0 $ is a root maximal for the property that $b_{s^{-1}(\gamma^\vee)} \neq 0$  then the projection of 
		$$
		\operatorname{Ad}(b_1)\left( H + \sum_{\beta^\vee<0 } dx_{s(\beta^\vee)}(b_{\beta^\vee})\right)
		$$
		to the $\gamma^\vee$-root subspace will be a non-zero multiple of $d_{\gamma^\vee}(b_{s^{-1}(\gamma^\vee)})$. This forces $\gamma^\vee$ to be simple. On the other hand, if $\alpha^\vee$ is simple then $b_{s^{-1}(\alpha^\vee)} \neq 0$ because otherwise the projection of $\operatorname{Ad}(b_1)\left( H + \sum_{\beta^\vee<0 } dx_{s(\beta^\vee)}(b_{\beta^\vee})\right)$ onto the $\alpha^\vee$-root subspace will be zero. Therefore, $s^{-1}(\alpha^\vee) <0$ for each simple $\alpha^\vee >0$. This implies $s = w_0$, which finishes the proof.
	\end{proof}
	\begin{example}\label{ex-GL3example}
		Suppose $G = \operatorname{GL}_3$ and write $H = \operatorname{diag}(a,b,c)$ for integers $a> b > c$. Then $g_0 = \left( \begin{smallmatrix}
			1 & x & z \\ 0 & 1 & y \\ 0 & 0 & 1
		\end{smallmatrix} \right) \in U$ satisfies the conditions imposed in Step 1 precisely when $z = -xy \frac{c-b}{a-c}$ and $x,y \neq 0$. In this case it is easy to see directly that $g_0 \in B^- w_0 B^-$ as claimed in Step 4. However, we can also use run the arguments from Step 5 in this example to illustrate the idea. For example, if $g_0^{-1} = b_1 s b_2$ for $b_i \in B^-$ and $s = \left( \begin{smallmatrix}
			0 & 1 & 0 \\ 0 & 0 & 1 \\ 1 & 0 & 0
		\end{smallmatrix}  \right)$ then $\operatorname{Ad}(sb_2)$ would send $H$ onto a matrix of the form
		$$
		\begin{pmatrix}
			* & *_1 & *_2 \\ * & * & 0 \\ * & * & *
		\end{pmatrix}
		$$
		with the $*$'s possibly zero. We would then need $\operatorname{Ad}(b_1)$ to send this matrix onto something with non-zero entries directly above the diagonal and  top right entry zero. Considering how $\operatorname{Ad}(b_1)$ acts shows that $*_2 =0$, otherwise the  top right entry could never be killed. On the other hand, if $*_2 =0$ then it is impossible for an application of $\operatorname{Ad}(b_1)$ to create a non-zero entry in the $(2,3)$-th position.
	\end{example}
	\section{Equivariant sheaves and their cycles}\label{sec-equivariantsheaves}
	
	Our goal is now to give the coefficients appearing in Proposition~\ref{prop-firstcycle} a representation theoretic meaning. To do this we will relate these cycle identities to relations between global sections of line bundles on the $M_\mu$. 
	\begin{sub}\label{sub-TequiKtheory}
		Suppose $X$ is a finite type $\mathbb{F}$-scheme equipped with an action of the torus $T$. Then we write $K_0^T(X)$ for the Grothendieck group of the category of $T$-equivariant coherent sheaves on $X$. If $X$ is proper over $\mathbb{F}$ then the Euler characteristic $[\mathcal{F}] \mapsto \sum_{i \geq 0} (-1)^i [H^i(X,\mathcal{F})]$ defines a homomorphism
		$$
		\chi: K_0^T(X) \rightarrow R(T)
		$$ 
		where $R(T) = \mathbb{Z}[X^*(T)]$ denotes the Grothendieck group of algebraic $T$-representations (in which multiplication is given by the tensor product). For $\alpha^\vee \in X^*(T)$ we write $e(\alpha^\vee)$ for its class in $R(T)$ and for $V \in R(T)$ we write $V_{\alpha^\vee} \in \mathbb{Z}$ for the multiplicity of $e(\alpha^\vee)$ in $V$.
	\end{sub}
	
	\begin{definition}
		Suppose that $d \geq 0$.
		\begin{itemize}
			\item Define $K_0^T(X)_{\leq d}$ as the Grothendieck group of the category of $T$-equivariant coherent sheaves on $X$ with support of dimension $\leq d$.
			\item Let $V = (V_n)_{n \geq 0}$ be a sequence of elements in $R(T)$. We say $V$ is polynomial of degree $\leq d$ if there exists a polynomial $P(x) \in \mathbb{Q}[x]$ of degree $\leq d$ so that 
			$$
			\sum_{\alpha^\vee \in X^*(T)} |V_{n,\alpha^\vee}| \leq P(n)
			$$
			for all $n\geq 0$ (here $|\cdot|$ denotes the usual absolute value).
		\end{itemize}
		Notice that if each $V_n$ is effective (i.e. is the class of a $T$-representation $V_n$ in $R(T)$) then $V$ is polynomial if and only if the dimensions of $V_n$ are bounded by the value at $n$ of a polynomial. However the above definition also allows us to extend this notion to elements which are not necessarily effective. 
	\end{definition}
	\begin{remark}
		Note that there are homomorphisms $K_0^T(X)_{\leq d} \rightarrow K_0^T(X)$ which are not typically injective.
	\end{remark}
	\begin{lemma}\label{lemma-additive}
		If $V = (V_{n})_{n \geq 0}$ and $W = (W_{n})_{n \geq 0}$ are respectively polynomial of degree $\leq d_1$ and $\leq d_2$  then $(V_n+ W_n)_{n \geq 0}$ is also polynomial of degree $\leq \operatorname{max}\lbrace d_1,d_2 \rbrace$ and $(V_nW_n)_{n \geq 0}$ is polynomial of degree $\leq d_1+d_2$.
	\end{lemma}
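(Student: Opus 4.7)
The plan is to prove both statements by direct manipulation of the coefficients $V_{n,\alpha^\vee}$, using the triangle inequality for the additive claim and reinterpreting multiplication in $R(T) = \mathbb{Z}[X^*(T)]$ as convolution of coefficient sequences for the multiplicative claim.

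For the additive statement, let $P_V, P_W \in \mathbb{Q}[x]$ be polynomials of degrees $\leq d_1, d_2$ witnessing the hypothesis for $V$ and $W$. Since $(V_n+W_n)_{\alpha^\vee} = V_{n,\alpha^\vee} + W_{n,\alpha^\vee}$, the triangle inequality gives
$$
\sum_{\alpha^\vee \in X^*(T)} |(V_n+W_n)_{\alpha^\vee}| \leq \sum_{\alpha^\vee} |V_{n,\alpha^\vee}| + \sum_{\alpha^\vee} |W_{n,\alpha^\vee}| \leq P_V(n) + P_W(n),
$$
and the right-hand side is a polynomial of degree $\leq \max\{d_1,d_2\}$.

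For the multiplicative statement, recall that in $R(T) = \mathbb{Z}[X^*(T)]$ the product of $V_n$ and $W_n$ is a convolution, so $(V_nW_n)_{\gamma^\vee} = \sum_{\alpha^\vee + \beta^\vee = \gamma^\vee} V_{n,\alpha^\vee} W_{n,\beta^\vee}$. The triangle inequality followed by Fubini gives
$$
\sum_{\gamma^\vee} |(V_nW_n)_{\gamma^\vee}| \leq \sum_{\gamma^\vee} \sum_{\alpha^\vee+\beta^\vee=\gamma^\vee} |V_{n,\alpha^\vee}|\,|W_{n,\beta^\vee}| = \Bigl(\sum_{\alpha^\vee}|V_{n,\alpha^\vee}|\Bigr)\Bigl(\sum_{\beta^\vee}|W_{n,\beta^\vee}|\Bigr) \leq P_V(n)P_W(n),
$$
and $P_V P_W$ has degree $\leq d_1+d_2$.

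There is essentially no obstacle here; the only thing to be mildly careful about is the interchange of summations, which is valid because the sums are finite (each $V_n$ and $W_n$ lies in $R(T)$, hence has finite support). Once convolution is recognised as the right description of multiplication in $R(T)$, both inequalities are immediate.
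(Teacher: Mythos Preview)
Your proof is correct and follows exactly the same approach as the paper: the triangle inequality for the additive statement, and the convolution description of multiplication in $R(T)$ together with the triangle inequality and factoring the double sum for the multiplicative statement.
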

	\begin{proof}
		The first assertion follows since $\sum_{\alpha^\vee} |V_{n,\alpha^\vee} + W_{n,\alpha^\vee}| \leq \sum_{\alpha^\vee} |V_{n,\alpha^\vee}| + \sum_{\alpha^\vee} |W_{n,\alpha^\vee}|$. For the second, use $\sum_{\alpha^\vee} |\sum_{\beta^\vee+\gamma^\vee = \alpha^\vee} V_{n,\beta^\vee} W_{n,\gamma^\vee}| \leq \sum_{\beta^\vee,\gamma^\vee} | V_{n,\beta^\vee} |  | W_{n,\gamma^\vee}| = \left( \sum_{\beta^\vee} | V_{n,\beta^\vee} |  \right) \left(\sum_{\gamma^\vee} | W_{n,\gamma^\vee} | \right) $.
		
	\end{proof}
	\begin{lemma}\label{lem-cohomology}
		Suppose that $X$ is a proper $\mathbb{F}$-scheme of finite type equipped with a $T$-equivariant ample line bundle $\mathcal{L}$ and $\mathcal{F} \in K_0^T(X)$ is contained in the image of $\operatorname{im}K_0^T(X)_{\leq d}$. Then
		$$
		\chi(\mathcal{F} \otimes [\mathcal{L}^{\otimes n}] ) \in R(T)
		$$
		is polynomial of degree $ \leq d$.
	\end{lemma}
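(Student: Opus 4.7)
The plan is to reduce to the case where $\mathcal{F}$ is the class of a single $T$-equivariant coherent sheaf $\mathcal{G}$ with $\dim \operatorname{supp} \mathcal{G} \leq d$, and then to exploit Serre vanishing together with the Hilbert polynomial to convert the question into polynomial growth of $\dim H^0$.

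First I would observe that by the very definition of $K_0^T(X)_{\leq d}$, any $\mathcal{F}$ in its image can be written as a finite $\mathbb{Z}$-linear combination $\sum_j a_j [\mathcal{G}_j]$ with each $\mathcal{G}_j$ a $T$-equivariant coherent sheaf whose support has dimension $\leq d$. Since multiplication in $R(T)$ is bilinear, $\chi(\mathcal{F} \otimes [\mathcal{L}^{\otimes n}]) = \sum_j a_j \chi([\mathcal{G}_j] \otimes [\mathcal{L}^{\otimes n}])$, and Lemma~\ref{lemma-additive} (applied to sums) then reduces the proof to the case $\mathcal{F} = [\mathcal{G}]$ for a single such $\mathcal{G}$.

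Next, since $\mathcal{L}$ is ample on the proper $\mathbb{F}$-scheme $X$, Serre vanishing gives an integer $n_0 \geq 0$ such that $H^i(X, \mathcal{G} \otimes \mathcal{L}^{\otimes n}) = 0$ for all $i>0$ and all $n \geq n_0$. In this range the Euler characteristic reduces to
\[
\chi([\mathcal{G}] \otimes [\mathcal{L}^{\otimes n}]) = [H^0(X, \mathcal{G} \otimes \mathcal{L}^{\otimes n})],
\]
which is the class in $R(T)$ of an honest $T$-representation. For such an effective class the sum of absolute values of weight multiplicities is simply the $\mathbb{F}$-dimension, so
\[
\sum_{\alpha^\vee \in X^*(T)} \bigl| \chi([\mathcal{G}] \otimes [\mathcal{L}^{\otimes n}])_{\alpha^\vee} \bigr| \;=\; \dim_{\mathbb{F}} H^0(X, \mathcal{G} \otimes \mathcal{L}^{\otimes n}) \qquad (n \geq n_0).
\]
By the standard Hilbert polynomial estimate (applied to $\mathcal{G}$ viewed as a coherent sheaf on its schematic support, a proper scheme of dimension $\leq d$, together with the restriction of the ample line bundle $\mathcal{L}$) this dimension agrees, for $n \geq n_0$, with a polynomial in $n$ of degree $\leq \dim \operatorname{supp}\mathcal{G} \leq d$.

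Finally, for each of the finitely many $n$ with $0 \leq n < n_0$ the quantity $\sum_{\alpha^\vee} |\chi([\mathcal{G}] \otimes [\mathcal{L}^{\otimes n}])_{\alpha^\vee}|$ is a finite non-negative integer, so choosing a large enough constant and adding it to the polynomial produced above yields a single polynomial $P(x) \in \mathbb{Q}[x]$ of degree $\leq d$ dominating the sum for all $n \geq 0$. Combined with the reduction step this proves the lemma. No step seems genuinely hard; the only thing to be a bit careful about is that one really does want a bound on $\sum |(\cdot)_{\alpha^\vee}|$ rather than just on the dimension, which is why the reduction to an effective class via Serre vanishing is the crucial move.
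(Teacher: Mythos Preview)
Your proof is correct and follows essentially the same approach as the paper: reduce via Lemma~\ref{lemma-additive} to a single equivariant sheaf $\mathcal{G}$ with support of dimension $\leq d$, use ampleness (Serre vanishing) so that $\chi$ becomes the effective class $[H^0]$ for large $n$, bound the resulting dimension by the Hilbert polynomial, and absorb the finitely many small $n$ into a constant. The paper's write-up is terser but the logic is identical.
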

	\begin{proof}
		Using Lemma~\ref{lemma-additive} we can assume that $\mathcal{F}$ is the class of a $T$-equivariant sheaf $\mathcal{G}$ on $X$ with support of dimension $\leq d$. Since $\mathcal{L}$ is ample $\chi([\mathcal{G} \otimes \mathcal{L}^{\otimes n}])$ equals the class of $H^0(X,\mathcal{G} \otimes \mathcal{L}^{\otimes n})$ in $R(T)$ for sufficiently large $n$. Since the dimension of $H^0(X,\mathcal{G} \otimes \mathcal{L}^{\otimes n})$ is for sufficiently large $n$ the value at $n$ of a polynomial $P(x)$ of degree equal the support of $\mathcal{G}$ it follows that for all $n \geq 0$
		$$
		\sum_{\alpha^\vee} |\chi(\mathcal{G} \otimes \mathcal{L}^{\otimes n})_{\alpha^\vee}| = \sum_{\alpha^\vee} \chi(\mathcal{G} \otimes \mathcal{L}^{\otimes n})_{\alpha^\vee} \leq P(n) + C
		$$ 
		for a constant $C>>0$.
	\end{proof}

	\begin{proposition}\label{prop-cycletorep}
		Let $\mathcal{F}$ be a $T$-equivariant coherent sheaf on $X$ with support of dimension $\leq d$ and let
		$$
		[\mathcal{F}] = \sum_Z n_Z [\mathcal{O}_Z] \in Z_d(X)
		$$
		be the associated $d$-dimensional cycle.
		Assume that $X$ is equipped with an ample $T$-equivariant line bundle $\mathcal{L}$ and that $n_Z >0$ implies $Z$ is $T$-stable. Then there are $q_Z \in \mathbb{Z}_{\geq 0}$ and $\theta_{Z,i}^\vee \in X^*(T)$ so that 
		$$
		\chi(\mathcal{F} \otimes \mathcal{L}^{\otimes n}) - \sum_Z \left(  \sum_{i=1}^{n_Z} e(\theta_{Z,i}^\vee) \chi(\mathcal{L}^{\otimes k - q_Z}|_{Z}) \right) \in R(T)
		$$
		is polynomial of degree $< d$.
	\end{proposition}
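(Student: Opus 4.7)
The plan is to exhibit integers $q_Z \geq 0$ and characters $\theta_{Z,i}^\vee \in X^*(T)$ for which the difference
$$[\mathcal{F}] - \sum_Z \sum_{i=1}^{n_Z} e(\theta_{Z,i}^\vee)\,[\mathcal{L}^{\otimes -q_Z}|_Z] \in K_0^T(X)$$
lies in the image of $K_0^T(X)_{\leq d-1}$. Once this is done, tensoring with $\mathcal{L}^{\otimes n}$ and applying $\chi$ yields the proposition, because Lemma~\ref{lem-cohomology} says that the image of $K_0^T(X)_{\leq d-1}$ under $\chi(-\otimes \mathcal{L}^{\otimes n})$ consists of sequences polynomial of degree $< d$, and $\chi$ is $R(T)$-linear so that each character twist $e(\theta_{Z,i}^\vee)$ passes through.

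The first step is a $T$-equivariant dévissage of $\mathcal{F}$. Since $T$ is connected, every irreducible component of $\operatorname{Supp}\mathcal{F}$ is $T$-stable, and more generally every associated prime of $\mathcal{F}$ corresponds to a $T$-stable integral closed subscheme of $X$. The standard filtration construction (choose a $T$-stable associated prime, pass to the quotient by its annihilator, iterate) then yields a $T$-equivariant filtration of $\mathcal{F}$ whose successive quotients are of the form $i_{W*}\mathcal{H}$ for a $T$-stable integral closed $W \subset X$ and $\mathcal{H}$ a $T$-equivariant coherent sheaf on $W$. Collecting the $d$-dimensional pieces and grouping them by component $Z$ produces, for each $d$-dim $T$-stable integral $Z$ with $n_Z > 0$, a $T$-equivariant coherent sheaf $\mathcal{G}_Z$ on $Z$ of generic length $n_Z$, together with
$$[\mathcal{F}] \equiv \sum_Z [i_{Z*}\mathcal{G}_Z] \pmod{\operatorname{im}K_0^T(X)_{\leq d-1}}.$$

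The second step replaces each $[i_{Z*}\mathcal{G}_Z]$ by a sum of character twists of $[\mathcal{L}^{\otimes -q_Z}|_Z]$. Fix $Z$ and let $\xi_Z$ denote its generic point. Using ampleness of $\mathcal{L}|_Z$, choose $q_Z \gg 0$ so that $\mathcal{G}_Z \otimes \mathcal{L}^{\otimes q_Z}|_Z$ is globally generated. The finite-dimensional $T$-representation $V := H^0(Z, \mathcal{G}_Z \otimes \mathcal{L}^{\otimes q_Z}|_Z)$ decomposes as $V = \bigoplus_{\theta^\vee} V_{\theta^\vee}$ since $T$ is a torus, and the evaluation map $V \otimes_\mathbb{F} \kappa(\xi_Z) \twoheadrightarrow (\mathcal{G}_Z \otimes \mathcal{L}^{\otimes q_Z}|_Z)_{\xi_Z}$ surjects onto an $n_Z$-dimensional $\kappa(\xi_Z)$-vector space. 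Extracting from the images of $\mathbb{F}$-basis vectors of the various $V_{\theta^\vee}$ a $\kappa(\xi_Z)$-basis of the generic fiber, one obtains $T$-eigenvectors $s_1, \ldots, s_{n_Z}$ with each $s_i \in V_{\theta_{Z,i}^\vee}$ whose images form such a basis. The associated $T$-equivariant map
$$\bigoplus_{i=1}^{n_Z} e(\theta_{Z,i}^\vee) \otimes \mathcal{L}^{\otimes -q_Z}|_Z \longrightarrow \mathcal{G}_Z$$
is an isomorphism at $\xi_Z$, so its kernel and cokernel are $T$-equivariant coherent sheaves supported in dimension $< d$, giving
$$[i_{Z*}\mathcal{G}_Z] \equiv \sum_{i=1}^{n_Z} e(\theta_{Z,i}^\vee)\,[\mathcal{L}^{\otimes -q_Z}|_Z] \pmod{\operatorname{im}K_0^T(X)_{\leq d-1}}.$$

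Summing over $Z$ and combining with the first step completes the congruence, and thus the proof. The principal technical point is the $T$-equivariant dévissage of Step 1, which relies crucially on the connectedness of $T$ so that associated primes of $T$-equivariant sheaves are themselves $T$-stable; the second step is then essentially formal, combining ampleness-driven global generation with the complete reducibility of $T$-representations.
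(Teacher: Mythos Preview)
Your proof is correct and follows essentially the same strategy as the paper: reduce to a single $T$-stable component $Z$, use ampleness to globally generate, and extract $n_Z$ characters from the space of global sections so that the difference lies in the image of $K_0^T(X)_{\leq d-1}$. The paper reduces to a single $Z$ by induction using powers of the ideal sheaf $\mathcal{I}_Z^N$ rather than by a direct d\'evissage, and extracts the characters by filtering all of $V$ by one-dimensional $T$-stable pieces and analyzing which graded pieces of the induced image filtration have full support, rather than by directly selecting eigenvectors whose images form a basis of the generic fibre --- but these are minor variations on the same idea.
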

	\begin{proof}
		We induct on the number of $n_Z >0$. If this is zero then the class of $\mathcal{F}$ has support of dimension $<d$ and so its class in $K_0^T(X)$ is contained in $\operatorname{im}K_0^T(X)_{\leq d-1}$. Lemma~\ref{lem-cohomology} therefore implies $\chi(\mathcal{F} \otimes \mathcal{L}^{\otimes n})$ is polynomial of degree $<d$, and the proposition holds. Otherwise, write $\mathcal{I}_Z$ for the ideal sheaves corresponding to those $Z$ with $n_Z >0$. By assumption each such $Z$ is $T$-stable and so $\mathcal{I}_Z^N$ is a $T$-equivariant coherent sheaf for any $N\geq 1$. For $N$ sufficiently large the support of $\mathcal{I}_Z^N\mathcal{F}$ does not contain $Z$ (see \cite[0Y19]{stacks-project}) and so the inductive hypothesis holds for $\mathcal{I}_Z^N\mathcal{F}$. By applying Lemma~\ref{lem-cohomology} to the identity
		$$
		[\mathcal{F}] =  [\mathcal{I}_Z^N\mathcal{F}] + [\mathcal{F}/\mathcal{I}_Z^N\mathcal{F}] 
		$$
		in $K_0^T(X)$ we see that the proposition will hold if it holds for $\mathcal{F}/\mathcal{I}_Z^N\mathcal{F}$. 
		
		Since $\mathcal{F}/I_Z^N\mathcal{F}$ has support contained in $Z$ we are reduced to proving the proposition when the support of $\mathcal{F}$ is a single irreducible component. Thus we can assume $Z= X$ and write $[\mathcal{F}] = n[\mathcal{O}_X]$ in $Z_d(X)$. Since $\mathcal{L}$ is ample there is an integer $q \in \mathbb{Z}_{\geq 0}$ so that $\mathcal{F} \otimes \mathcal{L}^{\otimes q}$ is generated by global sections. This gives a $T$-equivariant surjection
		$$V \otimes \mathcal{L}^{\otimes -q} \rightarrow \mathcal{F}
		$$
		with $V = H^0(X,\mathcal{F} \otimes \mathcal{L}^{\otimes q})$.  Since $T$ is abelian we can choose a $T$-equivariant filtration 
		$$  \ldots\subset V_{j+1} \subset  V_j \subset V_{j-1} \subset \ldots V_0 =V
		$$
		with each graded piece of dimension one. 
		\begin{claim}
			There exists $j_1,\ldots,j_n \in \mathbb{Z}_{\geq 0}$ so that
			$$
			[\mathcal{F}]-  \sum_{i=1}^n [V_{j_i}/V_{j_i+1} \otimes \mathcal{L}^{-q}] \in \operatorname{im}K_0^T(X)_{\leq d-1}
			$$
		\end{claim}
		\begin{proof}[Proof of Claim]
			Set $\mathcal{F}_j$ equal the image in $\mathcal{F}$ of $V_j \otimes \mathcal{L}^{-q}$. Then $[\mathcal{F}] = \sum_{j \in \mathbb{Z}_{\geq 0}} [\mathcal{F}_j/\mathcal{F}_{j+1}]$ in $K_0^T(X)$. For each $j$ we also have a $T$-equivariant exact sequence 
			$$
			0 \rightarrow \mathcal{G}_j \rightarrow V_j/V_{j+1} \otimes \mathcal{L}^{-q} \rightarrow \mathcal{F}_j/\mathcal{F}_{j+1} \rightarrow 0
			$$
			of coherent sheaves.  Since $V_j/V_{j+1} \otimes \mathcal{L}^{-q}$ is locally free of rank one it follows that exactly one of $\mathcal{G}_j$ and $\mathcal{F}_j/\mathcal{F}_{j+1}$ has support of dimension $<d$ and exactly one has support equal to $X$. Thus 
			$$
			\begin{aligned}
				~[\mathcal{F}] - \sum_{\operatorname{supp}\mathcal{F}_j/\mathcal{F}_{j+1} =X} & [V_j/V_{j+1}\otimes \mathcal{L}^{\otimes -q}]  \\
				&= - \sum_{\operatorname{supp}\mathcal{F}_j/\mathcal{F}_{j+1} =X} [\mathcal{G}_j] + \sum_{\operatorname{supp}\mathcal{F}_j/\mathcal{F}_{j+1} \neq X} [\mathcal{F}_j/\mathcal{F}_{j+1}] \in \operatorname{im}K_0^T(X)_{\leq d-1}
			\end{aligned}
			$$
			To finish the proof of the claim we just need to check that $\operatorname{supp}\mathcal{F}_j/\mathcal{F}_{j+1} =X$ exactly $n$ times. This follows because in $Z_d(X)$ we have $ [V_j/V_{j+1}\otimes \mathcal{L}^{\otimes -q}]  = [X]$ and so $[\mathcal{F}] =  \sum_{\operatorname{supp}\mathcal{F}_j/\mathcal{F}_{j+1} =X}  [V_j/V_{j+1}\otimes \mathcal{L}^{\otimes -q}]  = \sum_{\operatorname{supp}\mathcal{F}_j/\mathcal{F}_{j+1} = X} [X] = n[X]$.
		\end{proof}
		
		Applying Lemma~\ref{lem-cohomology} to the identity in the claim gives the proposition because if $\theta_i^\vee \in X^*(T)$ is the character through which $T$ acts on $V_{j_i}/V_{j_i+1}$ then $\chi(V_j/V_{j+1}\otimes \mathcal{L}^{\otimes k-q}) = e(\theta_i^\vee)\chi(\mathcal{L}^{\otimes (k-q)})$.
	\end{proof}
	
	\section{Determinant line bundles}\label{sec-detlinebundle}
	
	\begin{sub}
		In order to apply Proposition~\ref{prop-cycletorep} to the identity of cycles established in Proposition~\ref{prop-firstcycle} we need to choose an equivariant line bundle on $\operatorname{Gr}_G$. To do this we consider the morphism
		$$
		\operatorname{Ad}:	\operatorname{Gr}_G \rightarrow \operatorname{Gr}_{\operatorname{GL}(\mathfrak{g})}
		$$
		induced by the adjoint representation of $G$. Then $\operatorname{Gr}_{\operatorname{GL}(\mathfrak{g})}$ is equipped with the ``determinantal'' line bundle $\mathcal{L}_{\operatorname{det}}$, defined by the property that its pull-back to $\operatorname{Spec}A$ along a morphism corresponding to $(\mathcal{E},\iota) \in \operatorname{Gr}_{\operatorname{GL}(\mathfrak{g})}(A)$ is given by the $A$-module
		$$
		\operatorname{det}_A(u^{-N} \mathfrak{g} \otimes_{\mathcal{O}} A[u]/ \iota(\mathcal{E})) \otimes_A \operatorname{det}_A(u^{-N}\mathfrak{g} \otimes_{\mathcal{O}} A[u]/ \mathfrak{g} \otimes_{\mathcal{O}} A[u])^{-1}
		$$
		for $N$ sufficiently large that $\iota(\mathcal{E}) \subset u^{-N} \mathfrak{g} \otimes_{\mathcal{O}} A[u]$. Note this is independent of the choice of $N$. Then $\mathcal{L}_{\operatorname{det}}$ is $\operatorname{GL}(\mathfrak{g})$-equivariant and is ample in the sense that its restriction to any closed subscheme in $\operatorname{Gr}_{\operatorname{GL}(\mathfrak{g})}$ is ample. Therefore
		$$
		\mathcal{L}_{\operatorname{ad}} := \operatorname{Ad}^* \mathcal{L}_{\operatorname{det}}
		$$
		is $G$-equivariant and also ample.
	\end{sub}
	\begin{lemma}\label{lem-linebundle}
		For $\lambda \in X_*(T)$ the group $T$ acts on the fibre of $\mathcal{L}_{\operatorname{ad}}$ over $\mathcal{E}_{\lambda,i}$ via the image of $\lambda$ under the homomorphism 
		$$
		p: X_*(T) \rightarrow X^*(T)
		$$
		given by $\lambda \mapsto \sum_{\alpha^\vee \in R^\vee} \langle \alpha^\vee,\lambda \rangle \alpha^\vee$.
	\end{lemma}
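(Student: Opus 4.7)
The plan is to unwind what the adjoint morphism does to the $T$-fixed point $\mathcal{E}_{\lambda,i}$ and then read off the $T$-character of the fibre of $\mathcal{L}_{\operatorname{det}}$ from the root space decomposition of $\mathfrak{g}$. By functoriality of the affine grassmannian in $G$, the adjoint map sends $\mathcal{E}_{\lambda,i} = (\mathcal{E}^0,\lambda(u-\pi_i))$ to the trivial $\operatorname{GL}(\mathfrak{g})$-torsor $\mathfrak{g}\otimes_{\mathcal{O}}\mathcal{O}[u]$ equipped with the trivialization $\operatorname{Ad}(\lambda(u-\pi_i))$. Under the moduli description of Remark~\ref{rem-representability}, this corresponds to the lattice
$$\mathcal{M}_\lambda := \operatorname{Ad}(\lambda(u-\pi_i))(\mathfrak{g}\otimes_{\mathcal{O}}\mathcal{O}[u]) \subset \mathfrak{g}\otimes_{\mathcal{O}}\mathcal{O}[u,E(u)^{-1}].$$
Since $\lambda(t)$ acts on $\mathfrak{g}_{\alpha^\vee}$ by the scalar $t^{\langle\alpha^\vee,\lambda\rangle}$ and trivially on $\mathfrak{t}$, the root decomposition of $\mathfrak{g}$ yields
$$\mathcal{M}_\lambda = \mathfrak{t}\otimes_{\mathcal{O}}\mathcal{O}[u] \;\oplus\; \bigoplus_{\alpha^\vee\in R^\vee} (u-\pi_i)^{\langle\alpha^\vee,\lambda\rangle}\,\mathfrak{g}_{\alpha^\vee}\otimes_{\mathcal{O}}\mathcal{O}[u].$$

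Next I would choose $N \geq \operatorname{max}_{\alpha^\vee}|\langle\alpha^\vee,\lambda\rangle|$ so that $E(u)^{-N}\mathfrak{g}\otimes_{\mathcal{O}}\mathcal{O}[u]$ contains both $\mathcal{M}_\lambda$ and $\mathfrak{g}\otimes_{\mathcal{O}}\mathcal{O}[u]$, and apply the defining formula for the fibre of $\mathcal{L}_{\operatorname{det}}$. The two quotients in that formula split $T$-equivariantly along the root decomposition. The $\mathfrak{t}$-components of the two quotients are identical and $T$ acts trivially on them, so they cancel in the ratio. For each root $\alpha^\vee$, the $\alpha^\vee$-piece of the numerator quotient is $E(u)^{-N}\mathcal{O}[u]/(u-\pi_i)^{\langle\alpha^\vee,\lambda\rangle}\mathcal{O}[u] \otimes_{\mathcal{O}} \mathfrak{g}_{\alpha^\vee}$, which (after multiplying by $E(u)^N$) identifies with the quotient of $\mathcal{O}[u]$ by a monic polynomial of degree $Ne + \langle\alpha^\vee,\lambda\rangle$; hence it is free over $\mathcal{O}$ of that rank, with $T$ acting by the pure weight $\alpha^\vee$. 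The corresponding denominator piece is the special case $\lambda = 0$, free of rank $Ne$ with the same pure weight.

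Taking top exterior powers and subtracting, the difference of $T$-characters is
$$\sum_{\alpha^\vee \in R^\vee} \bigl((Ne + \langle\alpha^\vee,\lambda\rangle) - Ne\bigr)\alpha^\vee = \sum_{\alpha^\vee \in R^\vee} \langle\alpha^\vee,\lambda\rangle\,\alpha^\vee = p(\lambda),$$
which is the desired character. I expect no real conceptual obstacle here; the only care required is to verify that the fibre of $\mathcal{L}_{\operatorname{det}}$ is insensitive to replacing the ambient lattice $u^{-N}\mathfrak{g}\otimes_{\mathcal{O}}\mathcal{O}[u]$ (as written in Section~\ref{sec-detlinebundle}) by $E(u)^{-N}\mathfrak{g}\otimes_{\mathcal{O}}\mathcal{O}[u]$—which is needed because $(u-\pi_i)^{\langle\alpha^\vee,\lambda\rangle}\mathcal{O}[u]$ is not contained in any $u^{-N}\mathcal{O}[u]$ when $\langle\alpha^\vee,\lambda\rangle$ is negative and $\pi_i$ is nonzero. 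This is a standard check because any two valid ambient lattices contribute the same to the numerator and denominator determinants, so they cancel in the ratio.
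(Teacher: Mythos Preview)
Your proof is correct and follows essentially the same route as the paper's: identify the image lattice via the root decomposition of $\mathfrak{g}$, read off the $T$-weight on each root piece, and take the difference of determinantal characters. In fact you are slightly more careful than the paper on one point: the paper writes the fibre using the ambient lattice $u^{-N}\mathfrak{g}[u]$ and then presents the quotient as $\bigoplus_{\alpha^\vee}\bigoplus_{n=-N}^{\langle\alpha^\vee,\lambda\rangle}(u-\pi_i)^n\mathfrak{g}_{\alpha^\vee}$, which is not literally correct when $\pi_i\neq 0$ and some $\langle\alpha^\vee,\lambda\rangle<0$, whereas your use of $E(u)^{-N}$ (and the remark that the choice of ambient lattice cancels) handles this cleanly.
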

	\begin{proof}
		This fibre is the rank one $\mathcal{O}$-module
		\begin{equation}\label{eq-fibre}
			\underbrace{\operatorname{det}_{\mathcal{O}}\left(u^{-N} \mathfrak{g}[u] / \operatorname{Ad}(u-\pi_i)^\lambda \mathfrak{g}[u] \right)}_{\Lambda_1} \otimes \underbrace{\operatorname{det}_{\mathcal{O}}\left(u^{-N} \mathfrak{g}[u] / \mathfrak{g}[u] \right)^{-1}}_{\Lambda_2}
		\end{equation}
		for sufficiently large $N$. As an $\mathcal{O}$-module we have 
		$$
		u^{-N} \mathfrak{g}[u] / \operatorname{Ad}(u-\pi_i)^\lambda \mathfrak{g}[u] \cong \bigoplus_{\alpha^\vee \in R^\vee} \bigoplus_{n=-N}^{\langle \alpha^\vee,\lambda \rangle} (u-\pi_i)^n \mathfrak{g}_{\alpha^\vee}
		$$
		and $t \in T(\mathcal{O})$ acts on $(u-\pi_i)^n \mathfrak{g}_{\alpha^\vee}$ by $\alpha^\vee(t)$. Therefore, $t$ acts on	$\Lambda_1$ by $\prod_{\alpha^\vee \in R} \alpha^\vee(t)^{\langle \alpha^\vee,\lambda \rangle + N}$. Similarly $t$ acts on $\Lambda_2$ by $\prod_{\alpha^\vee \in R} \alpha^\vee(t)^{N}$. We conclude that $t$ acts on \eqref{eq-fibre} by
		$$
		\prod_{\alpha^\vee \in R} \alpha^\vee(t)^{\langle \alpha^\vee,\lambda \rangle} = p(\lambda)(t)
		$$
		as claimed.
	\end{proof}
	\begin{proposition}\label{prop-kunneth}
		If $\mu = (\mu_1,\ldots,\mu_e)$ with $\mu_i \in X_*(T)$ dominant then, after interpreting $H^0(M_\mu \otimes_{\mathcal{O}} \mathbb{F},\mathcal{L}_{\operatorname{ad}}^{\otimes n})$ and $\operatorname{Ind}_{P_{\mu_i}}^G(p(n\mu_i))$ as $T$-representations by restricting the $G$-action, one has
		$$
		[H^0(M_\mu \otimes_{\mathcal{O}} \mathbb{F},\mathcal{L}_{\operatorname{ad}}^{\otimes n})] =  [\bigotimes_{i=1}^e \operatorname{Ind}_{P_{\mu_i}}^G(p(n\mu_i))] \in R(T)
		$$ 
		for sufficiently large $n\geq 0$.
	\end{proposition}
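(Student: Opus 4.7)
The plan is to compute the global sections on the generic fibre and transfer the computation to the special fibre via flat base change. First, since $M_\mu$ is by definition the scheme-theoretic image of a morphism from an $E$-scheme into $\operatorname{Gr}_G$, it has no $\mathcal{O}$-torsion and is thus flat over $\mathcal{O}$. Moreover the composite
$$
\bigl( G/P_{\mu_1} \times_{\mathcal{O}} \cdots \times_{\mathcal{O}} G/P_{\mu_e} \bigr) \otimes_{\mathcal{O}} E \longrightarrow \operatorname{Gr}_G \otimes_{\mathcal{O}} E
$$
is a closed immersion (combining Lemma~\ref{lem-closedflag} with Corollary~\ref{lem-generic}), so its scheme-theoretic image equals its set-theoretic image; this yields a canonical identification $M_\mu \otimes_{\mathcal{O}} E \cong \prod_i (G/P_{\mu_i})_E$.

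Second, the restriction of $\mathcal{L}_{\operatorname{ad}}$ to the projective $\mathcal{O}$-scheme $M_\mu$ is ample. Serre vanishing therefore yields $H^i(M_\mu \otimes_{\mathcal{O}} \kappa, \mathcal{L}_{\operatorname{ad}}^{\otimes n}) = 0$ for all $i > 0$, $n \gg 0$, and both $\kappa = E$ and $\kappa = \mathbb{F}$. Cohomology and base change then imply that $H^0(M_\mu, \mathcal{L}_{\operatorname{ad}}^{\otimes n})$ is a finite flat---hence finite free---$\mathcal{O}$-module whose formation commutes with arbitrary base change on $\mathcal{O}$. Since $T$ is split over $\mathcal{O}$, the $T$-character of a finite free $T$-equivariant $\mathcal{O}$-module is insensitive to base extension, so it suffices to verify the identity for $H^0(M_\mu \otimes_{\mathcal{O}} E, \mathcal{L}_{\operatorname{ad}}^{\otimes n})$.

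Finally, we compute on the generic fibre. Under the product decomposition of Corollary~\ref{lem-generic} and the matching splitting $\widehat{A[u]}_{E(u)} \cong \prod_i \widehat{A[u]}_{u-\pi_i}$, the multiplicativity of determinants in direct sum decompositions identifies $\mathcal{L}_{\operatorname{ad}}|_{\operatorname{Gr}_G \otimes E}$ with an external tensor product $\boxtimes_i \mathcal{L}_{\operatorname{ad},i}$, where $\mathcal{L}_{\operatorname{ad},i}$ is the analogous determinantal line bundle on $\operatorname{Gr}_{G,i}$. Under the closed embedding $G/P_{\mu_i} \hookrightarrow \operatorname{Gr}_{G,i}$ of Lemma~\ref{lem-closedflag}, Lemma~\ref{lem-linebundle} identifies the $T$-weight on the fibre of $\mathcal{L}_{\operatorname{ad},i}$ at the basepoint with $p(\mu_i)$; since the Killing form underlying $p$ is Weyl invariant, $p(\mu_i)$ is fixed by the Weyl group of the Levi of $P_{\mu_i}$ and hence extends to a character of $P_{\mu_i}$. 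Thus $\mathcal{L}_{\operatorname{ad},i}|_{G/P_{\mu_i}}$ is the $G$-equivariant line bundle $\mathcal{L}(p(\mu_i))$. K\"unneth (using the higher cohomology vanishing from the previous step) together with the standard Borel--Weil identification $H^0\bigl((G/P_{\mu_i})_E, \mathcal{L}(p(n\mu_i))\bigr) = \operatorname{Ind}_{P_{\mu_i}}^G(p(n\mu_i)) \otimes_{\mathcal{O}} E$ then give
$$
H^0(M_\mu \otimes_{\mathcal{O}} E, \mathcal{L}_{\operatorname{ad}}^{\otimes n}) \;=\; \bigotimes_{i=1}^e \operatorname{Ind}_{P_{\mu_i}}^G(p(n\mu_i)) \otimes_{\mathcal{O}} E,
$$
as claimed. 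The main point demanding care is the base-change step: one genuinely needs that $H^0(M_\mu, \mathcal{L}_{\operatorname{ad}}^{\otimes n})$ is locally free with formation commuting with base change, which rests on the combination of flatness of $M_\mu$ over $\mathcal{O}$ and Serre vanishing in positive cohomological degrees.
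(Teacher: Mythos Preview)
Your proof is correct and follows essentially the same approach as the paper: reduce to the generic fibre via flatness of $M_\mu$ over $\mathcal{O}$ together with Serre vanishing, then decompose $\mathcal{L}_{\operatorname{ad}}$ as an external tensor product under the factorisation $M_\mu \otimes_{\mathcal{O}} E \cong \prod_i (G/P_{\mu_i})_E$ and apply K\"unneth together with Lemma~\ref{lem-linebundle}. The paper phrases the passage between fibres via the specialisation map of \cite[10.9]{Janbook} rather than cohomology and base change, and omits your remark on why $p(\mu_i)$ extends to a character of $P_{\mu_i}$, but these are purely presentational differences.
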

	\begin{proof}
		
		Since $\mathcal{L}_{\operatorname{ad}}$ is an ample line bundle on the flat $\mathcal{O}$-scheme $M_\mu$ it follows that 
		$$
		H^0(M_\mu \otimes_{\mathcal{O}} \mathbb{F},\mathcal{L}_{\operatorname{ad}}^{\otimes n}) = H^0(M_\mu,\mathcal{L}_{\operatorname{ad}}^{\otimes n}) \otimes_{\mathcal{O}} \mathbb{F}
		$$
		for sufficiently large $n$. Therefore $[H^0(M_\mu \otimes_{\mathcal{O}} \mathbb{F},\mathcal{L}_{\operatorname{ad}}^{\otimes n}) ]$ is equal to the image of $[H^0(M_\mu \otimes_{\mathcal{O}} E,\mathcal{L}_{\operatorname{ad}}^{\otimes n}) ]$ under the specialisation map from \cite[10.9]{Janbook}. Since this map sends the class of $\bigotimes_{i=1}^e \operatorname{Ind}_{P_{\mu_i}}^G(p(n\mu_i))$ viewed as a representation on an $E$-vector space onto $\bigotimes_{i=1}^e \operatorname{Ind}_{P_{\mu_i}}^G(p(n\mu_i))$ viewed as a representation on an $\mathbb{F}$-vector space the proposition will follow if we can show
		$$
		[H^0(M_\mu \otimes_{\mathcal{O}} E,\mathcal{L}_{\operatorname{ad}}^{\otimes n})] =  [\bigotimes_{i=1}^e \operatorname{Ind}_{P_{\mu_i}}^G(p(n\mu_i))] \in R(T)
		$$
		Under the isomorphism $\operatorname{Gr}_G \otimes_{\mathcal{O}} E \cong  \left( \operatorname{Gr}_{G,1} \times_{\mathcal{O}} \ldots \times_{\mathcal{O}} \operatorname{Gr}_{G,e} \right) \otimes_{\mathcal{O}} E$ we have $\mathcal{L}_{\operatorname{ad}} = \bigotimes_{i=1}^e p_i^* \mathcal{L}_{\operatorname{ad},i}$ where $\mathcal{L}_{\operatorname{ad},i}$ is the restriction of $\mathcal{L}_{\operatorname{ad}}$ to $\operatorname{Gr}_{G,i}$ and $p_i$ is the $i$-th projection. Therefore
		$$
		\mathcal{L}_{\operatorname{ad}}|_{M_\mu \otimes_{\mathcal{O}} E} = \bigotimes_{i=1}^e p_i^*(\mathcal{L}_{\operatorname{ad,i}}|_{G/P_{\mu_i} \times_{\mathcal{O}} E})
		$$
		and so the Kunneth formula \cite[0BED]{stacks-project} identifies
		$$
		H^0(M_\mu \otimes_{\mathcal{O}} E,\mathcal{L}_{\operatorname{ad}}^{\otimes n}) = \bigotimes_{i=1}^e H^0(G/P_{\mu_i} \otimes_{\mathcal{O}} E, \mathcal{L}_{\operatorname{ad},i}^{\otimes n})
		$$
		as $G$-representations. To finish the proof we just have to show 
		\begin{equation}\label{eq-desiredisom}
			H^0(G/P_{\mu_i}, \mathcal{L}_{\operatorname{ad},i}^{\otimes n}) \cong \operatorname{Ind}_{P_{\mu_i}}^G(p(n\mu_i))
		\end{equation} 
		For this recall (see for example \cite[5.12]{Janbook}) that the  global sections of any $G$-equivariant line bundle on $G/P_{\mu_i}$ are $G$-equivariantly isomorphic to $\operatorname{Ind}_{P_{\mu_i}}^G(\eta)$ where $\eta \in X^*(T)$ is the character through which $T$ acts on the fibre over $1 \in G/P_{\mu_i}$. Since $1$ is mapped onto $\mathcal{E}_{\mu_i,i}$ under the closed immersion $G/P_{\mu_i} \rightarrow \operatorname{Gr}_{G,i}$ we deduce \eqref{eq-desiredisom} from Lemma~\ref{lem-linebundle}.
	\end{proof}
	
	\section{Main theorem}\label{sec-proofofcyclethm}
	
	For any dominant $\lambda^\vee \in X^*(T)$ write
	$$
	W(\lambda^\vee) = \operatorname{Ind}_{B^-}^{G}(\lambda^\vee)
	$$
	for $B^-$ the Borel opposite to $B$. Likewise, for $\lambda \in X_*(T)$ we make sense of $W(\lambda)$, now as a representation of $\widehat{G}$. The following is an alternative formulation of Theorem~\ref{thmA-p1}.
	
	\begin{theorem}\label{thm-cycles}
		Assume that $G$ admits a twisting element $\rho \in X_*(T)$ and let $\mu = (\mu_1,\ldots,\mu_e)$ with each $\mu_i \in X_*(T)$ strictly dominant. If $\operatorname{char}\mathbb{F}>0$ assume also that
		$$
		\sum_{i=1}^e \langle \alpha^\vee,\mu_i \rangle \leq \operatorname{char}\mathbb{F} + e-1
		$$
		for all positive roots $\alpha^\vee$. Then
		$$
		[M_\mu \otimes_{\mathcal{O}} \mathbb{F}] = \sum m_\lambda [M_{(\lambda+\rho,\ldots,\rho)}]
		$$
		as $e|R^+|$-dimensional cycles for $m_\lambda \in \mathbb{Z}_{\geq 0}$ determined by the identity 
		$$
		[	\bigotimes_{i=1}^e W(\mu_i-\rho)] = \sum_{\lambda}m_\lambda [ W(\lambda)]
		$$
		in the Grothendieck group of $\widehat{G}$-representations.
	\end{theorem}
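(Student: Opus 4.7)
Combining Proposition~\ref{prop-firstcycle} with Theorem~\ref{thm-irred} already gives a cycle identity
$$
[M_\mu \otimes_{\mathcal{O}} \mathbb{F}] = \sum_\lambda n_\lambda [M_{(\lambda+\rho,\rho,\ldots,\rho)} \otimes_{\mathcal{O}} \mathbb{F}]
$$
with $n_\lambda \in \mathbb{Z}_{\geq 0}$ and $n_{\mu_1+\ldots+\mu_e-e\rho}=1$, in which each $Z_\lambda := M_{(\lambda+\rho,\rho,\ldots,\rho)} \otimes_{\mathcal{O}} \mathbb{F}$ is integral, generically reduced, and $G$-stable. The task is therefore to identify $n_\lambda = m_\lambda$. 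The plan is to upgrade this naive cycle identity to a representation-theoretic identity by evaluating against the $T$-equivariant ample line bundle $\mathcal{L}_{\operatorname{ad}}$ from Section~\ref{sec-detlinebundle}, and then to peel off the coefficients using the Weyl character formula.

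First I apply Proposition~\ref{prop-cycletorep} to the $T$-equivariant sheaf $\mathcal{O}_{M_\mu \otimes \mathbb{F}}$ on $\operatorname{Gr}_G \otimes \mathbb{F}$ (each $Z_\lambda$ being $G$-stable, hence $T$-stable). This produces integers $q_\lambda \geq 0$ and characters $\theta^\vee_{\lambda,i} \in X^*(T)$ such that
$$
\chi_T\!\left( \mathcal{O}_{M_\mu \otimes \mathbb{F}} \otimes \mathcal{L}_{\operatorname{ad}}^{\otimes n} \right) - \sum_\lambda \sum_{i=1}^{n_\lambda} e(\theta^\vee_{\lambda,i}) \, \chi_T\!\left( \mathcal{L}_{\operatorname{ad}}^{\otimes n-q_\lambda}|_{Z_\lambda} \right) \in R(T)
$$
is of polynomial growth of degree strictly less than $e|R^+|$ in the sense of Section~\ref{sec-equivariantsheaves}. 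For $n \gg 0$, ampleness kills higher cohomology and Proposition~\ref{prop-kunneth} identifies $\chi_T(\mathcal{O}_{M_\mu \otimes \mathbb{F}} \otimes \mathcal{L}_{\operatorname{ad}}^{\otimes n})$ with $[\bigotimes_{i=1}^e \operatorname{Ind}_{P_{\mu_i}}^G(p(n\mu_i))]$; strict dominance of each $\mu_i$ forces $P_{\mu_i} = B^-$ and makes each $p(n\mu_i)$ dominant, so this equals $[\bigotimes_{i=1}^e W(p(n\mu_i))]$. The analogous computation for each $Z_\lambda$ gives $\chi_T(\mathcal{L}_{\operatorname{ad}}^{\otimes n-q_\lambda}|_{Z_\lambda}) = [W(p((n-q_\lambda)(\lambda+\rho))) \otimes W(p((n-q_\lambda)\rho))^{\otimes (e-1)}]$. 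Substituting and absorbing the bounded shifts $q_\lambda$ and twists $e(\theta^\vee_{\lambda,i})$ into the error term (they change each summand only by sequences of strictly smaller polynomial degree), one obtains an asymptotic identity
$$
\left[ \bigotimes_{i=1}^e W(p(n\mu_i)) \right] \;\equiv\; \sum_\lambda n_\lambda \left[ W(p(n(\lambda+\rho))) \otimes W(p(n\rho))^{\otimes (e-1)} \right]
$$
in $R(T)$, valid modulo sequences of polynomial growth of degree $< e|R^+|$.

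The main obstacle is now purely representation-theoretic: to deduce from this asymptotic identity that $n_\lambda = m_\lambda$, where $m_\lambda$ are multiplicities in a tensor product of $\widehat{G}$-Weyl modules. The plan is to apply the Weyl character formula to each $W(p(n\nu))$, clear the common Weyl denominator, and match asymptotic coefficients $W$-orbit by $W$-orbit. Because the map $p: X_*(T) \to X^*(T)$ intertwines the Weyl group actions, the resulting signed exponential sums correspond, in leading order, to the signed sums arising from the Weyl character formula for $\widehat{G}$-Weyl modules evaluated at the cocharacters $\mu_i - \rho$, $\lambda$, and $0$. The asymptotic relation thereby lifts to an actual identity
$$
\left[ \bigotimes_{i=1}^e W(\mu_i - \rho) \right] = \sum_\lambda n_\lambda \, [W(\lambda)]
$$
in the Grothendieck group of $\widehat{G}$-representations, and linear independence of irreducible Weyl characters forces $n_\lambda = m_\lambda$. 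Carrying out this extraction carefully --- keeping track of the polynomial error and of the $W$-alternating cancellations that convert the asymptotic identity into an exact identity of $\widehat{G}$-characters --- is the technical crux of the argument, and is the content of Section~\ref{sec-reptheory}.
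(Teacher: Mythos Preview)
Your overall strategy matches the paper's: start from the naive cycle identity, push it through $\mathcal{L}_{\operatorname{ad}}$ via Proposition~\ref{prop-cycletorep} and Proposition~\ref{prop-kunneth}, and then invoke the representation theory of Section~\ref{sec-reptheory}. The gap is in the sentence where you ``absorb the bounded shifts $q_\lambda$ and twists $e(\theta^\vee_{\lambda,i})$ into the error term (they change each summand only by sequences of strictly smaller polynomial degree)''. This is not true for the twists. If $\theta^\vee_{\lambda,i}$ does not lie in the root lattice then the weight supports of $e(\theta^\vee_{\lambda,i})\,W(p(n(\lambda+\rho)))W(p(n\rho))^{e-1}$ and $W(p(n(\lambda+\rho)))W(p(n\rho))^{e-1}$ are disjoint, so $\sum_{\alpha^\vee}|\,\cdot\,|$ for their difference is twice the dimension, which is of degree exactly $e|R^+|$. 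Even when $\theta^\vee_{\lambda,i}$ is in the root lattice there is no general reason for the discrete difference to drop a full degree. At this stage you have no a~priori information on the $\theta^\vee_{\lambda,i}$ (they are just characters appearing in a $T$-equivariant filtration of some $H^0$), so you cannot simply discard them.

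The paper handles this by \emph{not} absorbing the twists. It keeps the expression $\sum_\lambda \sum_{i=1}^{n_\lambda} e(\theta^\vee_{\lambda,i})\,W(p(n(\lambda+\rho)))W(p(n\rho))^{e-1}$, separately proves (Proposition~\ref{prop-weylcharform}) that $\prod_i W(p(n\mu_i)) - \sum_\lambda m_\lambda\, W(p(n(\lambda+\rho)))W(p(n\rho))^{e-1}$ is of degree $< e|R^+|$, and subtracts. This leaves an expression of the form $\sum_\lambda X_\lambda\, W(p(n(\lambda+\rho)))W(p(n\rho))^{e-1}$ with $X_\lambda = m_\lambda - \sum_{i=1}^{n_\lambda} e(\theta^\vee_{\lambda,i}) \in R(T)$. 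Proposition~\ref{prop-kostant} is precisely the statement that such a sum can be of degree $< e|R^+|$ only if every $X_\lambda = 0$; crucially it is stated for arbitrary $X_\lambda \in R(T)$, not just integers. From $m_\lambda = \sum_{i=1}^{n_\lambda} e(\theta^\vee_{\lambda,i})$ in $R(T)$ one then reads off both that every $\theta^\vee_{\lambda,i}$ is trivial and that $n_\lambda = m_\lambda$. So the triviality of the twists is an \emph{output} of the argument, not an input; your attempted shortcut assumes what must be proved.
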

	\begin{proof}
		We give the proof here using two representation-theoretic propositions from the next section (Propositions~\ref{prop-weylcharform} and~\ref{prop-kostant}). Proposition~\ref{prop-firstcycle} and Theorem~\ref{thm-irred} imply
		$$
		[M_\mu \otimes_{\mathcal{O}}\mathbb{F}] = \sum n_{\lambda} [M_{\widetilde{\lambda}}\otimes_{\mathcal{O}} \mathbb{F}]
		$$
		where the sum suns over dominant $\lambda \leq \mu_1+\ldots+\mu_e - e\rho$, $\widetilde{\lambda} = (\lambda+\rho,\rho,\ldots,\rho)$,  and $n_{\lambda} \in \mathbb{Z}_{\geq 0}$. We have to show that $n_\lambda =m_\lambda$. Applying Proposition~\ref{prop-cycletorep} to this identity with $\mathcal{L}$ equal to $\mathcal{L}_{\operatorname{ad}}$ gives $\theta_{\lambda,i}^\vee \in X^*(T)$ so that
		$$
		\chi(\mathcal{L}_{\operatorname{ad}}^{\otimes n}|_{M_\mu \otimes_{\mathcal{O}} \mathbb{F}}) - \sum_{\lambda} \sum_{i=1}^{n_\lambda} e(\theta_{\lambda,i}^\vee)  \chi(\mathcal{L}_{\operatorname{ad}}^{\otimes n}|_{M_{\widetilde{\lambda}} \otimes_{\mathcal{O}} \mathbb{F}})
		$$
		is polynomial of degree $< e|R_+^\vee|$. Since each $\mu_i$ is strictly dominant each $P_{\mu_i}$ equals the opposite Borel $B^-$ and so
		$$
		W(p(n\mu_i)) = \operatorname{Ind}_{P_{\mu_i}}^G(p(n\mu_i))
		$$
		Therefore Proposition~\ref{prop-kunneth} gives that
		\begin{equation*}\label{eq-polynomial<di}
			\prod_{i=1}^e W(p(n\mu_i)) - \sum_{\lambda} \sum_{i=1}^{n_\lambda} e(\theta_{\lambda,i}^\vee) W(p(n(\lambda+\rho))) W(p(n\rho))^{e-1}
		\end{equation*}
		is polynomial of degree $< e|R_+^\vee|$. In the next section we prove (see Proposition~\ref{prop-weylcharform}) that
		$$
		\prod_{i=1}^e W(p(n\mu_i)) - \sum_{\lambda} m_\lambda  W(p(n(\lambda+\rho))) W(p(n\rho))^{e-1}
		$$
		is polynomial of degree $< e|R_+^\vee|$, and so considering the difference gives that
		$$
		\sum_\lambda \left( m_\lambda - \sum_{i=1}^{n_\lambda} e(\theta_{\lambda,i}^\vee) \right)  W(p(n(\lambda+\rho))) W(p(n\rho))^{e-1}
		$$
		is also polynomial of degree $< e|R_+^\vee|$. In the next section we also prove (see Proposition~\ref{prop-kostant}) that if $X_{\lambda} \in R(T)$ are such that
		$$
		\sum_{\lambda} X_{\lambda}   W(p(n(\lambda+\rho))) W(p(n\rho))^{e-1}
		$$
		is polynomial of degree $<e|R_+^\vee|$ then $X_\lambda =0$ for each $\lambda$. Therefore
		$$
		m_\lambda - \sum_{i=1}^{n_\lambda} e(\theta_{\lambda,i}^\vee) =0 
		$$
		for each $\lambda$. This implies  that each $\theta^\vee_{\lambda,i} =1$ and that $n_\lambda = m_\lambda$ for each $\lambda$. The latter assertion, in particular, proves the theorem.
	\end{proof}

	\section{Some representation theory}\label{sec-reptheory}
	
	It remains to prove Proposition~\ref{prop-weylcharform} and Proposition~\ref{prop-kostant} which were used in the proof of Theorem~\ref{thm-irred}. For this set $\rho^\vee = \frac{1}{2}\sum_{\alpha^\vee \in R_+^\vee} \alpha^\vee$. Then the Weyl character formula asserts that for any dominant $\lambda^\vee \in X^*(T)$ 
	$$
	[W(\lambda^\vee)] = \frac{A(\lambda^\vee+\rho^\vee)}{A(\rho^\vee)}
	$$
	where $A(\lambda^\vee) := \sum_{w\ \in W} (-1)^{l(w)} e(w(\lambda^\vee))$ and this identity is occurring inside the ring $\mathbb{Z}[\frac{1}{2}X^*(T)]$. See, for example, \cite[5.10]{Janbook}.
	
	\begin{lemma}\label{lemma-qroot}
		If $\mu \in X^*(T)$ is strictly dominant then
		$$
		W(n\mu^\vee) - e(\rho^\vee)\frac{A(n\mu^\vee)}{A(\rho^\vee)} \in R(T)
		$$
		is a sequence of effective elements (i.e. $\mathbb{Z}_{\geq 0}$-linear combinations of the $e(\alpha^\vee)$) which is polynomial in $n$ of degree $<|R^+|$.
	\end{lemma}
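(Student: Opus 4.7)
The plan is to rewrite the expression as a difference of two Weyl-module characters, establish effectivity via a multiplication-by-section argument on $G/B^-$, and deduce the degree bound from the Weyl dimension formula.

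Since $\mu^\vee$ is strictly dominant, $n\mu^\vee - \rho^\vee$ is dominant for every $n\geq 1$, and so the Weyl character formula gives $A(n\mu^\vee)/A(\rho^\vee) = [W(n\mu^\vee - \rho^\vee)]$ in $R(T)$. Similarly $[W(n\mu^\vee)] = A(n\mu^\vee+\rho^\vee)/A(\rho^\vee)$. The lemma thus reduces to showing that
$$
V_n := [W(n\mu^\vee)] - e(\rho^\vee)\,[W(n\mu^\vee - \rho^\vee)] \in R(T)
$$
is effective and polynomial in $n$ of degree $<|R^+|$.

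For effectivity I would realise $W(\lambda^\vee)=\operatorname{Ind}_{B^-}^G(\lambda^\vee)$ as global sections of a line bundle $\mathcal{L}_{\lambda^\vee}$ on $G/B^-$, and choose a non-zero $T$-weight vector $s\in W(\rho^\vee)$ of weight $\rho^\vee$ (which exists, and is unique up to scalar, since $\rho^\vee$ is the highest weight of $W(\rho^\vee)$ and so occurs with multiplicity one). Tensoring with the section $s$ yields an injection of invertible sheaves $\mathcal{L}_{\lambda^\vee-\rho^\vee}\hookrightarrow\mathcal{L}_{\lambda^\vee}$ (injective since $s$ is non-zero and both sheaves are torsion-free on the integral scheme $G/B^-$), and hence a $T$-equivariant injection
$$
W(\lambda^\vee-\rho^\vee)\hookrightarrow W(\lambda^\vee)
$$
raising every $T$-weight by $\rho^\vee$. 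Taking $\lambda^\vee=n\mu^\vee$ gives $\dim W(n\mu^\vee)_{\nu^\vee}\geq\dim W(n\mu^\vee-\rho^\vee)_{\nu^\vee-\rho^\vee}$ for every $\nu^\vee$, proving $V_n$ is effective.

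Effectivity reduces the polynomial bound to the same statement about $\dim W(n\mu^\vee) - \dim W(n\mu^\vee-\rho^\vee)$. By the Weyl dimension formula
$$
\dim W(\lambda^\vee) = \prod_{\alpha\in R^+}\frac{\langle \lambda^\vee+\rho^\vee,\alpha\rangle}{\langle \rho^\vee,\alpha\rangle},
$$
both $\dim W(n\mu^\vee)$ and $\dim W(n\mu^\vee - \rho^\vee)$ are polynomials in $n$ of degree $|R^+|$ with identical leading coefficient $\prod_{\alpha\in R^+}\langle \mu^\vee,\alpha\rangle/\langle \rho^\vee,\alpha\rangle$, so their difference is polynomial of degree $<|R^+|$. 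The main point of potential friction is the multiplication-by-section step in the effectivity argument; it is a standard consequence of the fact that a non-zero section of an invertible sheaf on an integral scheme defines an injective morphism of sheaves, but it is the only place where the geometry of the flag variety enters.
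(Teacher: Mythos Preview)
Your argument has a genuine gap: you implicitly assume that $\rho^\vee = \tfrac{1}{2}\sum_{\alpha^\vee\in R_+^\vee}\alpha^\vee$ lies in $X^*(T)$. In general it does not (e.g.\ for $G=\operatorname{SL}_2$), and then neither $W(n\mu^\vee-\rho^\vee)$ nor $W(\rho^\vee)$ makes sense as a $G$-representation, so your very first step---rewriting $A(n\mu^\vee)/A(\rho^\vee)$ as $[W(n\mu^\vee-\rho^\vee)]$---and your effectivity argument via a highest-weight section of $W(\rho^\vee)$ both break down. Note that the Weyl character formula quoted just before the lemma is explicitly an identity in $\mathbb{Z}[\tfrac{1}{2}X^*(T)]$, precisely to accommodate this.

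The paper's proof addresses this by first reducing to the case where $X^*(T)$ contains an integral twisting element $\rho_0^\vee$: one passes to a central $\mathbb{G}_m$-extension $\widetilde{G}$ of $G$ (as in \cite[\S5.3]{BG14}) whose character lattice does contain such an element, and checks that the statement descends because the Weyl groups coincide and the inclusion $X^*(T)\hookrightarrow X^*(\widetilde{T})$ respects the formation of $A(-)$. Once $\rho_0^\vee$ is available, the proof proceeds exactly as yours does, with $\rho_0^\vee$ in place of $\rho^\vee$: the line-bundle section argument gives the injection $W(n\mu^\vee-\rho_0^\vee)\otimes\rho_0^\vee\hookrightarrow W(n\mu^\vee)$, and the $W$-invariance of $\rho_0^\vee-\rho^\vee$ is used to identify $e(\rho_0^\vee)A(n\mu^\vee)/A(\rho_0^\vee)$ with $e(\rho^\vee)A(n\mu^\vee)/A(\rho^\vee)$. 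The dimension-formula step is the same.
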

	\begin{proof}	
		We first reduce to the case where $X^*(T)$ contains a twisting element $\rho^\vee_0$, in the sense of Definition~\ref{def-twistingelement}. The construction from \cite[\S5.3]{BG14} produces a central extension $1 \rightarrow \mathbb{G}_m \rightarrow \widetilde{G} \rightarrow G \rightarrow 1$ such that if $\widetilde{T} \subset \widetilde{G}$ is the preimage of $T$ then $X^*(\widetilde{T})$ contains such a twisting element. Being a central extension, the Weyl group of $\widetilde{G}$ relative to $\widetilde{T}$ equals $W$. Therefore, the inclusion 
		$$
		X^*(T) \rightarrow X^*(\widetilde{T})
		$$
		maps $A(\lambda^\vee)$ onto $A(\widetilde{\lambda}^\vee)$ for $\widetilde{\lambda}^\vee$ the character of $\widetilde{T}$ induced by $\lambda^\vee$. As a result the lemma holds for $G$ if it holds for $\widetilde{G}$
		
		We can therefore assume there exists a twisting element $\rho_0^\vee \in X^*(T)$. Then $\rho_0^\vee - \rho^\vee$ is $W$-invariant and so the Weyl character formula implies $[W(\mu^\vee)] = \frac{A((\mu^\vee+\rho_0^\vee))}{A(\rho_0^\vee)}$. Now, for any $\lambda^\vee \in X^*(T)$ write $\mathcal{L}(\lambda^\vee)$ for the $G$-equivariant line bundle on $G/B^-$ on which the action of $T$ on the fibre over the identity is given by $\lambda^\vee$. Then $\mathcal{L}(\rho^\vee_0)$ admits a unique global section on which $T$ acts by $\rho^{\vee}_0$, and this induces a $T$-equivariant injection
		$$
		\mathcal{O}_{G/B^-} \otimes \rho^\vee_0 \hookrightarrow \mathcal{L}_{\rho^\vee_0}
		$$
		Tensoring with $\mathcal{L}_{\mu^\vee - \rho^\vee_0}$ produces a $T$-equivariant injection $\mathcal{L}_{\mu^\vee - \rho^\vee_0} \otimes \rho^\vee_0 \hookrightarrow \mathcal{L}_{\mu^\vee}$ and taking global sections yields a $T$-equivariant injection of $W(\mu^\vee-\rho^\vee_0) \otimes\rho^\vee_0$ into $W(\mu^\vee)$. In particular,
		$$
		[W(n\mu^\vee) ]- e(\rho^\vee_0)[W(n\mu^\vee - \rho^\vee_0)] = [W(n\mu^\vee)] - e(\rho^\vee_0) \frac{A(n\mu^\vee)}{A(\rho_0^\vee)} = [W(n\mu^\vee)] - e(\rho^\vee )\frac{A(n\mu^\vee)}{A(\rho^\vee)}
		$$
		is an effective element of $R(T)$ for each $n \geq 0$. Since it is effective we can show the sequence of elements is polynomial of degree $<|R^+|$ by showing that the difference between the dimensions of $W(n\mu^\vee)$ and $W(n\mu^\vee - \rho^\vee_0)$ is a polynomial in $n$ of degree $<|R^+|$. But this follow from the Weyl dimension  formula $\operatorname{dim} W(\lambda^\vee) = \prod_{\alpha \in R_+} \frac{\langle \lambda^\vee+\rho^\vee_0,\alpha\rangle}{\langle \rho^\vee_0,\alpha \rangle}$ since it shows both $W(n\mu^\vee)$ and $W(n\mu^\vee - \rho^\vee_0)$ have dimension the value at $n$ of a degree $|R^+|$ polynomial with leading term $n^{|R^+|}\prod_{\alpha \in R_+} \frac{\langle \mu^\vee,\alpha \rangle }{\langle \rho^\vee_0,\alpha \rangle}$.
	\end{proof}
	\begin{proposition}\label{prop-weylcharform} Suppose that $X_*(T)$ contains a twisting element $\rho$ and consider strictly dominant $\mu_1,\ldots,\mu_e \in X_*(T)$. If
		$$
		[	\bigotimes_{i=1}^e W(\mu_i-\rho)] = \sum_{\lambda} m_\lambda[ W(\lambda)]
		$$
		in the Grothendieck group of $\widehat{G}$-representations then 
		$$
		\prod_{i=1}^e W(p(n\mu_i)) - \sum_{\lambda} m_\lambda W(p(n(\lambda+\rho))) W(p(n\rho))^{e-1}
		$$
		is polynomial of degree $<e|R_*^\vee|$.
	\end{proposition}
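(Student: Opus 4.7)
The plan is to reduce the statement to an algebraic identity among alternating sums $A(-)$ in $\mathbb{Z}[X^*(T)]$, and to derive that identity from the given tensor product decomposition on the dual side by applying a suitable ring homomorphism.

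First I would note that since $p\colon X_*(T) \to X^*(T)$ is induced by the $W$-invariant integral form $(\mu,\nu) = \sum_{\alpha^\vee \in R^\vee} \langle \alpha^\vee,\mu\rangle\langle \alpha^\vee,\nu\rangle$, which is positive definite on the semisimple part, one has $\langle p(\mu),\alpha\rangle = (\mu,\alpha) \in \mathbb{Z}_{\geq 1}$ for every strictly dominant $\mu$ and every simple coroot $\alpha$. Hence $p(\mu_i), p(\lambda+\rho), p(\rho) \in X^*(T)$ are all strictly dominant, and Lemma~\ref{lemma-qroot} applies, giving
\[
W(p(n\eta)) = \tfrac{e(\rho^\vee)A(p(n\eta))}{A(\rho^\vee)} + E_\eta(n)
\]
in $R(T)$ with $E_\eta(n)$ polynomial in $n$ of degree $<|R^+|$, for each relevant $\eta$. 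Substituting into $\prod_i W(p(n\mu_i))$ and $W(p(n(\lambda+\rho)))W(p(n\rho))^{e-1}$ and expanding, the ``principal'' terms (those containing no $E_\eta$-factor) collect to
\[
\tfrac{e(e\rho^\vee)}{A(\rho^\vee)^e}\prod_{i=1}^e A(p(n\mu_i)) \quad\text{and}\quad \tfrac{e(e\rho^\vee)}{A(\rho^\vee)^e}A(p(n(\lambda+\rho)))A(p(n\rho))^{e-1},
\]
respectively. Every other cross-term contains at least one $E_\eta$-factor (polynomial of degree $<|R^+|$) alongside at most $e-1$ Weyl-module factors (each polynomial of degree $|R^+|$ by the Weyl dimension formula), so by Lemma~\ref{lemma-additive} each such cross-term is polynomial of degree $\leq e|R^+|-1$. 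Summing over the finitely many $\lambda$, the proposition is reduced to verifying the identity
\[
\prod_{i=1}^e A(p(n\mu_i)) = \sum_{\lambda} m_\lambda\, A(p(n(\lambda+\rho)))\, A(p(n\rho))^{e-1}
\]
in $\mathbb{Z}[X^*(T)]$.

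For this last identity I would apply the Weyl character formula for $\widehat{G}$ to the decomposition $[\bigotimes_i W(\mu_i-\rho)] = \sum_\lambda m_\lambda[W(\lambda)]$: using that $\rho$ is a twisting element (so it may replace the half-sum of positive coroots in the character formula) and clearing the common denominator $\widehat{A}(\rho)^e$ yields
\[
\prod_{i=1}^e \widehat{A}(\mu_i) = \sum_{\lambda} m_\lambda\, \widehat{A}(\lambda+\rho)\,\widehat{A}(\rho)^{e-1}
\]
in $\mathbb{Z}[X_*(T)]$, where $\widehat{A}(\eta) := \sum_{w\in W}(-1)^{l(w)}e(w\eta)$. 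The composition $n p \colon X_*(T) \to X^*(T)$ is a group homomorphism and thus induces a ring homomorphism $\phi_n\colon \mathbb{Z}[X_*(T)] \to \mathbb{Z}[X^*(T)]$ given on generators by $e(\eta) \mapsto e(p(n\eta))$; since $p$ is $W$-equivariant (an immediate consequence of the $W$-invariance of the pairing) one has $\phi_n(\widehat{A}(\eta)) = A(p(n\eta))$. Applying $\phi_n$ to the boxed identity produces exactly the required identity, finishing the proof.

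The main subtleties are verifying that $p$ preserves strict dominance (so that Lemma~\ref{lemma-qroot} applies to all the relevant characters) and the bookkeeping of polynomial degrees of cross-terms in the expansion; both become routine once one invokes positive definiteness of the form inducing $p$ and the multiplicative bound of Lemma~\ref{lemma-additive}.
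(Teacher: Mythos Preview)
Your proof is correct and follows essentially the same route as the paper's: derive the identity $\prod_i A(\mu_i) = \sum_\lambda m_\lambda A(\lambda+\rho)A(\rho)^{e-1}$ from the Weyl character formula on the dual side, push it through the $W$-equivariant map $np$, and then use Lemma~\ref{lemma-qroot} together with Lemma~\ref{lemma-additive} to control the discrepancy between $W(p(n\eta))$ and $e(\rho^\vee)A(p(n\eta))/A(\rho^\vee)$. The only differences are cosmetic --- you reverse the order of presentation (reduce first, then prove the identity, whereas the paper does the opposite) and you explicitly verify that $p$ preserves strict dominance, a point the paper uses but does not spell out.
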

	\begin{proof}
		The Weyl character formula (applied to $\widehat{G}$) yields the identity 
		$$
		\prod_{i=1}^e \frac{A(\mu_i)}{A(\rho)} = \sum_{\lambda} m_\lambda \frac{A(\lambda+\rho)}{A(\rho)}
		$$
		in $R(\widehat{T})$. Multiplying by $A(\rho)^{e}$ gives 
		$$
		\prod_{i=1}^e A(\mu_i) = \sum_{\lambda} m_\lambda A(\lambda+\rho) A(\rho)^{e-1}
		$$
		The endomorphism of $R(\widehat{T}) = \mathbb{Z}[X_*(T)]$ induced by multiplication by $n$ on $X^*(T)$ is $W$-equivariant and so commutes with the formation of $A(\lambda)$. Applying this endomorphism to the previous identity gives
		$$
		\prod_{i=1}^e A(n\mu_i) = \sum_{\lambda} m_\lambda A(n(\lambda+\rho)) A(n\rho)^{e-1}
		$$
		The homomorphism $p: X_*(T) \rightarrow X^*(T)$ induces a homomorphism $R(T^\vee) \rightarrow R(T)$ which is again $W$-equivariant and so also commutes with the formation of $A(\lambda)$. Therefore, applying this homomorphism and multiplying by $\left(\frac{e(\rho^\vee)}{A(\rho^\vee)}\right)^e$ gives
		\begin{equation}\label{eq-finalmult}
			\prod_{i=1}^e\frac{ e(\rho^\vee)A(np(\mu_i)) }{A(\rho^\vee)}= \sum_{\lambda} m_\lambda \frac{e(\rho^\vee)A(np(\lambda+\rho))}{A(\rho^\vee)} \frac{e(\rho^\vee)A(np(\rho))}{A(\rho^\vee)}^{e-1}
		\end{equation}
		in $R(T)$. Write
		$$
		\begin{aligned}
			\prod_{i=1}^e\frac{ e(\rho^\vee)A(np(\mu_i)) }{A(\rho^\vee)} &= \prod_{i=1}^e \left( W(np(\mu_i)) - \left(W(np(\mu_i)) - \frac{ e(\rho^\vee)A(np(\mu_i)) }{A(\rho^\vee)} \right) \right) \\
			&= \prod_{i=1}^e \left( W(np(\mu_i)) \right) + C_{\mu,n}
		\end{aligned}
		$$
		for $C_{\mu,n} \in R(T)$.
		Lemma~\ref{lemma-qroot} ensures that $\left(W(np(\mu_i)) - \frac{ e(\rho^\vee)A(np(\mu_i)) }{A(\rho^\vee)} \right)$ is polynomial in $n$ of degree $<|R^+|$ and so, since $W(np(\mu_i))$ has dimension polynomial in $n$ of degree $|R_+^\vee|$, it follows that $C_\mu = (C_{\mu,n})_{n \geq 0}$ is polynomial of degree $<e|R_+^\vee|$. Similarly, each
		$$
		\frac{e(\rho^\vee)A(np(\lambda+\rho))}{A(\rho^\vee)} \frac{e(\rho^\vee)A(np(\rho))}{A(\rho^\vee)}^{e-1} = W(np(\lambda+\rho)) W(np(\rho))^{e-1} + C_{\lambda,n}
		$$
		with $C_\lambda = (C_{\lambda,n})_{n \geq 0}$ polynomial of degree $< e|R_+^\vee|$. Combining these observations with \eqref{eq-finalmult} gives that
		$$
		\prod_{i=1}^e W(p(n\mu_i)) - \sum_{\lambda} m_\lambda W(p(n(\lambda+\rho))) W(p(n\rho))^{e-1}
		$$
		is polynomial of degree $<e|R_+^\vee|$ as desired.
	\end{proof}
	
	\begin{proposition}\label{prop-kostant}
		Fix a strictly dominant $\mu^\vee \in X^*(T)$ and consider a finite collection of non-zero $C_{\lambda^\vee} \in R(T)$, indexed by strictly dominant $\lambda^\vee \in X^*(T)$. Suppose that
		$$
		\sum_{\lambda^\vee} C_{\lambda^\vee} W(n\lambda^\vee) W(n\mu^\vee)^{e-1}
		$$
		is polynomial of degree $<e|R_+^\vee|$. Then $C_{\lambda^\vee} =0$ for all $\lambda^\vee$.
	\end{proposition}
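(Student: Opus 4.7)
The plan is to leverage the Weyl character formula to reduce the proposition to a statement about alternating sums $A(\eta^\vee) = \sum_{w \in W}(-1)^{l(w)}e(w\eta^\vee)$, and then to isolate each $C_{\lambda^\vee}$ by exploiting the different extremal weights of $W(n\lambda^\vee)W(n\mu^\vee)^{e-1}$ as $\lambda^\vee$ varies. First I would invoke the Weyl character formula $W(\eta^\vee) = A(\eta^\vee + \rho^\vee)/A(\rho^\vee)$ and Lemma~\ref{lemma-qroot} to replace each factor $W(n\eta^\vee)$ by its leading-order Weyl-denominator expression $e(\rho^\vee)A(n\eta^\vee)/A(\rho^\vee)$, at the cost of an error sequence that is polynomial of degree $<|R_+^\vee|$. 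Propagating these errors through the product via Lemma~\ref{lemma-additive} and using that the total dimension of each $W(n\lambda^\vee)W(n\mu^\vee)^{e-1}$ grows polynomially of degree exactly $e|R_+^\vee|$, the hypothesis translates into the statement that $e(e\rho^\vee)A(\rho^\vee)^{-e}\sum_{\lambda^\vee} C_{\lambda^\vee}A(n\lambda^\vee)A(n\mu^\vee)^{e-1}$ is polynomial of degree $<e|R_+^\vee|$, up to an explicit correction term.

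Next I would pick $\lambda_0^\vee$ maximal (in dominance order, with ties broken lexicographically) among $\lambda^\vee$ with $C_{\lambda^\vee}\neq 0$, and focus on weights in the top dominant-chamber cone near $n\lambda_0^\vee + (e-1)n\mu^\vee$. Expanding $A(n\lambda_0^\vee)A(n\mu^\vee)^{e-1} = \sum_{\underline{w}\in W^e}\epsilon(\underline{w})e(w_0(n\lambda_0^\vee) + \sum_{i\geq 1}w_i(n\mu^\vee))$, the extremal weights for distinct tuples $(\lambda^\vee,\underline{w})$ lie in different Weyl-chamber regions that are disjoint for $n$ sufficiently large, even after bounded shifts by elements of $\mathrm{supp}(C_{\lambda^\vee})$. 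In particular, for weights of the form $n\lambda_0^\vee + (e-1)n\mu^\vee + \nu^\vee - \xi^\vee$ with $\nu^\vee\in\mathrm{supp}(C_{\lambda_0^\vee})$ and $\xi^\vee\in\mathbb{Z}_{\geq 0}R_+^\vee$ small, only $C_{\lambda_0^\vee}W(n\lambda_0^\vee)W(n\mu^\vee)^{e-1}$ contributes, and by Kostant's multiplicity formula (in its leading form, only the identity element of $W$ contributing) the multiplicity there is the $e$-fold convolved Kostant partition function $K^{*e}(\xi^\vee)$, independent of $n$.

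The main obstacle and crux of the proof is to show that, if $C_{\lambda_0^\vee}\neq 0$, the accumulated contribution of these multiplicities to $\sum_{\eta^\vee}|m_n(\eta^\vee)|$ grows at least like $n^{e|R_+^\vee|}$, contradicting the polynomial-of-degree-${<}e|R_+^\vee|$ assumption. The polynomial $K^{*e}$ has degree $e|R_+^\vee| - \mathrm{rank}\,T$ in the interior of the positive cone, and summing over a cone of size $O(n)$ in $\mathrm{rank}\,T$ independent directions contributes $\Omega(n^{e|R_+^\vee|})$ in total. The hard part is to verify that no linear combination $\sum_{\nu^\vee}c_{\nu^\vee}K^{*e}(\xi^\vee+\nu^\vee)$, with finitely-supported integer coefficients coming from $C_{\lambda_0^\vee}$, can have its total variation grow more slowly than $n^{e|R_+^\vee|}$; this comes down to the linear independence of such translated Kostant convolutions as polynomial functions on the interior of the positive cone. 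Translating back through the Weyl character formula, this linear independence is equivalent to the linear independence of the alternating sums $\sum_{w_0,\ldots,w_{e-1}}\epsilon(\underline{w})e(w_0(n\lambda_0^\vee) + \sum w_i(n\mu^\vee) + \nu^\vee)$ across different $\nu^\vee$, for which the strict dominance of $\lambda_0^\vee,\mu^\vee$ and the Weyl denominator identity are crucial. Once $C_{\lambda_0^\vee}=0$ is established, which contradicts the maximality of $\lambda_0^\vee$, one iterates the argument on the remaining $\lambda^\vee$ to conclude $C_{\lambda^\vee}=0$ for all $\lambda^\vee$.
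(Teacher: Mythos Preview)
Your approach diverges from the paper's in two ways. First, the paper dispenses with the factor $W(n\mu^\vee)^{e-1}$ at the outset, reducing to $e=1$ by observing that $\dim W(n\mu^\vee)$ is polynomial in $n$ of degree exactly $|R_+^\vee|$; it never invokes Lemma~\ref{lemma-qroot} or the expressions $A(n\eta^\vee)$, working instead directly with the Kostant multiplicity formula for $W(n\lambda^\vee)$. Second, and more substantively, the paper does not attempt a direct lower bound on the total variation of $C_{\lambda^\vee}W(n\lambda^\vee)$. Rather, having chosen $\lambda^\vee$ maximal with $C_{\lambda^\vee}\neq 0$ and some $e(\theta^\vee)$ in its support, it exhibits (for auxiliary parameters $\Phi,\Psi>0$) a family of weights $\theta^\vee + n\lambda^\vee - \sum_{\alpha^\vee\in S^\vee} l_{\alpha^\vee}\alpha^\vee$, with $0\le l_{\alpha^\vee}<n/\Psi-\Phi$, on which only the identity Weyl element contributes to the Kostant formula; the total multiplicity over this family grows like $n^{|R_+^\vee|}$, so for $n\gg 0$ at least one such weight must be cancelled in the full sum by some $e(\theta_0^\vee)W(n\lambda_0^\vee)$ with $\lambda_0^\vee\neq\lambda^\vee$. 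The resulting weight inequality, after sending $\Psi\to\infty$ with $\Phi$ large enough to absorb the finitely many possible $\theta_0^\vee-\theta^\vee$, forces $\lambda_0^\vee\ge\lambda^\vee$, contradicting maximality.

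The gap in your proposal is that the assertion about $\sum_{\nu^\vee}c_{\nu^\vee}K^{*e}(\xi^\vee+\nu^\vee)$ having total variation of order $n^{e|R_+^\vee|}$ is neither established nor, as formulated, correct. Linear independence of translated Kostant convolutions, even if granted, tells you only that a nontrivial combination is not identically zero---it says nothing about its size---and $K^{*e}$ is only piecewise polynomial, so ``linear independence as polynomial functions on the interior'' does not quite parse. Concretely, already for $\operatorname{SL}_2$ with $e=1$ the Kostant function $P$ is the indicator of $\mathbb{Z}_{\ge 0}\alpha^\vee$, and $P(\xi^\vee+\alpha^\vee)-P(\xi^\vee)$ vanishes identically on $\mathbb{Z}_{\ge 0}\alpha^\vee$, so its total variation over any box is bounded rather than growing with $n$; equivalently $(e(\alpha^\vee)-1)W(n\lambda^\vee)$ telescopes to just two characters. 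Thus the growth estimate you need for a single $C_{\lambda^\vee}$ can genuinely fail, and your outline does not explain how to rule this out or how contributions from distinct $\lambda^\vee$ would rescue it. The paper sidesteps this difficulty by never attempting such a growth bound, instead extracting the ordering relation $\lambda_0^\vee\ge\lambda^\vee$ directly from the existence of a cancelling term coming from some other $\lambda_0^\vee$.
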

	\begin{proof}
		Since $\mu^\vee$ is strictly dominant the dimension of $W(n\mu^\vee)$ is polynomial in $n$ of degree $|R^+|$. Therefore, we can assume $e=1$. If the proposition does not hold then we can choose $\lambda^\vee$ with $C_{\lambda^\vee} \neq 0$ and $C_{\lambda^\vee_0} =  0$ whenever $\lambda_0^\vee > \lambda^\vee$.
		
		\begin{observation}
			Fix integers $\Phi,\Psi >0$ and let $S^\vee$ denote the set of simple roots. Then there exists a degree $|R_+^\vee|$ polynomial $Q(x) \in \mathbb{Q}[x]$ with positive leading term so that for $n>>0$ one has
			$$
			\sum_{\eta^\vee} \operatorname{dim} W(n\lambda^\vee)_{\eta^\vee} \geq Q(n)
			$$
			where the sum runs over $\eta^\vee = n\lambda^\vee - \sum_{\alpha^\vee \in S^\vee} l_{\alpha^\vee} \alpha^\vee$ with $0 \leq l_{\alpha^\vee} <\frac{n}{\Psi}-\Phi$.
		\end{observation}
		\begin{proof}[Proof of Observation]
			The Kostant multiplicity formula \cite[\S24.2]{Hum78} asserts that 
			$$
			\operatorname{dim} W(n\lambda^\vee)_{\eta^\vee} = \sum_{w \in W} (-1)^{l(w)} P(w(n\lambda^\vee +\rho^\vee) - (\eta^\vee +\rho^\vee))
			$$ 
			where $P(\mu^\vee)$ denotes the number of ways in which $\mu^\vee \in X^*(T)$ can be expressed as a $\mathbb{Z}_{\geq 0}$-linear combination of $\alpha^\vee \in R_+^\vee$. We claim if $\eta^\vee = n\lambda^\vee - \sum_{\alpha^\vee \in S^\vee} l_{\alpha^\vee} \alpha^\vee$ for $0 \leq l_{\alpha^\vee} <\frac{n}{\Psi}-\Phi$ then $P(w(n\lambda^\vee +\rho^\vee) - (\eta^\vee +\rho^\vee)) = 0$ for $w \neq 1$. Since
			$$
			w(n\lambda^\vee +\rho^\vee) - (\eta^\vee +\rho^\vee) = w(n\lambda^\vee+\rho^\vee) - (n\lambda^\vee + \rho^\vee) + \sum_{\alpha^\vee \in S^\vee} l_{\alpha^\vee} \alpha^\vee
			$$
			the claim follows if, when $w(n\lambda^\vee+\rho^\vee) - (n\lambda^\vee + \rho^\vee)$ is expressed as a $\mathbb{Z}$-linear combination of $\alpha^\vee \in S^\vee$, at least one coefficient is $\leq -n$. This is the case because both $\lambda^\vee$ and $\rho^\vee$ are strictly dominant and so $w(\lambda^\vee) - \lambda^\vee$ and $w(\rho^\vee) - \rho^\vee$ are both $< 0$ for $w \neq 1$ \cite[13.2.A]{Hum78}.
			Therefore, the observation is reduced to producing a polynomial lower bound on
			\begin{equation}\label{eq-Kostantpartionbound}
				\sum_{0 \leq l_{\alpha^\vee} < \frac{n}{\Psi}-\Phi} P(\sum_{\alpha^\vee \in S^\vee} l_{\alpha^\vee} \alpha^\vee)	
			\end{equation}
			of the correct degree. To do this we first claim that
			$$
			P(\sum_{\alpha^\vee \in S^\vee} l_{\alpha^\vee} \alpha^\vee) \geq \left( \frac{1}{K(|R_+^\vee| - |S^\vee|)}\operatorname{min} \lbrace l_{\alpha^\vee} \rbrace\right)^{|R_+^\vee| - |S^\vee|}
			$$
			where $K \geq 0$ is the largest coefficient appearing when any $\alpha^\vee \in R_+^\vee$ is expressed as a sum of simple roots. To see this note that if $0 \leq j_{\alpha^\vee} \leq \frac{1}{K(|R_+^\vee| - |S^\vee|)}\operatorname{min}\lbrace  l_{\alpha^\vee} \rbrace$ then $\sum_{\alpha^\vee \in R^\vee_+ \setminus S^\vee} j_{\alpha^\vee} \alpha^\vee$, when expressed as a sum of simple roots, will have $\alpha^\vee$-coefficient in $[0, \operatorname{min} \lbrace l_{\alpha^\vee}\rbrace]$ for each $\alpha^\vee \in S^\vee$. Consequently, for each choice of $j_\alpha$ there exists $i_{\alpha^\vee} \geq 0$ so that
			$$
			\sum_{\alpha^\vee \in S^\vee} l_{\alpha^\vee} \alpha^\vee = \sum_{\alpha^\vee \in R_+^\vee \setminus S^\vee} j_{\alpha^\vee} \alpha^\vee + \sum_{\alpha^\vee \in S^\vee} i_{\alpha^\vee} \alpha^\vee
			$$
			Thus, each tuple $(j_{\alpha^\vee})_{\alpha^\vee \in R^\vee_+ \setminus S^\vee}$ contributes one to $P(\sum_{\alpha^\vee \in S^\vee} l_{\alpha^\vee} \alpha^\vee)$ which gives the claimed lower bound. Therefore \eqref{eq-Kostantpartionbound} is 
			$$
			\geq \sum_{0 \leq l_\alpha < \frac{k}{\Psi}-\Phi} \left( \frac{1}{K(|R_+^\vee| - |S^\vee|)}\operatorname{min} \lbrace l_{\alpha^\vee} \rbrace\right)^{|R_+^\vee| - |S^\vee|}
			$$
			which is easily seen to be a polynomial in $n$ of degree $(|R_+^\vee| - |S^\vee|) + |S^\vee|$ with positive leading term. 
		\end{proof}
		We return to the proof of the proposition. Choose $\Psi,\Phi>0$ (we will be more specific later). If $e(\theta^\vee)$ appears in $C_{\lambda^\vee}$ with non-zero multiplicity then $e(\theta^\vee + n\lambda^\vee - \sum_{\alpha^\vee} l_{\alpha^\vee} \alpha^\vee)$ appears in $C_{\lambda^\vee} W(n\lambda^\vee)$ for any $n$ and any $0 \leq l_{\alpha^\vee} < \frac{n}{\Psi}- \Phi$. The observation above implies that for $n>>0$ at least one of these $e(\theta^\vee + n\lambda^\vee - \sum_{\alpha^\vee} l_{\alpha^\vee} \alpha^\vee)$ must cancel in $\sum_{\lambda_0^\vee} C_{\lambda_0^\vee} W(n\lambda_0^\vee)$, since otherwise we contradict that assumption that $\sum_{\lambda_0^\vee} C_{\lambda_0^\vee} W(n\lambda_0^\vee)$ is polynomial in $n$ of degree $<|R_+^\vee|$. Therefore, for each sufficiently large $n$ there exists $0 \leq l_{\alpha^\vee} < \frac{n}{\Psi} - \Phi$ and $e(\theta_0^\vee)$ appearing with non-zero multiplicity in $C_{\lambda_0^\vee}$ for $\lambda_0^\vee \neq \lambda^\vee$ so that $e(\theta^\vee + n\lambda^\vee - \sum_{\alpha^\vee} l_{\alpha^\vee} \alpha^\vee)$ appears in $e(\theta_0^\vee) W(n\lambda_0^\vee)$. This implies
		$$
		n\lambda^\vee - \sum_{\alpha^\vee} l_{\alpha^\vee} \alpha^\vee \leq \theta_0^\vee- \theta^\vee + n\lambda_0^\vee
		$$
		for large $n$. We are going to show this implies $\lambda_0 \geq \lambda$ which is a contradiction since $\lambda_0 \neq \lambda$ and $C_{\lambda_0} \neq 0$. To do this choose $\beta^\vee \in X^*(T)$ so that $\alpha^\vee \in S^\vee$ and the $\beta^\vee$ form a basis of $X^*(T) \otimes_{\mathbb{Z}} \mathbb{Q}$. If $\theta_0^\vee - \theta^\vee = \sum_{\alpha^\vee \in S^\vee} n_{\alpha^\vee} \alpha^\vee + \sum_{\beta^\vee} n_{\beta^\vee} \beta^\vee$ then
		$$
		n(\lambda_0^\vee-\lambda^\vee) = \sum_{\alpha^\vee \in S^\vee} m_{\alpha^\vee,n} \alpha^\vee +\sum_{\beta^\vee} n_{\beta^\vee} \beta^\vee
		$$
		with $m_{\alpha^\vee,n} \geq n_{\alpha^\vee} - l_{\alpha^\vee}$. Since the $n_{\beta^\vee}$ are bounded above independently of $n$ (as there are only finitely many possible $\theta_0^\vee$) it follows that each $n_{\beta^\vee} =0$. If $\Phi \geq -n_{\alpha^\vee}$ for every $\alpha^\vee$ then we also have $m_{\alpha^\vee,n} \geq - \frac{n}{\Psi}$. Therefore
		$$
		\lambda_0^\vee - \lambda^\vee = \sum_{\alpha^\vee \in S^\vee} m_{\alpha^\vee} \alpha^\vee
		$$ 
		with $m_{\alpha^\vee} \geq - \frac{1}{\Psi}$ for all $\Psi >0$. We conclude that each $m_{\alpha^\vee} \geq 0$ and so $\lambda_0^\vee \geq \lambda^\vee$. Since this contradicts the maximality of $\lambda^\vee$ we conclude $C_{\lambda^\vee}=0$ for every $\lambda^\vee$.
	\end{proof}
	
	\newpage
	\part{Cycle identities in moduli spaces of crystalline representations}

	\section{Notation}
	
	\begin{sub}
		For the second part of this paper we fix the following data:
		\begin{itemize}
			\item Let $K/\mathbb{Q}_p$ be a finite extension with residue field $k$ and ramification degree $e$ over $\mathbb{Q}_p$. Let $C$ denote a completed algebraic closure of $K$ with ring of integers $\mathcal{O}_C$ and fix a compatible system $\pi^{1/p^\infty}$ of $p$-th power roots of a uniformiser $\pi \in K$. 
			\item Fix another extension $E$ of $\mathbb{Q}_p$, with ring of integers $\mathcal{O}$ and residue field $\mathbb{F}$, and an embedding $k \hookrightarrow \mathbb{F}$ which we extend to an embedding $W(k) \rightarrow \mathcal{O}$.  Enlarging $E$ if necessary we assume that $E$ contains a Galois closure of $K$ so that $W(k) \hookrightarrow\mathcal{O}$ extends to $e$ distinct embeddings, which we index as $\kappa_1,\ldots,\kappa_e$.
			\item Let $G$ be a split reductive group over $W(k)$ with connected fibres and let $\operatorname{Res}_{W(k)/\mathbb{Z}_p}G$ be the group-scheme over $\mathbb{Z}_p$ defined by 
			$$
			\widetilde{G}(A) = G( W(k) \otimes_{\mathbb{Z}_p} A)
			$$
			for any $\mathcal{O}$-algebra $A$. Set $\widetilde{G}$ equal to the base-change of $\operatorname{Res}_{W(k)/\mathbb{Z}_p}G$ to $\mathcal{O}$. Since $W(k) \otimes_{\mathbb{Z}_p} \mathcal{O} = \prod_{i=1}^f W(k)\otimes_{W(k),\varphi^i} \mathcal{O}$, for $f = [k:\mathbb{F}_p]$ and $\varphi$ the lifting to $W(k)$ of the $p$-th power map on $k$, we can also write
			$$
			\widetilde{G}\cong \prod_{i=1}^f G \otimes_{W(k), \varphi^i} \mathcal{O}
			$$ 
			so that $\widetilde{G} $ is split reductive group over $\mathcal{O}$ with connected fibres. We apply the constructions from Definition~\ref{sub-Grdef} to $\widetilde{G}$ and with $\pi_i := \kappa_i(\pi)$ to obtain the ind-scheme $\operatorname{Gr}_{\widetilde{G}}$. Notice we also have:
			$$
			\operatorname{Gr}_{\widetilde{G}}  \cong \prod_{i=1}^f \operatorname{Gr}_{G \otimes_{W(k),\varphi^i} \mathcal{O}}
			$$
			Maintaining the notation from Part 1, we write $E(u) = \prod_{i=1}^e (u-\pi_i)$. Notice this coincides with the minimal polynomial of $\pi$ in $W(k)[u]$.
			\item For any $p$-adically complete $\mathcal{O}$-algebra $A$ we set $\mathfrak{S}_A := (W(k)\otimes_{\mathbb{Z}_p} A)[[u]]$ and equip this ring with the $A$-linear Frobenius $\varphi$ sending $u \mapsto u^p$ and lifting the $p$-th power map on $k$. We frequently identify 
			$$
			G(\mathfrak{S}_A) =\widetilde{G}(A[[u]]) = \prod_{i=1}^f G(A[[u]])
			$$
			and notice that the endomorphism of $G(\mathfrak{S}_A)$ induced by $\varphi$ on $\mathfrak{S}_A$ identifies with the automorphism of $ \prod_{i=1}^f G(A[[u]])$ given by $(g_i)_i \mapsto (\varphi'(g_i))_{i+1}$ where  the $i$ are viewed modulo $f$ and $\varphi'$ is the automorphism of $G(A[[u]])$ induced by the $A$-linear endomorphism of $A[[u]]$ given by $u \mapsto u^p$
			\item For any $p$-adically complete $\mathcal{O}$-algebra we also consider
			$$
			A_{\operatorname{inf},A} := \varprojlim_a \varprojlim_i( W(\mathcal{O}_{C^\flat})/p^a \otimes_{\mathbb{Z}_p} A)/u^i
			$$
			where $\mathcal{O}_{C^\flat} = \varprojlim_{x\mapsto x^p} \mathcal{O}_C/p$ and $u = [(\pi,\pi^{1/p},\pi^{1/p^2},\ldots)] \in W(\mathcal{O}_{C^\flat})$. We view $A_{\operatorname{inf},A}$ as an $\mathfrak{S}_A$-algebra via $u$ and note that the lift of Frobenius on $W(\mathcal{O}_{C^\flat})$ induces a Frobenius $\varphi$ on $A_{\operatorname{inf},A}$ which is compatible with that on $\mathfrak{S}_A$. The natural $G_K$-action on $\mathcal{O}_C$ also induces a continuous (for the $(u,p)$-adic topology) $G_K$-action on $A_{\operatorname{inf},A}$ commuting with $\varphi$. We also have 
			$$
			W(C^\flat)_A := \varprojlim_a A_{\operatorname{inf},A}[\tfrac{1}{u}]/p^a
			$$
			If $A$ is topologically of finite type (i.e. $A \otimes_{\mathbb{Z}_p} \mathbb{F}_p$ is of finite type) then $\mathfrak{S}_A \rightarrow A_{\operatorname{inf},A}$ is faithfully flat (in particular injective) \cite[2.2.13]{EG19}. We only consider the $A_{\operatorname{inf},A}$ and $W(C^\flat)_A$ for such $A$.
			\item Fix a compatible system $\epsilon = (\epsilon_1,\epsilon_2,\ldots)$ of primitive $p$-th power roots of unity in $C$. We view $\epsilon \in \mathcal{O}_{C^\flat}$ and we set $\mu = [\epsilon]-1 \in A_{\operatorname{inf}}$.
			\item  A Hodge type $\mu$ for $K$ is a tuple of conjugacy classes of cocharacters of $G$, indexed by the embeddings of $K$ into $\overline{\mathbb{Q}}_p$ (although we typically do not distinguish between such conjugacy classes and a given representative). Since every such embedding factors through $E$ we can (and frequently do) interpret a Hodge type as an $e$-tuple of cocharacters of $\widetilde{G}$. 
		\end{itemize}
	\end{sub}

	\section{Moduli of Breuil--Kisin modules}
	
	\begin{sub}
		For any $p$-adically complete $\mathcal{O}$-algebra $A$ a $G$-Breuil--Kisin module (or Breuil--Kisin module when the group $G$ is clear from context) over $A$ is a $G$-torsor $\mathfrak{M}$ on $\operatorname{Spec}\mathfrak{S}_A$ equipped with an isomorphism
		$$
		\varphi_{\mathfrak{M}}: \varphi^*\mathfrak{M}[\tfrac{1}{E(u)}] \xrightarrow{\sim}\mathfrak{M}[\tfrac{1}{E(u)}]
		$$
		We refer to $\varphi_{\mathfrak{M}}$ as the Frobenius on $\mathfrak{M}$ and frequently write $\varphi$ instead of $\varphi_{\mathfrak{M}}$ when there is no risk of confusion.
		\begin{itemize}
			\item Let $Z_G(A)$ be the category of Breuil--Kisin modules over $A$ whose morphisms are isomorphisms of $G$-torsors compatible with the Frobenii.
			\item Let $\widetilde{Z}_G(A)$ be the category of pairs $(\mathfrak{M},\iota)$ with $\mathfrak{M}$ a Breuil--Kisin module over $A$ and $\iota$ a trivialisation of $\mathfrak{M}$ over $\operatorname{Spec}\mathfrak{S}_A$. Morphisms are isomorphisms of $G$-torsors compatible with the Frobenii and commuting with the trivialisation.
		\end{itemize} 
		Any homomorphism of $p$-adically complete $\mathcal{O}$-algebras $A\rightarrow B$ induces a homomorphism $\mathfrak{S}_A \rightarrow \mathfrak{S}_B$ and pull back induces functors $Z_G(B) \rightarrow Z_G(A)$ and $\widetilde{Z}^N_G(B) \rightarrow \widetilde{Z}^N_G(A)$ making $Z_G$ and $\widetilde{Z}_G$ into categories fibred over $\operatorname{Spf}\mathcal{O}$. In the obvious way these constructions are functorial in $G$.
	\end{sub}

	\begin{construction}\label{con-PRdiagram}
		We have morphisms
		$$
		\begin{tikzcd}
			& \widetilde{Z}_G \ar[dr,"\Psi"]\ar[dl,"\Gamma"]& \\
			Z_G & & \operatorname{Gr}_{\widetilde{G}}
		\end{tikzcd}
		$$
		where $\Gamma$ forgets the choice of trivialisation and the map $\Psi$ is given on $A$-valued points by
		$$
		\Psi(\mathfrak{M},\iota) := (\mathfrak{M}, \varphi^*\iota \circ \varphi_{\mathfrak{M}}^{-1})
		$$
		Here we interpret $(\mathfrak{M}, \varphi^*\iota \circ \varphi_{\mathfrak{M}}^{-1})$, first as $\widetilde{G}$-torsor on $\operatorname{Spec}A[[u]]$ together with a trivialisation after inverting $E(u)$, and then as an $A$-valued point of $\operatorname{Gr}_{\widetilde{G}}$ via Lemma~\ref{lem-BL}. This crucially uses that $A$ is $p$-adically complete in order to identify $\mathfrak{S}_A$ with the $E(u)$-adic completion of $(W(k) \otimes_{\mathbb{Z}_p} A)[u]$.
	\end{construction}
	\begin{remark}\label{rem-Frob}
		If $(\mathfrak{M},\iota) \in \widetilde{Z}^N_G(A)$ then we obtain an element $ C_{\mathfrak{M},\iota} \in G(\mathfrak{S}_A[\frac{1}{E(u)}]) = \widetilde{G}(A[[u]][\frac{1}{E(u)}])$ giving the isomorphism
		$$
		\varphi^* \mathcal{E}^0 \xrightarrow{\varphi^* \iota^{-1} }\varphi^*\mathfrak{M}[\tfrac{1}{E(u)}] \xrightarrow{\varphi_{\mathfrak{M}}} \mathfrak{M}[\tfrac{1}{E(u)}] \xrightarrow{\iota} \mathcal{E}^0
		$$
		We say that $C_{\mathfrak{M},\iota}$ represents the Frobenius on $\mathfrak{M}$ relative to $\iota$.
	\end{remark}
	\begin{sub}
		For an alternative viewpoint on Construction~\ref{con-PRdiagram} let  $L^+G$ denote the group-scheme over $\mathcal{O}$ defined by $A \mapsto G(\mathfrak{S}_A)$and write $LG$ for the group ind-scheme over $\mathcal{O}$ given by $A\mapsto G(\mathfrak{S}_A[\frac{1}{E(u)}])$. Then $(\mathfrak{M},\iota) \mapsto C_{\mathfrak{M},\iota}$ gives an isomorphism $\widetilde{Z}_G \cong LG$ which identifies the diagram in Construction~\ref{con-PRdiagram} with
		$$
		\begin{tikzcd}
			& LG \ar[dr,"C \mapsto C^{-1}"]\ar[dl]& \\
			\left[LG/\prescript{}{\varphi}{L^+G}\right] & & \left[LG/L^+G\right] \cong \operatorname{Gr}_{\widetilde{G}}
		\end{tikzcd}
		$$
		where $LG/\prescript{}{\varphi}{L^+G}$ indicates the quotient by $\varphi$-conjugation and $LG/L^+G$ indicates the quotient by right multiplication. In particular, this shows that both $\Gamma$ and $\Psi$ from Construction~\ref{con-PRdiagram} are $L^+G$-torsors, with the actions given respectively by $\varphi$-conjugation and right multiplication.
	\end{sub}
	An issue with $\widetilde{Z}_G$ is that it is not of finite type over $\mathcal{O}$. To address this we will consider the certain quotients. These ideas go back to \cite[2.2]{PR09}. See also \cite[\S3.3]{Lin23}.
	
	\begin{definition}
		For $N \geq 1$ let $U_{G,N} \subset L^+G$ denote the subgroup with $A$-valued points 
		$$
		\operatorname{ker} \left( G(\mathfrak{S}_A) \rightarrow G(\mathfrak{S}_A/u^N) \right)
		$$
		and set $\mathcal{G}_{G,N} = L^+G /U_{G,N}$.
	\end{definition}
	
	\begin{proposition}\label{prop-torsor}
		Let $X \subset \operatorname{Gr}_{\widetilde{G}}$ be a closed subscheme on which $p$ is nilpotent. Then, for $N \geq N_0$ (with $N_0$ depending on $X$),
		$$
		[\widetilde{Z}_G \times_{\operatorname{Gr}_{\widetilde{G}}} X / \prescript{}{\varphi}{U_{G,N}}] \cong [\widetilde{Z}_G \times_{\operatorname{Gr}_{\widetilde{G}}} X / U_{G,N}]
		$$
		(the quotient on the left being by the $\varphi$-conjugation action and that on the right action by the right translation action). In particular, the map $\Psi$ induces a morphism
		$$
		\Psi_N: [\widetilde{Z}_G \times_{\operatorname{Gr}_{\widetilde{G}}} X / \prescript{}{\varphi}{U_{G,N}}] \rightarrow X
		$$
		which is a torsor for the group scheme $\mathcal{G}_{G,N}$.
	\end{proposition}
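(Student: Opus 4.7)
The plan is to use the identification $\widetilde{Z}_G \cong LG$ from the preceding paragraph, under which $\Psi$ is the map $C \mapsto C^{-1}$ modulo $L^+G$, the $\varphi$-conjugation action making $\Gamma$ a torsor takes the form $\eta \cdot_\varphi C = \eta\, C\, \varphi(\eta)^{-1}$, and the action making $\Psi$ a torsor takes the form $g \cdot_r C = g\,C$ (left multiplication on $LG$, corresponding to right multiplication on the trivialization). Set $Y := \widetilde{Z}_G \times_{\operatorname{Gr}_{\widetilde{G}}} X$. Once the first isomorphism is proved, the torsor assertion for $\Psi_N$ is automatic: the $\cdot_r$-action makes $Y \to X$ into an $L^+G$-torsor, so quotienting by the closed normal subgroup $U_{G,N} \subset L^+G$ yields a $\mathcal{G}_{G,N}$-torsor.

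Before comparing quotients I would verify that both actions of $U_{G,N}$ preserve $Y$ for $N \geq N_0$. The $\cdot_r$-action preserves $\Psi$-fibers tautologically. For the $\varphi$-conjugation action, $\Psi(\eta C \varphi(\eta)^{-1}) = \varphi(\eta)\cdot \Psi(C)$ (left translation on $\operatorname{Gr}_{\widetilde{G}}$, the trailing $\eta^{-1}\in L^+G$ being absorbed), so preservation of $Y$ reduces to requiring that $U_{G,pN}$ act trivially on $X$. Since $X$ is of finite type it sits inside some Schubert variety $\operatorname{Gr}_{\widetilde{G},\leq \lambda}$, and $L^+G$ acts on the latter through a finite-dimensional quotient $\mathcal{G}_{G,N_1(\lambda)}$; hence $U_{G,pN}$ acts trivially on $X$ for $pN \geq N_1(\lambda)$.

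For the comparison of quotients I would construct an isomorphism of the two action groupoids on $Y$ directly. For $\eta \in U_{G,N}(A)$ and $C \in Y(A)$ set
$$
\Phi(\eta, C) := \eta\, C\, \varphi(\eta)^{-1}\, C^{-1} \in G\bigl(\mathfrak{S}_A[\tfrac{1}{E(u)}]\bigr),
$$
so that tautologically $\eta \cdot_\varphi C = \Phi(\eta,C) \cdot_r C$. The assignment $(\eta, C) \mapsto (\Phi(\eta,C), C)$ intertwines the source/target maps of the $\varphi$-conjugation groupoid with those of the left-multiplication groupoid, and granting that it lands in $U_{G,N}\times Y$ and is an isomorphism of schemes over $Y$, one obtains the desired equivalence of stack quotients. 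For the integrality and $U_{G,N}$-estimate I would use that the nilpotence of $p$ on $X$ makes each $\pi_i$ nilpotent, so $E(u)^{-1}$ lies in some $u^{-L}\mathfrak{S}_A$ (with $L$ depending uniformly on $X$), and that the containment of $X$ in a Schubert variety provides a bound $M = M(X)$ such that $\operatorname{Ad}(C^{\pm 1})$ shifts $u$-adic valuations by at most $M$. Since $\varphi(\eta)^{-1} - 1 \in u^{pN}\mathfrak{g}\otimes\mathfrak{S}_A$, one finds $C\varphi(\eta)^{-1}C^{-1} - 1 \in u^{pN - M}\mathfrak{g}\otimes\mathfrak{S}_A$, and therefore $\Phi(\eta,C) - 1 \in u^{\min\{N,\, pN-M\}}\mathfrak{g}\otimes\mathfrak{S}_A$; this lies in $u^N\mathfrak{g}\otimes\mathfrak{S}_A$ whenever $N \geq N_0 := \lceil M/(p-1)\rceil$.

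The main obstacle is showing that $(\eta,C)\mapsto(\Phi(\eta,C),C)$ is an isomorphism rather than merely a morphism. The linearization of $\eta \mapsto \Phi(\eta, C)$ at $\eta = 1$ is the $\mathcal{O}$-linear operator $\delta \mapsto \delta - \operatorname{Ad}(C)\varphi(\delta)$ on $u^N\mathfrak{g}\otimes\mathfrak{S}_A$; since $\varphi$ strictly raises $u$-adic valuation while $\operatorname{Ad}(C)$ only lowers it by the bounded amount $M$, the composite $\operatorname{Ad}(C)\circ\varphi$ is topologically nilpotent on $u^N\mathfrak{g}\otimes\mathfrak{S}_A$ for $N \geq N_0$, and $\mathrm{id} - \operatorname{Ad}(C)\varphi$ is invertible with inverse $\sum_{k\geq 0}(\operatorname{Ad}(C)\varphi)^k$. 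A standard Hensel/Newton iteration on the pro-unipotent group $U_{G,N}$ then globalizes this linearized inverse to a scheme-theoretic inverse of $\eta \mapsto \Phi(\eta,C)$, yielding the desired isomorphism of $Y$-group-schemes and completing the proof.
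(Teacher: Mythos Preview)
Your proposal is correct and the core argument matches the paper's: both reduce the stack isomorphism to showing that the map $\eta \mapsto \eta\,C\,\varphi(\eta)^{-1}C^{-1}$ (equivalently, in the paper's convention, $g_0 \mapsto g_0^{-1}C\varphi(g_0)C^{-1}$) is a bijection of $U_{G,N}$ onto itself for $N$ large, and both establish this by exploiting that $\varphi$ raises $u$-adic valuation by a factor of $p$ while $\operatorname{Ad}(C^{\pm1})$ lowers it by at most a bounded amount depending on $X$, so that a successive-approximation scheme converges once $(p-1)N$ exceeds that bound. The only difference is presentational: the paper reduces to $G=\operatorname{GL}_n$ via a faithful representation, cites \cite[9.6]{B21} and \cite[2.2]{PR09} for the convergent iteration in that case, and then observes that the limit stays in $G$; you instead run the linearization $\delta \mapsto \delta - \operatorname{Ad}(C)\varphi(\delta)$ and a Newton-type argument directly for general $G$. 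Your route is more self-contained and avoids the reduction step, while the paper's is shorter by citation; the underlying mechanism is the same.
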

	\begin{proof}
		The isomorphism $[\widetilde{Z}_G \times_{\operatorname{Gr}_{\widetilde{G}}} X / \prescript{}{\varphi}{U_{G,N}}] \cong [\widetilde{Z}_G \times_{\operatorname{Gr}_{\widetilde{G}}} X / U_{G,N}]$ follows from the concrete assertion that there exists $N \geq 1$ so that for any $C \in LG(A) = G(\mathfrak{S}_A[\frac{1}{E(u)}])$ representing an $A$-valued point in $X$ one has:
		\begin{itemize}
			\item If $g_0 \in U_{G,N}(A)$ then $g_0^{-1} C \varphi(g_0) = gC$ for a unique $g \in U_{G,N}(A)$.
			\item If $g \in U_{G,N}(A)$ then there exists a unique $g_0 \in U_{G,N}(A)$ for which $g_0^{-1} C \varphi(g_0) = gC$.
		\end{itemize}
		When $G = \operatorname{GL}_n$ this is shown in \cite[9.6]{B21} (following arguments in \cite[2.2]{PR09}). For general $G$ one chooses a faithful representation into $\operatorname{GL}_n$. The first point then follows immediately from the statement for $\operatorname{GL}_n$. For the second point one recalls that, in the case of $\operatorname{GL}_n$, one constructs $g_0 \in U_{\operatorname{GL}_n,N}(A) = 1 + u^N\operatorname{Mat}(\mathfrak{S}_A)$ as the limit of a $u$-adically converging sequence of matrices in $U_{\operatorname{GL}_n,N}(A)$. If $g$ and $C$ are in $G(\mathfrak{S}_A[\frac{1}{E(u)}])$ then $g_0$ will be the limit of a convergent sequence in $G(\mathfrak{S}_A[\frac{1}{E(u)}]) \cap U_{\operatorname{GL}_n,N}(A) = U_{G,N}(A)$. Thus $g_0 \in U_{G,N}(A)$ also, and the proposition follows. 
	\end{proof}

	\begin{corollary}\label{cor-descend}
		Let $\mu$ be a Hodge type and assume that for each $\kappa_0:k \rightarrow \mathbb{F}$ 
		$$
		\sum_{i=1}^e\langle \alpha^\vee,\mu_{i} \rangle \leq p
		$$
		for all roots $\alpha^\vee$. Then there exists a closed subfunctor $Z_{G,\mu,\mathbb{F}}$ of $Z_G \otimes_{\mathcal{O}} \mathbb{F}$ represented by an algebraic stack, of finite type over $\operatorname{Spec}\mathbb{F}$, with the property that $\mathfrak{M} \in Z_{G,\mu,\mathbb{F}}(A)$ if and only if 
		\begin{itemize}
			\item For any $A$-algebra $A'$ and any trivialisation $\iota$ of $\mathfrak{M} \otimes_A A'$ one has $\Psi(\mathfrak{M} \otimes_A A',\iota) \in M_\mu \otimes_{\mathcal{O}} \mathbb{F}$ for $M_\mu \subset \operatorname{Gr}_{\widetilde{G}}$ defined as in Definition~\ref{def-mmu}.
		\end{itemize}
		Furthermore, $\operatorname{dim}Z_{G,\mu,\mathbb{F}} = \sum_{\kappa:K \rightarrow E} \operatorname{dim}\widetilde{G}/P_{\mu_\kappa}$.
	\end{corollary}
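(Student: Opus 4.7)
The strategy is to descend along the $\mathcal{G}_{G,N}$-torsor produced by Proposition~\ref{prop-torsor} applied to the closed subscheme $X := M_\mu \otimes_{\mathcal{O}} \mathbb{F}$ of $\operatorname{Gr}_{\widetilde{G}} \otimes_{\mathcal{O}} \mathbb{F}$. First I would verify that $X$ is $L^+G$-stable under left translation: this is not true of the generic fibre $M_\mu \otimes_{\mathcal{O}} E$ (which is only $\widetilde{G}$-stable, as a product of flag varieties), but becomes true of the special fibre because, by Proposition~\ref{prop-firstcycle} and Definition~\ref{def-Clambda}, the support of $M_\mu \otimes_{\mathcal{O}} \mathbb{F}$ is a union of the closures $\mathcal{C}_{\lambda+e\rho}$ of $L^+G$-orbits; the $\mathcal{O}$-flatness of $M_\mu$ (as scheme-theoretic image from the generic fibre) then promotes this set-theoretic stability to stability of the actual closed-subscheme structure.

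With this in place, Proposition~\ref{prop-torsor} yields, for $N \geq N_0 := N_0(X)$, a $\mathcal{G}_{G,N}$-torsor
$$
\Psi_N \colon \widetilde{Z}_{G,\mu,N} := \bigl[\widetilde{Z}_G \times_{\operatorname{Gr}_{\widetilde{G}}} X \,/\, U_{G,N}\bigr] \longrightarrow X,
$$
where by that same proposition the quotient may be formed against either the $\varphi$-conjugation or the right-translation action of $U_{G,N}$. Since $X$ and $\mathcal{G}_{G,N}$ are both of finite type over $\mathbb{F}$, so is $\widetilde{Z}_{G,\mu,N}$. I then set
$$
Z_{G,\mu,\mathbb{F}} := \bigl[\widetilde{Z}_{G,\mu,N} \,/\, \mathcal{G}_{G,N}\bigr]
$$
with $\mathcal{G}_{G,N}$ acting via the descended $\varphi$-conjugation action. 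This is a finite-type algebraic stack over $\mathbb{F}$ whose isomorphism class is independent of $N \geq N_0$ via the usual comparison along the descending chain $U_{G,N'} \subset U_{G,N}$.

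It remains to identify $Z_{G,\mu,\mathbb{F}}$ with a closed substack of $Z_G \otimes_{\mathcal{O}} \mathbb{F}$ carrying the advertised moduli interpretation, and to compute its dimension. The closed-substack claim follows because $\widetilde{Z}_G \times_{\operatorname{Gr}_{\widetilde{G}}} X$ is a closed sub-ind-scheme of $\widetilde{Z}_G \otimes_{\mathcal{O}} \mathbb{F}$ that is $\varphi$-conjugation-stable (via the $L^+G$-stability of $X$), and forming the $\varphi$-conjugation quotient yields the closed immersion $Z_{G,\mu,\mathbb{F}} \hookrightarrow Z_G \otimes_{\mathcal{O}} \mathbb{F}$. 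The moduli interpretation is that an $A$-point $\mathfrak{M}$ of $Z_G \otimes_{\mathcal{O}} \mathbb{F}$ lies in $Z_{G,\mu,\mathbb{F}}(A)$ iff, fppf-locally on $A$, it admits a trivialization $\iota$ with $\Psi(\mathfrak{M},\iota) \in X$; the $L^+G$-stability of $X$ upgrades this "locally, some trivialization" condition to "any trivialization after any base change". For the dimension, the torsor-quotient balance gives $\operatorname{dim} Z_{G,\mu,\mathbb{F}} = \operatorname{dim} X$, and $\mathcal{O}$-flatness of $M_\mu$ yields $\operatorname{dim} X = \operatorname{dim}(M_\mu \otimes_{\mathcal{O}} E) = \sum_{\kappa : K \rightarrow E} \operatorname{dim} \widetilde{G}/P_{\mu_\kappa}$.

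The main obstacle is the verification that $X = M_\mu \otimes_{\mathcal{O}} \mathbb{F}$ is $L^+G$-stable as a closed subscheme; this is what makes the "for any trivialization" phrasing of the moduli interpretation well-posed, since change of trivialization acts on $\Psi(\mathfrak{M},\iota)$ via left multiplication by $\varphi(L^+G) \subset L^+G$. Because this stability fails in the generic fibre, it is a genuine feature of the degeneration to $u = 0$, and one must exploit the explicit description of the support coming from Proposition~\ref{prop-firstcycle} together with $\mathcal{O}$-flatness of $M_\mu$, rather than the original defining property as a scheme-theoretic image.
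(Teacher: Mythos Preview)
Your overall architecture is the same as the paper's: apply Proposition~\ref{prop-torsor} with $X = M_\mu \otimes_{\mathcal{O}} \mathbb{F}$, then descend along the $\varphi$-conjugation action of $\mathcal{G}_{G,N}$. The dimension count via the torsor is also correct. But the crucial step---stability of $X$ under the relevant action---is handled incorrectly.

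First, the $\mathcal{C}_{\lambda+e\rho}$ are \emph{not} closures of $L^+G$-orbits: by Definition~\ref{def-Clambda}, $\mathcal{C}_\lambda$ is the closure of $\operatorname{Gr}_{G,\lambda,\mathbb{F}} \times_{\operatorname{Gr}_G} \operatorname{Gr}_G^\nabla$, and $\operatorname{Gr}_G^\nabla$ is only $G$-stable, not $L^+G$-stable. So your support description does not yield $L^+G$-stability, even set-theoretically. Second, Proposition~\ref{prop-firstcycle} is stated under the hypothesis that each $\mu_i$ is strictly dominant, which is not assumed in the present corollary. Third, and most tellingly, your argument never uses the bound $\sum_i \langle \alpha^\vee,\mu_i\rangle \leq p$; since this hypothesis is the entire content of the corollary, any proof that doesn't invoke it should be viewed with suspicion.

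The paper's argument avoids all of this by not asking for full $L^+G$-stability. What is actually required for descent along $\varphi$-conjugation is stability of $X$ under left translation by $\varphi(L^+G)$, i.e.\ by $\widetilde{G}(A[[u^p]])$. This is exactly where the numerical bound enters: since $M_\mu \otimes_{\mathcal{O}} \mathbb{F} \subset Y_{\widetilde{G},\leq \mu} \otimes_{\mathcal{O}} \mathbb{F}$ (Proposition~\ref{prop-schubertcontain}) and $\sum_i \langle \alpha^\vee,\mu_i\rangle \leq p$, any $g \in \widetilde{G}(A[[u^p]])$ acts on this locus as its constant term $g_0 = g \bmod u^p \in \widetilde{G}(A)$. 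The problem then reduces to $\widetilde{G}$-stability of $M_\mu \otimes_{\mathcal{O}} \mathbb{F}$, which is immediate from the definition (the generic fibre is $\widetilde{G}$-stable, hence so is its closure). This is both simpler and more robust than your approach, and it makes the role of the hypothesis transparent.
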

	\begin{proof}
		Applying Proposition~\ref{prop-torsor} with $X = M_\mu \otimes_{\mathcal{O}} \mathbb{F}$ shows that $[\widetilde{Z}_G \times_{\operatorname{Gr}_{\widetilde{G}}} X / \prescript{}{\varphi}{U_{G,N}}]$ is, for large enough $N$, a finite type $\mathbb{F}$-scheme of dimension 
		$$
		\operatorname{dim}\mathcal{G}_{G,N} + \sum_{\kappa:K \rightarrow E} \operatorname{dim}\widetilde{G}/P_{\mu_\kappa}
		$$
		To construct $Z_{G,\mu,\mathbb{F}}$ we descend this closed subscheme along the morphism $\Psi_N$. For this we need that $[\widetilde{Z}_G \times_{\operatorname{Gr}_{\widetilde{G}}} X / \prescript{}{\varphi}{U_{G,N}}]$ is stable under the $\varphi$-conjugation action $C_{\mathfrak{M},\iota} \mapsto g^{-1}C_{\mathfrak{M},\iota}\varphi(g)$ of $\mathcal{G}_{G,N}$. This is equivalent to asking that the $A$-valued points of each $M_\mu \otimes_{\mathcal{O}} \mathbb{F} \subset \operatorname{Gr}_{\widetilde{G}}$ are stable under the action of $\widetilde{G}(A[u^p])$.
		
		For this notice that if $\sum_{i=1}^e\langle \alpha^\vee,\mu_{i} \rangle \leq p$ then $g \in \widetilde{G}(A[u^p])$ acts on $Y_{\widetilde{G},\leq \mu} \otimes_{\mathcal{O}} \mathbb{F}$ as $g_0 := g$ modulo $u^p$ (this is clear from the definition). Since $M_\mu \otimes_{\mathcal{O}} \mathbb{F}\subset Y_{\widetilde{G},\leq \mu} \otimes_{\mathcal{O}} \mathbb{F}$ (see Proposition~\ref{prop-schubertcontain}) the claim reduces to the claim that $M_\mu \otimes_{\mathcal{O}} \mathbb{F}$ is stable under the action of $G$, and this is immediate.
	\end{proof}

	\begin{corollary}\label{cor-repble}
		Let $H \subset G$ be an embedding of reductive groups. Then the induced morphism
		$$
		Z_H \rightarrow Z_G
		$$
		is representable by schemes, and of finite type.
	\end{corollary}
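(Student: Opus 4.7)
The plan is to reduce to a concrete moduli description and then extract finite-typeness from the Frobenius compatibility. The assertion of the corollary is fpqc-local on the target, so for a morphism $\operatorname{Spec}A \to Z_G$ classifying a $G$-Breuil--Kisin module $\mathfrak{M}$, I may after an fpqc base change assume $\mathfrak{M}$ is trivialized, so that it is determined by a Frobenius matrix $C \in G(\mathfrak{S}_A[\tfrac{1}{E(u)}])$. Since $H$ is a reductive closed subgroup of the reductive group $G$, the quotient $G/H$ is affine, and I fix a $G$-equivariant closed embedding $G/H \hookrightarrow V$ into a finite-rank representation of $G$ over $\mathcal{O}$ (by Chevalley), sending the identity coset to a vector $v_0 \in V$ whose scheme-theoretic stabilizer is $H$.

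Under these identifications, a $B$-point of $Z_H \times_{Z_G} \operatorname{Spec}A$ corresponds to a Frobenius-stable reduction of $\mathfrak{M}\otimes_A B$ to $H$, which translates into a section $v = g \cdot v_0 \in V \otimes_\mathcal{O} \mathfrak{S}_B$ subject to two closed conditions: (i) $v$ lies in $G/H \subset V$; (ii) $v = C \cdot \varphi(v)$ as elements of $V \otimes_\mathcal{O} \mathfrak{S}_B[\tfrac{1}{E(u)}]$. The first condition encodes that $v$ defines a point of $\mathfrak{M}/H$, the second encodes that this reduction is respected by the Frobenius of $\mathfrak{M}$. Hence $Z_H \times_{Z_G} \operatorname{Spec}A$ is identified with a closed subfunctor of the a priori infinite-dimensional section functor $B \mapsto V\otimes_\mathcal{O}\mathfrak{S}_B$.

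The key finite-type claim is that the Frobenius equation (ii) cuts this section functor down dramatically. Writing $C = C'/E(u)^d$ with $C' \in \operatorname{End}(V)\otimes \mathfrak{S}_A$ for some fixed $d$, condition (ii) becomes the integral equation $E(u)^d v = C'\varphi(v)$. Since $\varphi$ on $\mathfrak{S}_B$ raises $u^i$ to $u^{pi}$ while $E(u)^d$ has $u$-adic degree $ed$, the plan is to show via a $u$-adic iterative analysis, combined with $p$-adic completeness of $A$, that for $N_0$ sufficiently large (depending only on $C$) any integral solution $v \in V \otimes \mathfrak{S}_B$ is uniquely determined by its image modulo $u^{N_0}$. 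This exhibits a closed embedding of the functor into the finite-free $A$-module $V \otimes_\mathcal{O}(\mathfrak{S}_A/u^{N_0})$; condition (i) then cuts out a further closed subscheme, giving the desired representability by a finite-type $A$-scheme.

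The hard part will be making the truncation argument precise. A naive $u$-adic recursion "$v_N$ is determined by $v_{<N}$" does not close up at once, because the leading coefficient of $E(u)^d$ viewed as a power series in $u$ is $((-1)^e\pi_1\cdots\pi_e)^d$, which is a unit times $p^d$ rather than a unit in $\mathfrak{S}_A$. The remedy is to iterate $p$-adically, exploiting both the $p$-adic completeness of $A$ and the constraint that $v$ has no pole at $E(u)$; morally this is the same phenomenon as the elementary calculation that the $\varphi$-invariants of $\mathfrak{S}_A$ reduce to $A$, despite $\mathfrak{S}_A$ being vastly larger. An alternative route would be to leverage the fact that $\widetilde{Z}_H \to \widetilde{Z}_G$ is, under the identifications of Construction~\ref{con-PRdiagram}, the closed immersion $LH \hookrightarrow LG$ of ind-schemes, and to descend representability along the $L^+H$- and $L^+G$-torsor structures using the $U_{G,N}$-quotient technique of Proposition~\ref{prop-torsor}.
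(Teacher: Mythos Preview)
Your representability argument is correct and matches the paper's: the paper simply invokes the well-known representability of $\operatorname{Bun}_H \to \operatorname{Bun}_G$ to exhibit $Z_H \times_{Z_G} \operatorname{Spec}A$ as a closed subscheme of an (infinite-type) arc-type scheme over $A$, which is exactly what your Chevalley embedding of $G/H$ into $V$ accomplishes concretely.

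For finite-typeness, however, your primary truncation argument is left incomplete, and the gap is not merely cosmetic. Even if you establish that any solution $v$ of $E(u)^d v = C'\varphi(v)$ is \emph{uniquely determined} by its image modulo $u^{N_0}$, this only yields a monomorphism of functors into the finite-free module $V \otimes (\mathfrak{S}_A/u^{N_0})$; to get a closed immersion you would also need that \emph{existence} of the lift is a closed condition on the truncation, i.e.\ that the higher coefficients are given by polynomial expressions in the lower ones. Making this precise amounts to re-proving the successive-approximation statement behind Proposition~\ref{prop-torsor}, which is exactly the machinery you defer to in your ``alternative route''.

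The paper takes that alternative route directly and bypasses the explicit recursion. It first reduces to $A$ Noetherian with $p$ nilpotent, then after an fppf cover factors $\operatorname{Spec}A \to Z_G$ through $\widetilde{Z}^N_{G,X} := [\widetilde{Z}_G \times_{\operatorname{Gr}_{\widetilde{G}}} X /\, \prescript{}{\varphi}{U_{G,N}}]$ for a suitable closed $X \subset \operatorname{Gr}_{\widetilde{H}}$ and large $N$. In the tower
\[
\widetilde{Z}^N_{H,X} \times_{\widetilde{Z}^N_{G,X}} \operatorname{Spec}A \;\longrightarrow\; Z_H \times_{Z_G} \operatorname{Spec}A \;\longrightarrow\; \operatorname{Spec}A
\]
the composite is of finite type because $\widetilde{Z}^N_{H,X} \to \widetilde{Z}^N_{G,X}$ is a closed immersion, and Noetherianity of $A$ then forces the middle term to be of finite type as well. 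So your instinct to fall back on the $U_{G,N}$-quotient technique was exactly right; the paper simply commits to it from the start rather than attempting the direct $u$-adic analysis.
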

	\begin{proof}
		The well-known fact that $\operatorname{Bun}_H \rightarrow \operatorname{Bun}_G$ is representable by schemes implies that, for any $A$-valued point of $Z_G$, $Z_H \times_{Z_G} \operatorname{Spec}A$ is representable by a closed subscheme of an $\mathfrak{S}_A$-scheme. To check this scheme is of finite type over $A$ we can assume that $A$ is a Noetherian $\mathcal{O}$-algebra $A$ on which $p$ is nilpotent. After replacing $A$ by an fppf-cover we can factor $\operatorname{Spec}A \rightarrow Z_G$ through $[\widetilde{Z}_G \times_{\operatorname{Gr}_{\widetilde{G}}} X / \prescript{}{\varphi}{U_{G,N}}]$ for sufficient large $N$ and some $X \subset \operatorname{Gr}_{\widetilde{G}}$. We can also assume $X$ is actually a closed subscheme of $\operatorname{Gr}_{\widetilde{H}}$. Writing $\widetilde{Z}^N_{G,X} := [\widetilde{Z}_G \times_{\operatorname{Gr}_{\widetilde{G}}} X / \prescript{}{\varphi}{U_{G,N}}]$ we then have a sequence of morphisms
		$$
		\widetilde{Z}^N_{H,X} \times_{\widetilde{Z}^N_{G,X}}  \times\operatorname{Spec}A \rightarrow Z_H \times_{Z_G} \operatorname{Spec}A \rightarrow \operatorname{Spec}A
		$$
		The composite is of finite type since  the same is true of $[\widetilde{Z}_H \times_{\operatorname{Gr}_{\widetilde{H}}} X / \prescript{}{\varphi}{U_{H,N}}] \rightarrow [\widetilde{Z}_G \times_{\operatorname{Gr}_{\widetilde{G}}} X / \prescript{}{\varphi}{U_{G,N}}])$ (in fact this is a closed immersion). As $A$ is Noetherian it follows that $Z_H \times_{Z_G} \operatorname{Spec}A$ is of finite type also.
	\end{proof}
	\begin{remark}
		For an embedding $H \subset G$ the analogous morphism between moduli spaces of shtuka's in representable by schemes, and additionally finite and unramified \cite{Breutmann,Yun22}. One expects that the same is true for $Z_H \rightarrow Z_G$, and it seems that the arguments of loc. cit. will go through largely unchanged. Since we do not need this additional level of control we do not try to give any details.
	\end{remark}
	
	\section{Crystalline Breuil--Kisin modules}

	Here we discuss the link between Breuil--Kisin modules and crystalline representations, by extending the discussion from \cite[\S10]{B21} from $\operatorname{GL}_n$ to $G$.
	\begin{definition}\label{def-crystGaloisactions}
		Let $A$ be a $p$-adically complete $\mathcal{O}$-algebra topologically of finite type and recall the $\mathfrak{S}_A$-algebra $A_{\operatorname{inf},A}$ which is equipped with a Frobenius extending that on $\mathfrak{S}_A$ and a continuous action of $G_K$ commuting with the Frobenius. By a crystalline action of $G_K$-action on $\mathfrak{M} \in Z_G(A)$ is one of the following two equivalent (in view of \cite[2.2.1]{BB20}) pieces of data:
		\begin{itemize}
			\item A compatible (with exact sequences and tensor products) collection of crystalline $G_K$-actions on $\mathfrak{M}^\chi$ for each $\mathcal{O}$-linear representation $\chi$ of $G$ in the sense of \cite[10.1]{B21}. In other words a continuous $\varphi$-equivariant $A_{\operatorname{inf},A}$-semilinear action of $G_K$ on $\mathfrak{M}^\chi \otimes_{\mathfrak{S}_A} A_{\operatorname{inf},A}$ for each $\chi$ satisfying
			$$
			(\sigma -1)(m) \in {\mathfrak {M}} \otimes _{{\mathfrak {S}}_A} [\pi ^\flat ]\varphi ^{-1}(\mu )A_{{\text {inf}},A}, \qquad (\sigma _{\infty } -1)(m) = 0 
			$$
			for every $m \in {\mathfrak {M}}$ and every $\sigma \in G_K, \sigma _{\infty } \in G_{K_\infty }$.
			\item A collection of isomorphisms of $G$-torsors $\mathfrak{M} \otimes_{\mathfrak{S}_A,\sigma} A_{\operatorname{inf},A} \xrightarrow{\sim} \mathfrak{M} \otimes_{\mathfrak{S}_A} A_{\operatorname{inf},A}$ for each $\sigma \in G_K$ so that, for any trivialisation of $\mathfrak{M}$ over an fppf cover $A' \rightarrow A$, the corresponding elements $A_\sigma \in G(A_{\operatorname{inf},A'})$ satisfy
			\begin{enumerate}
				\item $A_\sigma =1$ for $\sigma \in G_{K_\infty}$
				\item $A_\sigma \in \operatorname{ker}\left( G(A_{\operatorname{inf},A'}) \rightarrow G(A_{\operatorname{inf},A'}/u\varphi^{-1}(\mu)) \right)$ for $\sigma \in G_K$ and $\mu = [\epsilon]-1 \in A_{\operatorname{inf},A}$.
				\item $A_{\sigma\tau} = A_\sigma \tau(A_\sigma)$
				\item If $C \in G(\mathfrak{S}_{A'}[\frac{1}{E(u)}])$ is the matrix of Frobenius on $\mathfrak{M}$ corresponding to the trivialisation overr $A'$ then $C \varphi(A_\sigma) \sigma(C^{-1}) =A_\sigma$ for each $\sigma \in G_K$ and where $\varphi$ is the endomorphism of $G(A_{\operatorname{inf},A'})$ induced by $\varphi$ on $A_{\operatorname{inf},A'}$.
				\item The map $G_K \rightarrow G(A_{\operatorname{inf},A'})$ given by $\sigma  \mapsto A_\sigma$ is continuous for the topology induced by any choice of faithful embedding of $G$ into $\operatorname{GL}_n$.
			\end{enumerate}
		\end{itemize}
		Write $Y_G(A)$ for the groupoid consisting of $\mathfrak{M} \in Z_G(A)$ equipped with a crystalline $G_K$-action and $Y_G$ for the resulting limit preserving fppf stack over $\operatorname{Spf}\mathcal{O}$.
	\end{definition}

	We say that a continuous representation $\rho:G_K \rightarrow G(A)$ is crystalline if $\chi \circ \rho$ is crystalline in the sense of \cite{Fon94b} for every representation $\chi$ of $G$ (equivalently for a single faithful $\chi$).
	\begin{theorem}\label{thm-Kisin}
		Let $A$ be a finite flat $\mathcal{O}$-algebra. Then to each $(\mathfrak{M},x) \in Y_G(A)$ there is a uniquely determined (up to isomorphism) crystalline representation $\rho:G_K \rightarrow G(A)$ together with a $\varphi,G_K$-equivariant identification
		$$
		\mathfrak{M} \otimes_{\mathfrak{S}_A} W(C^\flat)_A \cong \rho \otimes_A W(C^\flat)_A
		$$
		of $G$-torsors. If $A$ is a discrete valuation ring then every crystalline $\rho$ arises in this way from a unique (up to isomorphism) such $(\mathfrak{M},x)$.
	\end{theorem}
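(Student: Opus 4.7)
The plan is to reduce both statements to the known case $G = \operatorname{GL}_n$ (recorded in \cite[\S10]{B21}, building on Kisin, Liu, and Gee--Liu--Savitt) via Tannakian formalism.

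For the forward direction, I would fix $(\mathfrak{M},x) \in Y_G(A)$ and, for each $\mathcal{O}$-linear representation $\chi: G \to \operatorname{GL}(V)$, form the contracted product $\mathfrak{M}^\chi$. This is a Breuil--Kisin module over $A$ in the classical sense equipped with a crystalline $G_K$-action per Definition~\ref{def-crystGaloisactions}, so the $\operatorname{GL}_n$-case produces a crystalline $G_K$-representation $V_\chi$ on a finite projective $A$-module together with a $\varphi, G_K$-equivariant identification $\mathfrak{M}^\chi \otimes_{\mathfrak{S}_A} W(C^\flat)_A \cong V_\chi \otimes_A W(C^\flat)_A$. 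I would then check that $\chi \mapsto V_\chi$ is a fibre functor from $\operatorname{Rep}_{\mathcal{O}}(G)$ into the category of crystalline $G_K$-representations on finite projective $A$-modules; compatibility with tensor products, duals and short exact sequences follows from the uniqueness clause of the $\operatorname{GL}_n$-case applied to the canonical morphisms $\mathfrak{M}^{\chi_1} \otimes \mathfrak{M}^{\chi_2} \to \mathfrak{M}^{\chi_1 \otimes \chi_2}$, and their Galois-equivariant counterparts on the $W(C^\flat)_A$-side. Invoking the fibre functor/$G$-torsor equivalence \cite[4.8]{Broshi}, together with triviality of $G$-torsors over the finite semi-local $\mathcal{O}$-algebra $A$ (smoothness of $G$ plus Hensel's lemma), would promote this fibre functor to a crystalline $\rho: G_K \to G(A)$. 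Assembling the $W(C^\flat)_A$-identifications across all $\chi$, again using tensor compatibility, yields the desired identification of $G$-torsors.

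For the converse when $A$ is a DVR, I would run the inverse procedure: given crystalline $\rho: G_K \to G(A)$, Kisin's essentially surjective functor attaches to each $\chi \circ \rho$ a Breuil--Kisin module $\mathfrak{M}_\chi$ carrying a crystalline $G_K$-action. Over a DVR each $\mathfrak{M}_\chi$ is finite free over $\mathfrak{S}_A$, which combined with the snake lemma makes $\chi \mapsto \mathfrak{M}_\chi$ exact, and tensor compatibility again follows from the uniqueness part of Kisin's correspondence. Tannakian reconstruction on the $\mathfrak{S}_A$-side would then produce a $G$-Breuil--Kisin module $\mathfrak{M}$ realising the $\mathfrak{M}_\chi$ as its contracted products, with the crystalline $G_K$-action descending componentwise. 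The resulting $(\mathfrak{M},x) \in Y_G(A)$ is unique up to isomorphism because the same is true componentwise.

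The hardest step will be verifying the tensor-exact fibre functor axioms for Kisin's functor in the $A$-coefficient setting. In the forward direction one must see that the period isomorphism intertwines all the relevant natural operations on crystalline representations and on Breuil--Kisin modules; this is controlled by uniqueness in the $\operatorname{GL}_n$-case but has to be applied coherently across all $\chi$. For the DVR converse, the delicate point is exactness of the inverse functor, where torsion phenomena arising when passing to sub- and quotient representations must be ruled out using flatness of $A$ and torsion-freeness of Breuil--Kisin modules over $\mathfrak{S}_A$.
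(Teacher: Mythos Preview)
Your forward direction is essentially the paper's argument: reduce to $\operatorname{GL}_n$ via Tannakian formalism, where the result is \cite[2.1.12]{B19}.

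The converse, however, has a genuine gap. You assert that over a DVR each $\mathfrak{M}_\chi$ is finite free over $\mathfrak{S}_A$ and that ``combined with the snake lemma'' this makes $\chi \mapsto \mathfrak{M}_\chi$ exact. Freeness of the individual modules does not force exactness of the functor: given a short exact sequence $0 \to V_1 \to V_2 \to V_3 \to 0$ of crystalline representations, Kisin's construction produces a complex $0 \to \mathfrak{M}_1 \to \mathfrak{M}_2 \to \mathfrak{M}_3 \to 0$ which is exact after inverting $p$ (via filtered $\varphi$-modules) and after inverting $u$ (via \'etale $\varphi$-modules), but the cokernel of $\mathfrak{M}_2 \to \mathfrak{M}_3$ can be a nonzero finite-length module supported at the closed point $(u,p)$. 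The paper states this explicitly: Kisin's construction ``while functorial and tensor compatible, is not exact.'' No amount of snake-lemma bookkeeping recovers exactness here, because the failure is not about torsion in the terms but about the extension step from the punctured spectrum to $\operatorname{Spec}\mathfrak{S}_A$.

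The paper's fix is to observe that Kisin's construction \emph{is} an exact tensor functor into vector bundles on the punctured spectrum $D^* = \operatorname{Spec}\mathfrak{S}_{\mathcal{O}} \setminus \{u=p=0\}$: such a bundle is a pair of projective $\varphi$-modules over $\mathfrak{S}_{\mathcal{O}}[\tfrac{1}{p}]$ and over the $p$-completion $\mathcal{O}_{\mathcal{E},A}$ of $\mathfrak{S}_A[\tfrac{1}{u}]$, glued over $\mathcal{O}_{\mathcal{E},A}[\tfrac{1}{p}]$, and both pieces of Kisin's construction are exact. Tannakian formalism then applies on $D^*$ to produce a $G$-torsor there with Frobenius after inverting $E(u)$, and one invokes Ansch\"utz's extension theorem \cite[1.2]{Ansch22} to extend the $G$-torsor across the closed point. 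This last extension step is precisely where exactness is lost, but by that stage the $G$-torsor is already in hand.
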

	\begin{proof}
		The first part follows immediately from the assertion for $\operatorname{GL}_n$ (see \cite[2.1.12]{B19}). The second part does not immediately follow from the case of $\operatorname{GL}_n$ because the construction in \cite{Kis06} of $\mathfrak{M}$ from any $\rho:G_K \rightarrow \operatorname{GL}_n(A)$,  while functorial and tensor compatible, is not exact. In particular, the construction cannot, a priori, be used to associate the $G$-torsor $\mathfrak{M}$ to $\rho:G_K \rightarrow G(A)$. 
		
		Fortunately, this issue can easily be addressed because Kisin's construction actually produces an exact tensor functor associating to  any crystalline representation $\rho:G_K \rightarrow \operatorname{GL}_n$ a vector bundle $\mathfrak{M}(\rho)^*$ on $D^* = \operatorname{Spec}\mathfrak{S}_{\mathcal{O}} \setminus \lbrace u=p=0 \rbrace$ together with a Frobenius isomorphism after inverting $E(u)$. As explained in e.g. \cite[2.3.6]{Lev15}, such an $\mathfrak{M}(\rho)^*$ can be interpreted as a pair of projective $\varphi$-modules, respectively over $\mathfrak{S}_{\mathcal{O}}[\frac{1}{p}]$ and the $p$-adic completion $\mathcal{O}_{\mathcal{E},A}$ of $\mathfrak{S}_A[\frac{1}{u}]$, together with a comparison isomorphism over $\mathcal{O}_{\mathcal{E},A}[\frac{1}{p}]$. The former is constructed in \cite[1.3.15]{Kis06} (and we look into this construction in more detail in Section~\ref{sec-shape}) and the latter is the etale $\varphi$-module associated to $\rho$ as in e.g. \cite[\S 2.1]{Kis06}. Then $\mathfrak{M}(\rho)$ is obtained using the equivalence between vector bundles on $D^*$ and $\operatorname{Spec}\mathfrak{S}_\mathcal{O}$ \cite[1.2]{Ansch22}. This last step is where  exactness is lost.
		
		Since $\rho \mapsto \mathfrak{M}(\rho)^*$ is exact and tensor compatible applying the construction to a crystalline representation valued in $G(\mathcal{O})$ produces a $G$-torsor on $D^*$ equipped with a Frobenius after inverting $E(u)$. Then \cite[1.2]{Ansch22} can be applied to extend this $G$-torsor to a $G$-Breuil--Kisin module, producing the $\mathfrak{M}$ associated to $\rho$.
	\end{proof}
	
	We say that a crystalline representation $\rho:G_K \rightarrow G(A)$ has Hodge type $\mu$ if $\chi \circ \rho$ has Hodge type $\chi \circ \mu$ for any representation $\chi:G \rightarrow \operatorname{GL}_n$ of $G$. We emphasise that while it is enough to check $\rho$ is crystalline using a single faithful representation, this is not sufficient to determine the Hodge type. 
	
	\begin{proposition}
		For each Hodge type $\mu$ there exists a closed subfunctor $Y^\mu_G$ of $Y_G$ which is represented by an $\mathcal{O}$-flat $p$-adic algebraic formal stack (in the sense \cite[A7]{EG19}) $Y^\mu_G$ of topologically finite type over $\mathcal{O}$ and  is uniquely determined by the property that its groupoid of $A$-valued points, for any finite flat $\mathcal{O}$-algebra $A$, is canonically equivalent to the full subcategory $Y_G^\mu(A)$ of $Y_G(A)$ consisting of those $\mathfrak{M}$ for which the associated crystalline representation $\rho$ as in Theorem~\ref{thm-Kisin} has Hodge type $\mu$.
	\end{proposition}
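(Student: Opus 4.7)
The plan is to define $Y_G^\mu$ as the scheme-theoretic closure inside $Y_G$ of a locally closed substack in the generic fibre classifying crystalline representations of Hodge type $\mu$, and to verify $\mathcal{O}$-flatness, topologically finite type, and the $A$-point characterization in turn.

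For the finite type bound, I would first cut out a closed substack $Y_G^{[\mu]}\subset Y_G$ as follows. Consider the auxiliary stack $\widetilde{Y}_G:=Y_G\times_{Z_G}\widetilde{Z}_G$, which is an $L^+\widetilde{G}$-torsor over $Y_G$ via $\Gamma$, and over which $\Psi$ from Construction~\ref{con-PRdiagram} defines a morphism to $\operatorname{Gr}_{\widetilde{G}}$. Setting $\widetilde{Y}_G^\mu:=\Psi^{-1}(M_\mu)$, a closed subfunctor of $\widetilde{Y}_G$, define $Y_G^{[\mu]}$ as its scheme-theoretic image under $\Gamma$; working modulo $p^a$, this image is a closed substack because on each finite-level slice the descent is realized, via Proposition~\ref{prop-torsor}, by quotienting the closed subscheme $\widetilde{Z}_G\times_{\operatorname{Gr}_{\widetilde{G}}}M_\mu$ by the smooth finite type group scheme $\mathcal{G}_{\widetilde{G},N}$ for $N$ sufficiently large (depending on $a$). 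Since $M_\mu$ is contained in the finite type $Y_{\widetilde{G},\leq\mu}$ by Proposition~\ref{prop-schubertcontain}, this exhibits $Y_G^{[\mu]}$ as a $p$-adic algebraic formal stack of topologically finite type over $\operatorname{Spf}\mathcal{O}$.

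To enforce $\mathcal{O}$-flatness, take $Y_G^\mu$ to be the scheme-theoretic closure of $Y_G^{[\mu]}\otimes_\mathcal{O} E$ inside $Y_G^{[\mu]}$. The resulting substack is automatically $\mathcal{O}$-flat and still of topologically finite type, and for any finite flat $\mathcal{O}$-algebra $A$, any $A$-valued point of $Y_G^{[\mu]}$ factors through $Y_G^\mu$ because $\operatorname{Spec}A$ is $\mathcal{O}$-flat.

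The main obstacle is identifying $Y_G^\mu(A)$ with the subcategory of $Y_G(A)$ consisting of those $(\mathfrak{M},x)$ whose associated crystalline representation $\rho:G_K\to G(A)$ (via Theorem~\ref{thm-Kisin}) has Hodge type $\mu$. It suffices to check this for $A$ an $E$-algebra, where the question becomes: is the condition that $\Psi(\mathfrak{M},\iota)\in M_\mu\otimes E$ for some (equivalently any) trivialization $\iota$ equivalent to $\rho$ having Hodge type $\mu$? Using the moduli interpretation of \ref{sub-moduliinterpret}, the generic fibre $M_\mu\otimes E\cong\prod_{i=1}^e\widetilde{G}/P_{\mu_i}$ parametrizes $e$-tuples of filtrations of type $\mu$ on the trivial $\widetilde{G}$-torsor, so the comparison reduces to identifying the filtration encoded by $\Psi(\mathfrak{M},\iota)$ at each place $\pi_i$ with the Hodge filtration, at the embedding $\kappa_i$, of the filtered $\varphi$-module attached to $\rho$. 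This is the content of the Kisin-theoretic comparison alluded to in the introduction following Theorem~\ref{thmA-p2}, obtained by extending via Tannakian functoriality the $\operatorname{GL}_n$-case treated in \cite[\S 10]{B21}. Uniqueness of $Y_G^\mu$ then follows, since any two $\mathcal{O}$-flat closed formal substacks of $Y_G$ with the same finite flat $A$-points must share a generic fibre, and $\mathcal{O}$-flatness forces them to coincide.
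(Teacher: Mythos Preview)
Your approach has a genuine gap at the identification step. The claim that, for an $E$-algebra $A$, the condition $\Psi(\mathfrak{M},\iota)\in M_\mu\otimes_{\mathcal{O}} E$ is equivalent to $\rho$ having Hodge type $\mu$ is false. Recall that $M_\mu\otimes_{\mathcal{O}} E\cong\prod_i \widetilde{G}/P_{\mu_i}$ is the product of the \emph{closed $\widetilde{G}$-orbits} of the points $\mathcal{E}_{\mu_i,i}$, whereas the condition ``Hodge type $\mu$'' only pins down the $L^+\widetilde{G}$-orbit: it says the relative position of $\varphi^*\mathfrak{M}$ and $\mathfrak{M}$ at each $u=\pi_i$ lies in the open Schubert cell $\operatorname{Gr}_{\widetilde{G},i,\mu_i}$. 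These loci have different dimensions (e.g.\ $|R^+|$ versus $\langle 2\rho^\vee,\mu_i\rangle$ when $\mu_i$ is strictly dominant), and a generic crystalline $\mathfrak{M}$ of Hodge type $\mu$ will have $\Psi(\mathfrak{M},\iota)[\tfrac{1}{p}]$ outside $M_\mu\otimes_{\mathcal{O}} E$. Indeed, this is precisely why Corollary~\ref{cor-almostcontained} needs the correction factor $X_{\xi,\beta}\in\widetilde{G}(\mathcal{O}^{\operatorname{rig}}_A[\tfrac{1}{\varphi(\lambda)}])$: one only has $X_{\xi,\beta}\cdot\Psi(\mathfrak{M},\iota)[\tfrac{1}{p}]\in M_\mu[\tfrac{1}{p}]$, and $X_{\xi,\beta}$ is a genuine loop-group element, not merely in $\widetilde{G}$. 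The passage in the introduction you cite concerns the \emph{special} fibre $\Psi(\mathfrak{M},\iota)\otimes_{\mathcal{O}}\mathbb{F}\in M_{\mu,\mathbb{F}}$, and even that requires the bound on $\mu$ and the nontrivial truncation argument of Section~\ref{sec-shape}.

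Consequently your $Y_G^{[\mu]}$ is strictly smaller than the locus of Hodge type $\mu$, and its $\mathcal{O}$-flat closure cannot have the required $A$-points. The paper instead bootstraps from the known $\operatorname{GL}_n$ case: one fixes a faithful $\chi:G\hookrightarrow\operatorname{GL}_n$, uses Corollary~\ref{cor-repble} to see that $Y_{\operatorname{GL}_n}^{\chi\circ\mu}\times_{Z_{\operatorname{GL}_n}} Z_G$ is a $p$-adic formal algebraic stack of topologically finite type, and then cuts down inside it by intersecting, over all representations $\chi'$ of $G$, the pullbacks of the closed substacks $Y_{\operatorname{GL}_m}^{\chi'\circ\mu}$. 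This Tannakian intersection is exactly what is needed because ``Hodge type $\mu$'' is by definition the condition that $\chi'\circ\rho$ has Hodge type $\chi'\circ\mu$ for every $\chi'$; a single faithful $\chi$ does not suffice. One then takes the $\mathcal{O}$-flat closure.
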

	
	\begin{proof}
		When $G =\operatorname{GL}_n$ this is \cite[10.7]{B21}. For general $G$, first choose a faithful representation $\chi:G \rightarrow \operatorname{GL}_n$. Corollary~\ref{cor-repble} shows that the projection
		$$
		Y^{\chi \circ \mu}_{\operatorname{GL}_n} \times_{Z_{\operatorname{GL}_n}} Z_G \rightarrow Y^{\chi \circ \mu}_{\operatorname{GL}_n}
		$$
		is representable by finite type schemes, and so $Y^{\chi \circ \mu}_{\operatorname{GL}_n} \times_{Z_{\operatorname{GL}_n}} Z_G$ is a $p$-adic  formal algebraic stack of topologically finite type over $\mathcal{O}$. Since $\chi$ is faithful the $A$-points, for any $p$-adically complete and topologically finite type $\mathcal{O}$-algebra $A$, of $Y^{\chi \circ \mu}_{\operatorname{GL}_n} \times_{Z_{\operatorname{GL}_n}} Z_G$ can be interpreted as a full subcategory of $Y_G(A)$. Therefore, any representation  $\chi':G \rightarrow \operatorname{GL}_m$ gives a morphism $Y^{\chi \circ \mu}_{\operatorname{GL}_n} \times_{Z_{\operatorname{GL}_n}} Z_G \rightarrow Y_{\operatorname{GL}_m}$ and pullback along the closed subfunctor $Y^{\chi' \circ \mu}_{\operatorname{GL}_m}$ gives a closed substack of $Y^{\chi \circ \mu}_{\operatorname{GL}_n} \times_{Z_{\operatorname{GL}_n}} Z_G$ whose $A$-points, for any finite flat $\mathcal{O}$-algebra $A$, consist of $\mathfrak{M} \in Y_G(A)$ whose associated crystalline representation has Hodge type $\chi' \circ \mu$ when composed with $\chi'$. The intersection of these closed substacks over all representations $\chi'$ therefore has $A$-points, for any finite flat $\mathcal{O}$-algebra $A$, consisting of $\mathfrak{M} \in Y_G(A)$ with associated crystalline representation having Hodge type $\mu$. Finally, one sets $Y^\mu_G$ equal the  $\mathcal{O}$-flat closure of this closed substack (in the sense of \cite[p.230]{EG19}).
	\end{proof}
	
	We finish this section by explaining the relationship between $Y_G^\mu$ and $G$-crystalline deformation rings. Fix a continuous homomorphism $\overline{\rho}:G_K \rightarrow G(\mathbb{F})$ and let $R_{\overline{\rho}}^\square$ denote the corresponding framed deformation ring over $\mathcal{O}$, i.e. the unique complete local Noetherian $\mathcal{O}$-algebra with residue field $\mathbb{F}$ equipped with a continuous homomorphism $\rho^{\operatorname{univ}}:G_K \rightarrow G(R_{\overline{\rho}}^\square)$ satisfying $\rho^{\operatorname{univ}} \otimes_{R^\square_{\overline{\rho}}} \mathbb{F} = \overline{\rho}$ and universal amongst all such rings with this property.
	
	\begin{theorem}\label{thm-Kisindef}
		For each Hodge type $\mu$ there exists a unique $\mathcal{O}$-flat reduced quotient $R_{\overline{\rho}}^{\square,\operatorname{cr},\mu}$ of $R^\square_{\overline{\rho}}$ with the property that a homomorphism $R^\square_{\overline{\rho}} \rightarrow A$ with $A$ a finite flat $\mathcal{O}$-algebra factors through $R^{\square,\operatorname{cr},\mu}_{\overline{\rho}}$ if and only if the composite
		$$
		G_K \xrightarrow{\rho^{\operatorname{univ}}} G(R^{\square}_{\overline{\rho}}) \rightarrow G(A)
		$$
		is crystalline of Hodge type $\mu$. Furthermore,
		$$
		\operatorname{dim}_{\mathcal{O}} R^{\square,\operatorname{cr},\mu}_{\overline{\rho}} = \operatorname{dim}_{\mathcal{O}} G + \sum_{i=1}^e \operatorname{dim}\widehat{G}/P_{\mu_i}
		$$
	\end{theorem}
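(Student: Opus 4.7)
The plan is to construct $R_{\overline{\rho}}^{\square,\operatorname{cr},\mu}$ as the $\mathcal{O}$-flat reduced quotient of $R_{\overline{\rho}}^\square$ whose generic-fibre points classify crystalline lifts of Hodge type $\mu$, and to compute its dimension by pulling back $Y_G^\mu$ to $\operatorname{Spf} R_{\overline{\rho}}^\square$ and rigidifying.

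Uniqueness is formal: any $\mathcal{O}$-flat reduced quotient of a complete Noetherian local $\mathcal{O}$-algebra is determined by its $\overline{E}$-valued points, and the stated universal property pins these down via Theorem~\ref{thm-Kisin}. For existence, I would consider the $p$-adic formal algebraic stack $Y_{G,\overline{\rho}}^\mu$ whose $A$-valued points, for $A$ a finite flat $\mathcal{O}$-algebra, are pairs consisting of $(\mathfrak{M},x)\in Y_G^\mu(A)$ together with a continuous homomorphism $R_{\overline{\rho}}^\square \to A$ whose composite with $\rho^{\operatorname{univ}}$ matches the Galois representation attached to $(\mathfrak{M},x)$ by Theorem~\ref{thm-Kisin}. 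This extends to Artinian $\mathcal{O}$-algebras by approximation. The natural morphism $\Theta: Y_{G,\overline{\rho}}^\mu \to \operatorname{Spf} R_{\overline{\rho}}^\square$ is proper after inverting $p$, following Kisin's original argument in the $\operatorname{GL}_n$ case, which extends to $G$ via Corollary~\ref{cor-repble} and a faithful representation into $\operatorname{GL}_n$. The scheme-theoretic image of $\Theta$, equipped with its $\mathcal{O}$-flat reduced structure, defines $R_{\overline{\rho}}^{\square,\operatorname{cr},\mu}$.

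For the dimension, introduce a framed variant $\widetilde{Y}_{G,\overline{\rho}}^\mu$ by additionally trivialising the underlying Breuil--Kisin module modulo a sufficiently large power $N$ of $u$ so that Proposition~\ref{prop-torsor} applies. Then $\widetilde{Y}_{G,\overline{\rho}}^\mu \to Y_{G,\overline{\rho}}^\mu$ is a torsor under $\mathcal{G}_{G,N}$, while the composite $\widetilde{Y}_{G,\overline{\rho}}^\mu \to \operatorname{Spf} R_{\overline{\rho}}^{\square,\operatorname{cr},\mu}$ is formally smooth on the rigid generic fibre of relative dimension $\dim \mathcal{G}_{G,N} - \dim_{\mathcal{O}} G$, the subtracted $\dim_{\mathcal{O}} G$ accounting for the framing ambiguity relating a representation to its trivialised version. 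Using Corollary~\ref{cor-descend} to get $\dim Z_{G,\mu,\mathbb{F}} = \sum_{i=1}^e \dim \widetilde{G}/P_{\mu_i}$, adding one for $\mathcal{O}$-flatness of $Y_G^\mu$, and identifying $\dim \widetilde{G}/P_{\mu_i}$ with $\dim \widehat{G}/P_{\mu_i}$, a direct dimension chase then yields $\dim_{\mathcal{O}} R_{\overline{\rho}}^{\square,\operatorname{cr},\mu} = \dim_{\mathcal{O}} G + \sum_{i=1}^e \dim \widehat{G}/P_{\mu_i}$.

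The main obstacle is making rigorous the construction of $Y_{G,\overline{\rho}}^\mu$ and the properness and generic-fibre behaviour of $\Theta$ without having at one's disposal the (still unconstructed for general $G$) moduli stack of Galois representations. Concretely, one must verify that every crystalline lift of $\overline{\rho}$ of Hodge type $\mu$ over a finite flat $\mathcal{O}$-algebra $A$ corresponds uniquely to a point of $Y_G^\mu(A)$, and that the bijection on finite flat points extends coherently to Artinian approximations in a way compatible with the framing. For $\operatorname{GL}_n$ this is Kisin's classical theorem and its refinement in \cite{B21}; for general $G$, Corollary~\ref{cor-repble} allows one to transport each of the required properties through a faithful embedding, provided one simultaneously tracks the Hodge type condition across all representations (as in the very construction of $Y_G^\mu$).
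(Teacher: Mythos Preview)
The paper does not prove this theorem at all: its proof is a two-line citation of \cite{Kis08} for $G=\operatorname{GL}_n$ and \cite{Bal12} (see also \cite{BG18}) for general $G$. Your proposal is, in outline, a sketch of how those cited references actually proceed---constructing a proper resolution over $\operatorname{Spf}R_{\overline{\rho}}^\square$ via Breuil--Kisin modules and reading off the dimension on the rigid generic fibre. Indeed, the paper later records exactly this resolution as Construction~\ref{con-Kisinresolution}, presented as a consequence rather than a proof of the theorem.

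That said, your dimension argument as written does not go through. You invoke Corollary~\ref{cor-descend} to obtain $\dim Z_{G,\mu,\mathbb{F}}=\sum_i\dim\widetilde{G}/P_{\mu_i}$, but that corollary is only stated under the bound $\sum_i\langle\alpha^\vee,\mu_i\rangle\le p$, whereas Theorem~\ref{thm-Kisindef} holds for arbitrary Hodge types. More seriously, relating $Y_G^\mu\otimes_{\mathcal{O}}\mathbb{F}$ to $Z_{G,\mu,\mathbb{F}}$ requires Theorem~\ref{thm-factorisation2} and Corollary~\ref{cor-closed immerson}, both of which lie logically downstream of Theorem~\ref{thm-Kisindef} and again require the bound on $\mu$. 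In fact the paper runs the implication in the opposite direction: Corollary~\ref{cor-lifting}(2) \emph{deduces} $\dim(Y_G^\mu\otimes_{\mathcal{O}}\mathbb{F})$ from the dimension formula in Theorem~\ref{thm-Kisindef} via Lemma~\ref{lem-formalsmooth}, not the other way around. The actual dimension computation in \cite{Kis08,Bal12} is carried out on the rigid generic fibre, where the resolution is formally smooth over the flag variety $\prod_i G/P_{\mu_i}$ without any restriction on $\mu$; your special-fibre route cannot substitute for this.
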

	\begin{proof}
		This is a special case of the main result of \cite{Kis08} when $G = \operatorname{GL}_n$ and of \cite{Bal12} for general $G$. See also \cite[Theorem A]{BG18}.
	\end{proof}
	
	\begin{construction}\label{con-Kisinresolution}
		Let $A$ be any complete local Noetherian $\mathcal{O}$-algebra with finite residue field and let $\rho:G_K \rightarrow G(A)$ be a continuous representation. Consider the functor which sends any $p$-adically complete $A$-algebra $B$ which is topologically of finite type over $\mathcal{O}$ to the set tuples $(\mathfrak{M},x,\alpha,\beta)$ for which $(\mathfrak{M},x) \in Y^\mu_G(B)$, $\alpha: A \rightarrow B$ is a continuous homomorphism, and $\beta$ is a $\varphi,G_{K}$-equivariant identification
		$$
		\mathfrak{M} \otimes_{\mathfrak{S}_{B}} W(C^\flat)_{B} \cong \rho \otimes_{A,\alpha} W(C^\flat)_{B}
		$$
		After choosing a faithful representation it follows from e.g. \cite[4.5.26]{EG19} that this functor is represented by the $\mathfrak{m}_A$-adic completion of a projective $A$-scheme  whose $\mathcal{O}$-flat closure we denote by $\mathcal{L}_\rho^\mu$. Then $\mathcal{L}_\rho^\mu$ has the property that the structure morphism $\mathcal{L}_\rho^\mu \rightarrow \operatorname{Spec}A$ becomes a closed immersion after inverting $p$. The scheme theoretic image of this morphism corresponds to a quotient $A^{\operatorname{cr,\mu}}$ of $A$ with the property that a homomorphism $A \rightarrow B$ into a finite flat $\mathcal{O}$-algebra $B$ factors through $\mathcal{L}_{\overline{\rho}}^\mu$ if and only if $G_K \xrightarrow{\rho} G(A) \rightarrow G(B)$ is crystalline of Hodge type $\mu$.
	\end{construction}
	
	For a given continuous $\overline{\rho}:G_K \rightarrow G(\mathbb{F})$ set $\mathcal{L}_{\overline{\rho}}^{\operatorname{cr},\mu} = \mathcal{L}_{\rho^{\operatorname{univ}}}$ for $\rho^{\operatorname{univ}}: G_K \rightarrow G(R_{\overline{\rho}}^\square)$ the universal deformation of $\rho$. Then Construction~\ref{con-Kisinresolution} shows that $R_{\overline{\rho}}^{\square,\operatorname{cr,\mu}}$ is the scheme theoretic image of $\mathcal{L}_{\overline{\rho}}^{\operatorname{cr},\mu}$.
	
	\begin{lemma}\label{lem-formalsmooth}
		For any continuous homomorphism $\overline{\rho}:G_K \rightarrow G(\mathbb{F})$ there is a formally smooth  morphism
		$$
		\mathcal{L}_{\overline{\rho}}^{\operatorname{cr},\mu} \otimes_{\mathcal{O}} \mathbb{F} \rightarrow Y_G^{\mu} \otimes_{\mathcal{O}} \mathbb{F}
		$$
		of relative dimension $\operatorname{dim}_{\mathcal{O}} G$ which induces, for any $p$-adically complete $\mathcal{O}$-algebra of topologically finite type over $\mathcal{O}$, the functor $(\mathfrak{M},x,\alpha,\beta) \mapsto (\mathfrak{M},x)$.
	\end{lemma}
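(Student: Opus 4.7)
The plan is to establish formal smoothness of $\Gamma : \mathcal{L}_{\overline{\rho}}^{\operatorname{cr},\mu} \otimes_{\mathcal{O}} \mathbb{F} \to Y_G^{\mu} \otimes_{\mathcal{O}} \mathbb{F}$, $(\mathfrak{M},x,\alpha,\beta) \mapsto (\mathfrak{M},x)$, by exhibiting $\Gamma$ as a torsor under the smooth group scheme $G$ (base-changed to $\mathbb{F}$); formal smoothness of the desired relative dimension $\operatorname{dim}_{\mathcal{O}} G$ is then immediate from the smoothness of $G$. The two central inputs are the crystalline lifting theorem (Theorem~\ref{thm-Kisin}), which recovers the framed Galois representation from the Breuil--Kisin module together with its Galois action, and the smoothness of $G$ itself.

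First, I would unpack the fibre of $\Gamma$ over a point $(\mathfrak{M},x) \in Y_G^\mu(B)$. Specifying $\alpha : R_{\overline{\rho}}^\square \to B$ is equivalent to specifying a framed lift $\rho_\alpha := \rho^{\operatorname{univ}} \otimes_\alpha B$ of $\overline{\rho}$, and $\beta$ is then a $\varphi$- and $G_K$-equivariant isomorphism $\mathfrak{M} \otimes_{\mathfrak{S}_B} W(C^\flat)_B \cong \rho_\alpha \otimes_B W(C^\flat)_B$ of $G$-torsors. A pair $(\alpha,\beta)$ is thus the same data as a trivialization of the underlying $G$-torsor $\mathfrak{M} \otimes W(C^\flat)_B$ whose associated Galois cocycle factors through $G(B) \hookrightarrow G(W(C^\flat)_B)$ and defines a framed lift of $\overline{\rho}$. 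By Theorem~\ref{thm-Kisin}, the Galois $G$-torsor attached to $(\mathfrak{M},x)$ does arise from such a representation and is determined up to isomorphism by $(\mathfrak{M},x)$; the set of valid framings then forms a torsor under $G(B)$, acting by change of trivialization and correspondingly by conjugation on the framed representation.

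To verify formal smoothness directly I would check the infinitesimal lifting criterion: for a square-zero surjection $B' \twoheadrightarrow B$ of Artinian local $\mathbb{F}$-algebras, a lift $(\mathfrak{M}',x') \in Y_G^\mu(B')$ of $(\mathfrak{M},x)$, and a fibre point $(\mathfrak{M},x,\alpha,\beta) \in \mathcal{L}_{\overline{\rho}}^{\operatorname{cr},\mu}(B)$, smoothness of $G$ allows one to lift the trivialization $\beta$ to a trivialization $\beta'$ of $\mathfrak{M}' \otimes W(C^\flat)_{B'}$. The crystalline conditions on the Galois cocycles imposed in Definition~\ref{def-crystGaloisactions}, combined with Theorem~\ref{thm-Kisin} applied to the pullbacks of $\mathfrak{M}'$ along finite flat quotients of $B'$, force the lifted cocycle to factor through $G(B') \subseteq G(W(C^\flat)_{B'})$, producing a framed lift $\rho':G_K \to G(B')$ of $\rho_\alpha$ and hence $\alpha': R_{\overline{\rho}}^\square \to B'$ via universality. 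The relative dimension $\operatorname{dim}_{\mathcal{O}} G$ follows from the $G$-torsor description of the fibres.

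The hard part is precisely ensuring that the lifted Galois cocycle really does factor through $G(B')$ rather than merely $G(W(C^\flat)_{B'})$, and dually that the fibre of $\Gamma$ is a genuine $G$-torsor on arbitrary test algebras. For finite flat algebras this is exactly the content of Theorem~\ref{thm-Kisin}, but for general $p$-adically complete $\mathcal{O}$-algebras topologically of finite type one must reduce via a faithful representation $G \hookrightarrow \operatorname{GL}_n$ to the corresponding assertions for $\operatorname{GL}_n$ established in \cite[\S10]{B21} and in the proof of Theorem~\ref{thm-Kisin}; compatibility with the $G$-torsor structure is preserved because all the functors involved are Tannakian and exact on $G$-torsors. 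Since both $\mathcal{L}_{\overline{\rho}}^{\operatorname{cr},\mu}$ and $Y_G^\mu$ are defined as $\mathcal{O}$-flat closures of moduli problems whose behaviour on finite flat test objects is pinned down by Theorem~\ref{thm-Kisin}, the $G$-torsor structure, and hence the desired formal smoothness of relative dimension $\operatorname{dim}_{\mathcal{O}} G$, descends to the special fibres.
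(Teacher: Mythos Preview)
Your overall strategy---identifying the fibres of $\Gamma$ as $G$-torsors via the ``set of framings'' description---is the same as the paper's, and the relative dimension computation via $G$-conjugation matches. The paper's proof is, however, considerably shorter: it reduces to complete local Noetherian test rings with finite residue field and then invokes the main result of \cite{Dee} (Fontaine's equivalence between \'etale $\varphi$-modules and Galois representations, extended to families) to produce the lifted representation $\rho':G_K \to G(B')$ directly from $(\mathfrak{M}',x')$.

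Your execution has a gap at precisely the point where the paper invokes \cite{Dee}. You justify that the lifted Galois cocycle factors through $G(B')$ by appealing to ``Theorem~\ref{thm-Kisin} applied to the pullbacks of $\mathfrak{M}'$ along finite flat quotients of $B'$'', but $B'$ here is an Artinian local $\mathbb{F}$-algebra, so it has no finite flat $\mathcal{O}$-algebra quotients and Theorem~\ref{thm-Kisin} (which is stated only for finite flat $\mathcal{O}$-algebras) does not apply. What is actually needed is the statement that for any such Artinian $B'$, the \'etale $\varphi$-module $\mathfrak{M}' \otimes_{\mathfrak{S}_{B'}} W(C^\flat)_{B'}$ with its $G_K$-action arises from a genuine representation $G_K \to G(B')$; this is exactly the content of \cite{Dee}. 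Your fallback to \cite[\S10]{B21} for $\operatorname{GL}_n$ and a Tannakian reduction could be made to work, but only if you cite a result there that covers general Artinian coefficients (not just finite flat ones), and you would still need to check that the $\operatorname{GL}_n$-valued representation so obtained lands in $G(B')$. The cleanest fix is simply to cite \cite{Dee} at this step, as the paper does.
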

	\begin{proof}
		The lifting property describing formal smoothness can be checked on $p$-adically complete $\mathcal{O}$-algebras factoring through $R_{\overline\rho}^{\square,\operatorname{cr,\mu}}$, and so we can assume the ring is a complete local Noetherian ring with finite residue field. The lifting can therefore be deduced from the main result of \cite{Dee}. This lifting is unique up to $G$-conjugation which shows that the relative dimension is as claimed.
	\end{proof}
	
	\begin{corollary}\label{cor-lifting}
		\begin{enumerate}
			\item Let $A$ be an Artin local $\mathcal{O}$-algebra and $(\mathfrak{M},x) \in Y_G^\mu(A)$. Then there exists a finite flat $\mathcal{O}$-algebra $A^\circ$, with a morphism $A^\circ \rightarrow A$,  and $(\mathfrak{M}^\circ,x^\circ) \in Y_G^\mu(A^\circ)$ whose image under $Y_G^\mu(A^\circ) \rightarrow Y_G^\mu(A)$ is $(\mathfrak{M},x)$.
			\item $Y_G^\mu \otimes_{\mathcal{O}} \mathbb{F}$ has dimension $\sum_{i=1}^e \operatorname{dim} \widetilde{G}/P_{\mu_i}$.
		\end{enumerate}
	\end{corollary}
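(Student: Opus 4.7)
The plan is to relate $Y_G^\mu$ to the crystalline framed deformation rings $R_{\overline{\rho}}^{\square,\operatorname{cr},\mu}$ via the ``resolution'' $\mathcal{L}_{\overline{\rho}}^{\operatorname{cr},\mu}$ of Construction~\ref{con-Kisinresolution}, so as to transfer to $Y_G^\mu$ both the dimension information in Theorem~\ref{thm-Kisindef} and the formal smoothness provided by Lemma~\ref{lem-formalsmooth}.

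For (1), given $(\mathfrak{M},x) \in Y_G^\mu(A)$, I would first attach (possibly after a finite extension of $\mathbb{F}$) a residual representation $\overline{\rho}\colon G_K \to G(\mathbb{F})$ to the reduction $(\overline{\mathfrak{M}}, \overline{x}) \in Y_G^\mu(\mathbb{F})$: Tannakianly, etale $\varphi$-module theory attaches a $G_{K_\infty}$-valued representation to $\overline{\mathfrak{M}}$, which the crystalline datum $\overline{x}$ extends to $G_K$. Lemma~\ref{lem-formalsmooth} with this $\overline{\rho}$ then supplies a formally smooth morphism $\mathcal{L}_{\overline{\rho}}^{\operatorname{cr},\mu} \otimes \mathbb{F} \to Y_G^\mu \otimes \mathbb{F}$ whose image contains $(\overline{\mathfrak{M}}, \overline{x})$, and the formal smoothness produces a lift $(\mathfrak{M},x,\alpha,\beta) \in \mathcal{L}_{\overline{\rho}}^{\operatorname{cr},\mu}(A)$ of $(\mathfrak{M},x)$. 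In particular this provides a morphism $\alpha\colon R := R_{\overline{\rho}}^{\square,\operatorname{cr},\mu} \to A$. A standard approximation argument for $\mathcal{O}$-flat complete local Noetherian rings then factors $\alpha$ through some finite flat $\mathcal{O}$-algebra $A^\circ$, and Theorem~\ref{thm-Kisin} associates to the crystalline representation $R \to A^\circ$ an object $(\mathfrak{M}^\circ, x^\circ) \in Y_G^\mu(A^\circ)$; functoriality of the assignment ensures the pullback along $A^\circ \to A$ is $(\mathfrak{M},x)$.

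For (2), part (1) applied to $A = \mathbb{F}$ shows that every closed point of $Y_G^\mu \otimes \mathbb{F}$ lies in the image of some $\mathcal{L}_{\overline{\rho}}^{\operatorname{cr},\mu} \otimes \mathbb{F} \to Y_G^\mu \otimes \mathbb{F}$, as $\overline{\rho}$ varies. Each such morphism is formally smooth of relative dimension $\dim_{\mathcal{O}} G$ by Lemma~\ref{lem-formalsmooth}; meanwhile $\mathcal{L}_{\overline{\rho}}^{\operatorname{cr},\mu} \to \operatorname{Spec} R_{\overline{\rho}}^{\square,\operatorname{cr},\mu}$ is proper and an isomorphism after inverting $p$, so by $\mathcal{O}$-flatness of both sides and Theorem~\ref{thm-Kisindef}
$$
\dim \mathcal{L}_{\overline{\rho}}^{\operatorname{cr},\mu} \otimes \mathbb{F} = \dim_{\mathcal{O}} R_{\overline{\rho}}^{\square,\operatorname{cr},\mu} = \dim_{\mathcal{O}} G + \sum_{i=1}^e \dim \widetilde{G}/P_{\mu_i}.
$$
Subtracting the relative dimension $\dim_{\mathcal{O}} G$ of the formally smooth covering then yields $\dim Y_G^\mu \otimes \mathbb{F} = \sum_{i=1}^e \dim \widetilde{G}/P_{\mu_i}$.

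The principal technical obstacle is the factorization step in (1): one must verify carefully that any morphism from the $\mathcal{O}$-flat complete local Noetherian ring $R_{\overline{\rho}}^{\square,\operatorname{cr},\mu}$ into an Artin local $\mathcal{O}$-algebra $A$ factors through a finite flat $\mathcal{O}$-algebra. One attempts to use Noether normalization to write $R$ as a finite extension of $\mathcal{O}[[T_1,\ldots,T_d]]$ and specialize the $T_i$ to elements of a sufficiently ramified finite extension of $\mathcal{O}$ reproducing the images in $A$, but ensuring the resulting quotient is genuinely $\mathcal{O}$-flat while still capturing $\alpha$ requires care, and is the delicate point of the argument. A secondary subtlety lies in producing $\overline{\rho}$ from $(\overline{\mathfrak{M}}, \overline{x})$ when an enlargement of $\mathbb{F}$ is needed to trivialize the ambient $G$-torsor.
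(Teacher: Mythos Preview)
Your overall strategy---reduce to $\mathcal{L}_{\overline{\rho}}^{\operatorname{cr},\mu}$ via the formally smooth map of Lemma~\ref{lem-formalsmooth}, then invoke the dimension formula for $R_{\overline{\rho}}^{\square,\operatorname{cr},\mu}$---is exactly the paper's approach, and your argument for part~(2) is correct.

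For part~(1), however, there is a genuine gap in your final step. Having factored $\alpha\colon R_{\overline{\rho}}^{\square,\operatorname{cr},\mu}\to A$ through a finite flat $\mathcal{O}$-algebra $A^\circ$, you appeal to Theorem~\ref{thm-Kisin} to produce $(\mathfrak{M}^\circ,x^\circ)\in Y_G^\mu(A^\circ)$ from the resulting crystalline representation over $A^\circ$. But Theorem~\ref{thm-Kisin} only provides the passage \emph{from} crystalline representations \emph{to} Breuil--Kisin modules when $A^\circ$ is a discrete valuation ring; for a general finite flat $A^\circ$ the functor $\rho\mapsto\mathfrak{M}(\rho)$ need not land in $G$-torsors (this is precisely the exactness issue flagged in the proof of Theorem~\ref{thm-Kisin} and in the introduction). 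So you have a crystalline representation over $A^\circ$ but no a~priori object of $Y_G^\mu(A^\circ)$.

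The paper avoids this by lifting not the map $R\to A$ but the entire $A$-point of $\mathcal{L}_{\overline{\rho}}^{\operatorname{cr},\mu}$ to an $A^\circ$-point of $\mathcal{L}_{\overline{\rho}}^{\operatorname{cr},\mu}$ (this is the content of the citation to \cite[4.1.2]{B19}, and uses that $\mathcal{L}_{\overline{\rho}}^{\operatorname{cr},\mu}$ is $\mathcal{O}$-flat and locally Noetherian). The lifted point then \emph{carries} the datum $(\mathfrak{M}^\circ,x^\circ)$ as part of its definition, and one simply projects along $\mathcal{L}_{\overline{\rho}}^{\operatorname{cr},\mu}\to Y_G^\mu$. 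In other words, your ``principal technical obstacle'' (the approximation/factorisation argument) is indeed the crux, but it should be applied to the local ring of $\mathcal{L}_{\overline{\rho}}^{\operatorname{cr},\mu}$ at the relevant closed point rather than to $R_{\overline{\rho}}^{\square,\operatorname{cr},\mu}$; doing so makes your problematic invocation of Theorem~\ref{thm-Kisin} unnecessary.
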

	\begin{proof}
		Using Lemma~\ref{lem-formalsmooth} this follows from analogous assertions for $\mathcal{L}_\rho^\mu$, of which (1) is a consequence of \cite[4.1.2]{B19} and (2) is a consequence of the dimension formula for $R_{\overline{\rho}}^{\square,\operatorname{cr},\mu}$.
	\end{proof}
	\section{The shape of Frobenius}\label{sec-shape}
	
	For the results of this section it is necessary to assume that the compatible system $\pi^{1/p^\infty}$ is chosen so that $K_\infty \cap K(\mu_{p^\infty}) =K$ whenever $\mu^{p^\infty}$ is a compatible system of primitive $p$-th power roots of unity. When $p>2$ this is automatic and, while not automatic when $p=2$, it follows from \cite{Wang17} that $\pi$ can be chosen so that this is the case. 
	\begin{theorem}\label{thm-factorisation2}
		Assume that $\mu$ is a Hodge type satisfying
		$$
		\sum_{i=1}^e \langle \alpha^\vee, \mu_i \rangle \leq \frac{p-1}{\nu} +1, \qquad \nu = \operatorname{max}_{i\neq j}\lbrace v(\pi_i-\pi_j) \rbrace
		$$
		where $v$ denotes the valuation on $\mathcal{O}$ with $v(\pi_i) =1$ for one (equivalently all) $i$. Then the morphism $Y_G^\mu \otimes_{\mathcal{O}} \mathbb{F} \rightarrow Z_G$ which forgets the crystalline $G_K$-action factors through $Z_{G,\mu,\mathbb{F}}$.
	\end{theorem}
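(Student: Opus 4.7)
The plan is to implement the strategy outlined in the second set of bullets of the introduction. Since the defining condition for $Z_{G,\mu,\mathbb{F}}$ in Corollary~\ref{cor-descend} is expressed after fppf base change and choice of trivialisation, and $M_\mu \otimes_\mathcal{O} \mathbb{F}$ is closed in $\operatorname{Gr}_{\widetilde{G}} \otimes_\mathcal{O} \mathbb{F}$, I would first reduce to the following concrete assertion: for any finite flat $\mathcal{O}$-algebra $A^\circ$, any $(\mathfrak{M},x) \in Y_G^\mu(A^\circ)$, and any trivialisation $\iota$ of $\mathfrak{M}$ over $\mathfrak{S}_{A^\circ}$, the point $\Psi(\mathfrak{M},\iota) \otimes_\mathcal{O} \mathbb{F}$ lies in $M_\mu \otimes_\mathcal{O} \mathbb{F}$. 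This is enough because Corollary~\ref{cor-lifting}(1) allows any Artin-local test object of $Y_G^\mu \otimes_\mathcal{O} \mathbb{F}$ to be lifted, after base change, to a finite flat $\mathcal{O}$-algebra, and a trivialisation can be introduced by a further fppf cover.

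By Theorem~\ref{thm-Kisin}, such $(\mathfrak{M},x)$ corresponds to a crystalline representation $\rho: G_K \rightarrow G(A^\circ)$ of Hodge type $\mu$, with associated filtered $\varphi$-module $D$. Since Kisin's construction (as recalled in the proof of Theorem~\ref{thm-Kisin}) is functorial and exact tensor, it produces a $\varphi$-equivariant comparison isomorphism of $G$-torsors $\mathfrak{M} \otimes \mathcal{O}^{\operatorname{rig}}[\tfrac{1}{\lambda}] \cong D \otimes \mathcal{O}^{\operatorname{rig}}[\tfrac{1}{\lambda}]$. Composing with $\iota$ and with the trivialisation of $D$ coming from the parabolic reduction given by $\operatorname{Fil}^\bullet D_K$, and using the moduli-theoretic description of $M_\mu$ from \ref{sub-moduliinterpret}, produces an element $X \in \widetilde{G}(A^\circ \otimes_\mathcal{O} \mathcal{O}^{\operatorname{rig}}[\tfrac{1}{\lambda}])$ satisfying $X \cdot \Psi(\mathfrak{M},\iota)[\tfrac{1}{p}] \in M_\mu[\tfrac{1}{p}]$, without any bound on $\mu$ having been used so far.

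The core task is then to produce from $X$ a truncation $X^{\operatorname{trun}}$, obtained by reducing $X$ modulo a suitable power of $E(u)$ depending on $\mu$, such that $X^{\operatorname{trun}}$ is integral, lies in $\ker(L^+\widetilde{G}(A^\circ) \to L^+\widetilde{G}(\mathbb{F}))$, and still satisfies $X^{\operatorname{trun}} \cdot \Psi(\mathfrak{M},\iota)[\tfrac{1}{p}] \in M_\mu[\tfrac{1}{p}]$. For this I would invoke the bounds of Liu and GLS on the monodromy operator, from which $X$ can be reconstructed; these bound the denominators of $X$ in terms of the Hodge type, and the hypothesis $\sum_i \langle \alpha^\vee,\mu_i\rangle \leq \tfrac{p-1}{\nu}+1$ is precisely the quantitative input required for a choice of truncation degree to simultaneously achieve integrality, congruence to $1$ modulo the maximal ideal, and preservation of $M_\mu$-membership (this last point relying on the fact that a sufficiently deep $E(u)$-adic perturbation lies in a subgroup of $L^+\widetilde{G}$ acting trivially on $Y_{\widetilde{G},\leq\mu} \otimes_\mathcal{O} \mathbb{F}$, which contains $M_\mu \otimes_\mathcal{O} \mathbb{F}$ by Proposition~\ref{prop-schubertcontain}). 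Once $X^{\operatorname{trun}}$ is in place, closedness of $M_\mu$ promotes $X^{\operatorname{trun}} \cdot \Psi(\mathfrak{M},\iota)[\tfrac{1}{p}] \in M_\mu[\tfrac{1}{p}]$ to an integral containment, and reducing modulo the maximal ideal of $A^\circ$ then yields $\Psi(\mathfrak{M},\iota) \otimes \mathbb{F} \in M_\mu \otimes \mathbb{F}$, as desired. The truncation step is the main obstacle, as it requires translating the delicate $p$-adic Hodge-theoretic estimates (originally formulated for $\operatorname{GL}_n$) into a $G$-theoretic statement while carefully balancing the competing constraints governing the truncation degree.
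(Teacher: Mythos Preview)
Your proposal is correct and follows essentially the same strategy as the paper: reduce via Corollary~\ref{cor-lifting} to finite flat $\mathcal{O}$-algebras, produce the element $X$ from Kisin's comparison (the paper's Corollary~\ref{cor-almostcontained}), and then truncate using the monodromy bounds of \cite{GLS,Liu15} to obtain an integral element congruent to $1$ modulo the maximal ideal.

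Two points where the paper's execution differs slightly from your sketch are worth noting. First, your justification for why the truncation preserves membership in $M_\mu[\tfrac{1}{p}]$ invokes $Y_{\widetilde{G},\leq\mu}\otimes_\mathcal{O}\mathbb{F}$, but this is a generic fibre statement; the paper instead uses the product decomposition $M_\mu[\tfrac{1}{p}]\cong\prod_i G/P_{\mu_i}$ directly, showing that the kernel of $G(\widehat{\mathfrak{S}}_{A,i})\to G(\widehat{\mathfrak{S}}_{A,i}/(u-\pi_i)^{n_i})$ with $n_i=\max_{\alpha^\vee}\langle\alpha^\vee,\mu_i\rangle$ acts trivially on $G/P_{\mu_i}$. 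Second, rather than translating the $\operatorname{GL}_n$ estimates into a $G$-theoretic statement as you suggest, the paper first normalises $X$ by its constant term (so the zeroth Taylor coefficient becomes $1$), then reduces the integrality question to $\operatorname{GL}_n$ via a faithful representation with affine quotient $\operatorname{GL}_n/G$; the explicit Taylor-coefficient bounds (Corollary~\ref{cor-denominatorbounds}) and the congruence lemma (Lemma~\ref{lem-congruence}) are then carried out purely for $\operatorname{GL}_n$.
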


	\begin{remark}
		\begin{enumerate}
			\item 
			If $p$ does not divide $e$, i.e. if $K$ is tamely ramified over $\mathbb{Q}_p$, then $\nu =1$. To see this recall $E(u) = \prod_{i=1}^e (u-\pi_i)$ and so
			$$
			\frac{d}{du} E(u) = \sum_{i=1}^e \prod_{j \neq i} (u-\pi_j)
			$$
			Therefore $\frac{d}{du}E(u)|_{u=\pi_i} =\prod_{j \neq i}(\pi_i-\pi_j)$ has valuation $ \sum_{j \neq i} v(\pi_j-\pi_i)$. On the other hand, since $E(u) \equiv u^e$ modulo $p$ we have
			$$
			\frac{d}{du}E(u)|_{u=\pi_i} = e \pi_i^{e-1} \text{ modulo }p
			$$
			and so, if $e$ is prime to $p$, then $e-1 =  \sum_{j \neq i} v(\pi_j-\pi_i)$. As $v(\pi_j - \pi_i) \geq 1$ we must have each $v(\pi_j-\pi_i) = 1$.
			\item If each $\mu_i$ is strictly dominant then $\sum_{i=1}^e \langle \alpha^\vee, \mu_i \rangle \geq e$ and so for the bound in Theorem~\ref{thm-factorisation} to hold we must have
			$$
			e \leq \frac{p-1}{\nu} + 1
			$$
			In particular, (1) implies $\nu =1$. Therefore, the bound in Theorem~\ref{thm-factorisation} is equivalent to asking that
			$$
			\sum_{i=1}^e \langle \alpha^\vee, \mu_i \rangle \leq p
			$$
			for each root $\alpha^\vee$.
		\end{enumerate}
	\end{remark}
	
	In order to prove Theorem~\ref{thm-factorisation2} it suffices to show such a factorisation on the level of Artin local $\mathbb{F}$-algebras (see for example \cite[15.2]{B21}). Using the lifting result of Corollary~\ref{cor-lifting} we therefore reduce Theorem~\ref{thm-factorisation2} to the following:

	\begin{theorem}\label{thm-factorisation}
		Let $A$ be a finite flat $\mathcal{O}$-algebra and suppose that $(\mathfrak{M}(\rho),x) \in Y_G^\mu(A)$ corresponds as in Theorem~\ref{thm-Kisin} to a crystalline representation $\rho$ of Hodge type $\mu$. Assume that, for all roots $\alpha^\vee$ of $\widetilde{G}$,
		$$
		\sum_{i=1}^e \langle \alpha^\vee, \mu_i \rangle \leq \frac{p-1}{\nu} +1, \qquad \nu = \operatorname{max}_{i\neq j}\lbrace v(\pi_i-\pi_j) \rbrace
		$$
		where $v$ denotes the valuation on $\mathcal{O}$ with $v(\pi_i) =1$ for one (equivalently all) $i$. Then
		$$
		\Psi(\mathfrak{M}(\rho) ,\iota) \otimes_{\mathcal{O}} \mathbb{F} \in M_\mu (A \otimes_{\mathcal{O}} \mathbb{F})
		$$
		for any trivialisation $\iota$ of $\mathfrak{M}(\rho)$ 
	\end{theorem}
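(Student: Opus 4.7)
The plan is to follow the strategy sketched in the introduction: compare $\Psi(\mathfrak{M}(\rho),\iota)$ to the point of $M_\mu[\tfrac{1}{p}]$ determined by the Hodge filtration on the filtered $(\varphi,N)$-module $D$ attached to $\rho$, via an explicit transformation $X$, and then produce an integral approximation $X^{\operatorname{trun}}$ of $X$ which is trivial modulo $\mathfrak{m}_\mathcal{O}$.

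First, write $\mathcal{O}^{\operatorname{rig}}_A$ for the subring of $\mathfrak{S}_A[\tfrac{1}{p}]$ consisting of power series in $u$ convergent on the open unit disk, and $\lambda = \prod_{n \geq 0} \varphi^n(E(u)/E(0))$. Recall from the construction in \cite{Kis06} (as used in Theorem~\ref{thm-Kisin}, applied tannakianly across all representations of $\widetilde{G}$) a Frobenius-equivariant isomorphism of $\widetilde{G}$-torsors
$$\mathfrak{M}(\rho) \otimes_{\mathfrak{S}_A} \mathcal{O}^{\operatorname{rig}}_A[\tfrac{1}{\lambda}] \;\cong\; D \otimes_{K_0 \otimes A[\frac{1}{p}]} \mathcal{O}^{\operatorname{rig}}_A[\tfrac{1}{\lambda}].$$
Trivialising both sides (using $\iota$ on the left), this is represented by an element $X \in \widetilde{G}(\mathcal{O}^{\operatorname{rig}}_A[\tfrac{1}{\lambda}])$. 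Using the moduli description of the embedding $\prod_i \widetilde{G}/P_{\mu_i} \hookrightarrow \operatorname{Gr}_{\widetilde{G}} \otimes E$ from~\ref{sub-moduliinterpret}, the Hodge filtration on $D$ yields a point of this product whose image in $\operatorname{Gr}_{\widetilde{G}}[\tfrac{1}{p}]$ is precisely $X \cdot \Psi(\mathfrak{M}(\rho),\iota)[\tfrac{1}{p}]$; hence $X \cdot \Psi(\mathfrak{M}(\rho),\iota)[\tfrac{1}{p}] \in M_\mu[\tfrac{1}{p}]$, without any bound on $\mu$.

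Next, the denominators of $X$ must be controlled. The element $X$ can be recovered from the monodromy operator on $D$ (together with powers of $\lambda$) via an exponential construction, and the bounds of \cite{Liu15} and \cite{GLS} on the monodromy translate into a bound on the $p$-adic denominator of the coefficient of $u^n$ in $X - 1$ as a function of $n$ and $\nu$. Under the hypothesis $\sum_i \langle \alpha^\vee, \mu_i \rangle \leq \tfrac{p-1}{\nu}+1$, one can choose an integer $N$ large enough that the truncation $X^{\operatorname{trun}}$ of $X$ modulo $E(u)^N$ lies in $\widetilde{G}(\mathfrak{S}_A)$ and satisfies $X^{\operatorname{trun}} \equiv 1 \pmod{\mathfrak{m}_\mathcal{O} \mathfrak{S}_A}$, yet still $X - X^{\operatorname{trun}} \in E(u)^N \widetilde{\mathfrak{g}}(\mathcal{O}^{\operatorname{rig}}_A[\tfrac{1}{\lambda}])$ in the sense that multiplication by $X^{-1} X^{\operatorname{trun}}$ is close to the identity in a controlled way.

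Finally, combining Proposition~\ref{prop-schubertcontain} with the moduli description of $Y_{\widetilde{G}, \leq \mu}$ in Definition~\ref{def-schubert}, I would verify that for $N$ chosen as above, modification of $X$ by the tail does not change membership in $M_\mu[\tfrac{1}{p}]$, using that the defining conditions of $Y_{\widetilde{G}, \leq \mu}$ only see data modulo $E(u)^N$ in each tannakian representation. This gives $X^{\operatorname{trun}} \cdot \Psi(\mathfrak{M}(\rho),\iota)[\tfrac{1}{p}] \in M_\mu[\tfrac{1}{p}]$. Since $M_\mu$ is $\mathcal{O}$-flat (being a scheme-theoretic closure) and $A$ is $\mathcal{O}$-flat, this generic inclusion upgrades to $X^{\operatorname{trun}} \cdot \Psi(\mathfrak{M}(\rho),\iota) \in M_\mu(A)$ integrally. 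Reducing modulo $\mathfrak{m}_\mathcal{O}$ and using $X^{\operatorname{trun}} \equiv 1$ then yields $\Psi(\mathfrak{M}(\rho),\iota) \otimes_\mathcal{O} \mathbb{F} \in M_{\mu,\mathbb{F}}(A \otimes_\mathcal{O} \mathbb{F})$, as desired. The hardest step is the second: translating the $p$-adic Hodge theoretic bounds on the monodromy operator into an explicit integrality bound on the truncation of $X$, and matching the resulting denominator estimates against the precise hypothesis involving $\nu$.
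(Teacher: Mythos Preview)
Your outline matches the paper's strategy in its broad strokes, but there is a genuine missing step that prevents your argument from going through as written. You assert that one can truncate $X$ so that $X^{\operatorname{trun}} \equiv 1 \pmod{\mathfrak{m}_\mathcal{O}\mathfrak{S}_A}$, but this is false for $X = X_{\xi,\beta}$ itself: its value at $u = \pi_i$ is an element $X_{i,0} \in \widetilde{G}(K_0 \otimes_{\mathbb{Z}_p} A)$ encoding the change of trivialisation between $\mathfrak{M}(\rho)$ and $D(\rho)$, and there is no reason for this constant term to be integral, let alone congruent to $1$. The paper addresses this by first replacing $X_{\xi,\beta}$ with $\widetilde{X} = g^{-1} X_{\xi,\beta}$, where $g \in \widetilde{G}(K \otimes_{\mathbb{Z}_p} A)$ is the constant term; since $g$ is constant it stabilises $M_\mu[\tfrac{1}{p}]$, and now the Taylor expansion of $\widetilde{X}$ around each $u = \pi_i$ begins with $1$. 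Only after this normalisation does one truncate.

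A second, more technical point: you work with the $u$-adic expansion of $X-1$ and speak of truncating modulo $E(u)^N$, but the paper's mechanism is genuinely different and this is where $\nu$ enters. One works instead with the Taylor expansions of $\widetilde{X}$ around each $u = \pi_i$ separately, uses the differential equation $\partial(X_{\xi,\beta}) = X_{\xi,\beta} N$ (coming directly from the definition of $N_\nabla$, not via an exponential) together with the integrality of $N$ to show $X_{i,0}^{-1} X_{i,n} \in \pi_i^{p-n}\operatorname{Mat}(W(k)\otimes A)$, and then truncates modulo $\prod_i (u-\pi_i)^{n_i}$ for $n_i = \max_{\alpha^\vee}\langle\alpha^\vee,\mu_i\rangle$. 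The hard arithmetic step is a Chinese-remainder-type lemma showing that an element of $\widehat{\mathfrak{S}}_A/\prod_i(u-\pi_i)^{n_i}$ with these prescribed $\pi_i$-adic valuations on its $(u-\pi_i)$-Taylor coefficients lifts to an element of $\pi_i\mathfrak{S}_A$; the inversion of $(\pi_i - \pi_j)$ in the CRT formula is exactly what produces the factor $\nu = \max_{i\neq j} v(\pi_i - \pi_j)$ in the bound $\sum_i n_i \leq \tfrac{p-1}{\nu}+1$. Your proposal does not identify this mechanism, and without it the role of $\nu$ remains unexplained.
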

	
	The rest of this section will be devoted to the proof of the theorem. The first step is to realise the Hodge type $\mu$ in terms of $\mathfrak{M}(\rho)$. We will see that this is easy after inverting $p$.
	\begin{sub}\label{sub-rings1/p}
		We begin by introducing some power series rings in which $p$ had been inverted:
		\begin{itemize}
			\item Let $\widehat{\mathfrak{S}}_A$ denote the $E(u)$-adic completion of $\mathfrak{S}_A[\frac{1}{p}]$. Notice that since $E(u)$ generates the kernel of the surjection $\widehat{\mathfrak{S}}_A \rightarrow K\otimes_{\mathbb{Z}_p} A$ sending $u \mapsto \pi$ this surjection has a unique splitting, via which we view $\widehat{\mathfrak{S}}_A$ as a $K\otimes_{\mathbb{Z}_p} A$-module.
			\item Let $\widehat{\mathfrak{S}}_{A,i}$ denote the $(u-\pi_i)$-adic completion of $\mathfrak{S}_A[\frac{1}{p}]$ and identify this ring with $(K_0\otimes_{\mathbb{Z}_p} A)[[u-\pi_i]]$. As in~\ref{sub-productdecomp} we have an isomorphism $\widehat{\mathfrak{S}}_A \cong \prod_{i=1}^e \widehat{\mathfrak{S}}_{A,i}$ and this allows us to consider the Taylor expansion around $u=\pi_i$ of any $f \in \widehat{\mathfrak{S}}_A$: it is the power series 
			$$
			\sum_{n \geq 0} f_n (u-\pi_i)^n, \qquad f_n \in K_0 \otimes_{\mathbb{Z}_p} A
			$$
			corresponding to the image of $f$ in $\widehat{\mathfrak{S}}_{A,i}$.
			\item Let $\mathcal{O}^{\operatorname{rig}}$ denote the subring of $K_0[[u]]$ consisting of power series convergent on the open unit disk and set $\mathcal{O}^{\operatorname{rig}}_A = A \otimes_{\mathbb{Z}_p} \mathcal{O}^{\operatorname{rig}}$ whenever $A$ is a finite $\mathcal{O}$-algebra. We set $\lambda = \prod_{i=0}^\infty \frac{\varphi^n(E(u))}{E(0)} \in \mathcal{O}^{\operatorname{rig}}$ and we view $\mathcal{O}^{\operatorname{rig}}_A[\frac{1}{\varphi(\lambda)}]$ as an $\widehat{\mathfrak{S}}_A$-algebra by sending an element onto its Taylor series around $u=\pi$. We write $\varphi$ for the unique extension of $\varphi$ on $\mathfrak{S}_A$ to $\mathcal{O}^{\operatorname{rig}}_A$.
		\end{itemize}
		Notice that the composite $\mathcal{O}^{\operatorname{rig}}_{A}[\tfrac{1}{\varphi(\lambda)}] \rightarrow \widehat{\mathfrak{S}}_{A,i}$ is injective for each $i$ and so we frequently abuse notation by writing 
		$$
		f = \sum_{n\geq 0} f_n (u-\pi_i)^n
		$$
		whenever $f \in \mathcal{O}^{\operatorname{rig}}_{A}[\tfrac{1}{\varphi(\lambda)}]$.
	\end{sub}
	\begin{sub}\label{sub-Kisinconst}
		Next we recall some aspects of the construction of $\mathfrak{M}(\rho)[\frac{1}{p}]$ from \cite{Kis06} when $G = \operatorname{GL}_n$. First, the filtered $\varphi$-module $D(\rho)$ associated to $\rho[\frac{1}{p}]$ is used to construct a projective $\mathcal{O}^{\operatorname{rig}}_A = \mathcal{O}^{\operatorname{rig}} \otimes_{\mathbb{Z}_p} A$-module $\mathcal{M}(\rho)$ together with an isomorphism
		$$
		\varphi^*\mathcal{M}(\rho)[\tfrac{1}{\lambda}] \xrightarrow{\sim} \mathcal{M}(\rho)[\tfrac{1}{\lambda}]
		$$
		See \cite[\S 1.2]{Kis06}. There are two key consequences of this construction:
		\begin{itemize}
			\item There exists a $\varphi$-equivariant isomorphism
			$$
			\xi: \varphi^*\mathcal{M}(\rho)[\tfrac{1}{\varphi(\lambda)}] \cong D(\rho) \otimes_{K_0 \otimes_{\mathbb{Z}_p} A} \mathcal{O}^{\operatorname{rig}}_A[\tfrac{1}{\varphi(\lambda)}]
			$$
			See \cite[1.2.6]{Kis06}.
			\item After extending scalars to $\widehat{\mathfrak{S}}_A$ we obtain isomorphisms
			$\varphi^*\mathcal{M}(\rho) \otimes_{\mathcal{O}^{\operatorname{rig}}_A} \widehat{\mathfrak{S}}_A  \cong D(\rho) \otimes_{K_0 \otimes_{\mathbb{Z}_p} A} \widehat{\mathfrak{S}}_A \cong D(\rho)_K \otimes_{K \otimes_{\mathbb{Z}_p} A} \widehat{\mathfrak{S}}_A
			$
			under which 
			\begin{equation}\label{eq-fil-1}
				\begin{aligned}
					\mathcal{M}(\rho) \otimes_{\mathcal{O}^{\operatorname{rig}}_A} \widehat{\mathfrak{S}}_A = \sum_{i\in \mathbb{Z}} \operatorname{Fil}^i(D(\rho)_K) \otimes_{K \otimes_{\mathbb{Z}_p} A} E(u)^{-i} \widehat{\mathfrak{S}}_A
				\end{aligned}
			\end{equation}
			See \cite[1.2.1]{Kis06}.
		\end{itemize}
		Using that $D(\rho)$ comes from a crystalline representation (i.e. is an admissible filtered $\varphi$-module) Kisin then shows \cite[1.3.8]{Kis06} that $\mathcal{M}(\rho)$ descends uniquely to the projective $\mathfrak{S}_A[\frac{1}{p}]$-module $\mathfrak{M}(\rho)[\frac{1}{p}]$.
	\end{sub}

	All the constructions from~\ref{sub-Kisinconst} are functorial in $\rho$, and compatible with exact sequences and tensor products. Therefore, the Tannakian formalism ensures that the observations in~\ref{sub-Kisinconst} remain valid  (after interpreting \eqref{eq-fil-1} as in~\ref{sub-moduliinterpret}) when $\rho$ is valued in a general $G$. As a consequence we deduce:
	\begin{corollary}\label{cor-almostcontained}
		For any trivialisation $\beta$ of $D(\rho)$ over $\operatorname{Spec}K_0 \otimes_{\mathbb{Z}_p} A$, the pair
		$$
		(\mathfrak{M}(\rho) \otimes_{\mathfrak{S}_A} \widehat{\mathfrak{S}}_A,  \beta \circ \xi \circ \varphi_{\mathfrak{M}}^{-1})
		$$
		consisting of a $G$-torsor on $\operatorname{Spec}\widehat{\mathfrak{S}}_A$ and a trivialisation after inverting $E(u)$, defines an $A[\frac{1}{p}]$-valued point of $M_\mu \subset \operatorname{Gr}_{\widetilde{G}}$. Equivalently, if $X_{\xi,\beta}$ denotes the automorphism
		$$
		\mathcal{E}^0 \xrightarrow{\varphi^*\iota^{-1}} \varphi^*\mathfrak{M}(\rho) \otimes_{\mathfrak{S}_A} \mathcal{O}^{\operatorname{rig}}_A[\tfrac{1}{\varphi(\lambda)}] \xrightarrow{\xi} D(\rho) \otimes_{K_0 \otimes_{\mathbb{Z}_p} A} \mathcal{O}^{\operatorname{rig}}_A[\tfrac{1}{\varphi(\lambda)}] \xrightarrow{\beta} \mathcal{E}^0
		$$
		of the trivial $G$-torsor then 
		$$
		X_{\xi,\beta} \cdot \Psi(\mathfrak{M}(\rho),\iota)[\tfrac{1}{p}] \in M_\mu(A[\tfrac{1}{p}])
		$$
		This requires no bound on the Hodge type $\mu$.
	\end{corollary}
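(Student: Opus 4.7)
The plan is to verify that the pair $(\mathfrak{M}(\rho) \otimes_{\mathfrak{S}_A} \widehat{\mathfrak{S}}_A,\ \beta \circ \xi \circ \varphi_\mathfrak{M}^{-1})$ lies in $M_\mu$ by working factor by factor under the product decomposition of Corollary~\ref{lem-generic} and matching it against the image of the Hodge filtration on $D(\rho)_K$ through the closed immersion $\prod_{i=1}^e \widetilde{G}/P_{\mu_i} \hookrightarrow \operatorname{Gr}_{\widetilde{G}}$ of Definition~\ref{def-mmu}. Because every construction in play---the Breuil--Kisin module $\mathfrak{M}(\rho)$, the isomorphism $\xi$, the morphism $\Psi$, and the closed immersion of flag varieties---is functorial in the group and compatible with tensor products and short exact sequences, the Tannakian viewpoint reduces the verification to checking the statement on each representation of $\widetilde{G}$, i.e. effectively to the case $G = \operatorname{GL}_n$.

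The equivalence of the two formulations in the corollary is immediate from the identity
\[
X_{\xi,\beta} \cdot (\varphi^*\iota \circ \varphi_\mathfrak{M}^{-1}) = (\beta \circ \xi \circ \varphi^*\iota^{-1}) \circ (\varphi^*\iota \circ \varphi_\mathfrak{M}^{-1}) = \beta \circ \xi \circ \varphi_\mathfrak{M}^{-1},
\]
so the geometric heart of the statement is to identify the displayed pair with an $A[\tfrac{1}{p}]$-valued point of $M_\mu$. Using the product decomposition $\widehat{\mathfrak{S}}_A \cong \prod_{i=1}^e \widehat{\mathfrak{S}}_{A,i}$ from~\ref{sub-rings1/p}, it suffices to show that each factor lies in $\widetilde{G}/P_{\mu_i} \subset \operatorname{Gr}_{\widetilde{G},i}$. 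By the moduli-theoretic description in~\ref{sub-moduliinterpret} together with Lemma~\ref{lem-closedflag}, this amounts to exhibiting the underlying $\widetilde{G}$-torsor as
\[
V \mapsto \sum_{n \in \mathbb{Z}} \operatorname{Fil}^n(V) \otimes_A (u-\pi_i)^{-n}\widehat{\mathfrak{S}}_{A,i}
\]
for a filtration of type $\mu_i$ on the trivial $\widetilde{G}$-torsor, equipped with its canonical trivialisation after inverting $u-\pi_i$. The second bullet of~\ref{sub-Kisinconst} supplies exactly this: after composing with $\varphi_\mathfrak{M}^{-1}$, $\xi$, and then $\beta$, the sublattice $\mathfrak{M}(\rho) \otimes \widehat{\mathfrak{S}}_A$ of the trivialised torsor $D(\rho) \otimes \widehat{\mathfrak{S}}_A$ is carried onto $\sum_n \operatorname{Fil}^n D(\rho)_K \otimes_{K \otimes A} E(u)^{-n}\widehat{\mathfrak{S}}_A$, and splitting along the product decomposition identifies the $i$-th component with the filtration of type $\mu_{\kappa_i} = \mu_i$ on the trivialised $\widetilde{G}$-torsor coming from $D(\rho)_K$ at the embedding $\kappa_i$.

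The main bookkeeping---and what I expect to be the only mildly delicate point---is to check that the trivialisation $\beta \circ \xi \circ \varphi_\mathfrak{M}^{-1}$, a priori only defined after inverting $E(u)$, really coincides over $\widehat{\mathfrak{S}}_{A,i}[\tfrac{1}{u-\pi_i}]$ with the canonical trivialisation used in~\ref{sub-moduliinterpret}. This is not serious: both trivialisations extend the same identification $\mathfrak{M}(\rho) \otimes \widehat{\mathfrak{S}}_{A,i}[\tfrac{1}{u-\pi_i}] \xrightarrow{\sim} D(\rho) \otimes \widehat{\mathfrak{S}}_{A,i}[\tfrac{1}{u-\pi_i}]$ induced by $\beta \circ \xi$, and $\sum_n \operatorname{Fil}^n V \otimes (u-\pi_i)^{-n}\widehat{\mathfrak{S}}_{A,i}$ becomes $V \otimes \widehat{\mathfrak{S}}_{A,i}[\tfrac{1}{u-\pi_i}]$ after inverting $u-\pi_i$. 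The real substance of the corollary is the identification in~\ref{sub-Kisinconst} of $\mathfrak{M}(\rho) \otimes \widehat{\mathfrak{S}}_A$ with the Hodge-filtered submodule, which is provided by Kisin's construction with no bound on $\mu$---precisely as the corollary's final sentence asserts.
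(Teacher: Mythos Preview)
Your proposal is correct and follows essentially the same route as the paper: the paper's proof is the single sentence preceding the corollary, which notes that the constructions in~\ref{sub-Kisinconst} are exact and tensor-compatible, hence pass to general $G$ via the Tannakian formalism, and that the identification \eqref{eq-fil-1} interpreted through the moduli description in~\ref{sub-moduliinterpret} yields exactly the point of $M_\mu$. You have spelled out the product decomposition and the matching of trivialisations in more detail, but the underlying argument is the same.
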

	
	In order to use Corollary~\ref{cor-almostcontained} to prove Theorem~\ref{thm-factorisation} we need to control the denominators appearing in $X_{\xi,\beta}$. Following ideas of \cite{GLS} we do this by first deriving some kind of intergrality of a differential operator associated to $X_{\xi,\beta}$. Set $S_{\operatorname{max}} = W(k)[[u,\frac{u^{e}}{p}]] \cap \mathcal{O}^{\operatorname{rig}}[\frac{1}{\lambda}]$ and $S_{\operatorname{max},A} = S_{\operatorname{max}} \otimes_{\mathbb{Z}_p} A$ for any finite $\mathcal{O}$-algebra $A$.
	\begin{proposition}\label{prop-Nnablamax}
		Assume $G = \operatorname{GL}_n$ and define a differential operator $N_\nabla$ on $\varphi^*\mathcal{M}(\rho)[\tfrac{1}{\varphi(\lambda)}] \cong D(\rho) \otimes_{K_0 \otimes_{\mathbb{Z}_p} A} \mathcal{O}^{\operatorname{rig}}_A[\frac{1}{\varphi(\lambda)}]$ over $\partial := u\frac{d}{du}$ by setting $N_\nabla(d) =0$ for $d \in D(\rho)$. The assumption that $K_\infty \cap K(\mu_{p^\infty}) = K$ ensures the matrix of $N_\nabla$ relative to the trivialisation $\varphi^*\iota$ of $\varphi^*\mathcal{M}(\rho)[\frac{1}{\varphi(\lambda)}]$ has entries in 
		$$
		u^p \varphi(S_{\operatorname{max},A})
		$$
		Again this requires no bound on the Hodge type $\mu$.
	\end{proposition}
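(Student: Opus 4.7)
The statement has two independent parts: divisibility by $u^p$ and integrality in $\varphi(S_{\operatorname{max},A})$. I would treat them separately, using that $N_\nabla$ is characterised by a connection-and-initial-condition property.

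\textbf{Divisibility by $u^p$.} Let $X = X_{\xi,\beta}$ so that, up to sign, the matrix of $N_\nabla$ in the trivialisation $\varphi^*\iota$ equals $\partial(X)X^{-1}$ (this is the standard transport formula: $\varphi^*\iota$ gets sent to $X$ applied to the $\beta \circ \xi$ basis on which $N_\nabla$ vanishes). The key observation is that modulo $u^p$ the isomorphism $\xi$ is determined by the fibre at $u = 0$, because $\varphi \colon \mathcal{O}^{\operatorname{rig}}_A \to \mathcal{O}^{\operatorname{rig}}_A/u^p$ factors through $\mathcal{O}^{\operatorname{rig}}_A/u$. Unwinding, $\varphi^*\mathcal{M}(\rho)/u^p \cong \varphi^*D(\rho) \otimes_{K_0\otimes_{\mathbb{Z}_p}A} \mathcal{O}^{\operatorname{rig}}_A/u^p$ and $\xi$ modulo $u^p$ is simply $\varphi_D \otimes 1$, where $\varphi_D$ is the Frobenius on $D(\rho)$. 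Consequently $X \equiv X_0 \pmod{u^p}$ with $X_0$ a matrix in $K_0 \otimes_{\mathbb{Z}_p} A$ (constant in $u$), so $\partial(X) \in u^p \cdot \operatorname{Mat}(\mathcal{O}^{\operatorname{rig}}_A[\tfrac{1}{\varphi(\lambda)}])$ and hence so does $\partial(X)X^{-1}$.

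\textbf{Integrality.} For the harder part I would follow the strategy of \cite{Liu15} and \cite{GLS}, exploiting the crystalline $G_K$-action on $\mathfrak{M}(\rho) \otimes_{\mathfrak{S}_A} A_{\operatorname{inf},A}$ from Definition~\ref{def-crystGaloisactions}. The hypothesis $K_\infty \cap K(\mu_{p^\infty}) = K$ ensures that $H := \operatorname{Gal}(K_\infty(\mu_{p^\infty})/K_\infty) \cong \mathbb{Z}_p$; fix a topological generator $\tau$ of $H$. The crystalline compatibility condition gives
\[
A_\tau - 1 \in [\pi^\flat]\varphi^{-1}(\mu) \cdot \operatorname{End}(\mathfrak{M}(\rho) \otimes_{\mathfrak{S}_A} A_{\operatorname{inf},A}),
\]
which is exactly the divisibility needed to form a convergent expression for an intrinsic monodromy operator $N_\infty$ on $\mathfrak{M}(\rho) \otimes_{\mathfrak{S}_A} A_{\operatorname{inf},A}[\tfrac{1}{\mu}]$ whose matrix with respect to $\iota$ is integral over $A_{\operatorname{inf},A}$. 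One then shows (again following \cite[\S 3]{GLS}) that $N_\infty$ in fact preserves $\mathfrak{M}(\rho) \otimes_{\mathfrak{S}_A} \mathcal{O}^{\operatorname{rig}}_A$ and defines a connection there over the derivation $\partial = u\tfrac{d}{du}$. Since $\tau$ commutes with $\varphi$ and acts trivially on $D(\rho)$ (via the comparison $\mathfrak{M}(\rho) \otimes W(C^\flat)_A \cong \rho \otimes W(C^\flat)_A$ from Theorem~\ref{thm-Kisin}), the pullback $\varphi^* N_\infty$ kills the image of $D(\rho)$ under $\xi$. By the uniqueness of a connection on $\varphi^*\mathcal{M}(\rho)[\tfrac{1}{\varphi(\lambda)}]$ over $\partial$ killing $D(\rho)$, this pullback agrees with $N_\nabla$ up to the normalisation scalar coming from $t = \log([\epsilon])$. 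Combining integrality over $A_{\operatorname{inf},A}$ with the characterisation $S_{\operatorname{max},A} = W(k)[[u,u^e/p]] \otimes_{\mathbb{Z}_p} A \cap \mathcal{O}^{\operatorname{rig}}_A[\tfrac{1}{\lambda}]$ and accounting for the factor of $\varphi$ coming from the $\varphi^{-1}(\mu)$ in the crystalline condition places the matrix entries in $\varphi(S_{\operatorname{max},A})$.

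\textbf{Main obstacle.} The delicate step is the comparison $\varphi^* N_\infty = c \cdot N_\nabla$ and the tracking of the normalisation. Both operators are characterised by uniqueness, so the identification itself is clean, but controlling denominators precisely enough to land in $\varphi(S_{\operatorname{max},A})$ rather than the larger ring $S_{\operatorname{max},A}[\tfrac{1}{p}]$ requires carefully analysing how $\mathcal{O}^{\operatorname{rig}}_A$ and $A_{\operatorname{inf},A}$ sit inside a common period ring, together with the exact power of $\varphi^{-1}(\mu)$ appearing in the divisibility of $A_\tau - 1$. This is where the extra factor of $\varphi$ (as opposed to bare $S_{\operatorname{max},A}$) enters, matching the $\varphi$-pullback structure already exploited for the $u^p$ divisibility above.
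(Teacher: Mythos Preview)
Your overall strategy matches the paper's: relate $N_\nabla$ to the crystalline $G_K$-action via a logarithm/exponential comparison inside a common period ring, exploit the divisibility $(\sigma-1)(m)\in [\pi^\flat]\varphi^{-1}(\mu)A_{\operatorname{inf},A}$, and then intersect back into $\mathcal{O}^{\operatorname{rig}}_A[\tfrac{1}{\lambda}]$. The paper formalises this as Lemma~\ref{lem-Galandmono} (the explicit series $N_\nabla(m)=\tfrac{1}{t}\sum_{n\geq 1}(-1)^{n+1}\tfrac{(\sigma-1)^n}{n}(m)$ for $\sigma$ with $\chi_{\operatorname{cyc}}(\sigma)=1$ and $\epsilon(\sigma)=\epsilon$, whose existence uses exactly the hypothesis $K_\infty\cap K(\mu_{p^\infty})=K$), together with Proposition~\ref{prop-Galoisdivisibility} bounding $(\sigma-1)^n(m)$.

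Where your sketch falls short is precisely at the step you flag as the ``main obstacle.'' You propose to work inside $A_{\operatorname{inf},A}$ and then intersect with $\mathcal{O}^{\operatorname{rig}}_A[\tfrac{1}{\lambda}]$, following \cite{Liu15} and \cite{GLS}. But that route only yields the weaker ring appearing in Remark~\ref{rem-GLS}, namely $u^p(W(k)[[u^p,\tfrac{u^{ep}}{p}]][\tfrac1p]\cap S)$ with $S$ the divided-power envelope. The paper's point is that one must replace $B_{\operatorname{crys}}$ by Colmez's ring $B_{\operatorname{max}}$ (respectively $A_{\operatorname{crys}}$ by $A_{\operatorname{max}}$): the crucial intersection Lemma~\ref{lem-intersectionisSmax} shows $\mathcal{O}^{\operatorname{rig}}[\tfrac{1}{\lambda}]\cap A_{\operatorname{max}}\subset S_{\operatorname{max}}$, and the series manipulation in the proof of Proposition~\ref{prop-NnablaAmax} uses that $t/\mu$ is a unit in $A_{\operatorname{max}}$, that $\varphi^{-1}(\mu)^{p-1}/p\in A_{\operatorname{max}}$, and Colmez's result that principal ideals in $B_{\operatorname{max}}^+$ are closed. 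These are the specific facts that produce $S_{\operatorname{max}}$ rather than the larger GLS ring, and they are not available if one works only with $A_{\operatorname{inf}}$ or with $B_{\operatorname{crys}}$.

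A minor remark on your $u^p$ argument: the paper does not separate this out. It proves the equivalent Proposition~\ref{prop-Nnablamax2} (for $\mathfrak{M}$ itself, with target $\tfrac{u}{p}S_{\operatorname{max}}$) and obtains the $u$-divisibility directly from the $[\pi^\flat]$ factor in the crystalline condition; the $u^p$ and the extra $\varphi$ in the original statement then both arise simultaneously by passing to $\varphi^*\mathfrak{M}$. Your direct argument via $X\equiv X_0\pmod{u^p}$ is plausible (indeed $X=\beta\circ\xi\circ\varphi^*\iota^{-1}$ is a constant matrix times something with entries power series in $u^p$, so $X^{-1}\partial(X)$ lies in $u^p$ times such series), but it is cleaner to treat the two ingredients together as the paper does.
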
 
	\begin{proof}
		The proof will be given in Section~\ref{sec-filandmono} below. The essential idea is to relate $N_\nabla$ and the $G_K$-action on $\mathfrak{M} \otimes_{\mathfrak{S}_A} A_{\operatorname{inf},A}$ after basechanging to an appropriate period ring, and exploit the integrality of the $G_K$-action.
	\end{proof}
	\begin{remark}\label{rem-GLS}
		What is proved in \cite[4.7]{GLS} (when $p>2$) and \cite[4.1]{Wang17} (when $p=2$ and $\pi$ is chosen so that $K_\infty \cap K(\mu^{p^\infty}) = K$) is that the entries of the matrix representing $N_\nabla$ are contained in
		\begin{equation}\label{eq-GLSring}
			u^p\left( W(k)[[u^p,\tfrac{u^{ep}}{p}]][\tfrac{1}{p}] \cap S \right) \otimes_{\mathbb{Z}_p} A	
		\end{equation}
		where $S$ denotes the $p$-adic completion of the divided power envelope of $W(k)[u]$ with respect to the ideal generated by $E(u)$. This is slightly weaker than Proposition~\ref{prop-Nnablamax} (though for the purposes of this paper it makes no difference because the calculations in the first paragraph of Corollary~\ref{cor-denominatorbounds} below also go through using \eqref{eq-GLSring}, see \cite[2.3.9]{GLS15}). We have stated the stronger result here because it may be useful when considering Hodge types beyond the bounds imposed in this paper.
	\end{remark}

	\begin{corollary}\label{cor-denominatorbounds}
		Continue to assume $G = \operatorname{GL}_n$ and fix a trivialisation $\beta$ of $D(\rho)$ over $\operatorname{Spec}K_0\otimes_{\mathbb{Z}_p} A$. We then view the automorphism $X_{\xi,\beta}$ from Corollary~\ref{cor-almostcontained} as a matrix and, as in~\ref{sub-rings1/p}, write its Taylor expansion around $u=\pi_i$ as
		$$
		X_{\xi,\beta} = \sum_{n \geq 0} X_{i,n} (u-\pi_i)^n
		$$
		Then 
		$$
		X_{i,0}^{-1} X_{i,n}\in \operatorname{Mat}(\pi_i^{p-n}W(k) \otimes_{\mathbb{Z}_p} A)
		$$
		for $1 \leq n \leq p-1$.
	\end{corollary}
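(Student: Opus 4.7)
The plan is to translate Proposition~\ref{prop-Nnablamax} into a first-order matrix ODE for $X_{\xi,\beta}$ and then extract the coefficient bounds by Taylor expansion at $u=\pi_i$, with the main technical work being the valuation estimate on the Taylor coefficients of $u^p\varphi(S_{\operatorname{max},A})$. For the ODE: since $N_\nabla$ kills $D(\rho)$, in the trivialisation $\beta$ the operator $N_\nabla$ acts simply as $\partial = u\tfrac{d}{du}$. Pulling back to the $\varphi^*\iota$ trivialisation via the change-of-basis $X_{\xi,\beta}$, the operator $N_\nabla$ there becomes $\partial + X_{\xi,\beta}^{-1}\partial(X_{\xi,\beta})$, and so Proposition~\ref{prop-Nnablamax} is the assertion that there is a matrix $Y$ with entries in $u^p\varphi(S_{\operatorname{max},A})$ satisfying $\partial(X_{\xi,\beta}) = X_{\xi,\beta} \cdot Y$.

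Expanding $S_{\operatorname{max},A}$ as a $W(k)\otimes_{\mathbb{Z}_p} A$-module, a typical monomial is $u^j(u^e/p)^m$ with $0 \leq j < e$ and $m\geq 0$, and $u^p$ times its image under $\varphi$ is $u^{p(1+j+em)}/p^m$. Expanding at $u=\pi_i$ via $u=\pi_i+(u-\pi_i)$ and using the crucial relation $v(p)=e$ (so that $p^m$ and $\pi_i^{em}$ agree up to a unit of $\mathcal{O}$), the coefficient of $(u-\pi_i)^n$ has $v$-valuation at least $(p-n) + pj + em(p-1) \geq p-n$ for $0 \leq n \leq p-1$. By linearity the Taylor coefficients $Y_{i,n}$ of any $Y$ as above therefore lie in $\pi_i^{p-n}(W(k)\otimes_{\mathbb{Z}_p} A)$ in this range of $n$. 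This is the heart of the argument, and it is exactly the factor $u^p$ that absorbs both the denominators $1/p^m$ appearing in $S_{\operatorname{max}}$ and the $n$-fold valuation loss coming from expansion at a point of positive valuation.

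Comparing Taylor coefficients of $(u-\pi_i)^n$ on both sides of $\partial(X_{\xi,\beta}) = X_{\xi,\beta}\cdot Y$, and using $\partial = (\pi_i + (u-\pi_i))\tfrac{d}{du}$, gives the recursion
\[
(n+1)\pi_i\, X_{i,n+1} \;=\; X_{i,n}(Y_{i,0}-n) + \sum_{k=0}^{n-1} X_{i,k}\, Y_{i,n-k}.
\]
Premultiplying by $X_{i,0}^{-1}$ gives the same recursion for $Z_n := X_{i,0}^{-1}X_{i,n}$ with $Z_0=1$, and the base case $n=0$ yields $Z_1 = Y_{i,0}/\pi_i \in \pi_i^{p-1}(W(k)\otimes A)$. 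For the inductive step, assuming $Z_k \in \pi_i^{p-k}(W(k)\otimes A)$ for $1 \leq k \leq n$: the $k=0$ term $Y_{i,n}$ and the term $Z_n(Y_{i,0}-n)$ both lie in $\pi_i^{p-n}(W(k)\otimes A)$, while each $Z_k Y_{i,n-k}$ with $1 \leq k \leq n-1$ lies in $\pi_i^{2p-n}(W(k)\otimes A) \subset \pi_i^{p-n}(W(k)\otimes A)$. Dividing by $(n+1)\pi_i$---which is valid because $n+1$ is a unit of $W(k)$ throughout $1 \leq n+1 \leq p-1$---then gives $Z_{n+1} \in \pi_i^{p-n-1}(W(k)\otimes A)$, completing the induction.
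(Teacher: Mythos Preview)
Your proof is correct and follows essentially the same route as the paper: derive the ODE $\partial(X_{\xi,\beta}) = X_{\xi,\beta}\cdot N$ from Proposition~\ref{prop-Nnablamax}, bound the Taylor coefficients of $N$ at $u=\pi_i$, and run the obvious induction on the resulting recurrence. The only difference is that the paper squeezes out a slightly sharper bound $N_m \in \pi_i^{p+e-m}\operatorname{Mat}(W(k)\otimes_{\mathbb{Z}_p}A)$ for $1\leq m\leq p-1$ by using the $p$-divisibility of $\binom{pn}{m}$, whereas your estimate $Y_{i,n}\in \pi_i^{p-n}\operatorname{Mat}(W(k)\otimes_{\mathbb{Z}_p}A)$ ignores this; as you verified, the weaker bound already suffices for the induction.
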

	\begin{proof}
		Let $\underline{e} = (e_1,\ldots,e_n)$ denote the standard basis of $\mathcal{E}^0$. Then $\xi(\varphi^*\iota^{-1}(\underline{e})) = \beta^{-1}(\underline{e}) X_{\xi,\beta}$. We can also write
		$$
		N_\nabla(\varphi^*\iota^{-1}(\underline{e})) = \varphi^*\iota^{-1}(\underline{e}) N
		$$
		and Proposition~\ref{prop-Nnablamax} ensures that the matrix $N$ has entries in $u^p\varphi(S_{\operatorname{max},A})$. Therefore,
		$$
		N = \sum_{n \geq 1} \frac{N_n'}{\pi_i^{n-1}} u^{pn}
		$$
		for matrices $N_m'$ with entries in $W(k) \otimes_{\mathbb{Z}_p} A$. If the Taylor expansion of $N$ around $u=\pi_i$ is $\sum_{m \geq 0} N_m (u-\pi_i)^m$ then
		$$
		N_m =\frac{1}{m!}\left( \frac{d}{du}\right)^m (N)|_{u=\pi_i} = \sum_{n \geq 1} \binom{pn}{m}N'_n \pi_i^{(p-1)n-m+1}
		$$
		for $m\geq 0$. Therefore $N_0 \in \pi_i^{p} \operatorname{Mat}(W(k)\otimes_{\mathbb{Z}_p} A)$ and $N_m \in \pi_i^{p+e-m}\operatorname{Mat}(W(k)\otimes_{\mathbb{Z}_p} A)$ for $m =1,\ldots,p-1$.
		
		By definition we have $N_\nabla( \xi^{-1} \circ \beta^{-1}(\underline{e})) = 0$ and so, recalling that $\partial =u\frac{d}{du}$, we have
		$$
		\begin{aligned}
			\varphi^*\iota^{-1}(\underline{e}) N &= N_\nabla( \varphi^*\iota^{-1}(\underline{e})) 
			\\ &= N_\nabla( \xi^{-1} \circ \beta^{-1}(\underline{e}) X_{\xi,\beta})
			\\
			&= \xi^{-1} \circ \beta^{-1}(\underline{e}) \partial(X_{\xi,\beta})\\
			&= \varphi^*\iota^{-1}(\underline{e}) X_{\xi,\beta}^{-1} \partial(X_{\xi,\beta})
		\end{aligned}
		$$
		In other words, $\partial(X_{\xi,\beta})  = X_{\xi,\beta}N$.
		In terms of Taylor expansions around $u=\pi_i$ this gives the recurrence $nX_{i,n} + \pi_i(n+1)X_{i,n+1} = \sum_{j=0}^n  X_{i,n-j}N_j$. Multiplying on the left by $X_{i,0}^{-1}$ gives
		$$
		nX_{i,0}^{-1} X_{i,n} + \pi_i(n+1)X_{i,0}^{-1} X_{i,n+1} = \sum_{j=0}^{n} X_{i,0}^{-1} X_{i,n-j}N_j
		$$
		for all $n \geq 0$. The corollary then follows by an easy induction, using the divisibility of the $N_j$ from the first paragraph.
	\end{proof}
	\begin{proof}[Proof of Theorem~\ref{thm-factorisation}]
		Recall that if $\beta$ is a trivialisation of $D(\rho)$ then
		$$
		X_{\xi,\beta} \cdot \Psi(\mathfrak{M}(\rho),\iota)[\tfrac{1}{p}] \in M_\mu(A[\tfrac{1}{p}])
		$$
		for $X_{\xi,\beta} \in G(\widehat{\mathfrak{S}}_A)$ as in Corollary~\ref{cor-almostcontained}. To prove the theorem we are going to show that, under the specified bounds on $\mu$, one can replace $X_{\xi,\beta}$ in this assertion with any $\widetilde{X}_{\operatorname{trun}} \in \operatorname{ker}\left( G(\mathfrak{S}_A) \rightarrow G(\mathfrak{S}_A \otimes_{\mathcal{O}} \mathbb{F}) \right)$.
		\subsubsection*{Step 1}
		Let $g \in G(K \otimes_{\mathbb{Z}_p} A)$ be the image of $X_{\xi,\beta}$ induced by the quotient $\widehat{\mathfrak{S}}_A \rightarrow K \otimes_{\mathbb{Z}} A$. Via the unique section of this surjection we can view $g \in G(\widehat{\mathfrak{S}}_A)$. We then set $\widetilde{X} = g^{-1} X_{\xi,\beta}$ and consider its image in $G(\widehat{\mathfrak{S}}_A/ \prod_{i=1}^e (u-\pi_i)^{n_i})$ for $n_i = \operatorname{max}\lbrace \langle \alpha^\vee, \mu_i \rangle \rbrace$, the maximum  being taken over all roots of $\widetilde{G}$.
		
		\subsubsection*{Step 2} We are going to show that 
		$$
		\widetilde{X}_{\operatorname{trun}} \cdot \Psi(\mathfrak{M}, \iota)[\tfrac{1}{p}] \in M_\mu(A[\tfrac{1}{p}])
		$$
		whenever $\widetilde{X}_{\operatorname{trun}} \in G(\widehat{\mathfrak{S}}_A)$ has image in $G(\widehat{\mathfrak{S}}_A/ \prod_{i=1}^e (u-\pi_i)^{n_i})$ equal to that of  $\widetilde{X}$. First note that, since $g$ is a constant matrix, it stabilises $M_\mu[\frac{1}{p}]$ and so $\widetilde{X} \cdot \Psi(\mathfrak{M},\iota)[\frac{1}{p}] \in M_\mu(A[\frac{1}{p}])$. Since any such $\widetilde{X}_{\operatorname{trun}} \in G(\widehat{\mathfrak{S}}_A)$ can be written as $Z_0 \widetilde{X}$ for 
		$$
		Z_0 \in \operatorname{ker} \left( G(\widehat{\mathfrak{S}}_A) \rightarrow G(\widehat{\mathfrak{S}}_A/ \prod_{i=1}^e (u-\pi_i)^{n_i})	 \right) 
		$$
		it suffices to show that any such $Z_0$ acts trivially on $M_\mu[\frac{1}{p}]$. Under the isomorphism $M_\mu[\frac{1}{p}] \cong \prod_{i=1}^e G/P_{\mu_i}$ the element $Z_0$ acts on on the $i$-th component by its image under the projection onto $G(\widehat{\mathfrak{S}}_{A,i})$. It therefore suffices to show that any element in the kernel of $G(\widehat{\mathfrak{S}}_{A,i}) \rightarrow G(\widehat{\mathfrak{S}}_{A,i}/(u-\pi_i)^{n_i})$ acts trivially on $G/P_{\mu_i}$. But this is clear from the choice of $n_i$.
		
		\subsubsection*{Step 3}
		
		We want to show $\widetilde{X}_{\operatorname{trun}}$ can be chosen as above so that it additionally lies in the image of $\operatorname{ker}\left( G(\mathfrak{S}_A) \rightarrow G(\mathfrak{S}_A \otimes_{\mathcal{O}} \mathbb{F}) \right)$. This will prove the theorem because if $\widetilde{X}_{\operatorname{trun}} \in G(\mathfrak{S}_A)$ then the containment $\widetilde{X}_{\operatorname{trun}} \cdot \Psi(\mathfrak{M},\iota) \in M_\mu(A[\frac{1}{p}])$ implies $\widetilde{X}_{\operatorname{trun}} \cdot \Psi(\mathfrak{M},\iota) \in M_\mu(A)$. The fact that $ \widetilde{X}_{\operatorname{trun}} \in \operatorname{ker}\left( G(\mathfrak{S}_A) \rightarrow G(\mathfrak{S}_A \otimes_{\mathcal{O}} \mathbb{F}) \right)$ further implies $\widetilde{X}_{\operatorname{trun}} \cdot \Psi(\mathfrak{M},\iota) \otimes_{\mathcal{O}} \mathbb{F} = \Psi(\mathfrak{M},\iota) \otimes_{\mathcal{O}} \mathbb{F}$. Therefore, $\Psi(\mathfrak{M},\iota) \otimes_{\mathcal{O}} \mathbb{F} \in M_{\mu} \otimes_{\mathcal{O}} \mathbb{F}$ as required.
		
		We claim that, after choosing a faithful representation of $G$, this question reduces to the case of $\operatorname{GL}_n$. To see this first note that $\widetilde{X}$ is functorial in the group $G$ since $\widetilde{X}_{\xi,\beta}$ is functorial in $G$ and the construction of $g \in G(K\otimes_{\mathbb{Z}_p} A)$ also commutes with any change of group. As a consequence, if we know the assertion of the previous paragraph when $G = \operatorname{GL}_n$ then, for any representation $G \rightarrow \operatorname{GL}_n$, the composite 
		$$
		G(\widehat{\mathfrak{S}}_A) \rightarrow G(\widehat{\mathfrak{S}}_A/ \prod_{i=1}^e (u-\pi_i)^{n_i}) \rightarrow \operatorname{GL}_n(\widehat{\mathfrak{S}}_A/ \prod_{i=1}^e (u-\pi_i)^{n_i})
		$$
		maps $\widetilde{X}$ into the image of $\operatorname{ker}\left( \operatorname{GL}_n(\mathfrak{S}_A) \rightarrow \operatorname{GL}_n(\mathfrak{S}_A \otimes_{\mathcal{O}} \mathbb{F}) \right)$. We want to choose the representation so this implies 
		$$
		G(\widehat{\mathfrak{S}}_A) \rightarrow G(\widehat{\mathfrak{S}}_A/ \prod_{i=1}^e (u-\pi_i)^{n_i})
		$$
		maps $\widetilde{X}$ into the image of $\operatorname{ker}\left( G(\mathfrak{S}_A) \rightarrow G(\mathfrak{S}_A \otimes_{\mathcal{O}} \mathbb{F}) \right)$. Since $\mathfrak{S}_A/\prod_{i=1}^e (u-\pi_i)^{n_i}$ is a subring of $\widehat{\mathfrak{S}}_A/ \prod_{i=1}^e (u-\pi_i)^{n_i}$ this will be the case whenever $G \rightarrow \operatorname{GL}_n$ is a faithful representation whose quotient $\operatorname{GL}_n/G$ is a scheme. 
		\subsubsection*{Step 4}
		
		To conclude we may therefore assume $G = \operatorname{GL}_n$. We can then view $X_{\xi,\beta}$ as a matrix and, for each $1\leq i \leq e$, consider its Taylor expansions 
		$$
		X_{\xi,\beta} = \sum_{n \geq 0} X_{i,n} (u-\pi_i)^n, \qquad X_{i,n} \in \operatorname{Mat}(K_0\otimes_{\mathbb{Z}_p} A)
		$$
		around $u =\pi_i$ as in Corollary~\ref{cor-denominatorbounds}. Under the isomorphism $G(K \otimes_{\mathbb{Z}_p} A) \cong \prod_{i=1}^e G(K_0\otimes_{\mathbb{Z}_p} A)$ we have $g =(X_{i,0})_i$ (where $g$ is the element from Step 1). Therefore $\widetilde{X} = g^{-1} X_{\xi,\beta}$ has Taylor expansions
		$$
		\widetilde{X}= \sum_{n \geq 0} X_{i,0}^{-1}X_{i,n} (u-\pi_i)^n
		$$
		around $u = \pi_i$.
		Corollary~\ref{cor-denominatorbounds} ensures $X_{i,0}^{-1}X_{i,n} \in \operatorname{Mat}(\pi_i^{p-n}W(k)\otimes_{\mathbb{Z}_p} A)$ for $n=1,\ldots,p-1$. Applying Lemma~\ref{lem-congruence} to the entries of $\widetilde{X}-1$ produces a matrix with entries in $ \pi_i \mathfrak{S}_A$ (for any choice of $i$) and whose image in $\operatorname{GL}_n(\mathfrak{S}_A/\prod_{i=1}^e (u-\pi_i)^{n_i})$ equals that of $\widetilde{X}-1$. This finishes the proof.
	\end{proof}
	\begin{lemma}\label{lem-congruence}
		Let $A$ be a finite flat $\mathcal{O}$-algebra and suppose $n_i \geq 0$ are such that
		$$
		\sum_{i=1}^e n_i \leq \frac{p-1}{\nu} + 1
		$$
		for $\nu$ as in Theorem~\ref{thm-factorisation}. Suppose that
		$$
		\frac{\widehat{\mathfrak{S}}_A}{\prod_{i=1}^{e} (u-\pi_i)^{n_i}} \cong \prod_{i=1}^e \frac{\widehat{\mathfrak{S}}_{A,i}}{(u-\pi_i)^{n_i}}
		$$
		maps an element $f$ onto $(f_i)_i$ with $f_i = \sum_{n=0}^{n_i-1} f_{i,n} (u-\pi_i)^n$ for $f_{i,n} \in \pi^{p -n}_iW(k) \otimes_{\mathbb{Z}_p} A$. Then $f$ is represented by an element in $\pi_i\mathfrak{S}_A$ (for any $i$).
	\end{lemma}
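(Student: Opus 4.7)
The plan is to exhibit a polynomial $P(u)\in (W(k)\otimes_{\mathbb{Z}_p}A)[u]$ whose coefficients all lie in $\pi_1(W(k)\otimes_{\mathbb{Z}_p}A)$ and which maps to $(f_i)_i$ in $\prod_{i=1}^e\widehat{\mathfrak{S}}_{A,i}/(u-\pi_i)^{n_i}$; since each $\pi_i$ generates the maximal ideal of $\mathcal{O}$, this suffices. I would construct $P$ by Hermite interpolation. For each $i$ set
$$
Q_i(u)\;=\;\frac{\prod_{k\neq i}(u-\pi_k)^{n_k}}{\prod_{k\neq i}(\pi_i-\pi_k)^{n_k}}\in (K_0\otimes A)[u],
$$
so that $Q_i(\pi_i)=1$ and $Q_i\equiv 0$ modulo $(u-\pi_k)^{n_k}$ for $k\neq i$. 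Let $S_i(u)$ be the polynomial in $u-\pi_i$ of degree $<n_i$ obtained by truncating the Taylor expansion of $f_i\cdot Q_i^{-1}$ at $u=\pi_i$. Then $P:=\sum_i Q_iS_i$ has the required images in the local factors by construction, so the problem reduces to bounding the $u$-adic coefficients of $P$.

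The key step is a double valuation estimate. Near $u=\pi_i$ the expansion $Q_i^{-1}=\prod_{k\neq i}\bigl(1+(u-\pi_i)/(\pi_i-\pi_k)\bigr)^{-n_k}$ has its $m$-th Taylor coefficient of valuation $\geq -\nu m$ (using $v(\pi_i-\pi_k)\leq\nu$ and that the binomial coefficients $\binom{-n_k}{l}$ are integers). Combined with $v(f_{i,n})\geq p-n$, convolving gives $v(S_{i,m})\geq p-\nu m$, and re-expanding $(u-\pi_i)^m$ in powers of $u$ via the binomial theorem shows the $u^r$-coefficient of the polynomial $S_i$ satisfies $v\geq p-r-(\nu-1)(n_i-1)$. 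Next, the $u^r$-coefficient of the numerator of $Q_i$ is, up to sign, the elementary symmetric polynomial in the $\pi_k$'s (with multiplicities $n_k$) of degree $N_i-r$ where $N_i=\sum_{k\neq i}n_k$; since each $\pi_k$ has valuation $1$, this has $v\geq N_i-r$, and dividing by the denominator (of valuation at most $\nu N_i$) gives $v\geq -r-(\nu-1)N_i$. Multiplying and summing indices,
$$
v\bigl(\text{coefficient of }u^R\text{ in }Q_iS_i\bigr)\;\geq\;p-R-(\nu-1)(N_i+n_i-1)\;=\;p-R-(\nu-1)N,
$$
where $N=\sum_k n_k-1$.

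Summing over $i$ preserves this bound. Since $\deg P\leq N$, the weakest valuation among the coefficients of $P$ is $\geq p-\nu N$, and the hypothesis $\sum n_i\leq (p-1)/\nu+1$ is exactly $\nu N\leq p-1$, so every $u$-adic coefficient has valuation $\geq 1$. This places $P$ in $\pi_1(W(k)\otimes A)[u]\subset \pi_1\mathfrak{S}_A$, where I use the decomposition $W(k)\otimes_{\mathbb{Z}_p}A\cong\prod_\sigma A$ into finitely many copies of $A$ to convert a valuation bound $\geq 1$ on an element of $K_0\otimes A$ into integrality with a factor of $\pi_1$. The main obstacle is purely bookkeeping: the estimates are nearly sharp, and the $(\nu-1)$-losses coming from the denominator $\prod(\pi_i-\pi_k)^{n_k}$ in $Q_i$ and from re-expanding Taylor series in powers of $u$ must combine into precisely the bound dictated by the hypothesis when $R=N$.
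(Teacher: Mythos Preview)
Your proof is correct and follows the same strategy as the paper: explicit Hermite interpolation to produce a polynomial representative, followed by coefficient-by-coefficient valuation estimates that collapse exactly to the hypothesis $\nu(\sum_j n_j-1)\le p-1$. The paper reduces by linearity to a single index $i$ and works in the $(u-\pi_i)$-expansion of the representative $F=f_i\prod_{j\ne i}(u-\pi_j)^{n_j}T_j$; you instead keep all indices, truncate $f_iQ_i^{-1}$ first and then multiply by $Q_i$, and track the $u$-expansion. Your organization has the minor advantage that $\deg(Q_iS_i)\le N=\sum_k n_k-1$, so the estimate $v\ge p-R-(\nu-1)N$ covers all coefficients at once, whereas the paper's higher-degree $F$ makes the range of $n$ that must be checked less transparent.
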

	\begin{proof}
		By linearity we can fix $1 \leq i \leq e$ and assume $f_j =0$ for $i \neq j$. We need to describe the inverse of the above isomorphism and so express $f$ in terms of $f_i$. For this we use the formal identity $\frac{1}{(1-y)^n} = \sum_{m \geq 0} \binom{n+m - 1}{n-1} y^m$. Setting $y = \frac{u- \pi_i}{\pi_i- \pi_j}$ shows that
		$$
		(u-\pi_j)^{n_j} \sum_{m=0}^{n_i -1} \binom{n_j + m -1}{n_j-1}\frac{(u-\pi_i)^{m}}{(\pi_i-\pi_j)^{m+n_j}} \equiv 1 \text{ modulo } (u-\pi_i)^{n_i}
		$$
		Therefore $f$ is represented by
		$$
		F := f_i \prod_{j \neq i} (u-\pi_j)^{n_j} \sum_{m=0}^{n_i -1} \binom{n_j + m -1}{n_j-1}\frac{(u-\pi_i)^{m}}{(\pi_i-\pi_j)^{m+n_j}} \text{ modulo } (u-\pi_i)^{n_i}
		$$
		(indeed,$ F \equiv f_i$ modulo $(u-\pi_i)^{n_i}$ and $F \equiv 0$ modulo $(u-\pi_j)^{n_j}$ for $j \neq i$). By hypothesis the coefficient of $(u-\pi_i)^n$ in $f_i$ has coefficient with $\pi_i$-adic valuation $\geq p-n \geq p-\nu n$ (since $\nu \geq 1$). On the other hand, the coefficient of $(u-\pi_i)^n$ in $\sum_{m=0}^{n_i -1} \binom{n_j + m -1}{n_j-1}\frac{(u-\pi_i)^{m}}{(\pi_i-\pi_j)^{m+n_j}}$ has $\pi_i$-adic valuation $\geq -(n+n_j)\nu$ whenever $i \neq j$. Therefore, the coefficient of $(u-\pi_i)^n$ in $F$ has $\pi_i$-adic valuation
		$$
		\geq p -\nu(n +\sum_{i \neq j} n_j)
		$$
		and so we will be done if $p -\nu(n +\sum_{i \neq j} n_j) \geq 1$ for all $n=0,\ldots,n_i-1$. In other words, if $p -\nu(-1 + \sum_{j=1}^e n_j) \geq 1$ or equivalently
		$$
		\sum_{j=1}^e n_j \leq \frac{p-1}{\nu} + 1
		$$
		which finishes the proof.
	\end{proof}
	\section{Constructing Galois actions}\label{sec-galois}
	
	In this section we equip $\mathfrak{M} \in Z_{G,\mu,\mathbb{F}}$ with a unique crystalline $G_K$-action, under a bound on $\mu$. The necessary bound is very slightly stronger than asking that $\sum_{i=1}^e \langle \alpha^\vee, \mu_i \rangle \leq p+e-1$ for each root $\alpha^\vee$. In order to formulate it recall that if a Hodge type $\mu$ corresponds to an $e$-tuple of cocharacters $(\mu_1,\ldots,\mu_e)$ of $\widetilde{G}$ then, since
	$$
	\widetilde{G} \cong \prod_{j=1}^f G \otimes_{W(k),\varphi^i} W(k)
	$$
	for $f$ the degree of $k/\mathbb{F}_p$, we can also view $\mu$ as a tuple $\mu_{ij}$ of cocharacters of $G$ for $1\leq j \leq e$ and $1 \leq i \leq f$.
	\begin{proposition}\label{prop-constGalois}
		Let $\mathfrak{M} \in Z_{G,\mu,\mathbb{F}}(A)$ with $A$ any finite type $\mathbb{F}$-algebra and  $\mu$ a Hodge type satisfying 
		$$
		\sum_{j=1}^e \langle \alpha^\vee, \mu_{ij} \rangle \leq p+e-1
		$$
		for each root $\alpha^\vee$ of $G$ and for each $1 \leq i \leq f$. Assume there is an $1 \leq i \leq f$  with the inequality strict for every $\alpha^\vee$. Then $\mathfrak{M}$ admits a unique crystalline $G_K$-action.
	\end{proposition}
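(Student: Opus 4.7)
The plan is to construct the $G_K$-action by a fixed-point iteration in $G(A_{\operatorname{inf},A'})$ after fppf-locally trivialising $\mathfrak{M}$. Choose an fppf cover $A \to A'$ over which $\mathfrak{M}$ becomes trivial, so that Frobenius is represented by an element $C \in G(\mathfrak{S}_{A'}[\tfrac{1}{E(u)}])$; by Definition~\ref{def-crystGaloisactions}, a crystalline $G_K$-action on $\mathfrak{M}$ is then the same as a collection $(A_\sigma)_{\sigma \in G_K}$ in $G(A_{\operatorname{inf},A'})$ satisfying conditions (1)--(5), and both existence and uniqueness descend from $A'$ to $A$ by fppf descent. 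I would define candidates $A_\sigma$ by the iteration $A_\sigma^{(0)} := 1$, $A_\sigma^{(n+1)} := C\,\varphi(A_\sigma^{(n)})\,\sigma(C^{-1})$.

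Because $\sigma(u) = [\epsilon]^{c(\sigma)} u$ for a continuous cocycle $c : G_K \to \mathbb{Z}_p$, the discrepancy $\sigma(C^{-1})C - 1$ lies in the ideal $I := u\varphi^{-1}(\mu_\epsilon) A_{\operatorname{inf},A'}$, where $\mu_\epsilon := [\epsilon]-1$, so $A_\sigma^{(1)} \equiv 1$ modulo $I$. Both the existence of a limit $A_\sigma$ satisfying (4) and the uniqueness of $A_\sigma$ within $1 + I\cdot \mathrm{Lie}(\widetilde{G}) \otimes A_{\operatorname{inf},A'}$ reduce to the claim that the operator $T: X \mapsto \mathrm{Ad}(C)\,\varphi(X)$ is strictly $I$-adically contracting after sufficient iteration. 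Granting this: (1) is automatic since any $\sigma \in G_{K_\infty}$ fixes $u$ and hence $C$, making every iterate equal to $1$; (2) is preserved through the iteration by construction; the cocycle condition (3) follows from uniqueness applied to the two natural candidates $A_{\sigma\tau}$ and $A_\sigma \sigma(A_\tau)$; and continuity (5) is inherited from the iteration.

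The main obstacle is the contraction estimate. Through the containment $M_\mu \subset Y_{\widetilde{G},\leq \mu}$ of Proposition~\ref{prop-schubertcontain} and the hypothesis $\mathfrak{M} \in Z_{G,\mu,\mathbb{F}}(A)$, the adjoint action of $C$ on the root subspace $\mathfrak{g}_{\alpha^\vee}$ of the $i$-th factor in the decomposition $\widetilde{G} \cong \prod_{i=1}^f G \otimes_{W(k),\varphi^i} \mathcal{O}$ can depress the $u$-adic valuation by at most $\sum_{j=1}^e \langle \alpha^\vee, \mu_{ij}\rangle$. Each application of $\varphi$ multiplies valuations by $p$ and cycles through the $f$ Frobenius factors, so $T^f$ multiplies valuations by $p^f$ while subtracting at most $\sum_{i,j} \langle \alpha^\vee, \mu_{ij}\rangle$ in total. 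The assumption $\sum_j \langle \alpha^\vee, \mu_{ij}\rangle \leq p+e-1$, together with the strict inequality for at least one $i$, gives $\sum_{i,j} \langle \alpha^\vee, \mu_{ij}\rangle \leq f(p+e-1) - 1 < fp$, which is exactly what is needed to upgrade non-expansion into strict contraction of $T^f$ across a full Frobenius cycle. This yields convergence of the iteration, uniqueness of the resulting $A_\sigma$, and hence the crystalline $G_K$-action on $\mathfrak{M}$.
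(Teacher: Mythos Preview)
Your overall strategy is the same as the paper's: reduce to a trivialised $\mathfrak{M}$, set up the iteration $A_\sigma^{(n+1)} = C\varphi(A_\sigma^{(n)})\sigma(C^{-1})$, and show it converges by combining an initial estimate with a contraction property. However, both of your key technical estimates contain genuine gaps.

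\textbf{The initial estimate.} You assert that $\sigma(C^{-1})C - 1 \in u\varphi^{-1}(\mu_\epsilon)A_{\operatorname{inf},A'}$ follows directly from $\sigma(u) = [\epsilon]^{c(\sigma)}u$. This is false in general: $C$ lies only in $G(\mathfrak{S}_{A'}[\tfrac{1}{E(u)}])$, so it has poles in $u$, and for such $C$ the discrepancy need not acquire the extra factor of $u$. For instance, with $G=\operatorname{GL}_2$, $e=1$, and $C = \left(\begin{smallmatrix}1 & u^{-1}\\ 0 & 1\end{smallmatrix}\right)$, one computes $C\sigma(C^{-1}) - 1$ has an entry $u^{-1}(1-[\epsilon(\sigma)]^{-1})$, whose valuation is too small. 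The paper obtains this estimate not from the formula for $\sigma(u)$ alone, but from the containment $M_\mu \subset \operatorname{Gr}_{\widetilde{G}}^{\nabla_\sigma}$, a closed subscheme defined precisely by this condition; the inclusion is checked by reduction to $\operatorname{GL}_n$ and the results of \cite{B21}.

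\textbf{The contraction estimate.} There are two problems. First, you argue only with the adjoint action on root subspaces of $\mathfrak{g}$, but the relevant map $x \mapsto C\varphi(x)C^{-1}$ acts on group elements, not Lie algebra elements, and is nonlinear. The paper handles this by working with the augmentation ideal $I \subset \mathcal{O}_G$ and proving $\Phi_C(I) \subset \sum_{j\geq 1} I^j \otimes (u^{-jh^{(i)}})\mathfrak{S}_A$; the higher-order terms $I^j$ are then controlled because $\varphi(\varpi_n u\varphi^{-1}(\mu))^j$ contributes extra divisibility for $j\geq 2$. Second, your numerical inequality $f(p+e-1)-1 < fp$ is simply false whenever $e>1$. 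The correct per-step gain is $p+e-1-h^{(i)}$, not $p-h^{(i)}$: since $A$ is an $\mathbb{F}$-algebra one has $\varphi(u\varphi^{-1}(\mu)) = u^p\mu$ and $\mu/\varphi^{-1}(\mu)$ generates the same ideal as $E(u)=u^e$, so $\varphi(u\varphi^{-1}(\mu)) = \varphi^{-1}(\mu)u^{p+e}$ up to units. This extra $u^e$ is exactly what makes the hypothesis $h^{(i)} \leq p+e-1$ (rather than $h^{(i)} < p$) sufficient, and your accounting loses it.
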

	
	Before giving the proof we explain the propositions significance for us:
	\begin{corollary}\label{cor-closed immerson}
		Assume $\mu$ is as in Proposition~\ref{prop-constGalois}. Then the factorisation $Y_G^\mu \otimes_{\mathcal{O}} \mathbb{F} \rightarrow Z_{G,\mu,\mathbb{F}}$ from Theorem~\ref{thm-factorisation2} is a closed immersion.
	\end{corollary}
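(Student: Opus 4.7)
The plan is to establish Corollary~\ref{cor-closed immerson} by exhibiting $f : Y_G^\mu \otimes_{\mathcal{O}} \mathbb{F} \to Z_{G,\mu,\mathbb{F}}$ as a proper monomorphism of finite type algebraic stacks over $\mathbb{F}$, which then forces it to be a closed immersion. The two ingredients are the uniqueness and existence parts of Proposition~\ref{prop-constGalois}, together with the factorisation of Theorem~\ref{thm-factorisation2}; the hypotheses on $\mu$ are arranged exactly so that both inputs are available.

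First I would check that $f$ is a monomorphism of stacks, i.e.\ fully faithful on $A$-valued groupoids for any finite type $\mathbb{F}$-algebra $A$. Given two $A$-points $(\mathfrak{M}_1,x_1),(\mathfrak{M}_2,x_2)$ of $Y_G^\mu \otimes \mathbb{F}$ and a morphism $\alpha:\mathfrak{M}_1\to\mathfrak{M}_2$ between their images in $Z_{G,\mu,\mathbb{F}}(A)$, the pullback $\alpha^*(x_2)$ is a crystalline $G_K$-action on $\mathfrak{M}_1$. Since $\mathfrak{M}_1$ lies in $Z_{G,\mu,\mathbb{F}}$ by Theorem~\ref{thm-factorisation2}, the uniqueness part of Proposition~\ref{prop-constGalois} forces $\alpha^*(x_2) = x_1$. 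So $\alpha$ lifts (uniquely) to a morphism in $Y_G^\mu\otimes\mathbb{F}$, which is exactly fully faithfulness. Note this already uses only the uniqueness in Proposition~\ref{prop-constGalois}.

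Next I would verify the valuative criterion for properness. Both stacks are of finite type over $\mathbb{F}$ by Corollary~\ref{cor-descend} and Corollary~\ref{cor-lifting}. Given a complete DVR $V$ over $\mathbb{F}$ with fraction field $F$, a $V$-point $\mathfrak{M}_V \in Z_{G,\mu,\mathbb{F}}(V)$, and a lift $(\mathfrak{M}_V\otimes F, x_F) \in (Y_G^\mu \otimes \mathbb{F})(F)$ of its generic fibre, the existence part of Proposition~\ref{prop-constGalois} produces a unique crystalline $G_K$-action $x_V$ on $\mathfrak{M}_V$. By uniqueness again, $x_V$ restricts to $x_F$ generically, so $(\mathfrak{M}_V,x_V)$ is the desired extension, and uniqueness of the extension is immediate from the monomorphism property. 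Uniqueness in the valuative criterion plus monomorphism together then imply $f$ is separated, and existence gives universal closedness; a proper monomorphism of algebraic stacks is a closed immersion, which concludes the proof.

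The main obstacle is the final step of the valuative criterion: one must check that the pair $(\mathfrak{M}_V,x_V)$ actually corresponds to an object of $Y_G^\mu \otimes \mathbb{F}$, given that the latter is defined as an $\mathcal{O}$-flat closure rather than by a direct moduli description in characteristic $p$. The way to handle this is to note that $(\mathfrak{M}_V,x_V)$ naturally defines a point of the ambient stack $Y_G \otimes \mathbb{F}$, and its image under the forgetful map lies in $Z_{G,\mu,\mathbb{F}}$ (by hypothesis); combined with the fact that $Y_G^\mu \otimes \mathbb{F}$ already contains the entire image of the monomorphism $f$ and with the generic lift hypothesis, the extension $(\mathfrak{M}_V,x_V)$ lies in the closed substack $Y_G^\mu \otimes \mathbb{F} \subset Y_G \otimes \mathbb{F}$ because $V$-flat specialisation of a generic point of $Y_G^\mu \otimes \mathbb{F}$ remains inside the closed substack. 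If necessary, this step can be reduced via the faithful representation $G \hookrightarrow \mathrm{GL}_n$ used to define $Y_G^\mu$ to the already-known case of $\operatorname{GL}_n$ (i.e.\ \cite[10.7]{B21} and Corollary~\ref{cor-repble}).
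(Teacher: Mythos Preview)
Your approach is correct in outline but takes a significantly more circuitous route than the paper. The paper's argument is a three-line observation: Proposition~\ref{prop-constGalois} (existence \emph{and} uniqueness together) says precisely that the forgetful morphism $Y_G \times_{Z_G} Z_{G,\mu,\mathbb{F}} \to Z_{G,\mu,\mathbb{F}}$ is an equivalence. Since $Y_G^\mu \hookrightarrow Y_G$ is a closed substack, base-changing gives that $Y_G^\mu \times_{Z_G} Z_{G,\mu,\mathbb{F}} \to Z_{G,\mu,\mathbb{F}}$ is a closed immersion. Finally, Theorem~\ref{thm-factorisation2} identifies the source with $Y_G^\mu \otimes_{\mathcal{O}} \mathbb{F}$. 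No valuative criterion, no properness, no issue about whether a constructed point lands in the $\mathcal{O}$-flat closure $Y_G^\mu$.

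By contrast, you decompose the same content into ``monomorphism'' (uniqueness only) plus ``valuative criterion'' (existence plus uniqueness again), and then must separately argue that the $V$-point you build lies in $Y_G^\mu \otimes \mathbb{F}$ rather than merely in $Y_G \otimes \mathbb{F}$. Your resolution of this last point (closedness of $Y_G^\mu$ in $Y_G$ plus reducedness of $V$) is fine, but note a small wrinkle: Proposition~\ref{prop-constGalois} is stated only for finite type $\mathbb{F}$-algebras, whereas a complete DVR $V$ over $\mathbb{F}$ is not of finite type. You can repair this by noting that $Z_{G,\mu,\mathbb{F}}$ is of finite type, so the $V$-point factors through a finite type algebra where the proposition applies, but this is another detour. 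The paper's approach sidesteps all of this by recognising that the full force of Proposition~\ref{prop-constGalois} already gives an \emph{isomorphism} over $Z_{G,\mu,\mathbb{F}}$, after which the closed immersion is immediate from the closed embedding $Y_G^\mu \subset Y_G$.
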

	\begin{proof}
		Proposition~\ref{prop-constGalois} implies that the morphism $Y_G \times_{Z_G} Z_{\mu,G,\mathbb{F}} \rightarrow Z_{\mu,G,\mathbb{F}}$ is an isomorphism. Since $Y_G^\mu$ is a closed subfunctor of $Y_G$ it follows that
		$$
		Y_G^\mu \times_{Z_G} Z_{G,\mu,\mathbb{F}} \rightarrow Z_{G,\mu,\mathbb{F}}
		$$
		is a closed immersion. But Theorem~\ref{thm-factorisation2} implies $Y_G^\mu \times_{Z_G} Z_{G,\mu,\mathbb{F}} = Y_G^\mu \otimes_{\mathcal{O}} \mathbb{F}$ so the corollary follows.
	\end{proof}
	
	We emulate the proof of  \cite[11.3]{B21}, which proves Proposition~\ref{prop-constGalois} in the case $G = \operatorname{GL}_n$.
	\begin{proof}[Proof of Proposition~\ref{prop-constGalois}]
		The claimed uniqueness of the $G_K$-action means it commutes with any descent datum on $\mathfrak{M}$. It therefore suffices to prove the proposition after pulling $\mathfrak{M}$ back along an fppf cover of $A$. This allows us to assume that $\mathfrak{M}$ admits a trivialisation $\iota$ over $\operatorname{Spec}\mathfrak{S}_A$. Let $C \in G(\mathfrak{S}_A[\frac{1}{E(u)}])$ be the corresponding matrix of Frobenius.
		
		For any $\mathcal{O}$-algebra $R$ and any $\varpi \in R$ we set $K_G(R,\varpi) = \operatorname{ker}\left( G(R) \rightarrow G(R/ \varpi) \right)$. Then the existence and uniqueness of such a $G_K$-action can be deduced from the following two assertions:
		\begin{enumerate}
			\item $C\sigma(C^{-1}) \in K_G(A_{\operatorname{inf,A}},u\varphi^{-1}\mu)$ for all $\sigma \in G_K$
			\item There is a sequence $\varpi_n \in A_{\operatorname{inf,A}}$ with $\varpi_0=1$ and converging $u$-adically to zero so that $x \mapsto C \varphi(x) C^{-1}$ induces a map 
			$$
			K_G(A_{\operatorname{inf,A}},\varpi_n u \varphi^{-1}(\mu)) \rightarrow K_G(A_{\operatorname{inf,A}}, \varpi_{n+1}u \varphi^{-1}(\mu))
			$$
			for each $n\geq 0$.
		\end{enumerate}
		To see that (1) and (2) prove the proposition we consider, for each $\sigma \in G_K$, the limit
		$$
		\begin{aligned}
			A_\sigma &:= \lim_{j \rightarrow \infty} \underbrace{C \varphi(C) \ldots \varphi^j(C) \varphi^j(\sigma(C^{-1})) \ldots \varphi(\sigma(C^{-1})) \sigma(C^{-1})}_{:= A_j} \\
			&= \prod_{j=1}^\infty \underbrace{C \varphi(C) \ldots \varphi^{j-1}(C)\varphi^j(C\sigma(C^{-1})) \varphi^{j-1}(C^{-1}) \ldots \varphi(C^{-1}) C^{-1}}_{= A_j A_{j-1}^{-1}}
		\end{aligned}
		$$
		Together (1) and (2) imply that the terms in the product converge $u$-adically to $1$, and so the entire product converges. Since each term in the product is in $K_G(A_{\operatorname{inf,A}},u\varphi^{-1}(\mu))$ so is $A_\sigma$. As $\sigma$ varies through $G_K$ the resulting $A_\sigma$ define a crystalline $G_K$-action as in Definition~\ref{def-crystGaloisactions}. This proves existence. For uniqueness, if $A_\sigma'$ was another such $G_K$-action then $x= A_\sigma (A_{\sigma}')^{-1} \in K_G(A_{\operatorname{inf,A}},u\varphi^{-1}(\mu))$ and satisfies $C\varphi(x)C^{-1}  = x$; consequently (2) implies $x = 1$.
		\subsubsection*{Verifying condition (2)} 
		This is almost identical to \cite[Lemma 6.3.5]{BB20}. Let $I \subset \mathcal{O}_G$ denote the kernel of the counit. Conjugation by $C$ defines an automorphism $\Phi_C$ of $\mathcal{O}_G \otimes_{W(k)} \mathfrak{S}_A[\frac{1}{E(u)}]$ which maps $I \otimes_{W(k)} \mathfrak{S}_A[\frac{1}{E(u)}]$ onto itself. We claim that if $x \in I$ then
		\begin{equation}\label{eq-inductiveident}
			\Phi_C(x) \in \sum_{j \geq 1} I^j \otimes_{W(k)} (u^{-jh^{(i)}}) \mathfrak{S}_A	
		\end{equation}
		where $(u^{-h^{(i)}} )_i  \in \mathfrak{S}_A$ denotes the corresponding element under the isomorphism $\mathfrak{S}_A \cong \prod_{i=1}^f A[[u]]$ and $h^{(i)} = \operatorname{max}_{\alpha^\vee} \sum_{j= 1}^{e} \langle \alpha^\vee, \mu_{ij} \rangle $.
		It suffices to check \eqref{eq-inductiveident} modulo $I^n$ for all $n \geq 1$. This can be done inductively once we know that, for any $x \in I$, 
		\begin{equation}\label{eq-adjoint}
			\Phi_C(x) \in I \otimes_{W(k)} (u^{-(n-1)h^{(i)}}) \mathfrak{S}_A \text{ modulo } I^n \otimes_{W(k)} \mathfrak{S}_A[\tfrac{1}{E(u)}]	
		\end{equation} 
		Indeed, \eqref{eq-adjoint} combined with the inductive hypothesis ensures that $\Phi_C(x)$ modulo $ I^n \otimes_{W(k)} \mathfrak{S}_A[\tfrac{1}{E(u)}]	$ is contained inside of 
		$$
		\left( \sum_{j=1}^{n-2} I^j \otimes_{W(k)} (u^{-jh^{(i)}}) \mathfrak{S}_A \right)  +  I \otimes_{W(k)} (u^{-(n-1)h^{(i)}}) \mathfrak{S}_A = \sum_{j=1}^{n-1} I^j \otimes_{W(k)} (u^{-jh^{(i)}}) \mathfrak{S}_A
		$$
		moduli $ I^n \otimes_{W(k)} \mathfrak{S}_A[\tfrac{1}{E(u)}]$.
		
		To deduce \eqref{eq-adjoint} we use that $\Psi(\mathfrak{M},\iota) \in Y_{\widetilde{G},\leq \mu} \otimes_{\mathcal{O}} \mathbb{F}$ (as defined in Definition~\ref{def-schubert}). More precisely, the map $x \mapsto \Phi_C(x)$ describes the action of $C$ on the $G$-representation $I$ and, since $I^{n-1}/I^{n} \cong \operatorname{Sym}^{n-1}(\mathfrak{g}^*)$, the maximal weights of $I/I^n$ are of the form $-(n-1)w_0 \alpha^\vee_{\operatorname{max}}$ for $\alpha^\vee_{\operatorname{max}}$ the maximal roots of $G$.  Recalling what it means to have $\Psi(\mathfrak{M},\iota) \in Y_{\widetilde{G},\leq \mu} \otimes_{\mathcal{O}} \mathbb{F}$ then gives \eqref{eq-adjoint}.
		
		Now any $g \in K_G(A_{\operatorname{inf},A}, \varpi_n u \varphi^{-1}(\mu))$ corresponds to a $W(k)$-algebra homomorphism $f: \mathcal{O}_G \rightarrow A_{\operatorname{inf},A}$ with $f(I) \subset \varpi_n u \varphi^{-1}(\mu) A_{\operatorname{inf},A}$. Then $\varphi(g)$ corresponds to the homomorphism $\varphi \circ f$ and $C \varphi(g) C^{-1}$ corresponds to $\varphi \circ f \circ \Phi_C$. We therefore have to find $\varpi_n$ so that 
		$$
		f(I) \subset \varpi_n u \varphi^{-1}(\mu) A_{\operatorname{inf},A} \Rightarrow \varphi \circ f \circ \Phi_C(I)\subset \varpi_{n+1} u \varphi^{-1}(\mu) A_{\operatorname{inf},A}
		$$
		with $\varpi_n \rightarrow 0$. We will show one can take $\varpi_{n+1} = \varphi(\varpi_n) ( u^{p+e-1 - h^{(i)}})_i$ for $h^{(i)}$ as above. The $\varpi_n$ converge to $0$ as $n \rightarrow \infty$ due to the assumption that $h^{(i)} \leq e+p-1$ with the inequality strict for at least one $i$. For the required implication we note that, as $f$ is a homomorphism of rings, \eqref{eq-inductiveident} ensures
		$$
		\begin{aligned}
			\varphi \circ f \circ \Phi_C(x) &\in \sum_{j \geq 1} \varphi( \varpi_n u \varphi^{-1}(\mu))^j \left( (u^{-h^{(i)}})_i \right)^j  A_{\operatorname{inf},A} \\
			&\subset\varphi(\varpi_n) u \varphi^{-1}(\mu)   (u^{p+e-1 - h^{(i)}})_i  A_{\operatorname{inf},A} \\
			&\subset \varpi_{n+1} u \varphi^{-1}(\mu) A_{\operatorname{inf},A} 
		\end{aligned}
		$$
		with the second inclusion using that, since $A$ is an $\mathbb{F}$-algebra, $\varphi(u\varphi^{-1}(\mu)) = u^p \mu$ is divisible by $\varphi^{-1}(\mu) u^{p +e}$.

		\subsubsection*{Verifying condition (1)}
		Condition (1) will be a consequence of the fact that $\Psi(\mathfrak{M},\iota) \in M_{\mu}$, and holds without any assumption on $\mu$. In fact, $M_\mu$ is contained inside a closed subscheme $\operatorname{Gr}_{\widetilde{G}}^{\nabla_\sigma} \subset \operatorname{Gr}_{\widetilde{G}}$ whose $A$-valued points, for any $p$-adically complete $\mathcal{O}$-algebra $A$ of topologically finite type, consists of those $(\mathcal{E},\iota) \in \operatorname{Gr}_{\widetilde{G}}^{\nabla_\sigma}(A)$ for which there exists an fpcq cover $A' \rightarrow A$ trivialising $\mathcal{E}$ so that
		$$
		(\mathcal{E},\iota) \otimes_A A' = (\mathcal{E}^0,C) \Leftrightarrow C \sigma(C)^{-1} \in U_{\operatorname{inf},A'}
		$$
		for every $\sigma \in G_K$. That this condition is closed, and that $M_\mu \subset \operatorname{Gr}_{\widetilde{G}}^{\nabla_\sigma}$ easily reduce, after choosing a faithful representation, to the case of $\operatorname{GL}_n$, where they are proved in \cite[7.4]{B21} and \cite[7.6]{B21}.
		
	\end{proof}

	\section{Cycle inequalities}
	
	Now we can prove the main theorem:
	
	\begin{theorem}\label{thm-cycleinequality}
		Assume that $G$ admits a twisting element $\rho$ and let $\overline{\rho}:G_K \rightarrow G(\mathbb{F})$ be a continuous homomorphism. Let $\mu$ be a Hodge type with each $\mu_i$ strictly dominant. Suppose also that
		$$
		\sum_{i=1}^e \langle \alpha^\vee,\mu_i \rangle  \leq p
		$$
		for each root $\alpha^\vee$ of $\widetilde{G}$. Then, as $\sum_{i=1}^e \operatorname{dim} \widetilde{G}/P_{\mu_i}$-dimensional cycles inside of $\operatorname{Spec}R_{\overline{\rho}}^{\square} \otimes_{\mathcal{O}} \mathbb{F}$, one has
		$$
		[\operatorname{Spec}R_{\overline{\rho}}^{\square,\operatorname{cr},\mu} \otimes_{\mathcal{O}} \mathbb{F}] \leq  \sum_{\lambda} m_\lambda [\operatorname{Spec}R_{\overline{\rho}}^{\square,\operatorname{cr},\widetilde{\lambda}} \otimes_{\mathcal{O}} \mathbb{F}]
		$$
		where
		\begin{itemize}
			\item The $\leq$ indicates that the difference is an effective cycle, i.e. a $\mathbb{Z}_{\geq 0}$-linear combination of integral closed subschemes.
			\item The sum runs over dominant cocharacters $\lambda$ of $\widetilde{G}$.
			\item $\widetilde{\lambda}$ denotes the Hodge type given by the $e$-tuple $(\lambda+\rho,\rho,\ldots,\rho)$.
			\item $m_\lambda$ denotes the multiplicity of $W(\lambda)$ inside $\bigotimes_{i=1}^e W(\mu_i- \rho)$. It follows from \cite[5.6]{Janbook}  and \cite[3.10]{Her09} that, due to the bound on $\mu$, $m_\lambda$ can equivalently be defined as the multiplicity of the representation of $\widehat{G}(\mathbb{F})$ obtained from the $\mathbb{F}$-valued points of $W(\lambda)$ inside that induced from the $\mathbb{F}$-valued points in $\bigotimes_{i=1}^e W(\mu_i- \rho)$.
		\end{itemize}
	\end{theorem}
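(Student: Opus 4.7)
The plan is to transport the cycle identity of Theorem~\ref{thm-cycles} through the standard diagram $\operatorname{Spec}R_{\overline{\rho}}^\square \xleftarrow{\Theta} \mathcal{L}_{\overline{\rho}}^{\operatorname{cr},\mu} \xrightarrow{} Y_G^\mu \hookrightarrow Z_{G,\mu,\mathbb{F}} \xrightarrow{\Psi_N} M_\mu \otimes_{\mathcal{O}} \mathbb{F}$, upgraded to account for a trivialisation of $\mathfrak{M}$ via the torsor structure of Proposition~\ref{prop-torsor}. First, by Theorem~\ref{thm-factorisation2} and Corollary~\ref{cor-closed immerson}, the bound $\sum_i \langle \alpha^\vee,\mu_i \rangle \leq p$ ensures that $Y_G^\mu \otimes_{\mathcal{O}} \mathbb{F} \hookrightarrow Z_G \otimes_{\mathcal{O}} \mathbb{F}$ factors as a closed immersion through $Z_{G,\mu,\mathbb{F}}$; composing with the $\mathcal{G}_{\widetilde{G},N}$-torsor $\Psi_N$ of Proposition~\ref{prop-torsor} and the formally smooth morphism $\mathcal{L}_{\overline{\rho}}^{\operatorname{cr},\mu} \otimes_{\mathcal{O}} \mathbb{F} \to Y_G^\mu \otimes_{\mathcal{O}} \mathbb{F}$ of Lemma~\ref{lem-formalsmooth} produces a smooth morphism from a $\mathcal{G}_{\widetilde{G},N}$-torsor $\mathcal{L}_0^\mu$ over $\mathcal{L}_{\overline{\rho}}^{\operatorname{cr},\mu} \otimes_{\mathcal{O}} \mathbb{F}$ to (a closed subscheme of) $M_\mu \otimes_{\mathcal{O}} \mathbb{F}$.

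Next, I would apply Theorem~\ref{thm-cycles} to rewrite $[M_\mu \otimes_{\mathcal{O}} \mathbb{F}] = \sum_\lambda m_\lambda [M_{\widetilde{\lambda}} \otimes_{\mathcal{O}} \mathbb{F}]$ as a cycle identity (valid because the strict dominance of each $\mu_i$ and the bound on $\mu$ imply the hypotheses of the theorem). Pulling this identity back along the smooth morphism constructed in the previous paragraph and descending along the $\mathcal{G}_{\widetilde{G},N}$-torsor yields, for each $\lambda$, an integral closed subscheme $X_0^{\widetilde{\lambda}} \subset \operatorname{Spec}R_{\overline{\rho}}^\square \otimes_{\mathcal{O}} \mathbb{F}$ together with an identity of cycles $[\mathcal{L}_{\overline{\rho}}^{\operatorname{cr},\mu} \otimes_{\mathcal{O}} \mathbb{F}] = \sum_\lambda m_\lambda [\widetilde{X}_0^{\widetilde{\lambda}}]$ on $\mathcal{L}_{\overline{\rho}}^{\operatorname{cr},\mu} \otimes_{\mathcal{O}} \mathbb{F}$, with $\widetilde{X}_0^{\widetilde{\lambda}}$ the preimage of $X_0^{\widetilde{\lambda}}$ under $\Theta$. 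The crucial use of Theorem~\ref{thm-irred} is that each $M_{\widetilde{\lambda}} \otimes_{\mathcal{O}} \mathbb{F}$ is irreducible and generically reduced; since $\Psi_N$ is a torsor (hence smooth with irreducible geometric fibres) and $\mathcal{L}_{\overline{\rho}}^{\operatorname{cr},\mu} \otimes_{\mathcal{O}} \mathbb{F} \to Y_G^\mu \otimes_{\mathcal{O}} \mathbb{F}$ is formally smooth of pure relative dimension $\dim_{\mathcal{O}} G$, the classes $[\widetilde{X}_0^{\widetilde{\lambda}}]$ are genuinely classes of integral closed subschemes.

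Finally, I would push forward along the proper morphism $\Theta:\mathcal{L}_{\overline{\rho}}^{\operatorname{cr},\mu} \to \operatorname{Spec}R_{\overline{\rho}}^{\square,\operatorname{cr},\mu}$ (which is a closed immersion after inverting $p$ by Construction~\ref{con-Kisinresolution}), using the flatness of $\mathcal{L}_{\overline{\rho}}^{\operatorname{cr},\mu}$ over $\mathcal{O}$ and the density of the generic fibre to conclude that $\Theta_*$ preserves the top-dimensional cycle classes: $\Theta_*[\mathcal{L}_{\overline{\rho}}^{\operatorname{cr},\mu} \otimes_{\mathcal{O}} \mathbb{F}] = [\operatorname{Spec}R_{\overline{\rho}}^{\square,\operatorname{cr},\mu} \otimes_{\mathcal{O}} \mathbb{F}]$ and similarly for each $\widetilde{\lambda}$-piece. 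This yields an identity $[\operatorname{Spec}R_{\overline{\rho}}^{\square,\operatorname{cr},\mu} \otimes_{\mathcal{O}} \mathbb{F}] = \sum_\lambda m_\lambda [X_0^{\widetilde{\lambda}}]$ in $Z_{d}(\operatorname{Spec}R_{\overline{\rho}}^\square \otimes_{\mathcal{O}} \mathbb{F})$, with $d = \dim_{\mathcal{O}} G + \sum_i \dim \widetilde{G}/P_{\mu_i}$. To finish, I would observe that $[\operatorname{Spec}R_{\overline{\rho}}^{\square,\operatorname{cr},\widetilde{\lambda}} \otimes_{\mathcal{O}} \mathbb{F}] \leq [X_0^{\widetilde{\lambda}}]$ with equality (because $X_0^{\widetilde{\lambda}}$ is irreducible and generically reduced of the correct dimension, and contains $\operatorname{Spec}R_{\overline{\rho}}^{\square,\operatorname{cr},\widetilde{\lambda}} \otimes_{\mathcal{O}} \mathbb{F}$ as a closed subscheme of the same dimension), whereas for the $\mu$-term only the inclusion $[\operatorname{Spec}R_{\overline{\rho}}^{\square,\operatorname{cr},\mu} \otimes_{\mathcal{O}} \mathbb{F}] \leq [X_0^\mu]$ is automatic.

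The hard part, and the real asymmetry that gives an inequality rather than an equality, is that $\widetilde{Y}^\mu$ (equivalently $Y_G^\mu$) is only contained in, but not known to equal, $\Psi^{-1}(M_\mu \otimes_{\mathcal{O}} \mathbb{F})$: producing a crystalline lift of Hodge type $\mu$ for an arbitrary torsion Breuil--Kisin module whose image under $\Psi$ lies in $M_\mu \otimes_{\mathcal{O}} \mathbb{F}$ would close the gap, but is beyond the methods here. The more technical obstacle in executing the plan is verifying that the $\mathcal{G}_{\widetilde{G},N}$-torsor structure and the smooth descent genuinely preserve cycles in the correct sense and that irreducibility and generic reducedness transfer along the combined smooth map; this follows from standard facts but requires care because $\Psi_N$ exists only after modding out by $U_{\widetilde{G},N}$ with $N$ depending on $\mu$, so one must check that the torsor structure is compatible with the cycle identity of Theorem~\ref{thm-cycles} uniformly in $\lambda$.
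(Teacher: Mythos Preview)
Your overall strategy matches the paper's, but the bookkeeping of where the equality becomes an inequality is tangled, and as written the second paragraph contains a genuine error. You construct $\mathcal{L}_0^\mu$ as a torsor over $\mathcal{L}_{\overline{\rho}}^{\operatorname{cr},\mu}\otimes_{\mathcal{O}}\mathbb{F}$ mapping to ``a closed subscheme of'' $M_\mu\otimes_{\mathcal{O}}\mathbb{F}$, and then claim that pulling back the identity $[M_\mu\otimes_{\mathcal{O}}\mathbb{F}]=\sum_\lambda m_\lambda[M_{\widetilde{\lambda}}\otimes_{\mathcal{O}}\mathbb{F}]$ along this map gives an \emph{equality} $[\mathcal{L}_{\overline{\rho}}^{\operatorname{cr},\mu}\otimes_{\mathcal{O}}\mathbb{F}]=\sum_\lambda m_\lambda[\widetilde{X}_0^{\widetilde{\lambda}}]$. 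But the composite $\mathcal{L}_0^\mu\to M_\mu\otimes_{\mathcal{O}}\mathbb{F}$ is not flat: it factors through the closed immersion $Y_G^\mu\otimes_{\mathcal{O}}\mathbb{F}\hookrightarrow Z_{G,\mu,\mathbb{F}}$ (Corollary~\ref{cor-closed immerson}), and flat pullback of cycles is only available for the genuinely smooth piece $\Psi_N$ over $Z_{G,\mu,\mathbb{F}}$. So no such equality is available on $\mathcal{L}_{\overline{\rho}}^{\operatorname{cr},\mu}$, and your third paragraph then inherits the same inconsistency (you write $[\operatorname{Spec}R_{\overline{\rho}}^{\square,\operatorname{cr},\mu}\otimes_{\mathcal{O}}\mathbb{F}]=\sum_\lambda m_\lambda[X_0^{\widetilde{\lambda}}]$ and simultaneously $[\operatorname{Spec}R_{\overline{\rho}}^{\square,\operatorname{cr},\mu}\otimes_{\mathcal{O}}\mathbb{F}]\leq[X_0^\mu]$, which cannot both be what you intend).

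The paper resolves this by keeping the cycle \emph{equality} at the level where the map to $M_\mu$ really is smooth: one pulls back along $\Psi_N$ and descends to obtain $[Z_{G,\mu,\mathbb{F}}]=\sum_\lambda m_\lambda[Z_{G,\widetilde{\lambda},\mathbb{F}}]$. The inequality then enters exactly once, via the closed immersion $Y_G^\mu\otimes_{\mathcal{O}}\mathbb{F}\hookrightarrow Z_{G,\mu,\mathbb{F}}$, giving $[Y_G^\mu\otimes_{\mathcal{O}}\mathbb{F}]\leq[Z_{G,\mu,\mathbb{F}}]$; for $\mu=\widetilde{\lambda}$ this is an equality because $Z_{G,\widetilde{\lambda},\mathbb{F}}$ is irreducible and generically reduced of the same dimension as $Y_G^{\widetilde{\lambda}}\otimes_{\mathcal{O}}\mathbb{F}$ (Corollary~\ref{cor-lifting}(2)). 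Only after assembling $[Y_G^\mu\otimes_{\mathcal{O}}\mathbb{F}]\leq\sum_\lambda m_\lambda[Y_G^{\widetilde{\lambda}}\otimes_{\mathcal{O}}\mathbb{F}]$ does one pull back along the formally smooth maps of Lemma~\ref{lem-formalsmooth} and push forward along $\Theta$. (One also needs $e>1$ so that the hypothesis of Proposition~\ref{prop-constGalois}, hence Corollary~\ref{cor-closed immerson}, is met; when $e=1$ the statement is vacuous.) Your final two paragraphs correctly identify the source of the asymmetry, but the argument as written has the inequality entering in the wrong place.
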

	
	In the proof we use the standard functoriality of groups of cycles, namely the existence of a pullback homomorphism along flat morphisms and the pushforward along proper morphisms. See for example \cite[02R3, 02RA]{stacks-project}.
	
	\begin{proof}
		First, we can assume $e>1$ since when $e=1$ the theorem is vacuous. As a consequence the inequality $\sum_{i=1}^e \langle \alpha^\vee,\mu_i \rangle  \leq p$ ensures that Corollary~\ref{cor-closed immerson} is applicable.
		
		Theorem~\ref{thm-cycles} gives an identity of cycles $[M_\mu \otimes_{\mathcal{O}} \mathbb{F}] = \sum_{\lambda} m_\lambda [M_{\widetilde{\lambda}} \otimes_{\mathcal{O}} \mathbb{F}]$. For sufficiently large $N$ we can pull this identity back along the smooth morphism $\Psi_N$ by \cite[02R8]{stacks-project}, giving an equality 
		$$
		[\widetilde{Z}_G \times_{\operatorname{Gr}_{\widetilde{G}}} (M_{\mu} \otimes_{\mathcal{O}} \mathbb{F}) / \prescript{}{\varphi}{U_{G,N}}] = \sum_\lambda m_\lambda  [\widetilde{Z}_G \times_{\operatorname{Gr}_{\widetilde{G}}} (M_{\widetilde{\lambda}} \otimes_{\mathcal{O}} \mathbb{F}) / \prescript{}{\varphi}{U_{G,N}}]
		$$
		of $\operatorname{dim} \mathcal{G}_{G,N} + \sum_{i=1}^e \operatorname{dim} \widetilde{G}/P_{\mu_i}$-dimensional cycles. This identity then descends to an identity
		$$
		[Z_{\mu,G,\mathbb{F}}] = \sum_{\lambda} m_\lambda [Z_{\widetilde{\lambda},G,\mathbb{F}}]
		$$
		of $\sum_{i=1}^e \operatorname{dim}\widetilde{G}/P_{\mu_i}$-dimensional cycles. Note that here we are discussing cycles inside an algebraic stack, as opposed to a scheme. In this case a cycle is again a formal linear combination of integral closed substacks, with the notion of multiplicity as discussed in \cite[0DR4]{stacks-project}. We also observe that, since $\mathcal{G}_{G,N}$ is smooth and irreducible, the irreducibility and generic reducedness of $M_{\widetilde{\lambda}} \otimes_{\mathcal{O}} \mathbb{F}$ from Theorem~\ref{thm-irred} is shared by $Z_{\widetilde{\lambda},G,\mathbb{F}}$.
		
		Now Corollary~\ref{cor-closed immerson} implies that $[Y_{G}^\mu \otimes_{\mathcal{O}} \mathbb{F}] \leq [Z_{G,\mu,\mathbb{F}}]$. The irreducibility and generic reducedness of $Z_{G,\widetilde{\lambda},\mathbb{F}}$, together with the fact that $\operatorname{dim}Y_{G}^\mu \otimes_{\mathcal{O}} \mathbb{F} = \operatorname{dim} Z_{G,\mu,\mathbb{F}}$ implies that this is an equality when $\mu = \widetilde{\lambda}$. Therefore, we have
		$$
		[Y_G^\mu \otimes_{\mathcal{O}} \mathbb{F}] \leq \sum_{\lambda} m_\lambda [Y_G^{\widetilde{\lambda}} \otimes_{\mathcal{O}} \mathbb{F}]
		$$
		Pulling this identity back along the formally smooth morphism from Lemma~\ref{lem-formalsmooth} gives
		$$
		[\mathcal{L}_{\overline{\rho}}^{\operatorname{cr},\mu} \otimes_{\mathcal{O}} \mathbb{F}] \leq \sum_{\lambda} m_\lambda [\mathcal{L}_{\overline{\rho}}^{\operatorname{cr},\widetilde{\lambda}} \otimes_{\mathcal{O}} \mathbb{F}]
		$$ 
		Finally, pushing this identity forward along the proper morphism $\mathcal{L}_{\overline{\rho}}^{\operatorname{cr},\mu} \otimes_{\mathcal{O}} \mathbb{F} \rightarrow \operatorname{Spec}R_{\overline{\rho}}^\square \otimes_{\mathcal{O}} \mathbb{F}$ and using \cite[3.3]{B21} to equate the pushforward of $[\mathcal{L}_{\overline{\rho}}^{\operatorname{cr},\mu} \otimes_{\mathcal{O}} \mathbb{F}]$ with $[\operatorname{Spec}R_{\overline{\rho}}^{\square,\operatorname{cr},\mu} \otimes_{\mathcal{O}} \mathbb{F}]$ proves the theorem.
	\end{proof}
	\section{Monodromy and Galois}\label{sec-filandmono}
	
	Here we give a proof of the following equivalent formulation of Proposition~\ref{prop-Nnablamax}. For simplicity we work with $\mathbb{Z}_p$ coefficients but the extension to any coefficient ring which is finite and flat over $\mathbb{Z}_p$ is immediate.
	\begin{proposition}\label{prop-Nnablamax2}
		Let $\mathfrak{M}$ denote the Breuil--Kisin module associated to a crystalline representation $\rho:G_K \rightarrow \operatorname{GL}_n(\mathbb{Z}_p)$ and let $N_\nabla$ be the operator over $\partial=u\frac{d}{du}$ on $\mathfrak{M} \otimes_{\mathfrak{S}} \mathcal{O}^{\operatorname{rig}}[\frac{1}{\lambda}]$ induced from the $\varphi$-equivariant identification
		$$
		\mathfrak{M} \otimes_{\mathfrak{S}} \mathcal{O}^{\operatorname{rig}}[\tfrac{1}{\lambda}] \cong D(\rho)\otimes_{\mathfrak{S}} \mathcal{O}^{\operatorname{rig}}[\tfrac{1}{\lambda}]
		$$
		described in~\ref{sub-Kisinconst}. If $N_\nabla(\iota) = \iota N$ for an $\mathfrak{S}$-basis $\iota$ of $\mathfrak{M}$ then $N \in \frac{u}{p}\operatorname{Mat}(S_{\operatorname{max}})$ for $S_{\operatorname{max}} := W(k)[[u,\frac{u^e}{p}]]$.
	\end{proposition}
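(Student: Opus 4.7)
The plan is to follow the strategy of \cite{Liu15} and \cite{GLS}, bounding $N_\nabla$ by comparing it with the $G_K$-action on $\mathfrak{M} \otimes_{\mathfrak{S}} A_{\operatorname{inf}}$ through an exponential identity. Since $\rho$ is crystalline, $\mathfrak{M} \otimes_{\mathfrak{S}} A_{\operatorname{inf}}$ (with $\mathfrak{S} \to A_{\operatorname{inf}}$ sending $u \mapsto [\pi^\flat]$) carries a canonical $\varphi$-equivariant semilinear $G_K$-action satisfying $(\sigma-1)(\mathfrak{M}\otimes A_{\operatorname{inf}}) \subset \mathfrak{M}\otimes [\pi^\flat]\varphi^{-1}(\mu)A_{\operatorname{inf}}$ and trivial on $G_{K_\infty}$. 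Fix $\tau \in G_K$ with $\tau(\pi^{1/p^n}) = \epsilon_n \pi^{1/p^n}$ for all $n$, so $\tau([\pi^\flat]) = [\epsilon][\pi^\flat]$; the hypothesis $K_\infty \cap K(\mu_{p^\infty}) = K$ ensures $\tau$ topologically generates $\operatorname{Gal}(K_\infty(\mu_{p^\infty})/K_\infty)$, so integrality bounds on $\tau - 1$ alone will suffice.

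The second step is to embed everything into a common large crystalline period ring (essentially Fontaine's $B^+_{\operatorname{max}}$), into which both $\mathcal{O}^{\operatorname{rig}}[\tfrac{1}{\lambda}]$ and $A_{\operatorname{inf}}[\tfrac{1}{p}]$ map compatibly with $\varphi$ and $G_K$. By Berger's theorem the Kisin identification $\mathfrak{M}\otimes \mathcal{O}^{\operatorname{rig}}[\tfrac{1}{\lambda}]\cong D(\rho)\otimes \mathcal{O}^{\operatorname{rig}}[\tfrac{1}{\lambda}]$ extends to a $G_K$-equivariant identification $\mathfrak{M}\otimes B^+_{\operatorname{max}}\cong D(\rho)\otimes_{K_0} B^+_{\operatorname{max}}$ in which $\tau$ fixes the $D(\rho)$-factor pointwise and acts on $u$ by $u\mapsto [\epsilon]u$. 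Since this is the formal flow of $\partial=u\tfrac{d}{du}$, and $N_\nabla$ is by construction the extension of $\partial$ that kills $D(\rho)$, one obtains the operator identity
\[
\tau=\exp(\mathfrak{t}\, N_\nabla), \qquad \mathfrak{t}:=\log([\epsilon])
\]
on $\mathfrak{M}\otimes B^+_{\operatorname{max}}$.

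Combining the expansion $\tau-1=\mathfrak{t}N_\nabla+\tfrac{\mathfrak{t}^2}{2}N_\nabla^2+\cdots$ with the integrality of $\tau-1$ gives
\[
\mathfrak{t}\, N_\nabla(\mathfrak{M}) \subset \mathfrak{M}\otimes [\pi^\flat]\varphi^{-1}(\mu)A_{\operatorname{inf}} + \mathfrak{t}^2\cdot(\mathfrak{M}\otimes B^+_{\operatorname{max}}),
\]
and an induction on the $\mathfrak{t}$-degree (as in \cite[\S4]{GLS}) isolates the contribution of $N_\nabla$ itself. Exploiting that $\mathfrak{t}/\mu$ is a unit and that the quotient $[\pi^\flat]\varphi^{-1}(\mu)/\mathfrak{t}$ has image in $\tfrac{u}{p}S_{\operatorname{max}}$, then intersecting with $\mathfrak{M}\otimes \mathcal{O}^{\operatorname{rig}}[\tfrac{1}{\lambda}]$ inside $\mathfrak{M}\otimes B^+_{\operatorname{max}}$, one extracts $N_\nabla(\mathfrak{M})\subset \mathfrak{M}\otimes\tfrac{u}{p}S_{\operatorname{max}}$. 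Expanding $N_\nabla(\iota_j)$ in the $\mathfrak{S}$-basis $\iota$, this translates directly into $N\in\tfrac{u}{p}\operatorname{Mat}(S_{\operatorname{max}})$.

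The hard part is sharpening the ring-theoretic bound. The statement proved in \cite[4.7]{GLS} (recalled in Remark~\ref{rem-GLS}) takes place in Breuil's divided-power ring $S$ rather than in $S_{\operatorname{max}}=W(k)[[u,\tfrac{u^e}{p}]]$; obtaining the sharper conclusion requires careful tracking of the powers of $p$ appearing when dividing integral elements of $A_{\operatorname{inf}}$ by $\mathfrak{t}$, together with the characterization of $S_{\operatorname{max}}$ as (essentially) the intersection $B^+_{\operatorname{max}}\cap K_0[[u]][\tfrac{1}{p}]$. Once this intersection step is handled, the remainder is direct bookkeeping with the Leibniz rule and the relation $N_\nabla\varphi=p\varphi N_\nabla$.
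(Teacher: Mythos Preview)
Your outline is essentially the paper's approach: pass to $B_{\operatorname{max}}^+$, relate $\tau$ and $N_\nabla$ via the exponential identity, exploit the integral bound on $(\tau-1)$, and then use the intersection $\mathcal{O}^{\operatorname{rig}}[\tfrac{1}{\lambda}]\cap A_{\operatorname{max}}\subset S_{\operatorname{max}}$ to sharpen the GLS bound from $S$ to $S_{\operatorname{max}}$. The strategy and the key ring-theoretic input are the same.

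There is one concrete difference in execution worth flagging. Rather than extracting $N_\nabla$ from the exponential by an ``induction on the $\mathfrak{t}$-degree'' (which you leave vague, and which is awkward because the higher-order terms involve $N_\nabla^k$ you have not yet controlled), the paper inverts the relation: choosing $\sigma\in G_K$ with $\epsilon(\sigma)=\epsilon$ \emph{and} $\chi_{\operatorname{cyc}}(\sigma)=1$, one has the logarithm formula
\[
N_\nabla(m)=\frac{1}{t}\sum_{n\geq 1}(-1)^{n+1}\frac{(\sigma-1)^n(m)}{n},
\]
and the task becomes bounding each $(\sigma-1)^n(m)/n$ separately. The condition $\chi_{\operatorname{cyc}}(\sigma)=1$ is essential here (both for the logarithm identity and for the iterated divisibility $(\sigma-1)^n(m)\in\mathfrak{M}\otimes u\varphi^{-1}(\mu)^nA_{\operatorname{inf}}$ when $n\geq 2$); your $\tau$ as specified need not satisfy it, and your claim that $\tau$ topologically generates $\operatorname{Gal}(K_\infty(\mu_{p^\infty})/K_\infty)$ is in tension with this requirement. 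The hypothesis $K_\infty\cap K(\mu_{p^\infty})=K$ is used precisely to guarantee such a $\sigma$ exists. Once this is set up, the estimate $\varphi^{-1}(\mu)^{n-1}\in nA_{\operatorname{max}}$ handles the $1/n$, and the identity $\tfrac{u\varphi^{-1}(\mu)}{t}A_{\operatorname{max}}=\tfrac{u}{p\lambda}A_{\operatorname{max}}$ together with the intersection lemma gives the result; the relation $N_\nabla\varphi=p\varphi N_\nabla$ is not needed.
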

	
	As explained in Remark~\ref{rem-GLS}, the results of this section are not strictly speaking necessary for this paper but we still think they may be useful to help orient the reader.
	
	\begin{sub}The ideas go back to \cite{GLS}, with the new insight being that improved bounds can be achieved by replacing Fontaine's crystalline period ring $B_{\operatorname{crys}}$ with a better behaved period ring $B_{\operatorname{max}}$. This ring is defined in \cite[\S III]{Col98} by considering the subring $A_{\operatorname{max}}$ of $B_{\operatorname{dR}}^+$ consisting of elements which can be expressed as 	
		$$
		\sum_{n \geq 0} x_n \left( \frac{\nu}{p} \right)^n
		$$
		for $\nu$ any element generating the kernel of usual surjection $\Theta:A_{\operatorname{inf}} \rightarrow \mathcal{O}_C$ and $x_n \in A_{\operatorname{inf}}$ a sequence converging to zero for the $(p,u)$-adic topology on $A_{\operatorname{inf}}$. Note that $E(u)$ is one such generator of this kernel. Then $B_{\operatorname{max}}^+ = A_{\operatorname{max}}[\frac{1}{p}]$ and $B_{\operatorname{max}} = B_{\operatorname{max}}^+[\frac{1}{t}]$ for $t:= \operatorname{log}([\epsilon]) = \sum_{n\geq 1} (-1)^{n+1}\frac{([\epsilon]-1)^n}{n}$. The essential property that we will need is:
	\end{sub}
	\begin{lemma}\label{lem-intersectionisSmax}
		Recall that $\mathcal{O}^{\operatorname{rig}}$ denotes the ring of power series in $K_0[[u]]$ converging on the open unit disk, and $\lambda := \prod_{n \geq 0} \varphi^n(\frac{E(u)}{E(0)})^n$. Then the inclusion of $\mathfrak{S} \rightarrow A_{\operatorname{inf}}$ extends to an embedding of $\mathcal{O}^{\operatorname{rig}}[\frac{1}{\lambda}] \rightarrow B_{\operatorname{max}}$ so that 
		$$
		\mathcal{O}^{\operatorname{rig}}[\tfrac{1}{\lambda}] \cap A_{\operatorname{max}} \subset S_{\operatorname{max}}
		$$
	\end{lemma}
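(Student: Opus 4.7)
The plan is to first construct the embedding $\mathcal{O}^{\operatorname{rig}}[1/\lambda] \to B_{\operatorname{max}}$ in two stages, and then prove the intersection inclusion.

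For the embedding, I would extend $\mathfrak{S} = W(k)[[u]] \hookrightarrow A_{\operatorname{inf}}$ (via $u \mapsto [\pi^\flat]$) to $W(k)[[u,u^e/p]] \to A_{\operatorname{max}}$. By the very definition of $A_{\operatorname{max}}$ we have $E(u)/p \in A_{\operatorname{max}}$ (take $\nu = E(u)$). Since $E$ is Eisenstein, $E(u) - u^e \in pW(k)[u]$, so $u^e/p = E(u)/p - (E(u)-u^e)/p \in A_{\operatorname{max}}$. Then $p$-adic completeness of $A_{\operatorname{max}}$ extends $W(k)[u,u^e/p] \to A_{\operatorname{max}}$ to the $p$-adic completion. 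For the further extension to $\mathcal{O}^{\operatorname{rig}}$ one invokes the classical embedding $\mathcal{O}^{\operatorname{rig}} \hookrightarrow B^+_{\operatorname{rig}} \subset B^+_{\operatorname{max}}$ (valid because a holomorphic function on the open unit disk can be written as a $p$-adic limit converging in $B^+_{\operatorname{max}}$). Finally, inverting $\lambda$ is permissible: the iterated identity $\varphi(\lambda) E(u) = E(0)\lambda$ shows $\lambda = E(u) \cdot U$ in $B^+_{\operatorname{dR}}$ with $U$ a unit (the higher factors $\varphi^n(E(u)/E(0))$ for $n \geq 1$ have nonzero image under $\Theta$), so $1/\lambda$ corresponds to a unit multiple of $1/t$ in $B_{\operatorname{max}}$.

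For the intersection statement, the inclusion $\supseteq$ is immediate from the first part. For the reverse, let $f \in \mathcal{O}^{\operatorname{rig}}[1/\lambda] \cap A_{\operatorname{max}}$ and expand $f = \sum_n c_n u^n$ with $c_n \in K_0$ using $\mathcal{O}^{\operatorname{rig}}[1/\lambda] \subset K_0[[u]]$; since $S_{\operatorname{max}} = W(k)[[u,u^e/p]] \cap \mathcal{O}^{\operatorname{rig}}[1/\lambda]$, the task reduces to showing $p^{\lfloor n/e\rfloor}c_n \in W(k)$ for every $n \geq 0$. Using the defining description of $A_{\operatorname{max}}$, I would write $f = \sum_m x_m (E(u)/p)^m$ with $x_m \in A_{\operatorname{inf}}$ converging $(p,u)$-adically to zero, and expand each $(E(u)/p)^m$ as a polynomial in $u$ of degree $em$ whose $K_0$-coefficients have $p$-adic valuation $\geq -m$. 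Were all the $x_m$ to lie in the subring $W(k) \subset A_{\operatorname{inf}}$, the desired valuation bound on $c_n$ would follow immediately, since only terms with $em \geq n$ contribute to $c_n$ and each contributes with $p$-valuation $\geq -m$.

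The main obstacle is rigorously extracting a \emph{$W(k)$-component} of each $x_m \in A_{\operatorname{inf}}$ and showing the remaining parts contribute zero to the $K_0$-coefficients $c_n$. I would handle this by reducing modulo $(p^M,u^N)$ for each pair $M,N$: only finitely many $x_m$ are nonzero in the quotient, and the condition that the resulting truncated sum agrees with a polynomial in $K_0[u]/(u^N)$ can be tested using the injectivity of $\mathfrak{S}/(p^M,u^N) \hookrightarrow A_{\operatorname{inf}}/(p^M,u^N)$, together with the standard fact that $\mathfrak{S}[1/p] \cap A_{\operatorname{inf}} = \mathfrak{S}$ inside $B^+_{\operatorname{dR}}$. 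Passing to the inverse limit over $M,N$ should then force each $c_n$ to have the claimed $p$-adic valuation. Making this truncation argument precise — essentially a repackaging of Colmez's analysis of $A_{\operatorname{max}}$ — is the main technical content of the lemma.
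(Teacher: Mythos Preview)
The embedding sketch and your reformulation of $S_{\operatorname{max}}$-membership as the valuation bound $v_p(c_n) \geq -\lfloor n/e\rfloor$ are fine. The gap is in the intersection argument. The representation $f = \sum_m x_m (E(u)/p)^m$ with $x_m \in A_{\operatorname{inf}}$ is highly non-unique, and your truncation-mod-$(p^M,u^N)$ proposal does not yield a well-defined ``$W(k)$-component'' of each $x_m$: the $x_m$ themselves carry $u$-adic content, there is no canonical complement to $\mathfrak{S}$ inside $A_{\operatorname{inf}}$, and the fact $\mathfrak{S}[1/p]\cap A_{\operatorname{inf}} = \mathfrak{S}$ only constrains the total sum, not individual terms. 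At this point your sketch amounts to ``some limiting procedure should work'', but the actual mechanism is absent.

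The paper reverses the direction of attack. After reducing to $\mathcal{O}^{\operatorname{rig}}\cap A_{\operatorname{max}}$ (each $\varphi^n(E(u))/p$ being a unit in $S_{\operatorname{max}}$), one writes $f \in \mathcal{O}^{\operatorname{rig}}$ in its \emph{unique} $E(u)$-adic expansion $f = \sum_n q_n\,(E(u)/p)^n$ with $q_n \in K_0[u]$ of degree $<e$ tending $p$-adically to zero. The key point is that each such $q_n$ is \emph{flat} in Colmez's sense: because $\deg q_n < e$, one has $\Theta(q_n) = q_n(\pi) \neq 0$ whenever $q_n\neq 0$, and $w(q_n)$ equals the minimum $p$-valuation of its coefficients, so $q_n \in p^{w(q_n)}A_{\operatorname{inf}}$. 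One then invokes the precise statement you only gesture at, namely Colmez's Lemme~V.3.1: for a flat expansion $\sum y_n(\nu/p)^n$, membership in $A_{\operatorname{max}}$ is \emph{equivalent} to $w(y_n)\geq 0$ for all $n$ with $w(y_n)\to\infty$. This forces $q_n \in W(k)[u]$, hence $f \in S_{\operatorname{max}}$. The missing idea in your approach is to impose uniqueness on the $\mathcal{O}^{\operatorname{rig}}$ side via degree-$<e$ remainders, so that Colmez's criterion becomes a genuine two-way test rather than something to be extracted from a non-unique $A_{\operatorname{inf}}$-valued expansion.
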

	\begin{proof}
		An easy computation shows that each $\frac{\varphi^n(E(u))}{p}$ is invertible in $S_{\operatorname{max}}$, and so it suffices to show $\mathcal{O}^{\operatorname{rig}}\cap A_{\operatorname{max}} \subset S_{\operatorname{max}}$. Any $f \in \mathcal{O}^{\operatorname{rig}}$ can be expressed uniquely as 
		$$
		f = \sum (\tfrac{E(u)}{p})^n q_n
		$$
		with $q_n \in K_0[u]$ polynomials of degree $<e$ converging $p$-adically to zero. We claim that $f \in A_{\operatorname{max}}$ if and only if each $q_n \in W(k)[u]$. This will prove the proposition because it will imply $f \in S_{\operatorname{max}}$. To see this we use a result of Colmez. Recall that $\Theta$ extends to a surjection $\Theta: B_{\operatorname{dR}}^+ \rightarrow C$ and, following \cite[\S V.3]{Col98}, we call an element $x \in B_{\operatorname{dR}}^+$ flat if $\theta(x) \neq 0$ and if $x \in p^{w(x)}A_{\operatorname{inf}}$ where $w(x)$ denotes the integer part of $v_p(\Theta(x))$. We also say zero is flat. If $q_n = \sum_{i=0}^{e-1} a_i u^i$ is non-zero then $\Theta(q_n) =\sum_{i=0}^{e-1} a_i \pi^i$ is non-zero and $w(\Theta(q_n)) = \operatorname{min} v_p(a_i)$. Thus, $q_n \in p^{w(\Theta(q_n))}A_{\operatorname{inf}}$ and so each $q_n$ is flat. Colmez shows in \cite[Lemme V.3.1]{Col98} that if $x \in B_{\operatorname{dR}}^+$ can be expressed as a sum $\sum_{n \geq 0} y_n (\frac{\nu}{p})^n$ with $\nu \in A_{\operatorname{inf}} \cap \operatorname{ker}\Theta$ a generator and $y_n \in B_{\operatorname{dR}}^+$ flat, then $x \in A_{\operatorname{max}}$ if and only if $w(y_n) \geq 0$ and converges to $\infty$. Since $E(u)$ is one possible generator of $\operatorname{ker}\Theta$ this gives the result.
	\end{proof}
	
	Combining Lemma~\ref{lem-intersectionisSmax} with the following gives Proposition~\ref{prop-Nnablamax2}.
	\begin{proposition}\label{prop-NnablaAmax}
		With notation as in Proposition~\ref{prop-Nnablamax2} one has $N \in \frac{u}{p\lambda} \operatorname{Mat}( A_{\operatorname{max}})$.
	\end{proposition}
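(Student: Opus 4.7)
The strategy is to interpret $N_\nabla$ as a logarithmic Sen-style operator associated to a well-chosen element of $G_K$, and to deduce the integrality bound from the divisibility of the Galois action that comes from the crystalline condition on $\rho$.

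\textbf{Step 1 (Choice of $\tau$).} The hypothesis $K_\infty \cap K(\mu_{p^\infty}) = K$ yields $\operatorname{Gal}(L/K) = \operatorname{Gal}(K_\infty/K) \times \operatorname{Gal}(K(\mu_{p^\infty})/K)$ for $L = K_\infty(\mu_{p^\infty})$, so one can select $\tau \in G_{K(\mu_{p^\infty})}$ projecting to a topological generator of $\operatorname{Gal}(L/K(\mu_{p^\infty})) \cong \operatorname{Gal}(K_\infty/K) \cong \mathbb{Z}_p$. Concretely, $\tau(\pi^{1/p^n}) = \epsilon_n \pi^{1/p^n}$, so $\tau(u) = [\epsilon] u = (1+\mu) u$, while $\tau$ fixes $\epsilon$, $\mu$, and $t = \log([\epsilon])$. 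On $A_{\operatorname{inf}}$ one then has the formal identity $\tau = \exp(t\partial)$, where $\partial$ is the continuous $W(\mathcal{O}_{C^\flat})$-linear derivation with $\partial(u) = u$; this is verified by testing both sides on $u$ and on Teichm\"uller lifts of $\tau$-fixed elements of $\mathcal{O}_{C^\flat}$.

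\textbf{Step 2 (Comparison with $N_\nabla$).} Combining Kisin's identification from \ref{sub-Kisinconst} with the standard $\varphi,G_K$-equivariant isomorphism $\mathfrak{M} \otimes_{\mathfrak{S}} W(C^\flat) \cong \rho \otimes_{\mathbb{Z}_p} W(C^\flat)$ gives a $\varphi, G_K$-equivariant identification $\mathfrak{M} \otimes_{\mathfrak{S}} B_{\operatorname{max}} \cong D(\rho) \otimes_{K_0} B_{\operatorname{max}}$ extending Kisin's comparison via the embedding $\mathcal{O}^{\operatorname{rig}}[\tfrac{1}{\lambda}] \hookrightarrow B_{\operatorname{max}}$ of Lemma~\ref{lem-intersectionisSmax}. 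Since $N_\nabla$ was defined by killing $D(\rho)$, it corresponds to $1 \otimes \partial$ under this identification, and the Step 1 identity upgrades to $\tau = \exp(tN_\nabla)$ on $\mathfrak{M} \otimes B_{\operatorname{max}}$.

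\textbf{Step 3 (Integrality from crystallinity).} The crystalline divisibility in Definition~\ref{def-crystGaloisactions} applied to $\rho$ gives $(\tau - 1)(\mathfrak{M}) \subset \mathfrak{M} \otimes u\varphi^{-1}(\mu) A_{\operatorname{inf}}$. Since $\tau - 1$ is a $\tau$-derivation with $(\tau - 1)(u\varphi^{-1}(\mu)) = \mu u \varphi^{-1}(\mu)$, iterating yields $(\tau - 1)^n(\mathfrak{M}) \subset \mathfrak{M} \otimes (u\varphi^{-1}(\mu))^n A_{\operatorname{inf}}$. Because $A_{\operatorname{max}}$ contains divided powers of $E(u)/p$ and $u\varphi^{-1}(\mu)$ and $E(u)$ generate the same ideal of $A_{\operatorname{inf}}$ up to a unit, the $p$-adic denominators in $\tfrac{1}{n}$ are absorbed by the $n$-fold divisibility, so $\log\tau := \sum_{n \geq 1} \tfrac{(-1)^{n+1}(\tau - 1)^n}{n}$ converges on $\mathfrak{M} \otimes A_{\operatorname{max}}$ and maps $\mathfrak{M}$ into $\mathfrak{M} \otimes u\varphi^{-1}(\mu) A_{\operatorname{max}}$.

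\textbf{Step 4 (Conclusion).} Inverting the exponential identity of Step 2 gives $tN_\nabla = \log\tau$ on $\mathfrak{M} \otimes B_{\operatorname{max}}$, whence $N_\nabla(\mathfrak{M}) \subset \mathfrak{M} \otimes \tfrac{u\varphi^{-1}(\mu)}{t} A_{\operatorname{max}}$. The proposition then follows from the identity $\tfrac{t}{p\lambda\varphi^{-1}(\mu)} \in A_{\operatorname{max}}^\times$, which one derives from $\varphi(t) = pt$, the functional equation $\lambda \cdot E(u) = \varphi(\lambda) \cdot E(0)$, and the classical fact that $\varphi^{-1}(t)/\varphi^{-1}(\mu) \in A_{\operatorname{max}}^\times$ (ultimately coming from the expansion $t = \mu - \tfrac{\mu^2}{2} + \cdots$, which converges to a unit times $\mu$ in $B_{\operatorname{max}}^+$).

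The main obstacle will be rigorously verifying the convergence of $\log\tau$ on $\mathfrak{M} \otimes A_{\operatorname{max}}$ in Step 3: this is the whole reason for working with $A_{\operatorname{max}}$ rather than $A_{\operatorname{crys}}$ or $A_{\operatorname{inf}}$, and requires a careful book-keeping of how the $n$-fold divisibility by $u\varphi^{-1}(\mu)$, interpreted after base change to $A_{\operatorname{max}}$ using $u\varphi^{-1}(\mu) \in E(u) A_{\operatorname{max}}^\times$, beats the $p$-adic growth of $\tfrac{1}{n}$. The identification in Step 2 of Kisin's isomorphism with (a restriction of) the crystalline comparison is standard but requires some Tannakian care, and the final conversion of ideals in Step 4 is a direct but nontrivial manipulation with Fontaine's period rings.
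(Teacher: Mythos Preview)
Your overall strategy is exactly the paper's: pick $\tau\in G_K$ with $\chi_{\operatorname{cyc}}(\tau)=1$ and $\epsilon(\tau)=\epsilon$, identify $tN_\nabla$ with $\log\tau$ on $\mathfrak{M}\otimes B_{\operatorname{max}}$, use divisibility of the crystalline $G_K$-action to bound the terms of the logarithm series, and then convert the ideal $\tfrac{u\varphi^{-1}(\mu)}{t}A_{\operatorname{max}}$ into $\tfrac{1}{p\lambda}A_{\operatorname{max}}$. Steps~1, 2 and~4 match the paper's Lemma~\ref{lem-Galandmono} and the final paragraph of its proof essentially verbatim.

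The gap is in Step~3. Two claims there do not hold as written. First, the iteration ``$(\tau-1)^n(\mathfrak{M})\subset\mathfrak{M}\otimes(u\varphi^{-1}(\mu))^nA_{\operatorname{inf}}$'' does not follow from the $\tau$-derivation identity alone: after one step you land in $\mathfrak{M}\otimes u\varphi^{-1}(\mu)A_{\operatorname{inf}}$, and applying $(\tau-1)$ again produces a term $u\varphi^{-1}(\mu)\cdot(\tau-1)(b)$ with $b\in A_{\operatorname{inf}}$ arbitrary, for which you have no divisibility control. The paper (Proposition~\ref{prop-Galoisdivisibility}) instead obtains the weaker bound $(\sigma-1)^n(m)\in\mathfrak{M}\otimes u\varphi^{-1}(\mu)^nA_{\operatorname{inf}}$ by a more indirect route: it expresses $(\sigma-1)^n(\varphi(m))$ via the \emph{forward} formula of Lemma~\ref{lem-Galandmono} as a series in $N_\nabla^j(\varphi(m))\cdot t^j$, uses that $N_\nabla^j(\varphi(m))\in\mathfrak{M}^\varphi\otimes u^p\mathcal{O}^{\operatorname{rig}}[\tfrac{1}{\varphi(\lambda)}]$ (this is where the $u$-divisibility actually comes from), lands in $u^p t^nB_{\operatorname{max}}^+$, and then intersects with $A_{\operatorname{inf}}[\tfrac{1}{\mu}]$ using that principal ideals in $B_{\operatorname{max}}^+$ are closed. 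Second, your convergence argument rests on ``$u\varphi^{-1}(\mu)\in E(u)A_{\operatorname{max}}^\times$'', which is false: $E(u)\in\ker\Theta$ while $\Theta(u\varphi^{-1}(\mu))=\pi(\zeta_p-1)\neq 0$. The correct estimate, used in the paper, is $\varphi^{-1}(\mu)^{n-1}\in nA_{\operatorname{max}}$, proved by first showing $\varphi^{-1}(\mu)^{p-1}/p\in A_{\operatorname{max}}$ (via $\varphi^{-1}(\mu)^p\equiv\mu\bmod p$ and the fact that $\mu/\varphi^{-1}(\mu)$ generates $\ker\Theta$) and then handling general $n$ by writing $n=p^sm$.

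You correctly flagged Step~3 as the crux, but the specific mechanisms you propose for both the iterated divisibility and the convergence need to be replaced by the arguments above.
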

	
	\begin{sub}
		To prove this first observe that $A_{\operatorname{max}}$ has a natural Frobenius $\varphi$ and a $\varphi$-equivariant $G_K$-action extending that on $A_{\operatorname{inf}}$. Furthermore, one has $\varphi(B_{\operatorname{max}}) \subset B_{\operatorname{crys}} \subset B_{\operatorname{max}}$ which shows that $B_{\operatorname{max}}$ can be used as a replacement for the crystalline period ring $B_{\operatorname{crys}}$. This means that there are $\varphi$-equivariant identifications
		\begin{equation}\label{eq-Bmaxcomparison}
			\rho \otimes_{\mathbb{Z}_p} B_{\operatorname{max}} \cong D(\rho) \otimes_{K_0} B_{\operatorname{max}} \cong \mathfrak{M} \otimes_{\mathfrak{S}} B_{\operatorname{max}}	
		\end{equation}
		with the first being $G_K$-equivariant for the trivial action of $G_K$ on $D(\rho)$. Here the second isomorphism is the base-change of the $\varphi$-equivariant isomorphism
		\begin{equation}\label{eq-Origcomparison}
			\mathfrak{M} \otimes_{\mathfrak{S}} \mathcal{O}^{\operatorname{rig}}[\tfrac{1}{\lambda}] \cong D(\rho) \otimes_{K_0} \mathcal{O}^{\operatorname{rig}}[\tfrac{1}{\lambda}]	
		\end{equation}
		described in~\ref{sub-Kisinconst}, while  the composite is obtained from the identification
		\begin{equation}\label{eq-Ainfcomparison}
			\mathfrak{M} \otimes_{\mathfrak{S}}W(C^\flat)\cong \rho \otimes_{\mathbb{Z}_p} W(C^\flat)
		\end{equation}
		in Theorem~\ref{thm-Kisin}, after applying \cite[4.26]{BMS} to descend this isomorphism to  $A_{\operatorname{inf}}[\frac{1}{\mu}]$, and then base-changing to $B_{\operatorname{max}}$.
	\end{sub}
	
	The key to proving Proposition~\ref{prop-NnablaAmax} is to relate, inside of \eqref{eq-Bmaxcomparison}, the $G_K$-action coming from $\rho$ with the operator $N_\nabla$ coming from $D(\rho)$:
	
	\begin{lemma}\label{lem-Galandmono}
		For any $\sigma \in G_K$ and $m \in \mathfrak{M} \otimes_{\mathfrak{S}} \mathcal{O}^{\operatorname{rig}}[\frac{1}{\lambda}]$ one has
		$$
		(\sigma - 1)(m) = \sum_{n \geq 1}N_\nabla^n(m) \otimes \frac{  \operatorname{log}([\epsilon(\sigma)])^n }{n!}
		$$
		Conversely, if $\sigma \in G_K$ acts trivially on $\mathbb{Z}_p(1) = \varprojlim \mu_{p^n}(C)$ (i.e. if the cyclotomic character $\chi_{\operatorname{cyc}}$ is trivial on $\sigma$) then
		$$
		N_\nabla(m) = \frac{1}{\operatorname{log}([\epsilon(\sigma)])}\sum_{n=1}^\infty (-1)^{n+1} \frac{(\sigma-1)^n}{n}(m)
		$$
		where $\epsilon(\sigma) \in \mathbb{Z}_p(1) = \sigma(u)/u$.
	\end{lemma}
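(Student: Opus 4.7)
The plan is to reduce both formulas to an identity in $\mathcal{O}^{\operatorname{rig}}[\frac{1}{\lambda}] \subset B_{\operatorname{max}}$ describing how $G_K$ acts on power series in $u$ in terms of the differential operator $\partial = u\frac{d}{du}$ and the element $\log([\epsilon(\sigma)])$. First I would use the identification~\eqref{eq-Bmaxcomparison} to view $\mathfrak{M} \otimes_{\mathfrak{S}} \mathcal{O}^{\operatorname{rig}}[\frac{1}{\lambda}]$ inside $D(\rho) \otimes_{K_0} B_{\operatorname{max}}$. In this description $G_K$ acts trivially on $D(\rho)$ and through its natural action on $B_{\operatorname{max}}$, while $N_\nabla$ is a derivation over $\partial$ which by its very construction vanishes on $D(\rho)$. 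By Leibniz, $N_\nabla^n(d \otimes f) = d \otimes \partial^n(f)$ for $d \in D(\rho)$ and $f \in \mathcal{O}^{\operatorname{rig}}[\frac{1}{\lambda}]$. Since both sides of the claimed identity are additive in $m$ and $D(\rho)$-linear, by continuity the problem reduces to showing
\begin{equation*}
(\sigma - 1)(f) = \sum_{n \geq 1} \partial^n(f) \frac{\log([\epsilon(\sigma)])^n}{n!}
\end{equation*}
for $f \in \mathcal{O}^{\operatorname{rig}}[\frac{1}{\lambda}] \subset B_{\operatorname{max}}$.

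For this scalar identity I would use the key computation $\sigma(u) = [\epsilon(\sigma)] \cdot u$ in $A_{\operatorname{inf}}$, combined with $[\epsilon(\sigma)] = \exp(\log([\epsilon(\sigma)]))$, which makes sense in $B_{\operatorname{max}}^+$ since $\log([\epsilon(\sigma)])$ has positive valuation. For $f = u^k$ one has $\partial^n(u^k) = k^n u^k$, giving
\begin{equation*}
\sum_{n \geq 1} \partial^n(u^k) \frac{\log([\epsilon(\sigma)])^n}{n!} = u^k \bigl(\exp(k \log([\epsilon(\sigma)])) - 1\bigr) = \sigma(u^k) - u^k,
\end{equation*}
which proves the identity on monomials. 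Additivity and $(p,u)$-adic continuity extend it to all of $\mathcal{O}^{\operatorname{rig}}$, and inverting $\lambda$ presents no obstacle because $(\sigma-1)$ is a derivation of $\mathcal{O}^{\operatorname{rig}}[\frac{1}{\lambda}]$, so the formula on $\lambda$ forces the correct formula on $\lambda^{-1}$.

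For the converse, when $\chi_{\operatorname{cyc}}(\sigma) = 1$ we have $\sigma$ fixing $[\epsilon]$ and hence fixing $\log([\epsilon(\sigma)])$, so the first formula asserts that $\sigma$ acts as $\exp(\log([\epsilon(\sigma)]) \cdot N_\nabla)$ on the relevant module. Formally inverting via $\log(1 + Y) = \sum_{n \geq 1}(-1)^{n+1}Y^n/n$ then yields the claimed expression for $N_\nabla$. To make this rigorous I would iterate the first formula to see that $(\sigma-1)^n(m) \in \log([\epsilon(\sigma)])^n \cdot \mathfrak{M} \otimes_{\mathfrak{S}} B_{\operatorname{max}}$, which both secures convergence of the logarithmic series in $B_{\operatorname{max}}$ and makes dividing out the single factor $\log([\epsilon(\sigma)])$ well-defined. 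The main difficulty I anticipate is tracking these convergence issues cleanly; the payoff of working in $B_{\operatorname{max}}$ rather than $B_{\operatorname{crys}}$ is precisely that the exponential and logarithm of elements of small positive valuation such as $\log([\epsilon(\sigma)])$ are defined there by construction, so that the formal manipulations above are fully justified.
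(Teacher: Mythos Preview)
Your approach is correct and essentially the same as the paper's: both reduce via $m = d \otimes f$ with $d \in D(\rho)$ to scalar identities for $f \in \mathcal{O}^{\operatorname{rig}}[\tfrac{1}{\lambda}]$, then verify these on monomials $u^i$ using $\sigma(u) = [\epsilon(\sigma)]u$ together with the $\exp$/$\log$ correspondence. For the converse the paper is slightly more concrete, computing $(\sigma-1)^n(u^i) = u^i([\epsilon(\sigma)]^i-1)^n$ directly (this is where $\chi_{\operatorname{cyc}}(\sigma)=1$ enters, to make $\sigma$ fix $[\epsilon(\sigma)]$) and reading off convergence from $[\epsilon(\sigma)]-1 \in pA_{\operatorname{max}}$ (respectively $4A_{\operatorname{max}}$ when $p=2$), rather than formally inverting the exponential as you propose; and note a small slip: $(\sigma-1)$ is a $\sigma$-derivation rather than a derivation, though this does not affect your extension to $\lambda^{-1}$.
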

	
	Here convergence of the sums is taken with respect to the topology on $B_{\operatorname{max}}^+$ with basis of open neighbourhoods of $0$ given by $p^nA_{\operatorname{max}}$. Since $A_{\operatorname{max}}$ is $p$-adically complete so is $B_{\operatorname{max}}^+$ for this topology. 
	\begin{proof}
		It suffices to check these identities for $m = d \otimes f$ for $d \in D(\rho)$ and $f \in \mathcal{O}^{\operatorname{rig}}[\frac{1}{\lambda}]$. Since $(\sigma-1)(d) = N_\nabla(d) = 0$ the lemma reduces to the claim that when $\chi_{\operatorname{cyc}}(\sigma) =1$
		$$
		\frac{1}{\operatorname{log}([\epsilon(\sigma)])}\sum_{n=1}^\infty (-1)^{n+1} \frac{(\sigma-1)^n}{n}(f)
		$$
		converges in $B_{\operatorname{max}}^+$ to $\partial(f)$ and, for any $\sigma \in G_K$,
		$$
		\sum_{n \geq 1}\partial^n(f) \otimes \frac{\left(  \operatorname{log}([\epsilon(\sigma)]) \right)^n }{n!}
		$$
		converges in $B_{\operatorname{max}}^+$ to 	$(\sigma-1)(f)$. It suffices to check either claim when $f = u^i$. For the first note that if $\chi_{\operatorname{cyc}}(\sigma)=1$ then $(\sigma-1)^n(u^i) = u^i([\epsilon(\sigma)]^i - 1)^n$ for all $n$. Therefore, the claimed convergence follows from the observation that $[\epsilon(\sigma)] -1 \in pA_{\operatorname{max}}$ when $p>2$ and that, when $p=2$, instead $[\epsilon(\sigma)] - 1 = ([\epsilon(\sigma)]^{1/2}-1)([\epsilon(\sigma)]^{1/2}+1) \in 4 A_{\operatorname{max}}$. For the second claim, we note that $\partial^n(f) = u^i$ and so $\sum_{n \geq 1}\partial^n(f) \otimes \frac{  \operatorname{log}([\epsilon(\sigma)])^n }{n!} = u^i\operatorname{exp}( \operatorname{log}([\epsilon(\sigma)]^i))$, which converges to $u^i [\epsilon(\sigma)]^i = \sigma(f)$.
	\end{proof}
	Next we prove the divisibility of the $G_K$-action asserted in Theorem~\ref{thm-Kisin}. Actually, we need something a little stronger:
	\begin{proposition}\label{prop-Galoisdivisibility}
		If $m \in \mathfrak{M}$ and $\sigma \in G_K$ then
		$$
		(\sigma-1)^n(m) \in \mathfrak{M} \otimes_{\mathfrak{S}} u\varphi^{-1}(\mu)^nA_{\operatorname{inf}}
		$$
		for $n=1$. If additionally $\chi_{\operatorname{cyc}}(\sigma) =1$ then this is true for all $n \geq 1$.
	\end{proposition}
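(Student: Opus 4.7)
The $n=1$ case is immediate: it is a direct restatement of the divisibility clause built into the definition of a crystalline $G_K$-action (Definition~\ref{def-crystGaloisactions}), once $u$ is identified with $[\pi^\flat]$. This requires no cyclotomic hypothesis.

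For $n \geq 2$ under the hypothesis $\chi_{\operatorname{cyc}}(\sigma) = 1$, I would induct on $n$ with base case $n=1$. The key algebraic inputs under this cyclotomic triviality are: first, $\sigma$ fixes $[\epsilon]$ and hence $\mu$ and $\varphi^{-1}(\mu)$, so the ideal $u\varphi^{-1}(\mu)^n A_{\operatorname{inf}}$ is $\sigma$-stable; second, $\sigma(u) = [\epsilon(\sigma)] u$ with $[\epsilon(\sigma)] = (1+\mu)^{c(\sigma)}$ for some $c(\sigma) \in \mathbb{Z}_p$, so $[\epsilon(\sigma)] - 1 \in \mu A_{\operatorname{inf}}$; and since $\mu = (1+\varphi^{-1}(\mu))^p - 1 \in \varphi^{-1}(\mu) A_{\operatorname{inf}}$, also $[\epsilon(\sigma)] - 1 \in \varphi^{-1}(\mu) A_{\operatorname{inf}}$.

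Given the inductive hypothesis, one writes $(\sigma-1)^n(m) = \sum_i m_i \otimes a_i$ with $a_i \in u\varphi^{-1}(\mu)^n A_{\operatorname{inf}}$ and expands by Leibniz on $\mathfrak{M} \otimes_{\mathfrak{S}} A_{\operatorname{inf}}$:
$$
(\sigma - 1)^{n+1}(m) = \sum_i (\sigma-1)(m_i) \otimes \sigma(a_i) + \sum_i m_i \otimes (\sigma-1)(a_i).
$$
Each cross summand lands in $\mathfrak{M} \otimes u\varphi^{-1}(\mu)^{n+1} A_{\operatorname{inf}}$ at once from the $n=1$ case combined with the $\sigma$-stability of the ideal.

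The main obstacle is controlling the internal summand. Writing $a_i = u\varphi^{-1}(\mu)^n b_i$, one computes
$$
(\sigma-1)(a_i) = u\varphi^{-1}(\mu)^n\bigl(([\epsilon(\sigma)]-1)\sigma(b_i) + (\sigma-1)(b_i)\bigr);
$$
the first piece is harmless by the inputs above, but the second would require $(\sigma-1)(b_i) \in \varphi^{-1}(\mu) A_{\operatorname{inf}}$, which fails in general since $\sigma$ does not act trivially modulo $\varphi^{-1}(\mu)$ on arbitrary elements of $A_{\operatorname{inf}}$. The resolution must exploit that the $b_i$ are not arbitrary: they record iterated applications of $\sigma-1$ to $m \in \mathfrak{M}$, and so inherit the cocycle structure of the matrix $G_\sigma \in \operatorname{GL}_n(A_{\operatorname{inf}})$ representing $\sigma$ on a local trivialisation, which satisfies $G_{\sigma\tau} = G_\sigma \sigma(G_\tau)$ and (under $\chi_{\operatorname{cyc}}(\sigma) = 1$) factors through the pro-$\mathbb{Z}_p$ quotient $G_K/G_{K(\mu_{p^\infty})}$. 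I would therefore strengthen the inductive hypothesis to track simultaneously the $\varphi^{-1}(\mu)$-order of $(\sigma-1)^n(m)$ and of the iterated differences $(\sigma-1)^k(G_\sigma - I)$, so that the cocycle identity together with the pro-$\mathbb{Z}_p$ parametrisation closes the induction. An alternative route, closer to the later use of these ideas in Section~\ref{sec-filandmono}, is to pass to $\mathfrak{M} \otimes B_{\operatorname{max}}^+$ and invoke Lemma~\ref{lem-Galandmono} to write $\sigma - 1 = \sum_{k \geq 1} t(\sigma)^k N_\nabla^k/k!$ with $t(\sigma) \in \mu B_{\operatorname{max}}^+$, extracting the required $\mu^n$- (hence $\varphi^{-1}(\mu)^n$-) divisibility from the leading term and descending integrality back to $A_{\operatorname{inf}}$ via the faithful flatness $\mathfrak{S} \to A_{\operatorname{inf}}$.
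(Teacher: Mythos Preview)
Your proposal identifies the right landscape but does not complete a proof, and two of its load-bearing steps are either circular or wrong as stated.

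First, on the $n=1$ case: in context this proposition is \emph{establishing} the divisibility of the Galois action coming from a crystalline representation (note the sentence preceding it: ``Next we prove the divisibility of the $G_K$-action asserted in Theorem~\ref{thm-Kisin}''). Invoking Definition~\ref{def-crystGaloisactions} presupposes what is being shown. One can source the $n=1$ case from the literature, but you should say so rather than call it immediate from a definition whose applicability is what is at stake.

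Second, your inductive approach for $n\geq 2$ correctly isolates the obstruction: in the Leibniz expansion, the term $u\varphi^{-1}(\mu)^n(\sigma-1)(b_i)$ is uncontrolled because $(\sigma-1)$ does not send $A_{\operatorname{inf}}$ into $\varphi^{-1}(\mu)A_{\operatorname{inf}}$. Your suggested fix---strengthening the inductive hypothesis via the cocycle identity for $G_\sigma$---is too vague to assess and I do not see how it closes: the cocycle relation constrains $G_{\sigma\tau}$ in terms of $G_\sigma$ and $\sigma(G_\tau)$, but does not by itself produce extra $\varphi^{-1}(\mu)$-divisibility of $(\sigma-1)^k(G_\sigma-I)$ beyond the $k=0$ case. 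This route does not obviously work.

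Third, your alternative route via $B_{\operatorname{max}}^+$ and Lemma~\ref{lem-Galandmono} is exactly what the paper does, but your sketch of the descent step is wrong. Faithful flatness of $\mathfrak{S}\to A_{\operatorname{inf}}$ is irrelevant here: the element $(\sigma-1)^n(m)$ already lives over $A_{\operatorname{inf}}$, and the issue is to pass from divisibility by $t^n$ in $B_{\operatorname{max}}^+$ to divisibility by $\mu^n$ in $A_{\operatorname{inf}}$. The paper does this via the intersection
\[
A_{\operatorname{inf}}[\tfrac{1}{\mu}]\cap u^p t^n B_{\operatorname{max}}^+ = u^p\mu^n A_{\operatorname{inf}},
\]
which requires Fontaine's characterisation of the ideal $\mu A_{\operatorname{inf}}$ and a result of Liu on $u^n B_{\operatorname{crys}}^+\cap A_{\operatorname{inf}}$, together with the fact that $t/\mu$ is a unit in $A_{\operatorname{max}}$. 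You also omit the topological step needed to pass from term-by-term divisibility to divisibility of the infinite sum: this uses that principal ideals of $B_{\operatorname{max}}^+$ are closed (Colmez), which is precisely why $B_{\operatorname{max}}$ is preferred to $B_{\operatorname{crys}}$ here. None of this is captured by ``faithful flatness''.
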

	
	Here we will use that  the topology on $B_{\operatorname{max}}^+$ (in contrast to that on $B_{\operatorname{crys}}^+$) is well behaved. More precisely, one has \cite[Proposition III.2.1]{Col98} which implies that any principal ideal in $B_{\operatorname{max}}^+$ is closed.
	\begin{proof}
		We show the equivalent assertion that $(\sigma-1)^n(\varphi(m)) \in \mathfrak{M}^\varphi \otimes_{\mathfrak{S}} u^p \mu^n A_{\operatorname{inf}}$ for $\mathfrak{M}^\varphi$ the image of $\varphi^*\mathfrak{M}$ in $\mathfrak{M}[\frac{1}{E(u)}]$ under the Frobenius. Iterating the formula in Lemma~\ref{lem-Galandmono} shows that $(\sigma-1)^n(\varphi(m))$ can be expressed as
		\begin{equation}\label{eq-sigma-1}
			\sum_{j=n}^\infty \left(  \sum_{j_1+\ldots+j_n =j,j_i \geq 1} N_\nabla^j(m) \otimes \frac{\operatorname{log}([\epsilon(\sigma)])^j}{j_1!\ldots j_n!} \right)	
		\end{equation}
		for $n=1$ and any $\sigma \in G_K$ and, if $\chi_{\operatorname{cyc}}(\sigma) =1$, for all $n \geq 1$. As explained in~\ref{sub-Kisinconst}, \eqref{eq-Origcomparison} arises from an identification $\mathfrak{M}^\varphi \otimes_{\mathfrak{S}} \mathcal{O}^{\operatorname{rig}}[\frac{1}{\varphi(\lambda)}] \cong D(\rho) \otimes_{K_0} \mathcal{O}^{\operatorname{rig}}[\frac{1}{\varphi(\lambda)}]$. This means that $N_\nabla^j(\varphi(m)) \in \mathfrak{M}^\varphi \otimes_{\mathfrak{S}} u^p\mathcal{O}^{\operatorname{rig}}[\frac{1}{\varphi(\lambda)}]$ for each $j \geq 1$ and so  each term of \eqref{eq-sigma-1} is contained in $u^p t^j B_{\operatorname{max}}^+$. Since all principal ideals in $B_{\operatorname{max}}^+$ are closed it follows that the entire sum is contained in $\mathfrak{M}^\varphi \otimes_{\mathfrak{S}} u^p t^n B_{\operatorname{max}}^+$ also.
		
		On the other hand, since \eqref{eq-Ainfcomparison} descends to an isomorphism over $A_{\operatorname{inf}}[\frac{1}{\mu}]$, we also know that $(\sigma-1)(\varphi(m)) \in \mathfrak{M}^\varphi \otimes_{\mathfrak{S}} A_{\operatorname{inf}}[\frac{1}{\mu}]$. The proposition will therefore follow from the assertion that
		$$
		A_{\operatorname{inf}}[\tfrac{1}{\mu}] \cap u^p t^n B_{\operatorname{max}}^+ = u^p \mu^n A_{\operatorname{inf}}
		$$
		To prove this first note that $\frac{t}{\mu}$ is a unit in $A_{\operatorname{max}}$ by \cite[Lemme III.3.9]{Col98}. Therefore, we need to show that if $a \in A_{\operatorname{inf}} \cap \mu^n B_{\operatorname{max}}^+$ then $a \in \mu^n A_{\operatorname{inf}}$ and if $a \in A_{\operatorname{inf}} \cap u^n B_{\operatorname{max}}$ then $a \in u^nA_{\operatorname{inf}}$. The first claim follows from the fact \cite[Proposition 5.1.3]{Fon94} that $\mu$ generates the ideal consisting of those $x \in A_{\operatorname{inf}}$ with $\varphi^n(x) \in \operatorname{ker}\theta$ for all $n \geq 0$.  For the second claim we use \cite[Lemma 3.2.2]{Liu10b}, which shows $u^nB_{\operatorname{crys}}^+ \cap A_{\operatorname{inf}} = u^n A_{\operatorname{inf}}$. Since $\varphi(B_{\operatorname{max}}^+) \subset B_{\operatorname{crys}} \subset B_{\operatorname{max}}$, if $b \in u^nB_{\operatorname{max}}^+ \cap A_{\operatorname{inf}}$ then $\varphi(b) \in u^{pn}B_{\operatorname{crys}}^+ \cap A_{\operatorname{inf}} = u^{pn}A_{\operatorname{inf}}$. Thus $b \in u^nA_{\operatorname{inf}}$, as required.
	\end{proof}
	
	Finally we can prove:
	
	\begin{proof}[Proof of Proposition~\ref{prop-NnablaAmax}]
		The assumption that $K_\infty \cap K(\mu_{p^\infty}) = K$ ensures that $\sigma  \in G_K$ can be found with $\epsilon(\sigma)$ equal the fixed generator $\epsilon \in \mathbb{Z}_p(1)$ and $\chi_{\operatorname{cyc}}(\sigma)=1$. For such a $\sigma$ we have
		$$
		N_\nabla(m) = \frac{1}{t}\sum_{n=1}^\infty (-1)^{n+1} \frac{(\sigma-1)^n}{n}(m)
		$$
		for any $m \in \mathfrak{M}$. By Proposition~\ref{prop-Galoisdivisibility} we know $(\sigma-1)^n(m) \in \mathfrak{M} \otimes_{\mathfrak{S}} u \varphi^{-1}(\mu)^nA_{\operatorname{inf}}$ for each $n \geq 1$. We are going to show that each term in the above sum, and hence the sum itself, is contained $\mathfrak{M} \otimes_{\mathfrak{S}} \frac{u\varphi^{-1}(\mu)}{t} A_{\operatorname{max}}$.
		
		For this claim it suffices to show that $\varphi^{-1}(\mu)^{n-1} \in n A_{\operatorname{max}}$. Since $\varphi^{-1}(\mu)^p \equiv \mu$ modulo $pA_{\operatorname{inf}}$ it follows that $\alpha := \frac{\varphi^{-1}(\mu)^p}{p} - \frac{\mu}{p} \in A_{\operatorname{inf}}$. Since $\varphi^n(\alpha) \in \operatorname{ker}\theta$ for all $n \geq 1$ we know that $\varphi(\alpha)$ is divisible by $\mu$ in $A_{\operatorname{inf}}$. Hence $\frac{\alpha}{\varphi^{-1}(\mu)} = \frac{\varphi^{-1}(\mu)^{p-1}}{p} - \frac{\mu}{\varphi^{-1}(\mu)p} \in A_{\operatorname{inf}}$. Since $\frac{\mu}{\varphi^{-1}(\mu)}$ generates the kernel of $\Theta$ it follows that $\frac{\varphi^{-1}(\mu)^{p-1}}{p} \in A_{\operatorname{max}}$, and so the claim holds when $n=p$. For general $n$, we write $n = p^s m$ for $m$ coprime to $p$. Since $p^s-1 = (p-1)(1+p+\ldots+p^{s-1})$ we have $n-1 \geq p^s-1 \geq (p-1)s$ and so $\frac{\varphi^{-1}(\mu)^{n-1}}{p^s} \in A_{\operatorname{max}}$ which proves the claim.
		
		It remains only to show that $\frac{\varphi^{-1}(\mu) }{t}A_{\operatorname{max}} = \frac{1}{p\lambda} A_{\operatorname{max}}$. We showed above that $\mu$ generates the same ideal of $A_{\operatorname{max}}$ as $t$ so this is equivalent to showing that $\frac{\mu}{\varphi^{-1}(\mu)p}$ and $\lambda$ generate the same ideal. As $\frac{\mu}{\varphi^{-1}(\mu)}$ and $E(u)$ generate the same ideal in $A_{\operatorname{inf}}$ this is equivalent to asking that $\varphi(\lambda)$ is a unit in $A_{\operatorname{max}}$. But this is clear because $\frac{\varphi^n(E(u))}{E(0)} - 1$ is topologically nilpotent in $A_{\operatorname{max}}$ for $n\geq 1$.
	\end{proof}
	
	\bibliography{/Users/robin.bartlett/Library/CloudStorage/Dropbox/Maths/biblio.bib}
\end{document}